\let\al=\alpha
\let\f=\frac
\let\p=\psi
\let\na=\nabla
\let\pa=\partial
\newcommand{\beq}{\begin{equation}}
	\newcommand{\eeq}{\end{equation}}
\newcommand{\ben}{\begin{eqnarray}}
	\newcommand{\een}{\end{eqnarray}}
\newcommand{\beno}{\begin{eqnarray*}}
	\newcommand{\eeno}{\end{eqnarray*}}
\newtheorem{thm}{Theorem}[section]
\newtheorem{lem}{Lemma}[section]
\newtheorem{rmk}{Remark}[section]
\newtheorem{cor}{Corollary}[section]
\newtheorem{prop}{Proposition}[section]
\numberwithin{equation}{section}
\newcommand{\andf}{~\text{ and }~}
\newcommand{\with}{~\text{ with }~}
\def\eqdef{\buildrel\hbox{\footnotesize def}\over =}
\def\avg{\operatorname{avg}}
\def\dive{{\mathop{\rm div}\nolimits}\,}
\let\f=\frac
\let\p=\partial
\def\N{\mathop{\mathbb  N\kern 0pt}\nolimits}
\def\Q{\mathop{\mathbb  Q\kern 0pt}\nolimits}
\def\R{{\mathop{\mathbb R\kern 0pt}\nolimits}}
\def\Z{\mathop{\mathbb  Z\kern 0pt}\nolimits}
\def\PP{\mathop{\mathbb P\kern 0pt}\nolimits}
\renewcommand{\epsilon}{\varepsilon}
\def\ds{\displaystyle}
\def\pa{\partial}
\def\hat{\widehat}
\begin{document}
\title[The inhomogenenous Navier-Stokes equations]
{On the density patch problem for the 2-D inhomogeneous Navier-Stokes equations}

\author[T. Hao]{Tiantian Hao}
\address[T. Hao]{
School of Mathematical Sciences, Peking University, Beijing 100871, China}
\email{haotiantian@pku.edu.cn}

\author[F. Shao]{Feng Shao}
\address[F. Shao]{School of Mathematical Sciences, Peking University, Beijing 100871,  China}
\email{fshao@stu.pku.edu.cn}

\author[D. Wei]{Dongyi Wei}
\address[D. Wei]{
School of Mathematical Sciences, Peking University, Beijing 100871, China}
\email{jnwdyi@pku.edu.cn}

\author[Z. Zhang]{Zhifei Zhang}
\address[Z. Zhang]{
School of Mathematical Sciences, Peking University, Beijing 100871, China}
\email{zfzhang@math.pku.edu.cn}

\date{\today}

\begin{abstract}
In this paper, we first construct a class of global strong solutions for the 2-D inhomogeneous Navier-Stokes equations under very general assumption that the initial density is only bounded and the initial velocity is in $H^1(\R^2)$. With suitable assumptions on the initial density, which includes the case of density patch and vacuum bubbles, we prove that Lions' s weak solution is the same as the strong solution with the same initial data. In particular,  this gives a complete resolution of the density patch problem proposed by Lions \cite{PL}: {\it for the density patch data $\rho_0=1_{D}$ with a smooth bounded domain $D\subset\R^2$, the regularity of $D$ is preserved by the time evolution of Lions's weak solution.}
\end{abstract}

\maketitle



\section{Introduction}

We consider the inhomogeneous incompressible Navier-Stokes equations in $ \mathbb{R}^{+}\times\Omega, \Omega\subseteq \mathbb{R}^{d}, d=2, 3$:
\begin{equation}\label{INS}
     \left\{
     \begin{array}{l}
     \partial_t\rho+u\cdot\nabla \rho=0,\\
     \rho(\partial_tu+u\cdot\nabla u)-\Delta u+\nabla P=0,\\
     \dive u = 0,\\
     (\rho, u)|_{t=0} =(\rho_{0}, u_0),
     \end{array}
     \right.
\end{equation}
where $\rho,~u$ stand for the density and velocity of the fluid respectively, and $P$ is a scalar pressure function. 

Lady\v{z}enskaja and Solonnikov \cite{LS} first addressed the question of unique solvability of \eqref{INS} in a bounded domain $\Omega$. Under the assumption that $u_0\in W^{2-\frac{2}{p},p}(\Omega)(p>d)$ is divergence free and vanishes on $\p\Omega$ and  $\rho_0\in C^1(\Omega)$ is bounded away from zero, they proved the global well-posedness in dimension $d=2$, and local well-posedness in dimension $d=3$. We refer to \cite{A, AG, AGZ, AP, D, DM2, HPZ, PZ} and references therein for the well-posedness results in the critical functional framework.  In all these works, the density has to be continuous, bounded and bounded away from zero.

In the case when $\rho_0\in L^{\infty}(\R^d)$ with a positive lower bound and $u_0\in H^1(\R^d)$, Kazhikov \cite{K} proved the global existence of weak solution to the system \eqref{INS}, and Simon \cite{S} removed the lower bound assumption on $\rho_0$. While with $u_0\in H^2(\R^d)$, Danchin and Mucha \cite{DM3} proved that the system \eqref{INS} has a unique local in time solution. Paicu, Zhang and the fourth author \cite{PZZ}  proved the global well-posedness of \eqref{INS} with $\rho_0\in L^{\infty}(\R^2)$ bounded away from zero and  $u_0\in H^s(\R^2)$ for any $s>0$, and $\|u_0\|_{L^2}\|\na u_0\|_{L^2}$ small  in dimension 3. Furthermore, Zhang \cite{Z} established the global existence of strong solution with $\rho_0\in L^{\infty}(\R^3)$ bounded away from zero and $u_0$ small in the critical Besov space $\dot{B}^{\f12}_{2,1}(\R^3)$. See \cite{DW, D-apde} for recent important progress in this direction.

\subsection{Lions's density patch problem} 

In this paper,  we only focus on the case of $\Omega=\R^2$, since the global regularity is still a famous open problem even for the 3-D incompressible Navier-Stokes equations.

Given $0\leq \rho_0\in L^{\infty}(\R^2)$, and $u_0$ satisfying $\dive u_0=0,\,\sqrt{\rho_0}u_0\in L^2(\R^2)$, Lions \cite{PL} proved that the system \eqref{INS} has a global weak solution satisfying the energy inequality 
\begin{align}\label{energy}
\f12\|\sqrt{\rho}u(t)\|_{L^2}^2+\int_0^t\|\nabla u(s)\|^2_{L^2}\,ds\leq \f12\|\sqrt{\rho_0}u_0\|_{L^2}^2.
\end{align}
Moreover, for any $0\leq \alpha\leq \beta<\infty$, the Lebesgue measure $\mu (\{x\in \R^2:\alpha\leq \rho(t,x)\leq \beta\})$ is independent of $t$.
Furthermore, Lions proposed the following open question(see page 34 in \cite{PL}): 

{\it We would like to mention another interesting  open question: suppose that $\rho_0=1_{D}$ for a smooth domain $D(\subset \Omega)$, i.e., a patch of a homogeneous incompressible fluid {\color{red}``}surrounded" by the vacuum(or a bubble of vacuum embedded in the fluid). Then, Theorem 2.1 yields at least one global weak solution and (2.17) implies that, for all $t\ge 0, \rho(t)=1_{D(t)}$ for some set such that $vol(D(t))=vol(D)$. In this case, (2.1)-(2.2) can be reformulated as a somewhat complicated free boundary problem. It is also very natural to ask whether the regularity of $D$ is preserved by the time evolution.}\smallskip

The density patch type problem was first solved by Liao and Zhang \cite{LZ-ARMA, LZ} for the initial density of the form
$$
\rho_0=\eta_11_{\Omega_0}+\eta_21_{\Omega_0^c},
$$
where  $(\eta_1,\eta_2)$ is any pair of positive constants and $\Omega_0$ is a bounded, simply connected domain with $W^{3,p}$ boundary, see \cite{DZ, GG} for related works. 
When $\eta_2=0$, the first breakthrough is due to Danchin and Mucha \cite{DM}, where they constructed a class of strong solutions for  $\rho_0\in L^\infty(\Omega)$ and $u_0\in H^1(\Omega)$ without extra compatibility condition in the case when the fluid domain $\Omega$ is either a bounded domain or the Torus; in particular, for the density patch data, the regularity of $D\subset \Omega$ is preserved by the time evolution of strong solution they constructed. Prange and Tan \cite{PT} extend some results in \cite{DM} to the case of $\Omega=\R^2$; moreover, for the density with a bubble of vacuum(i.e., $\eta_1=0, \eta_2>0$), they can prove the weak-strong uniqueness between Lions's weak solution and strong solution they constructed, thus the regularity of $D$ is preserved by the time evolution of  Lions's weak solution in such case. However, for the density patch type data(i.e., $\eta_1>0, \eta_2=0$),  \cite{PT} still need to impose the compatibility condition \eqref{comp}  on the initial data and a weighted $L^\infty$ bound for the weak solution. Thus, the following original version of Lions' s density patch problem remains open:\smallskip

{\it For the density patch data $\rho_0=1_{D}$ with a smooth bounded domain $D\subset\R^2$, whether the regularity of $D$ is preserved by the time evolution of Lions's weak solution?}
\smallskip

This problem is analogue to the vortex patch problem, which was solved by Chemin \cite{Che}, see also \cite{BC} for a new proof.

\subsection{Main results}

The goal of this paper is twofolds. First of all, we prove the global existence of strong solution under very general assumption on the initial density. 

\begin{thm}\label{thm:strong}
Given the initial data $(\rho_0,u_0)$ satisfying $0\leq \rho_0(x)\leq C_0 \text{~and~}\rho_0\not\equiv 0,~ u_0\in H^{1}(\R^2),
~\dive u_0=0$, the system \eqref{INS} has a global solution $(\rho,u,\nabla P)$ 
satisfying the weak energy inequality \eqref{energy}, $0\leq \rho(t,x)\leq C_0$,
and the following properties:

\begin{itemize}
    \item  $\sqrt{\rho}u,~\nabla u\in L^{\infty}(\R^{+};L^2(\R^2))$;
    \item $ \sqrt{\rho}\dot{u},~\nabla^2u,~ \nabla P,~ \sqrt{t}\nabla \dot{u}
    \in L^2(\R^{+};L^2(\R^2))$, here $\dot{u}\eqdef u_t+u\cdot \nabla u$;
    \item 
    $\nabla u\in L^{1}([0,T];L^{\infty}(\R^2)),$ $\rho\in \mathcal{C}([0,T];L^p(B_R))$ for all $1\leq p<\infty,~R,~ T\in(0,\infty)$.
\end{itemize}

\end{thm}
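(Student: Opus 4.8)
The plan is to construct the solution by a compactness argument starting from smooth approximations, and then to upgrade the regularity using a priori estimates that are uniform in the approximation parameter. First I would regularize the data: mollify $u_0$ to $u_0^n\in H^\infty$ with $\dive u_0^n=0$ and replace $\rho_0$ by $\rho_0^n=\rho_0*\chi_n+\frac1n$ so that $\frac1n\le\rho_0^n\le C_0+1$; for such data the classical theory (Lady\v{z}enskaja--Solonnikov type, or the critical framework results cited in the introduction) gives a global smooth solution $(\rho^n,u^n,\nabla P^n)$ with density bounded above and below. The whole point is then to derive bounds on $(\rho^n,u^n,\nabla P^n)$ that depend only on $C_0$, $\|u_0\|_{H^1}$, and $T$, and \emph{not} on the lower bound $\frac1n$ of the density.

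The heart of the argument is the a priori estimate hierarchy. The basic energy identity \eqref{energy} is immediate from testing the momentum equation against $u$. The next, crucial level is the estimate for $\dot u = u_t + u\cdot\nabla u$: testing the momentum equation against $\dot u$ and integrating by parts carefully (using $\dive u=0$ and the transport equation for $\rho$) yields, after handling the commutator terms coming from $\frac{d}{dt}\int|\nabla u|^2$, a bound of the form
\begin{align}\label{eq:plan-udot}
\f12\|\sqrt{\rho^n}\dot u(t)\|_{L^2}^2 + \int_0^t\|\nabla\dot u\|_{L^2}^2\,ds \lesssim \|\nabla u_0\|_{L^2}^2 + \text{(nonlinear errors)}.
\end{align}
In 2-D the nonlinear errors are controlled via the Ladyzhenskaya inequality $\|f\|_{L^4}^2\lesssim\|f\|_{L^2}\|\nabla f\|_{L^2}$ together with the Stokes estimate $\|\nabla^2 u\|_{L^2}+\|\nabla P\|_{L^2}\lesssim \|\sqrt{\rho^n}\dot u\|_{L^2}$ (here one uses only the \emph{upper} bound $\rho^n\le C_0+1$, writing $-\Delta u+\nabla P = -\rho^n\dot u$ and treating the right side as an $L^2$ forcing), and one closes the estimate by a Gronwall argument; the weight $\sqrt t$ appears because $\nabla\dot u_0$ need not be in $L^2$, so one instead propagates $\sqrt t\,\nabla\dot u\in L^2_tL^2_x$ via the time-weighted version of \eqref{eq:plan-udot}. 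From $\sqrt{\rho^n}\dot u\in L^\infty_tL^2_x\cap$ (with the parabolic gain) one extracts $\nabla u\in L^1_tL^\infty_x$ by interpolating $\nabla u$ between $L^\infty_tL^2_x$ and $\nabla^2 u\in L^2_tL^2_x$ in 2-D and a logarithmic Sobolev / maximal-regularity refinement — this is the estimate that makes the flow map of $u$ genuinely Lipschitz-in-$x$ and Osgood-in-$t$, which is what ultimately controls transport of $\rho$.

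With these uniform bounds I would pass to the limit: $\nabla u^n$ bounded in $L^\infty_tL^2_x$ and $\nabla^2u^n$ in $L^2_tL^2_x$ plus the equation give strong $L^2_{loc}$ compactness of $u^n$ (Aubin--Lions), $\rho^n\ u^n$ converges, $\rho^n$ converges weakly-$*$ in $L^\infty$ and, using $\nabla u\in L^1_tL^\infty_x$ and the transport equation, in fact $\rho^n\to\rho$ in $\mathcal C([0,T];L^p(B_R))$ for all finite $p$ (DiPerna--Lions transport theory with a log-Lipschitz-type field, or directly via the regular Lagrangian flow). The limit triple solves \eqref{INS}, retains $0\le\rho\le C_0$, satisfies \eqref{energy} by weak lower semicontinuity, and inherits all the listed regularity by Fatou/weak lower semicontinuity of norms. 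The main obstacle, and the place where all the work concentrates, is establishing \eqref{eq:plan-udot} and the consequent $\nabla u\in L^1_tL^\infty_x$ bound \emph{uniformly as the density lower bound degenerates to $0$}: one must never divide by $\rho^n$, the Stokes/elliptic estimates must be run with $\rho^n\dot u$ kept on the right-hand side as an $L^2$ source, and the 2-D smallness/Gronwall structure must be arranged so that the constants depend only on $C_0$ and the energy, which is exactly the delicate bookkeeping that distinguishes this result from the classical theory requiring a positive lower bound on the density.
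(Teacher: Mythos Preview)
Your overall strategy---regularize, derive uniform a priori estimates that do not use the lower bound on $\rho^n$, and pass to the limit---matches the paper exactly. But the step you describe most casually is precisely the one where the paper introduces genuinely new ideas, and your sketch of it would not close.

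The critical gap is the bound $\nabla u\in L^1_tL^\infty_x$. Your proposal to ``interpolate $\nabla u$ between $L^\infty_tL^2_x$ and $\nabla^2u\in L^2_tL^2_x$'' with a ``logarithmic Sobolev / maximal-regularity refinement'' does not work in 2-D: $H^1(\R^2)\not\hookrightarrow L^\infty(\R^2)$, and Brezis--Gallouet type inequalities need one more derivative than you have. Nor can you upgrade $\nabla^2u$ to $L^p$ with $p>2$ via the Stokes estimate, because the source $\rho\dot u$ is only controlled in $L^2$ (you have $\sqrt{\rho}\,\dot u\in L^2$ and $\nabla\dot u\in L^2$, but without a lower bound on $\rho$ this gives no $L^p$ control of $\dot u$ itself). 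The paper's route is a new pointwise representation (Lemma~\ref{Lf}): writing $\Delta^2\psi=\operatorname{curl}(\rho\dot u)$ and using the operators $\Delta,\partial_z^2,\partial_{\bar z}^2$ one obtains
\[
|\nabla u(t,x)|\le Cr^{-1}\|\nabla u(t)\|_{L^2}+C\int_{B(x,r)}\frac{|\rho\dot u(t,y)|}{|y-x|}\,dy,
\]
and the singular integral on the right is estimated by $Cr\|\nabla\dot u\|_{L^2}+C\|\sqrt{\rho}\,\dot u\|_{L^2}$ (a Poincar\'e/Lorentz-space argument that again never divides by $\rho$). Optimizing in $r$ gives $\|\nabla u\|_{L^\infty}\lesssim \|\nabla u\|_{L^2}^{1/2}\|\nabla\dot u\|_{L^2}^{1/2}+\|\sqrt{\rho}\,\dot u\|_{L^2}$, whose time integral is finite thanks to the weighted estimates on $\dot u$.

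Two smaller but real issues: first, the second-level estimate (your \eqref{eq:plan-udot}) as written---without the weight and without extra assumptions---needs the 2-D algebraic identity $\partial_iu^k\partial_ku^j\partial_ju^i=0$ (i.e.\ $\operatorname{Tr}(A^3)=0$ for trace-free $2\times2$ matrices) to kill the cubic pressure term; without it one is forced to assume $\rho_0\in L^1$ as in \cite{LSZ}. Second, your compactness step is too quick: since $\rho$ may vanish, the uniform bounds control $\sqrt{\rho^n}u^n$ and $\nabla u^n$ but not $u^n$ in $L^2(\R^2)$, so a straight Aubin--Lions does not apply. The paper first proves a propagation-of-mass lemma (Lemma~\ref{lem2}) ensuring $\int_{B(x_0,R(t))}\rho(t)\,dx\ge c_0/4$, which combined with a Poincar\'e-type inequality (Lemma~\ref{lem_Lp}) yields uniform local $L^2$ bounds on $u^n$ and $u^n_t$; this is what drives the strong local convergence and the claimed $\rho\in C([0,T];L^p(B_R))$.
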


\begin{rmk}

\begin{itemize}

\item[(1)]Compared with the existence part  in \cite{PT}, here we only require $0\leq \rho_0(x)\leq C_0$ and $\rho_0\not\equiv 0$\footnote{It's natural to assume $\rho_0\not\equiv 0$, otherwise the problem is trivial and the solution is $\rho\equiv 0$, $u\equiv 0$.}. For the case of density patch data(FFV-2 condition in Theorem A, \cite{PT}), except for $u_0\in H^1(\R^2)$\footnote{Indeed, they assume the initial velocity satisfies $\sqrt{\rho_0}u_0\in L^2(\R^2)$ and $\nabla u_0\in L^2(\R^2)$.}, \cite{PT} also imposed the following key compatibility condition
\begin{align}\label{comp}
-\Delta u_0+\nabla P_0=\sqrt{\rho_0}g\quad \text{for}\,\, u_0\in L^2(\R^2),\,\nabla u_0\in L^1(\R^2),\,\,g\in L^2(\R^2).
\end{align}

\item[(2)]
In \cite{CK, HW, LSZ}, the initial density allows the vacuum but requires $\rho_0\in W^{1,q}, q>2$, which excludes the data of density patch or vacuum bubbles.

\item[(3)] In this paper, many new ideas are introduced and the method is robust. We believe that our method could be applied to the well-posed problem for the compressible fluids in the presence of vacuum, see \cite{HLX, LZ-cns} for example.

\item[(4)]
As $\na u \in L^1(0,T;L^\infty(\R^2))$ for any $T>0$,  there exists a unique continuous flow $X$ associated with $u$ globally defined on $\R^+\times \R^+\times \R^2$ by
\begin{align}\label{trajectory}
X(t,s,x)=x+\int_s^t u(\sigma, X(\sigma,s,x))\,d\sigma.    
\end{align}
Then $\rho(t,x)=\rho_0(X(0,t,x))$.

\end{itemize}

\end{rmk}

To solve the density patch problem, we need to explore the further regularity of strong solution under one of the following two assumptions:
\begin{itemize}

\item[(H1)] $\rho_0$ has a compact support, thus $(1+|x|^2)\rho_0\in L^1(\R^2)$.

\item[(H2)] there exist $R_0,~c_0\in (0,\infty)$ such that $\int_{B(x_0,R_0)}\rho_0\,dx\geq c_0>0$ for all $x_0\in \R^2$. 

\end{itemize}

\begin{rmk}
Indeed,  (H1) corresponds to the density patch problem, and (H2) allows vacuum bubbles, especially countable equal size vacuum bubbles. While, the  following condition in \cite{PT}
\begin{align*}
\max\{\underline{\rho}-\rho_0,0\}\in L^p(\R^d), \text{~for some~} \underline{\rho}\in(0,\infty), ~p\in(d/2,\infty),    
\end{align*}
only allows finitely many vacuum bubbles with an equal non-zero measure.
\end{rmk}

\begin{thm}\label{nabla^2 u_Lm}
Under the assumptions of Theorem \ref{thm:strong}, if $\rho_0$ satisfies (H1) or (H2), then for all $2\leq m<\infty,~T>0$, 
the solution satisfies $\sqrt{t}\nabla^2u\in L^2([0,T];L^m(\R^2))$.    
\end{thm}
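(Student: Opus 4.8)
The plan is to regard the velocity equation as a stationary Stokes system and to feed into its Calderón--Zygmund $L^m$-theory the time-weighted control of the material acceleration $\dot u$ already supplied by Theorem \ref{thm:strong}; the single extra ingredient needed is a weighted Sobolev inequality, and this is where the structural hypotheses (H1)/(H2) enter.

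\textbf{Step 1 (a weighted Sobolev inequality).} I would first prove that, under (H1) or (H2), for each $2\le m<\infty$ and each $T>0$ there is a constant $C=C(m,C_0,T,\text{data})$ such that, for a.e.\ $t\in(0,T)$,
\begin{align*}
\|\rho(t)\,g\|_{L^m(\R^2)}\le C\big(\|\sqrt{\rho(t)}\,g\|_{L^2(\R^2)}+\|\nabla g\|_{L^2(\R^2)}\big)
\end{align*}
for all $g$ with $\nabla g\in L^2(\R^2)$ and $\sqrt{\rho(t)}\,g\in L^2(\R^2)$. Under (H2) one covers $\R^2$ by balls $B(x_j,R_0)$ of bounded overlap; on each ball a Poincaré inequality with the mean taken against the measure $\rho\,dx$ (of mass $\ge c_0$ there, while $\rho\le C_0$) gives $\|g\|_{L^2(B(x_j,R_0))}\le C(\|\nabla g\|_{L^2(B(x_j,R_0))}+\|\sqrt\rho\,g\|_{L^2(B(x_j,R_0))})$ with $C$ independent of $j$; summing over $j$ and applying the two-dimensional Gagliardo--Nirenberg inequality $\|g\|_{L^m}\lesssim\|g\|_{L^2}^{2/m}\|\nabla g\|_{L^2}^{1-2/m}$ controls $\|g\|_{L^m(\R^2)}$, and then $\|\rho g\|_{L^m}\le C_0\|g\|_{L^m}$. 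Under (H1), conservation of mass gives $\int_{\R^2}\rho(t)\,dx=\int_{\R^2}\rho_0\,dx=:M>0$, while $\nabla u\in L^1([0,T];L^\infty)$ forces the flow \eqref{trajectory} to keep $\mathrm{supp}\,\rho(t)$ inside a fixed ball $B_{\Lambda(T)}$ for $t\le T$; thus $\|\rho g\|_{L^m(\R^2)}\le C_0\|g\|_{L^m(B_{\Lambda(T)})}$, and on the bounded domain $B_{\Lambda(T)}$ the Sobolev--Poincaré inequality with mean against $\rho\,dx$ (using $\int_{B_{\Lambda(T)}}\rho=M$) controls $\|g\|_{L^m(B_{\Lambda(T)})}$ by the right-hand side above.

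\textbf{Step 2 (conclusion via Stokes).} Applying Step 1 with $g=\dot u(t)$, which is admissible for a.e.\ $t\in(0,T)$ since $\nabla\dot u(t),\sqrt{\rho(t)}\dot u(t)\in L^2(\R^2)$, and multiplying by $\sqrt t$,
\begin{align*}
\sqrt t\,\|\rho(t)\dot u(t)\|_{L^m(\R^2)}\le C\big(\sqrt t\,\|\sqrt\rho\,\dot u(t)\|_{L^2}+\sqrt t\,\|\nabla\dot u(t)\|_{L^2}\big).
\end{align*}
Because $\sqrt\rho\,\dot u\in L^2(\R^+;L^2)$ and $\sqrt t\,\nabla\dot u\in L^2(\R^+;L^2)$ by Theorem \ref{thm:strong}, squaring, bounding $t\le T$ in the first term, and integrating over $[0,T]$ shows $\sqrt t\,\rho\dot u\in L^2([0,T];L^m(\R^2))$. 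The momentum equation is the Stokes system $-\Delta u+\nabla P=-\rho\dot u$, $\dive u=0$; as $\nabla^2 u,\nabla P\in L^2$ is already known, the standard $L^m$ Calderón--Zygmund estimate gives $\|\nabla^2 u(t)\|_{L^m}\le C(m)\|\rho\dot u(t)\|_{L^m}$ for a.e.\ $t$, hence $\sqrt t\,\|\nabla^2 u(t)\|_{L^m}\le C(m)\sqrt t\,\|\rho\dot u(t)\|_{L^m}$, and integrating the square over $[0,T]$ yields $\sqrt t\,\nabla^2 u\in L^2([0,T];L^m(\R^2))$, as claimed.

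\textbf{Main obstacle.} The genuinely delicate point is Step 1 under (H1): mass is conserved but the fluid may spread, so the inequality fails on the whole plane without further structure (take $g$ a slowly decaying plateau), and one must really exploit that $\rho\dot u$ is supported in a set whose size is controlled on $[0,T]$; getting that uniform bound on $\mathrm{supp}\,\rho(t)$ out of $\nabla u\in L^1([0,T];L^\infty)$ and the energy bound \eqref{energy}---equivalently, that the trajectory map \eqref{trajectory} is proper with a modulus uniform on $[0,T]$---is the crux. Checking the admissibility of $\dot u(t)$ for a.e.\ $t$ and the applicability of the Stokes estimate to the already-regular solution is routine by comparison.
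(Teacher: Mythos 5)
Your overall skeleton is the same as the paper's: write the momentum equation as a Stokes system, bound $\|\nabla^2u\|_{L^m}\leq C\|\rho\dot u\|_{L^m}$ by the $L^m$ Stokes/Calder\'on--Zygmund estimate, reduce $\|\rho\dot u\|_{L^m}$ to $\|\sqrt\rho\dot u\|_{L^2}+\|\nabla\dot u\|_{L^2}$ via a density-weighted Sobolev inequality, and close with the time-weighted bounds of Lemma \ref{dot u}. The weighted inequality is exactly the paper's Lemma \ref{lem4}. Under (H2) your covering-plus-Poincar\'e argument is the discrete version of what the paper does (it integrates the local inequality of Lemma \ref{lem_Lp} over all centers $x_0$ and then uses Sobolev embedding); these are interchangeable. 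Under (H1) you diverge: you propagate compactness of $\operatorname{supp}\rho(t)$ and work on a fixed ball $B_{\Lambda(T)}$, whereas the paper never uses compact support at positive times --- it propagates the second moment $\int\rho\langle x\rangle^2\,dx$ (Lemma \ref{lem3}) and invokes Lions's weighted interpolation inequality (Lemma \ref{lem_fm}), plus the mass lower bound on one fixed ball to control $\|f\|_{L^2(B_1)}$. The moment route is cheaper because it only needs the transport equation and the energy inequality, while your route needs genuinely more information about the flow.

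Two points need repair. First, under (H2) the lower bound $\int_{B(x_j,R_0)}\rho(t)\,dx\geq c_0$ at time $t>0$ is not hypothesis (H2); it must be propagated from $t=0$, and it survives only on the enlarged balls $B(x_j,R_0+2t\|\sqrt{\rho_0}u_0\|_{L^2}/\sqrt{c_0})$ (this is the paper's Lemma \ref{lem2}). This is harmless --- replace $R_0$ by $R(T)$ everywhere --- but it is a step you must actually carry out. Second, and more seriously, $\nabla u\in L^1([0,T];L^\infty)$ alone does \emph{not} confine trajectories (a spatially constant drift $u=c(t)$ with $c\notin L^1_t$ has $\nabla u\equiv0$); to bound $\operatorname{supp}\rho(t)$ you additionally need a pointwise anchor for $u$, e.g.\ the bound $|u(t,x_t)|\leq C_1$ at some $x_t$ in a fixed ball coming from the propagated mass lower bound (Lemma \ref{lem2} plus Lemma \ref{lem_Lp}), or directly the logarithmic bound of Lemma \ref{u_L^infty}; combined with the Lipschitz bound and Gr\"onwall this does give $\operatorname{supp}\rho(t)\subset B_{\Lambda(T)}$. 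You correctly identified this as the crux, but the ingredient that closes it is the mass lower bound, not the energy inequality by itself. If you prefer to avoid the flow altogether, switch to the paper's moment argument, which only uses $(1+|x|^2)\rho_0\in L^1$.
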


As a corollary, we can prove that the regularity of $D$ is preserved by the time evolution of strong solution with the initial density $\rho_0=a+b\mathbf{1}_{D}$.

\begin{cor}\label{cor1}
Under the assumptions of Theorem \ref{thm:strong}, if $\rho_0=a+b\mathbf{1}_{D}$ for some bounded domain $D\subset \R^2$ with $C^{1,\alpha}$ regularity $(0<\alpha<1)$ 
and $a\geq 0,~a+b\geq 0,~(a,b)\in \R^2\setminus \{(0,0)\}$, then the solution provided by Theorem \ref{thm:strong} is such that for all $t\geq 0,~\rho(t,\cdot)=a+b\mathbf{1}_{D_t}$,
with $D_t\eqdef X(t,0,D)$, where $X(t,s,\cdot)$ stands for the flow of $u$ that is the unique solution of \eqref{trajectory}. Furthermore, $D_t$ has $C^{1,\alpha}$ regularity.  
\end{cor}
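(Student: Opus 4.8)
\textbf{Proof strategy for Corollary \ref{cor1}.} The idea is to reduce the persistence of $C^{1,\alpha}$ regularity to a regularity statement for the Lagrangian flow. Since $\na u\in L^1(0,T;L^\infty(\R^2))$, the flow $X$ in \eqref{trajectory} is globally defined and, for each $t$, $X(t,0,\cdot)$ is a bi-Lipschitz homeomorphism of $\R^2$ with $X(0,t,\cdot)=X(t,0,\cdot)^{-1}$, and $\rho(t,x)=\rho_0(X(0,t,x))$. With $\rho_0=a+b\mathbf 1_D$ this immediately gives $\rho(t,x)=a+b\mathbf 1_D\big(X(0,t,x)\big)=a+b\mathbf 1_{D_t}(x)$ for $D_t:=X(t,0,D)$. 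Moreover $\rho_0$ does satisfy one of (H1), (H2): if $a=0$ then necessarily $b>0$ and $\rho_0=b\mathbf 1_D$ is compactly supported, so (H1) holds; if $a>0$ then $\rho_0\equiv a$ outside the bounded set $D$, so $\int_{B(x_0,R_0)}\rho_0\ge c_0$ for $R_0$ large enough and all $x_0$, i.e.\ (H2) holds. Hence Theorem \ref{nabla^2 u_Lm} applies, and it remains to prove that $X(t,0,\cdot)$ is a $C^{1,\alpha}$ diffeomorphism on a bounded neighbourhood of $\overline D$, which forces $D_t$ to be a $C^{1,\alpha}$ domain.

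The key analytic input is $\na u\in L^1(0,T;C^\alpha(\R^2))$. Theorem \ref{nabla^2 u_Lm} only gives $\sqrt t\,\na^2u\in L^2(0,T;L^m)$ for every finite $m$, and the weight $t^{-1/2}$ by itself ruins integrability near $t=0$ (it yields $\na^2u\in L^p(0,T;L^m)$ only for $p<1$). The plan is to interpolate this weighted bound with the unweighted bound $\na^2u\in L^2(0,T;L^2)$ of Theorem \ref{thm:strong}: for $\tfrac1q=\tfrac{1-\theta}2+\tfrac\theta m$ one has $\|\na^2u(t)\|_{L^q}\le\|\na^2u(t)\|_{L^2}^{1-\theta}\,t^{-\theta/2}\big(\sqrt t\,\|\na^2u(t)\|_{L^m}\big)^{\theta}$, and since $t\mapsto\|\na^2u(t)\|_{L^2}$ and $t\mapsto\sqrt t\,\|\na^2u(t)\|_{L^m}$ lie in $L^2(0,T)$ while $t^{-\theta/2}\in L^2(0,T)$ for $\theta<1$, Hölder in $t$ gives $\na^2u\in L^1(0,T;L^q)$ for every $q<\infty$. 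Combining with the Gagliardo--Nirenberg bound $\|\na u(t)\|_{L^q}\lesssim\|\na u(t)\|_{L^2}^{2/q}\|\na^2u(t)\|_{L^2}^{1-2/q}$ and the energy estimates of Theorem \ref{thm:strong}, we get $\na u\in L^1(0,T;W^{1,q}(\R^2))\hookrightarrow L^1(0,T;C^{1-2/q}(\R^2))$ by Morrey's embedding; choosing $q>2/(1-\alpha)$ and using $C^{1-2/q}(\R^2)\hookrightarrow C^{\alpha}(\R^2)$ (with the $L^\infty$ part supplied by $\na u\in L^1(0,T;L^\infty)$) yields $\na u\in L^1(0,T;C^\alpha(\R^2))$.

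Given this, the flow regularity follows from a Gronwall argument. Differentiating \eqref{trajectory} in $x$, the matrix $Y(t,x):=\na_xX(t,0,x)$ solves $\partial_tY=\na u(t,X(t,0,x))\,Y$, $Y(0,x)=\mathrm{Id}$, so $\|Y(t,\cdot)\|_{L^\infty}\le L(t):=\exp\!\big(\int_0^t\|\na u(s)\|_{L^\infty}\,ds\big)$. For $x,x'$ in a bounded neighbourhood $V$ of $\overline D$, writing $Y(t,x)-Y(t,x')$ in integral form and using $|X(s,0,x)-X(s,0,x')|\le L(s)|x-x'|$, the only delicate term is bounded by $\big(\int_0^t[\na u(s)]_{C^\alpha}L(s)^{1+\alpha}\,ds\big)|x-x'|^\alpha$; since $[\na u(\cdot)]_{C^\alpha}$ and $\|\na u(\cdot)\|_{L^\infty}$ are in $L^1(0,T)$, Gronwall gives $\sup_{t\le T}[\,Y(t,\cdot)\,]_{C^\alpha(V)}<\infty$. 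Thus $X(t,0,\cdot)\in C^{1,\alpha}(V)$, and the same argument applied to $X(s,t,\cdot)$ (in particular $s=0$) shows $X(t,0,\cdot)$ is a $C^{1,\alpha}$ diffeomorphism from a neighbourhood of $\overline D$ onto a neighbourhood of $\overline{D_t}$; hence $\partial D_t=X(t,0,\partial D)$ is a $C^{1,\alpha}$ hypersurface.

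I expect the only real difficulty to be the interpolation in the second paragraph: Theorem \ref{nabla^2 u_Lm} alone is insufficient because of the $t^{-1/2}$ singularity at $t=0$, and one must genuinely combine it with the unweighted $L^2_tL^2_x$ control of $\na^2u$ to recover an $L^1_t$ norm of $\na u$ in a Hölder space; once this is in place, both the representation $\rho(t)=a+b\mathbf 1_{D_t}$ and the flow estimate are routine.
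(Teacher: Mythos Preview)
Your proof is correct and follows the same overall strategy as the paper: verify (H1)/(H2), apply Theorem \ref{nabla^2 u_Lm}, derive $\nabla u\in L^1(0,T;C^{0,\alpha})$, and conclude that the flow is a $C^{1,\alpha}$ diffeomorphism. The only substantive difference is in how you extract the H\"older regularity of $\nabla u$ from the weighted bound $\sqrt{t}\,\nabla^2u\in L^2_tL^m_x$. You interpolate $\|\nabla^2u\|_{L^q}$ between the unweighted $\nabla^2u\in L^2_tL^2_x$ and the weighted $\sqrt{t}\,\nabla^2u\in L^2_tL^m_x$ to obtain $\nabla^2u\in L^1_tL^q_x$ for all $q<\infty$, and then invoke Morrey. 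The paper instead applies a single Gagliardo--Nirenberg inequality
\[
[\nabla u]_{C^\alpha}\le C\|\nabla u\|_{L^2}^{1-s}\|\nabla^2u\|_{L^m}^{s},\qquad s=\frac{1+\alpha}{2-2/m},
\]
interpolating directly between $\nabla u\in L^\infty_tL^2_x$ and $\sqrt{t}\,\nabla^2u\in L^2_tL^m_x$; since $s<1$ for $m>2/(1-\alpha)$, the factor $t^{-s/2}$ is integrable enough in time and one gets $\nabla u\in L^1(0,T;C^{0,\alpha})$ in one step. The paper's route is slightly shorter and does not use $\nabla^2u\in L^2_tL^2_x$, while yours is more modular, isolating $\nabla^2u\in L^1_tL^q_x$ as an intermediate statement of independent interest. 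The paper also treats the $C^{1,\alpha}$ regularity of the flow as an immediate consequence of $\nabla u\in L^1_tC^{0,\alpha}_x$, whereas you spell out the Gronwall argument.
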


\begin{rmk} In fact, the open set $D$ in Corollary \ref{cor1} is allowed to be a disjoint union of finitely many open connected sets $D^{(j)}(1\leq j\leq N)$ with $\overline{D^{(i)}}\cap \overline{D^{(j)}}=\emptyset$ for $i\neq j$. In this case, there holds
    \[D_t=\bigcup_{j=1}^N D_t^{(j)}\quad\text{with}\quad \overline{D^{(i)}_t}\cap \overline{D^{(j)}_t}=\emptyset\text{ for }i\neq j\quad\forall\ t\geq 0.\]
 This follows directly from the fact that    
 \begin{equation}\label{Eq.X_low_bound}
        |X(t, 0, x_1)-X(t, 0, x_2)|\geq |x_1-x_2|\exp\Big(-\int_0^t\|\na u(s)\|_{L^\infty}ds\Big),\quad \forall\ x_1, x_2\in \R^2,\ \forall\, t>0.
    \end{equation}
Indeed, let $\varphi(t)=X(t, 0, x_1)-X(t, 0, x_2)$. By \eqref{trajectory}, we have
\beno
|\varphi'(t)|\le \|\na u(t)\|_{L^\infty}|\varphi(t)|.
\eeno
Then Gr\"onwall's inequality ensures that 
\beno
|\varphi(t)|\ge |\varphi(0)|\exp\Big(-\int_0^t\|\na u(s)\|_{L^\infty}ds\Big).
\eeno

 \end{rmk}

The second goal of this paper gives a complete resolution of  Lions's density patch problem via proving the weak-strong uniqueness and Corollary \ref{cor1}.

\begin{thm}\label{thm_weak_unique}
Under the assumptions of Theorem \ref{thm:strong}, if $\rho_0$ satisfies (H1) or (H2), then the weak solution in \cite{PL} is unique and coincides with strong solution in Theorem \ref{thm:strong}. In particular, for the density patch data or vacuum bubbles, the regularity of $D$ is preserved by the time evolution of Lions's weak solution.
\end{thm}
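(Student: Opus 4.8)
The plan is to deduce Theorem \ref{thm_weak_unique} from the machinery already assembled: the strong solution of Theorem \ref{thm:strong}, its extra regularity from Theorem \ref{nabla^2 u_Lm}, the structural Corollary \ref{cor1}, and a weak-strong uniqueness argument adapted to the vacuum setting. First I would fix initial data $(\rho_0,u_0)$ as in Theorem \ref{thm:strong} with $\rho_0$ satisfying (H1) or (H2). Let $(\rho,u,\nabla P)$ be the strong solution provided by Theorem \ref{thm:strong}, so in particular $\nabla u\in L^1([0,T];L^\infty)$, $\nabla^2u,\nabla P\in L^2([0,T];L^2)$, $\sqrt{t}\nabla^2u\in L^2([0,T];L^m)$ for all $m<\infty$, and $0\le\rho\le C_0$. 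Let $(\bar\rho,\bar u,\nabla\bar P)$ be any Lions weak solution in the sense of \cite{PL} with the same initial data, satisfying the energy inequality \eqref{energy} and the conservation of level-set measures. The goal is to show $(\bar\rho,\bar u)=(\rho,u)$ a.e.

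The core is a relative-energy (weak-strong) estimate. I would test the weak formulation for $\bar u$ against the strong solution $u$ (and conversely) to control the difference $w\eqdef\bar u-u$ and $\sigma\eqdef\bar\rho-\rho$. The natural quantity is
\begin{align*}
E(t)\eqdef\frac12\int_{\R^2}\bar\rho\,|w(t)|^2\,dx+\int_0^t\int_{\R^2}|\nabla w(s)|^2\,dx\,ds,
\end{align*}
for which one derives, after the usual manipulations using $\dive u=\dive\bar u=0$ and the density transport equations, an inequality of the form
\begin{align*}
E(t)\le \int_0^t\!\!\int_{\R^2}\Big(\text{terms involving }\nabla u,\ \dot u,\ \sigma\Big)\,dx\,ds,
\end{align*}
where the dangerous contributions are the ones that cannot be absorbed by $\|\nabla w\|_{L^2}^2$ because $\bar\rho$ may vanish — so one cannot simply dominate $\|w\|_{L^2}$ by $\|\sqrt{\bar\rho}\,w\|_{L^2}$. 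To handle these I would bring in the momentum equation for the strong solution, $\rho\dot u-\Delta u+\nabla P=0$, controlling the forcing-type term $\int\sigma\,\dot u\cdot w$ by splitting $\dot u$ via $\|\rho\dot u\|_{L^2}$ together with the elliptic bound $\|\nabla^2 u\|_{L^2}+\|\nabla P\|_{L^2}\lesssim\|\rho\dot u\|_{L^2}$, and estimating the density difference $\sigma$ through the flows: since $\rho(t,\cdot)=\rho_0(X(0,t,\cdot))$ and similarly (at the level of measures) for $\bar\rho$, with the log-Lipschitz/Lipschitz control $\nabla u\in L^1(L^\infty)$ one gets a Gronwall-closable bound on a negative Sobolev norm of $\sigma$ in terms of $\int_0^t\|w\|$-type quantities; here the $\sqrt t$-weighted $L^2(L^m)$ control of $\nabla^2 u$ from Theorem \ref{nabla^2 u_Lm} is what lets us absorb the time singularity at $s=0$. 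Assembling these and applying Gronwall yields $E\equiv0$ on $[0,T]$, hence $\bar u=u$ and then $\bar\rho=\rho$.

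The main obstacle — and the place where (H1)/(H2) enter decisively — is precisely the lack of a uniform positive lower bound on the density: the relative energy only controls $\sqrt{\bar\rho}\,w$ in $L^\infty_tL^2_x$ and $\nabla w$ in $L^2_tL^2_x$, but several cross terms carry a bare $w$ (not weighted by $\sqrt{\bar\rho}$) or the density difference $\sigma$ in a norm not directly dominated by the energy. Overcoming this needs an interpolation/Hardy-type inequality controlling $\|w\|$ on the support region by $\|\nabla w\|_{L^2}$ plus a weighted quantity, which is exactly where (H1) (compact support, giving a Poincaré-type inequality after tracking the support, and $(1+|x|^2)\rho_0\in L^1$) or (H2) (the uniform lower bound $\int_{B(x_0,R_0)}\rho_0\ge c_0$, which propagates along the flow to give a uniform local mass bound and hence a generalized Poincaré inequality $\|f\|_{L^2(B)}\lesssim\|\nabla f\|_{L^2(B)}+\|\sqrt{\bar\rho}\,f\|_{L^2(B)}$) is used. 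With that inequality in hand the cross terms become absorbable and Gronwall closes. Once uniqueness is proved, the weak solution inherits every property of the strong solution, so Corollary \ref{cor1} (together with the remark extending it to finitely many disjoint components) immediately gives that for density patch or vacuum-bubble data $\rho(t,\cdot)=a+b\mathbf1_{D_t}$ with $D_t=X(t,0,D)$ of class $C^{1,\alpha}$, completing the resolution of Lions's problem.
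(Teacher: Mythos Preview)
The overall architecture of your argument is right—relative energy, density difference in $\dot H^{-1}$, and (H1)/(H2) supplying a substitute for a density lower bound via the generalized Poincar\'e inequality of Lemma \ref{lem4}. But the closure step contains a genuine gap.

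The obstruction is the trilinear term $\int\sigma\,\dot u\cdot w\,dx$. To pair $\sigma\in\dot H^{-1}$ against $\dot u\cdot w$ you would need $\dot u\cdot w\in\dot H^1$, and in two dimensions this product estimate is critical: neither $\dot u$ nor $w$ is in $L^\infty$, only in $H^1$-type spaces (through Lemma \ref{lem4}), and $H^1\cdot H^1\not\hookrightarrow H^1$ on $\R^2$. The paper's resolution is a pair of trilinear lemmas (Lemma \ref{uni_H^-1_bubble} under (H2) and Lemma \ref{uni_H^-1_patch} under (H1)) obtained by mollifying at scale $\epsilon$ and splitting, which yield
\[
\Bigl|\int g\,f_1f_2\,dx\Bigr|\le C\bigl(\|g\|_{\dot H^{-1}}|\ln\epsilon|^{1/2}+\epsilon\bigr)\,\|(\sqrt{\rho}f_1,\nabla f_1)\|_{L^2}\,\|(\sqrt{\rho}f_2,\nabla f_2)\|_{L^2}.
\]
Optimizing with $\epsilon=\|\sigma\|_{\dot H^{-1}}$ produces a factor $\|\sigma\|_{\dot H^{-1}}\bigl|\ln\|\sigma\|_{\dot H^{-1}}\bigr|^{1/2}$, which is \emph{superlinear} in the unknown. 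Consequently the integral inequality one obtains for $D(t)=\sup_{s\le t}s^{-1/2}\|\sigma(s)\|_{\dot H^{-1}}$ is of Osgood type, not Gr\"onwall type, and the paper closes with Osgood's lemma (the modulus $r\mapsto r|\ln r|^{1/2}$ has divergent inverse integral at $0$). Your ``apply Gr\"onwall'' would not go through as stated.

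Two smaller corrections. First, the paper does not run a direct relative-energy estimate but instead a duality argument \`a la Prange--Tan: one solves the backward linear problem \eqref{Eq.v_backward} for an auxiliary field $v$ and tests the $\delta u$-equation against $v$, which is what yields the clean inequality \eqref{estimate_delta_u}. Second, Theorem \ref{nabla^2 u_Lm} is not what absorbs the time singularity in the uniqueness proof; it is used only for the $C^{1,\alpha}$ propagation in Corollary \ref{cor1}. The uniqueness step relies instead on Lemma \ref{dot u} (giving $\int_0^T t\|\nabla\dot{\bar u}\|_{L^2}^2\,dt<\infty$) together with Lemma \ref{lem4} and the trilinear estimates above.
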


\begin{rmk}
 At this stage, we only prove the uniqueness of weak solutions  for $\rho_0$ satisfying (H1) or (H2). This gives a partial answer to Lions's open question about the uniqueness of weak solutions in two dimensions (see page 31 in \cite{PL}).
\end{rmk}

\subsection{Sketch of the proof}

First of all, it is relatively standard to obtain the following energy estimates 
\begin{align*}
\f12\|\sqrt{\rho}u(t)\|_{L^2}^2+\int_0^t\|\nabla u(s)\|^2_{L^2}\,ds\leq \f12\|\sqrt{\rho_0}u_0\|_{L^2}^2,
\end{align*}
and  for $i=0, 1$, 
\begin{equation}\nonumber\begin{aligned}
&\sup_{t\in [0,T]} t^i\|\nabla u\|^2_{L^2}+\int_0^Tt^i\int_{\R^2}\rho|\dot{u}|^2\,dxdt
\leq C,\\
&\int_0^Tt^i\big(\|\nabla^2 u\|_{L^2}^2+\|\nabla P\|^2_{L^2}\big)\,dt\leq C.
\end{aligned}
\end{equation}
Here and in what follows, the constant $C$ depends only on $\|\rho_0\|_{L^{\infty}},~\|\sqrt{\rho_0}u_0\|_{L^2}$ and $\|\nabla u_0\|_{L^2}$. 

Based on {\bf  an important observation $\p_iu^k\p_ku^j\p_ju^i=0$}  due to an algebraic fact $\text{Tr}(A^3)=0$ if $\text{Tr}(A)=0$ for a $2\times 2$ matrix $A$,  we can further show that for $i=1,2$(see Lemma \ref{dot u})
 \begin{equation*}
 \begin{aligned}
&\sup_{t\in [0,T]} t^i\|\sqrt{\rho} \dot{u}\|^2_{L^2}
+\int_0^Tt^i\|\nabla\dot{u}\|_{L^2}^2\,dt\leq C,\\
&\sup_{t\in [0,T]} t^i\bigl(\|\nabla^2u\|^2_{L^2}+\|\nabla P\|^2_{L^2}\bigr)\leq C.
\end{aligned}\end{equation*}

However, due to the presence of vacuum of the density, it is highly nontrivial to obtain  $u\in L^\infty_{loc}$ and $\na u\in L^1(0,T; L^\infty(\R^2))$ from the above energy estimates,
which are crucial to prove the existence of strong solution and weak-strong uniqueness.

To prove $u\in L^\infty_{loc}$, we use the following propagation property of lower bound of the density in the sense that  if $\int_{B(x_0,R_0)} \rho_0\,dx\geq c_0>0$, then for $R\geq R_0+2t\|\sqrt{\rho_0}u_0\|_{L^2}/\sqrt{c_0}$,  
\beno
\int_{B(x_0,R)} \rho(t,x)\,dx\geq c_0/4.
\eeno
Let's emphasize that this property also holds for weak solutions(see Lemma \ref{lem2}).  This lower bound ensures that there exists a $x_t\in B(x_0,R(t))$ such that $|u(t,x_t)|\leq C_1$. On the other hand, from the energy estimates, we can deduce that
\begin{equation}\nonumber
|u(t, x)-u(t, y)|\leq C(1+t)^{-\f12}\left[\ln \left(2+|x-y|/\sqrt{t}\right)\right]^{\f12}.
\end{equation}
Thus, we can conclude the following bound 
\begin{align*}
|u(t,x)|\leq C[\ln(2+|x-x_0|+t^{-1})]^{\f12} \quad \forall\ x\in\R^2,\,\, t>0.
\end{align*}
This bound is completely new and may be {of independent} interest. See Lemma \ref{u_L^infty}  for the detail. 

To prove $\na u\in L^1(0,T; L^\infty(\R^2))$, {\bf the key ingredient} is to establish the following {pointwise inequality} of $\na u(t,x)$, which is also completely new:
\begin{align*}
|\nabla u(t,x)|\leq& Cr^{-1}\|\nabla u(t)\|_{L^2}
+C\int_{B(x,r)}\frac{|\rho \dot{u}(t,y)|}{|y-x|}\,dy.
\end{align*}
where we further have 
\begin{align*}
\int_{B(x,r)}\frac{|\rho \dot{u}(t,y)|}{|y-x|}\,dy
\leq Cr\|\nabla \dot{u}\|_{L^2}+C\|\sqrt{\rho}\dot{u}\|_{L^2}.
\end{align*}
See Lemma \ref{Lip} for the detail. 

To prove the $C^{1,\al}$ regularity propagation of the density patch or vacuum bubbles, we need to improve the regularity of $u$.  Indeed,  if $\rho_0$ satisfies (H1) or (H2), then for all $2\leq m<\infty$, we have $\sqrt{t}\nabla^2u\in L^2([0,T];L^m(\R^2))$.  For this, the key point is to use {\bf the following important  inequality}(see Lemma \ref{lem4})
 \begin{equation}\nonumber
  \|\sqrt{\rho}(t, \cdot)f\|_{L^m(\R^2)}\leq C\|(\sqrt{\rho}(t, \cdot)f,\nabla f)\|_{L^2(\R^2)}\quad \forall\ t\in[0,T],
\end{equation}
such that 
\begin{align*}
        \|\sqrt{t}\nabla^2u\|_{L^2([0,T];L^m(\R^2))}&\leq C\|\sqrt{t}\rho\dot{u}\|_{L^2([0,T];L^m(\R^2))}\leq C\|\sqrt{t}\sqrt\rho\dot{u}\|_{L^2([0,T];L^m(\R^2))}\\
        &\leq C\|(\sqrt {t\rho}\dot{u}, \sqrt t\nabla \dot u)\|_{L^2([0,T];L^2(\R^2))}\leq C.
    \end{align*}
    
For the weak-strong uniqueness, we follow the argument of Prange and Tan \cite{PT}.  Let $(\bar{\rho}, \bar{u})$ be the strong solution obtained in Theorem \ref{thm:strong} and $(\rho, u)$ be the weak solution provided by Lions in \cite{PL} with same initial data $(\rho_0, u_0)$. We denote $\delta\!\rho=\rho-\bar{\rho},  \delta\!u=u-\bar{u}$.  First of all, we have
\begin{align*}
&D(t)\leq \|\rho_0\|^{\f12}_{L^{\infty}}\|\sqrt{\rho}\delta\! u\|_{L^2((0,t)\times\R^2)}\exp\Big({\|\nabla \bar{u}\|_{L^1(0,T;L^{\infty}(\R^2))}}\Big)\leq C\|\sqrt{\rho}\delta\! u\|_{L^2((0,t)\times\R^2)},\label{Dt2}
\end{align*}
where $D(t)=\ds\sup_{0<s\leq t}s^{-\f12}\|\delta\!\rho(s,\cdot)\|_{\dot{H}^{-1}(\R^2)}.$ Using a duality argument, we can show that
\begin{align*}
\|\sqrt{\rho}\delta\! u\|^2_{L^2((0,T)\times\R^2)}\leq \int_0^T\bigl|\langle\delta\!\rho\dot{\bar{u}},v\rangle\bigr|\,dt+\int_0^T\bigl|\langle\rho\delta\!u\cdot\nabla\bar{u},v\rangle\bigr|\,dt,
\end{align*}
where $v$ is the solution of the linear backward parabolic system \eqref{Eq.v_backward}. By Lemma \ref{lem4}, we can prove that 
\begin{align*}
&\int_0^T\bigl|\langle\rho\delta\!u\cdot\nabla\bar{u},v\rangle\bigr|\,dt\leq\int_0^T\|\sqrt{\rho}\delta\!u\|_{L^2} \|\sqrt{\rho}v\|_{L^4} \|\nabla\bar{u}\|_{L^{4}}\,dt\leq CT^{\f14}\|\sqrt{\rho}\delta\! u\|^2_{L^2((0,T)\times\R^2)}.
\end{align*}
Compared with \cite{PT},  {\bf the main innovation} is to control the trouble term $\int_0^T\bigl|\langle\delta\!\rho\dot{\bar{u}},v\rangle\bigr|\,dt$, which is based on the trilinear estimates in Lemma \ref{uni_H^-1_bubble} and Lemma \ref{uni_H^-1_patch},  in a way as 
\begin{align*}
\int_0^T\bigl|\langle\delta\!\rho\dot{\bar{u}},v\rangle\bigr|\,dt 
&\leq C \|(\sqrt{\rho}v,\nabla v)\|_{L^{\infty}(0,T;L^2)}
\int_0^T D(t)|\ln ({\sqrt{t}}D(t))|^{\f12}\|\bigl(t^{\f12}\sqrt{\bar{\rho}}\dot{\bar{u}},t^{\f12}\nabla\dot{\bar{u}}\bigr)\|_{L^2}\,dt.
\end{align*}
Thus, we can conclude    
\begin{align*}
D(t)\leq C \int_0^t D(s)|\ln ({\sqrt{s}}D(s))|^{\f12}\bigl(\|s^{\f12}\sqrt{\bar{\rho}}\dot{\bar{u}}\|_{L^2}+s^{\f12}\|\nabla\dot{\bar{u}}\|_{L^2}\bigr)\,ds.
\end{align*}
Then Osgood's lemma ensures that $D(t)=0$ on $[0,T]$.

\medskip

\noindent{\bf Notations.}\smallskip

\begin{enumerate}[(i)]
    \item[(1)] $\dot{u}\eqdef u_t+u\cdot \nabla u$ is the material derivative of the velocity.
    \item[(2)] $D_t\eqdef{(\p_t+u\cdot \nabla)},~\langle x \rangle\eqdef\sqrt{1+|x|^2}$.
    \item[(3)] $B(a,R)\eqdef\{x\in \R^2:|x-a|<R\},\, B_R=B(0,R), ~\forall\ a \in \R^2,~ R>0$.
    \item[(4)] For matrices $A=(A_{ij})$ and $B=(B_{ij})$, we denote $A:B=\sum_{i,j} A_{ij}B_{ij}$.
    \item[(5)] $ |D|$ is the Lebesgue measure of $D$ for $D\subset\R^2$.
    \item[(6)] We shall always denote $C$ to be an absolute constant which may vary from line to line. The dependence of the constant $C$ will be explicitly indicated if there are any exceptions.
\end{enumerate}

\section{Some important properties of weak solutions}

In this section, we prove some important properties of weak solutions $(\rho, u)$ only using the density equation and the energy inequality \eqref{energy}. 
These properties are also very useful for strong solutions.

\begin{lem}\label{lem1}
For any $ \varphi\in C_c^{\infty}(\R^2)$, we have 
\beno
\big|\|\varphi\sqrt{\rho}(t)\|_{L^2}-\|\varphi\sqrt{\rho}_0\|_{L^2}\big|\leq t\|\sqrt{\rho_0}u_0\|_{L^2}\|\nabla\varphi\|_{L^{\infty}}.
\eeno
\end{lem}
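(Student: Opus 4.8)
\medskip
\noindent\emph{Proof strategy.} The plan is to work directly with $g(t)\eqdef\|\varphi\sqrt{\rho}(t)\|_{L^2}^2=\int_{\R^2}\varphi^2\rho(t,x)\,dx$, using only the density equation and the energy inequality \eqref{energy}, as announced. Since $\dive u=0$, the density equation reads $\partial_t\rho+\dive(\rho u)=0$ in the sense of distributions. For $0<t_1<t_2$, testing this against $\varphi^2$ (with the $t$-localisation handled in the usual way by a time cutoff, and using that $t\mapsto\rho(t,\cdot)$ is continuous with values in $L^p_{loc}(\R^2)$, which is part of Lions's construction \cite{PL} via the DiPerna--Lions theory) gives
\beno
g(t_2)-g(t_1)=\int_{t_1}^{t_2}\!\!\int_{\R^2}\nabla(\varphi^2)\cdot\rho u\,dx\,ds=2\int_{t_1}^{t_2}\!\!\int_{\R^2}\varphi\,\nabla\varphi\cdot\rho u\,dx\,ds.
\eeno
Here the inner integral is finite because $\varphi\nabla\varphi\in L^\infty(\R^2)$ has compact support while $\rho u=\sqrt\rho\cdot(\sqrt\rho\,u)\in L^\infty(\R^+;L^2(\R^2))$ by \eqref{energy} and $0\le\rho\le C_0$. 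Hence $g$ is locally Lipschitz on $[0,\infty)$ with $g(0)=\|\varphi\sqrt{\rho_0}\|_{L^2}^2$ and, for a.e. $t>0$,
\beno
g'(t)=2\int_{\R^2}\varphi\,\nabla\varphi\cdot\rho u\,dx.
\eeno

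\smallskip
Next I would estimate this derivative. By the Cauchy--Schwarz inequality and then \eqref{energy},
\beno
|g'(t)|\le 2\|\nabla\varphi\|_{L^\infty}\int_{\R^2}|\varphi|\sqrt\rho\,(\sqrt\rho\,|u|)\,dx\le 2\|\nabla\varphi\|_{L^\infty}\|\varphi\sqrt{\rho}(t)\|_{L^2}\|\sqrt\rho\,u(t)\|_{L^2}\le 2A\sqrt{g(t)},
\eeno
where $A\eqdef\|\nabla\varphi\|_{L^\infty}\|\sqrt{\rho_0}u_0\|_{L^2}$.

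\smallskip
Finally, to pass from this differential inequality to the stated bound, I would regularise the square root: for $\e>0$ set $h_\e(t)\eqdef\sqrt{g(t)+\e}$, which is locally Lipschitz with
\beno
|h_\e'(t)|=\frac{|g'(t)|}{2\sqrt{g(t)+\e}}\le\frac{2A\sqrt{g(t)}}{2\sqrt{g(t)+\e}}\le A\qquad\text{for a.e. }t>0.
\eeno
Integrating on $[0,t]$ gives $|h_\e(t)-h_\e(0)|\le At$, and letting $\e\to0^+$ yields $\big|\sqrt{g(t)}-\sqrt{g(0)}\big|\le At$, i.e. $\big|\|\varphi\sqrt{\rho}(t)\|_{L^2}-\|\varphi\sqrt{\rho_0}\|_{L^2}\big|\le t\|\sqrt{\rho_0}u_0\|_{L^\infty}\|\nabla\varphi\|_{L^\infty}$; more precisely $t\,\|\sqrt{\rho_0}u_0\|_{L^2}\|\nabla\varphi\|_{L^\infty}$, which is the claim. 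The only genuinely delicate point is the first step: because $\rho$ is merely bounded and $u$ only lies in $L^2_tH^1_x$, one must invoke the renormalized (DiPerna--Lions) formulation of the transport equation to justify testing against $\varphi^2$ and to pin down the value at $t=0$; everything after that is elementary.
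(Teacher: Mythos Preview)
Your proof is correct and follows essentially the same approach as the paper: test the density equation against $\varphi^2$, use the energy inequality to bound the right-hand side by $2A\sqrt{g(t)}$, and conclude via an ODE-type argument for $\sqrt{g}$. The only cosmetic difference is that the paper writes the integral inequality $I(t)^2\le I(0)^2+2A\int_0^t I(s)\,ds$ and applies Gr\"onwall to get one direction, then invokes time-reversibility of the transport equation for the other, whereas your $\varepsilon$-regularization of $\sqrt{g}$ yields both inequalities at once.
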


\begin{proof}
Testing the density equation of \eqref{INS} against $\varphi^2$ gives 
\[\int_{\R^2}\rho(t,x)\varphi^2(x)\,dx-\int_{\R^2}\rho_0(x)\varphi^2(x)\,dx=\int_0^t\int_{\R^2}(\rho u)(s,x)\cdot\nabla(\varphi^2)(x)\,dx\,ds\quad\forall\ t\geq 0.\]
Let $I(t)=\|\sqrt{\rho(t)}\varphi\|_{L^2}$. Then by H\"older's inequality, we have
\begin{align*}
    I(t)^2&\leq I(0)^2+2\int_0^t\|\sqrt{\rho}u(s)\|_{L^2}\|\sqrt{\rho(s)}\varphi\|_{L^2}\|\nabla\varphi\|_{L^\infty}\,ds\\
    &\leq I(0)^2+2\|\sqrt{\rho_0}u_0\|_{L^2}\|\nabla\varphi\|_{L^\infty}\int_0^tI(s)\,ds,
\end{align*}
 where we have used the energy inequality \eqref{energy}  in the last inequality. Now it follows from Gr\"onwall's lemma that 
\beno
I(t)\leq I(0)+t\|\sqrt{\rho_0}u_0\|_{L^2}\|\nabla\varphi\|_{L^{\infty}},
\eeno
that is,
\[\|\sqrt{\rho(t)}\varphi\|_{L^2}\leq \|\sqrt{\rho_0}\varphi\|_{L^2}+t\|\sqrt{\rho_0}u_0\|_{L^2}\|\nabla\varphi\|_{L^{\infty}}\quad\forall\,\,t\geq 0.\]

On the other hand, since the density equation of \eqref{INS} is time-reversible, we also have 
\[\|\sqrt{\rho_0}\varphi\|_{L^2}\leq \|\sqrt{\rho(t)}\varphi\|_{L^2}+t\|\sqrt{\rho}u(t)\|_{L^2}\|\nabla\varphi\|_{L^{\infty}}\leq \|\sqrt{\rho(t)}\varphi\|_{L^2}+t\|\sqrt{\rho_0}u_0\|_{L^2}\|\nabla\varphi\|_{L^{\infty}}\]
for all $t\geq 0$, where we used the energy inequality \eqref{energy} again. 
\end{proof}

As $\rho_0\not\equiv 0$, there exist $x_0\in \R^2$ and $R_0\in (0,\infty)$ such that
\begin{align}\label{c0}
\int_{B(x_0,R_0)} \rho_0\,dx\geq c_0>0.    
\end{align}

\begin{lem}\label{lem2}
Assume \eqref{c0} and $R\geq R_0+2t\|\sqrt{\rho_0}u_0\|_{L^2}/\sqrt{c_0}$. Then we have 
\beno
\int_{B(x_0,R)} \rho(t,x)\,dx\geq c_0/4.
\eeno
\end{lem}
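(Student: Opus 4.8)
The plan is to use a test function localized to the ball $B(x_0,R)$ in the density equation, exactly as in Lemma~\ref{lem1}, and then choose the radius $R$ large enough that the "flux through the boundary" term, which grows at rate $\|\sqrt{\rho_0}u_0\|_{L^2}$ in time by the energy inequality, cannot deplete more than $3/4$ of the initial mass $c_0$. Concretely, fix a smooth cutoff $\varphi\in C_c^\infty(\R^2)$ with $\varphi\equiv 1$ on $B(x_0,R_0)$, $\operatorname{supp}\varphi\subset B(x_0,R)$, $0\le\varphi\le 1$, and $\|\nabla\varphi\|_{L^\infty}\le C/(R-R_0)$. Then $\|\varphi\sqrt{\rho_0}\|_{L^2}^2\ge\int_{B(x_0,R_0)}\rho_0\,dx\ge c_0$, so $\|\varphi\sqrt{\rho_0}\|_{L^2}\ge\sqrt{c_0}$.

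Next I would apply Lemma~\ref{lem1} with this $\varphi$, which gives
\[
\|\varphi\sqrt{\rho}(t)\|_{L^2}\ge\|\varphi\sqrt{\rho_0}\|_{L^2}-t\|\sqrt{\rho_0}u_0\|_{L^2}\|\nabla\varphi\|_{L^\infty}\ge\sqrt{c_0}-\frac{Ct\|\sqrt{\rho_0}u_0\|_{L^2}}{R-R_0}.
\]
The hypothesis $R\ge R_0+2t\|\sqrt{\rho_0}u_0\|_{L^2}/\sqrt{c_0}$ is designed to make the subtracted term at most $\tfrac12\sqrt{c_0}$ (here the implicit constant in the choice of $\varphi$ must be tracked so that the "$2$" in the hypothesis is the right numerical factor; one chooses the cutoff so that $\|\nabla\varphi\|_{L^\infty}\le 1/(R-R_0)$ up to the harmless fact that a genuinely smooth cutoff needs a slightly larger constant, which can be absorbed by shrinking the effective $R_0$ or by noting the statement only needs $c_0/4$ rather than a sharp constant). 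Hence $\|\varphi\sqrt{\rho}(t)\|_{L^2}\ge\tfrac12\sqrt{c_0}$, and therefore
\[
\int_{B(x_0,R)}\rho(t,x)\,dx\ge\int_{\R^2}\varphi^2\rho(t,x)\,dx=\|\varphi\sqrt{\rho}(t)\|_{L^2}^2\ge\frac{c_0}{4},
\]
using $0\le\varphi\le 1$ and $\operatorname{supp}\varphi\subset B(x_0,R)$ in the first inequality. This is the whole argument.

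The only genuinely delicate point is the bookkeeping of constants in the cutoff: one wants $\|\nabla\varphi\|_{L^\infty}$ as close to $1/(R-R_0)$ as possible so that the radius condition in the statement matches. This is handled by taking $\varphi$ to be (a smoothing of) the Lipschitz function $\min\{1,\max\{0,(R-|x-x_0|)/(R-R_0)\}\}$; its mollification can be made to have gradient bound arbitrarily close to $1/(R-R_0)$, and since the final conclusion only asks for the lower bound $c_0/4$ (not $c_0/4$ sharp) there is slack to absorb any fixed multiplicative loss — for instance by requiring $R-R_0\ge 2t\|\sqrt{\rho_0}u_0\|_{L^2}/\sqrt{c_0}$ we get a depletion term $\le\tfrac12\sqrt{c_0}$ even with a modest constant in $\|\nabla\varphi\|_{L^\infty}$. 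I expect no other obstacle; the essential content is entirely contained in Lemma~\ref{lem1} together with the energy inequality \eqref{energy}, and this lemma is really just a quantitative localized version of the conservation of mass.
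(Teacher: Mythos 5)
Your proof is correct and is essentially identical to the paper's: the same cutoff $\varphi$ equal to $1$ on $B(x_0,R_0)$, supported in $B(x_0,R)$, with $\|\nabla\varphi\|_{L^{\infty}}\le (1+\varepsilon)(R-R_0)^{-1}$, fed into Lemma \ref{lem1}, followed by letting $\varepsilon\to 0^{+}$. One small caveat: there is actually no slack in the constant $c_0/4$ (a gradient bound $C(R-R_0)^{-1}$ with any fixed $C>1$ would destroy the conclusion), so the only valid route is the one you also describe, namely taking the Lipschitz constant arbitrarily close to $(R-R_0)^{-1}$ and passing to the limit, which is exactly what the paper does.
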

\begin{proof}
Fix $t>0$ and $R\geq R_0+2t\|\sqrt{\rho_0}u_0\|_{L^2}/\sqrt{c_0}$. For any $\varepsilon>0$, we choose a bump function $\varphi\in C_c^\infty(\R^2;[0, 1])$ such that $\varphi|_{B(x_0, R_0)}=1$, $\operatorname{supp}\varphi\subset B(x_0, R)$ and $\|\nabla\varphi\|_{L^\infty}\leq (1+\varepsilon)(R-R_0)^{-1}\leq \frac{(1+\varepsilon)\sqrt{c_0}}{2t\|\sqrt{\rho_0}u_0\|_{L^2}}$.  By Lemma \ref{lem1}, we have
\begin{align*}
    \|\sqrt{\rho(t)}\varphi\|_{L^2}\geq \|\sqrt{\rho_0}\varphi\|_{L^2}-t\|\sqrt{\rho_0}u_0\|_{L^2}\|\nabla\varphi\|_{L^\infty}\geq \|\sqrt{\rho_0}\varphi\|_{L^2}-\frac{1+\varepsilon}{2}\sqrt{c_0}.
\end{align*}
It follows from the support properties of $\varphi$ that $$\|\sqrt{\rho(t)}\varphi\|_{L^2}\leq \left(\int_{B(x_0,R)} \rho(t,x)\,dx\right)^{1/2}, \quad \|\sqrt{\rho_0}\varphi\|_{L^2}\geq \left(\int_{B(x_0,R_0)} \rho_0(x)\,dx\right)^{1/2}\geq \sqrt{c_0},$$
hence
\[\left(\int_{B(x_0,R)} \rho(t,x)\,dx\right)^{1/2}\geq \sqrt{c_0}-\frac{1+\varepsilon}{2}\sqrt{c_0}.\]
Letting $\varepsilon\to0^+$ completes the proof.
\end{proof}

\begin{lem}\label{lem3}
If $\langle x \rangle\rho_0\in L^2(\R^2)$, then there holds
\beno
\|\langle x \rangle\sqrt{\rho}(t)\|_{L^2}\leq\|\langle x \rangle\sqrt{\rho}_0\|_{L^2}+ t\|\sqrt{\rho_0}u_0\|_{L^2}.
\eeno
\end{lem}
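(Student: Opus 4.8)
\medskip

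The plan is to deduce this from Lemma~\ref{lem1} by approximating the (non-compactly supported) weight $\langle x\rangle$ by a sequence of smooth cutoffs and passing to the limit. The one point to be careful about is that, in order to keep the constant in front of the error term $t\|\sqrt{\rho_0}u_0\|_{L^2}$ equal to $1$, the cutoffs must be taken to be $1$-Lipschitz; this is possible precisely because $\langle x\rangle$ is itself $1$-Lipschitz, $|\nabla\langle x\rangle|=|x|/\langle x\rangle\le 1$, so it can be bent down to zero outside a large ball without the gradient ever exceeding $1$. Concretely, I would fix for each $R>0$ a smooth profile $\psi_R\colon[1,\infty)\to[0,\infty)$ with $\psi_R(s)=s$ on $[1,R]$, $\operatorname{supp}\psi_R\subset[1,3R]$, $\psi_R\ge 0$ and $|\psi_R'|\le 1$ on $[1,\infty)$, and set $\varphi_R(x)=\psi_R(\langle x\rangle)$. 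Then $\varphi_R\in C_c^{\infty}(\R^2)$, $0\le\varphi_R\le\langle x\rangle$ with $\varphi_R=\langle x\rangle$ on $\{\langle x\rangle\le R\}$, $\varphi_R^2\uparrow\langle x\rangle^2$ pointwise as $R\to\infty$, and $|\nabla\varphi_R(x)|=|\psi_R'(\langle x\rangle)|\,|x|/\langle x\rangle\le 1$.

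Next I would apply Lemma~\ref{lem1} with $\varphi=\varphi_R$, which gives, for every $t\ge 0$,
\[
\|\varphi_R\sqrt{\rho(t)}\|_{L^2}\le\|\varphi_R\sqrt{\rho_0}\|_{L^2}+t\|\sqrt{\rho_0}u_0\|_{L^2}\|\nabla\varphi_R\|_{L^{\infty}}\le\|\langle x\rangle\sqrt{\rho_0}\|_{L^2}+t\|\sqrt{\rho_0}u_0\|_{L^2},
\]
where I used $0\le\varphi_R\le\langle x\rangle$ and $\|\nabla\varphi_R\|_{L^{\infty}}\le 1$. If $\|\langle x\rangle\sqrt{\rho_0}\|_{L^2}=+\infty$ there is nothing to prove; otherwise the right-hand side is finite and independent of $R$. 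Since $\rho(t)\ge 0$ and $\varphi_R^2\uparrow\langle x\rangle^2$ pointwise, the monotone convergence theorem yields $\|\varphi_R\sqrt{\rho(t)}\|_{L^2}^2=\int_{\R^2}\varphi_R^2\rho(t)\,dx\uparrow\int_{\R^2}\langle x\rangle^2\rho(t)\,dx=\|\langle x\rangle\sqrt{\rho(t)}\|_{L^2}^2$, so letting $R\to\infty$ in the displayed inequality gives $\|\langle x\rangle\sqrt{\rho(t)}\|_{L^2}\le\|\langle x\rangle\sqrt{\rho_0}\|_{L^2}+t\|\sqrt{\rho_0}u_0\|_{L^2}$, which in particular also shows $\langle x\rangle\sqrt{\rho(t)}\in L^2(\R^2)$.

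I expect no serious obstacle here: the only delicate point is the one already noted, namely that the cutoffs have Lipschitz constant $\le 1$ (and not merely bounded), which is exactly what the composition $\varphi_R=\psi_R(\langle x\rangle)$ with $|\psi_R'|\le 1$ provides, and the rest is routine. As a sanity check, one may also argue formally without Lemma~\ref{lem1}: testing the density equation $\partial_t\rho+\dive(\rho u)=0$ against $\langle x\rangle^2$ and using $\nabla(\langle x\rangle^2)=2x$, $|x|\le\langle x\rangle$, together with the energy inequality \eqref{energy}, gives $\tfrac{d}{dt}\|\langle x\rangle\sqrt{\rho(t)}\|_{L^2}^2\le 2\|\sqrt{\rho}u(t)\|_{L^2}\|\langle x\rangle\sqrt{\rho(t)}\|_{L^2}\le 2\|\sqrt{\rho_0}u_0\|_{L^2}\|\langle x\rangle\sqrt{\rho(t)}\|_{L^2}$, hence $\tfrac{d}{dt}\|\langle x\rangle\sqrt{\rho(t)}\|_{L^2}\le\|\sqrt{\rho_0}u_0\|_{L^2}$ and the claim follows by integration in time; the truncation above is precisely what makes this computation rigorous, in the same spirit as Lemmas~\ref{lem1} and~\ref{lem2}.
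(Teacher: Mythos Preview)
Your proof is correct and essentially identical to the paper's: both approximate $\langle x\rangle$ by smooth compactly supported $1$-Lipschitz cutoffs (the paper uses the scaled form $\varphi_R(x)=R\eta(\langle x\rangle/R)$ for a fixed profile $\eta$ with $\eta(r)=r$ on $[-1,1]$ and $|\eta'|\le 1$), apply Lemma~\ref{lem1}, and pass to the limit as $R\to\infty$. The only cosmetic difference is that the paper does not explicitly invoke monotone convergence but simply uses $\varphi_R(x)=\langle x\rangle$ for $|x|\le R-1$; note that your monotonicity claim $\varphi_R^2\uparrow\langle x\rangle^2$ requires the family $\psi_R$ to be nested in $R$, which you should either arrange explicitly (e.g.\ via the paper's scaling) or replace by Fatou's lemma.
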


\begin{proof}
Let $\eta\in C_c^\infty(\R)$ be such that $\eta(r)=r$ for $r\in[-1,1]$ and $|\eta'|\leq 1$. Then $|\eta(r)|\leq r$ for $r\in\R$. For any $R>1$, we define $\varphi_R(x)=R\eta(\langle x\rangle/R)$ for $x\in\R^2$, then $\varphi_R\in C_c^\infty(\R^2)$, $|\varphi_R(x)|\leq\langle x\rangle $ and $\nabla\varphi_R(x)=\frac{x}{\langle x\rangle}\eta'(\langle x\rangle/R)$, thus $|\nabla\varphi_R|\leq 1$. 
    By Lemma \ref{lem1}, we have
    \begin{align*}
        \|\varphi_R\sqrt{\rho}(t)\|_{L^2}\leq\|\varphi_R\sqrt{\rho}_0\|_{L^2}+ t\|\sqrt{\rho_0}u_0\|_{L^2}\leq\|\langle x \rangle\sqrt{\rho}_0\|_{L^2}+ t\|\sqrt{\rho_0}u_0\|_{L^2}.
    \end{align*}
 \if0   Hence it follows from Gr\"onwall's lemma that
    \begin{equation}\label{Eq.I_R(t)}
        I_R(t)\leq I_R(0)+t\|\sqrt{\rho_0}u_0\|_{L^2}+6\int_0^t\|\sqrt\rho u(s)\|_{L^2(B_R^c)}\,ds,\quad\forall\ t\geq0.
    \end{equation}
    Since $\sqrt{\rho}u\in L^\infty([0, t]; L^2(\R^2))\subset L^2([0, t]; L^2(\R^2)) $, we have
    \[\int_0^t\|\sqrt\rho u(s)\|_{L^2(B_R^c)}\,ds\leq \sqrt t\left(\int_0^t\int_{|x|\geq R}\rho(s,x)|u(s,x)|^2\,dx\,ds\right)^{1/2}\to0\text{ as }R\to+\infty.\]\fi
    Therefore, letting $R\to +\infty$ and using $\varphi_R(x)=\langle x\rangle$ for $|x|\leq R-1$ completes the proof.
\end{proof}

\begin{lem}\label{lem4}Let $p\in [2,\infty)$ and $T\in(0,\infty)$. If $\rho_0$ satisfies (H1), then there exists a constant $C>0$ depending only on $T, p, \|\rho_0\|_{L^{\infty}}, \|\sqrt{\rho_0} \langle x\rangle\|_{L^2}, \|\sqrt{\rho_0}u_0\|_{L^2}$, such that
 (for $f\in \dot{H}^1(\R^2)$)
 \begin{equation}\label{lem4.weak_estimate_H1}
 C^{-1} \|\sqrt{\rho}(t, \cdot)f\|_{L^p(\R^2)}\leq  \|f\|_{L^2(B_1)}+\|\nabla f\|_{L^2(\R^2)}\leq 
C\|(\sqrt{\rho}(t, \cdot)f,\nabla f)\|_{L^2(\R^2)},\ \forall\ t\in[0,T].
\end{equation}If $\rho_0$ satisfies (H2),
then there exists a constant $C>0$ depending only on  $T$, $p$, $R_0$, $c_0$, $\|\rho_0\|_{L^{\infty}}$, $\|\sqrt{\rho_0}u_0\|_{L^2}$ such that
 (for $f\in \dot{H}^1(\R^2)$)
 \begin{equation}\label{lem4.weak_estimate_H2}
    \|f\|_{H^1(\R^2)}+\|\sqrt{\rho}(t, \cdot)f\|_{L^p(\R^2)}\leq 
C\|(\sqrt{\rho}(t, \cdot)f,\nabla f)\|_{L^2(\R^2)},\quad \forall\ t\in[0,T].
\end{equation}
\end{lem}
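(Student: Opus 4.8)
The plan is to derive both \eqref{lem4.weak_estimate_H1} and \eqref{lem4.weak_estimate_H2} from a single local tool. First I would record the elementary Poincaré-type fact: if $R>0$, $\delta\in(0,|B_R|]$, $E\subset B_R$ is measurable with $|E|\geq\delta$, and $g\in H^1(B_R)$, then
\[
\|g\|_{L^2(B_R)}\leq C(R,\delta)\bigl(\|g\|_{L^2(E)}+\|\nabla g\|_{L^2(B_R)}\bigr),
\]
which follows from the Poincaré--Wirtinger inequality on $B_R$ and the bound $|E|^{1/2}|\langle g\rangle_{B_R}|\leq\|g\|_{L^2(E)}+\|g-\langle g\rangle_{B_R}\|_{L^2(B_R)}$ for the mean $\langle g\rangle_{B_R}$. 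Next I would combine Lemma~\ref{lem2} with Chebyshev's inequality: at a time $t\in[0,T]$, on a ball $B(y,R_t)$ with $R_t:=R_0+2t\|\sqrt{\rho_0}u_0\|_{L^2}/\sqrt{c_0}$ satisfying $\int_{B(y,R_t)}\rho(t,\cdot)\,dx\geq c_0/4$, the set $E:=\{x\in B(y,R_t):\rho(t,x)\geq c_0/(8|B_{R_t}|)\}$ has $|E|\geq c_0/(8\|\rho_0\|_{L^\infty})$ and $\sqrt{\rho(t,\cdot)}\geq(c_0/(8|B_{R_t}|))^{1/2}$ on $E$, so $\|f\|_{L^2(E)}\leq C_t\|\sqrt{\rho(t)}f\|_{L^2(B(y,R_t))}$. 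Feeding this into the displayed inequality with $g=f$ yields, for every such ball,
\[
\|f\|_{L^2(B(y,R_t))}\leq C_t\bigl(\|\sqrt{\rho(t)}f\|_{L^2(B(y,R_t))}+\|\nabla f\|_{L^2(B(y,R_t))}\bigr),
\]
with $C_t$ nondecreasing in $t$, hence $\leq C_T$ on $[0,T]$.

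Under (H2), Lemma~\ref{lem2} applies with \emph{every} center, so I would cover $\R^2$ by the balls $\{B(y_j,R_t)\}_{y_j\in R_t\Z^2}$, whose overlap is bounded by an absolute constant independent of $R_t$. Summing the last inequality over $j$ gives $\|f\|_{L^2(\R^2)}\leq C_T\|(\sqrt{\rho}f,\nabla f)\|_{L^2(\R^2)}$, hence $\|f\|_{H^1(\R^2)}\leq C_T\|(\sqrt{\rho}f,\nabla f)\|_{L^2}$. The $L^p$ bound then follows from the two-dimensional embedding $H^1(\R^2)\hookrightarrow L^p(\R^2)$ together with $\sqrt{\rho}\leq\|\rho_0\|_{L^\infty}^{1/2}$, since $\|\sqrt{\rho}f\|_{L^p}\leq\|\rho_0\|_{L^\infty}^{1/2}\|f\|_{L^p}\leq C_{T,p}\|f\|_{H^1}$. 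This proves \eqref{lem4.weak_estimate_H2}.

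Under (H1), the right-hand inequality of \eqref{lem4.weak_estimate_H1} follows as above with a \emph{single} ball: fix $x_0,R_0,c_0$ as in \eqref{c0} (possible since $\rho_0\not\equiv0$; under (H1) one may take $x_0=0$ with $R_0,c_0$ produced by Chebyshev from $\langle x\rangle^2\rho_0\in L^1$), enlarge the ball from Lemma~\ref{lem2} to some $B_{\widetilde R_T}\supset B_1\cup B(x_0,R_T)$, apply the local tool on $B_{\widetilde R_T}$ with the same $E$, and restrict to $B_1$; adding $\|\nabla f\|_{L^2}\leq\|(\sqrt{\rho}f,\nabla f)\|_{L^2}$ finishes it. The left-hand inequality, $\|\sqrt{\rho}f\|_{L^p(\R^2)}\leq C_T\bigl(\|f\|_{L^2(B_1)}+\|\nabla f\|_{L^2}\bigr)=:C_TN$, is the real content. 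On $B_1$ it is immediate from $W^{1,2}(B_1)\hookrightarrow L^p(B_1)$ and $\sqrt{\rho}\leq\|\rho_0\|_{L^\infty}^{1/2}$. On $B_1^c$ I would decompose into dyadic annuli $A_k=\{2^k\leq|x|<2^{k+1}\}$ and invoke Lemma~\ref{lem3}, which gives $\|\langle x\rangle\sqrt{\rho(t)}\|_{L^2}\leq\|\langle x\rangle\sqrt{\rho_0}\|_{L^2}+T\|\sqrt{\rho_0}u_0\|_{L^2}=:M_T$ on $[0,T]$, whence $\int_{A_k}\rho(t,\cdot)\,dx\leq 2^{-2k}M_T^2$. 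Rescaling $A_k$ to the fixed annulus $A_0$ and using Poincaré--Wirtinger with $W^{1,2}(A_0)\hookrightarrow L^s(A_0)$, together with a telescoping bound $|\langle f\rangle_{A_k}|\leq C\sqrt{k+1}\,N$ for the annular averages (each consecutive difference satisfies $|\langle f\rangle_{A_j}-\langle f\rangle_{A_{j-1}}|\leq C\|\nabla f\|_{L^2(\Omega_j)}$ for a slightly fattened annulus $\Omega_j$, summed by Cauchy--Schwarz using bounded overlap), I obtain $\|f\|_{L^s(A_k)}\leq C_s\,2^{2k/s}\sqrt{k+1}\,N$ for every $s<\infty$. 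Finally, Hölder with a small exponent $q\in(1,2)$ on $\rho$ and $q'$ on $f$, using $\int_{A_k}\rho^q\leq\|\rho_0\|_{L^\infty}^{q-1}2^{-2k}M_T^2$, gives $\int_{A_k}\rho^{p/2}|f|^p\leq\|\rho_0\|_{L^\infty}^{(p-2)/2}\int_{A_k}\rho|f|^p\leq C\,2^{-c k}(k+1)^{p/2}N^p$ with $c=2(2/q-1)>0$; summing the convergent series $\sum_{k\geq0}2^{-ck}(k+1)^{p/2}$ yields $\|\sqrt{\rho}f\|_{L^p(B_1^c)}\leq C_{T,p}N$, and combining with the $B_1$ contribution proves \eqref{lem4.weak_estimate_H1}.

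The main obstacle is precisely this last estimate: two-dimensional $\dot H^1$ functions are neither bounded nor decaying, so $\|\sqrt{\rho}f\|_{L^p}$ can only be tamed by exploiting the spatial decay of $\rho$. The quadratic weight $\langle x\rangle^2$ furnished by Lemma~\ref{lem3} forces $\int_{A_k}\rho\leq C\,2^{-2k}$, which has to be shown to beat the mild growth of $\|f\|_{L^s(A_k)}$ (at most a power of $k$ times a small power of $2^k$) coming from the logarithmic-type spreading of $\dot H^1$ functions; the role of the Hölder exponent $q<2$ is to convert ``$\rho$ small on a set of controlled measure'' into genuine exponential-in-$k$ decay. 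Everything else --- the local Poincaré tool, the tiling and summation, and the two-sidedness of \eqref{lem4.weak_estimate_H1} --- is routine.
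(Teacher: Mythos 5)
Your argument is correct, and for the two easier parts it essentially coincides with the paper's: the right-hand inequality in \eqref{lem4.weak_estimate_H1} is obtained in both cases by combining the mass lower bound of Lemma \ref{lem2} on one fixed ball with a local Poincar\'e-type inequality (you re-derive the paper's Lemma \ref{lem_Lp} via Chebyshev and a set $E$ of positive measure where $\rho$ is bounded below, which is equivalent), and for \eqref{lem4.weak_estimate_H2} your bounded-overlap lattice covering plays exactly the role of the paper's integration of the local estimate over all centers $x_0\in\R^2$ followed by Fubini. Where you genuinely diverge is the key estimate $\|\sqrt{\rho}f\|_{L^p}\leq C(\|f\|_{L^2(B_1)}+\|\nabla f\|_{L^2})$ under (H1): the paper disposes of it in one H\"older step, writing $\sqrt{\rho}f=\sqrt{\rho}\langle x\rangle^{1/p}\cdot\langle x\rangle^{-1/p}f$ with exponents $(3p/2,3p)$ and invoking the weighted embedding of Lemma \ref{lem_fm} (Theorem B.1 of Lions) together with the propagated moment bound of Lemma \ref{lem3}, whereas you give a self-contained dyadic-annulus proof: the decay $\int_{A_k}\rho\leq 2^{-2k}M_T^2$ from Lemma \ref{lem3}, the telescoped average bound $|\langle f\rangle_{A_k}|\leq C\sqrt{k+1}\,N$, a rescaled Poincar\'e--Sobolev inequality on annuli, and H\"older with an exponent $q<2$ on $\rho$ to produce geometric decay in $k$. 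Your telescoping step is in effect a re-proof of the logarithmic weighted embedding that Lemma \ref{lem_fm} encapsulates, so your route is longer but entirely elementary and makes transparent exactly how the quadratic decay of $\int_{A_k}\rho$ beats the $\sqrt{\log}$ growth of $\dot H^1$ functions; the paper's route is shorter but leans on the cited black box. One small point to keep in mind when writing it up: in the telescoping you should anchor $|\langle f\rangle_{A_0}|$ to $\|f\|_{L^2(B_1)}$ via one extra Poincar\'e step comparing $\langle f\rangle_{A_0}$ with $\langle f\rangle_{B_1}$ on $B_2$, since $A_0\not\subset B_1$.
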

\begin{proof}
We first show \eqref{lem4.weak_estimate_H1}. By Lemma \ref{lem3} and Lemma \ref{lem_fm}, we get  
\begin{align}
&\|\sqrt{\rho}f\|_{L^p(\R^2)}
=\|\sqrt{\rho}\langle x\rangle^{\frac{1}{p}}\langle x\rangle^{-\frac{1}{p}}f\|_{L^p(\R^2)}
\leq \|\sqrt{\rho}\langle x\rangle^{\frac{1}{p}}\|_{L^{\frac{3p}{2}}(\R^2)}\|\langle x\rangle^{-\frac{1}{p}}f\|_{L^{3p}(\R^2)}\nonumber\\
&=\bigg(\int_{\R^2}\rho^{\f{3p}{4}}  \langle x\rangle^{\f32}\,dx\bigg)^{\frac{2}{3p}} 
\bigg(\int_{\R^2}\frac{|f|^{3p}} {\langle x\rangle^{3}}\,dx\bigg)^{\frac{1}{3p}}\nonumber\\
&\leq C\|\rho\|^{\f12-\frac{2}{3p}}_{L^{\infty}}\bigg(\int_{\R^2}\rho  \langle x\rangle^{2}\,dx\bigg)^{\frac{2}{3p}}
\Bigl(\|f\|_{L^2(B_1)}
+\|\nabla f\|_{L^2(\R^2)}\Bigr)\nonumber\\
&\leq C\bigl(\|f\|_{L^2(B_1)}
+\|\nabla f\|_{L^2(\R^2)}\bigr).\label{Eq.L^2_loc}
\end{align}
Moreover there exists $R_0>1$ and $c_0>0$ such that $\int_{B(0,R_0)}\rho_0(x)\,dx\geq 4c_0 $. Let $R= R_0+2T\|\sqrt{\rho_0}u_0\|_{L^2}/\sqrt{4c_0}$. Then by Lemma \ref{lem2}, we have $\int_{B(0,R)}\rho(t,x)\,dx\geq c_0>0$, $\forall\ t\in[0, T] $. Then we get by Lemma \ref{lem_Lp} that
\begin{align}\label{f3}
 \|f\|_{L^2(B_1)}\leq\|f\|_{L^2(B(0,R))}\leq 
 C\bigl(\|\sqrt{\rho}(t, \cdot)f\|_{L^2(B(0,R))}+\|\nabla f\|_{L^2(B(0,R))}\bigr)\quad \forall\ t\in[0,T].
\end{align}
Then \eqref{lem4.weak_estimate_H1} follows from \eqref{Eq.L^2_loc} and \eqref{f3}.

Next we prove  \eqref{lem4.weak_estimate_H2}. Firstly, Sobolev embedding implies that 
\begin{align}\label{fLp}\|\sqrt{\rho}(t, \cdot)f\|_{L^p(\R^2)}\leq C \|\rho_0\|^{\f12}_{L^\infty}\|f\|_{H^1(\R^2)}.\end{align}
By Lemma \ref{lem2}, for $R= R_0+2T\|\sqrt{\rho_0}u_0\|_{L^2}/\sqrt{c_0},~c_0\in (0,\infty)$ we have
\begin{align*}
\int_{B(x_0,R)}\rho(t,x)\,dx\geq \f{c_0}{4}>0, \quad \forall\ x_0\in \R^2,\ \forall\ t\in[0, T].
\end{align*}
Then by Lemma \ref{lem_Lp}, for every $t\in[0,T]$, $\rho={\rho}(t, \cdot) $ we have
\begin{align*}
 \|f\|_{L^2(B(x_0,R))}\leq 
 C\bigl(\|\sqrt{\rho}f\|_{L^2(B(x_0,R))}+\|\nabla f\|_{L^2(B(x_0,R))}\bigr)\quad \text{~for all ~} x_0\in \R^2.
\end{align*}
Integrating it over $x_0\in \R^2$ gives 
\begin{align*}
 \int_{\R^2}\|f\|_{L^2(B(x_0,R))}^2\,dx_0\leq 
C\Bigl(\int_{\R^2}\|\sqrt{\rho}f\|_{L^2(B(x_0,R))}^2\,dx_0
+\int_{\R^2}\|\nabla f\|_{L^2(B(x_0,R))}^2\,dx_0\Bigr).
\end{align*}
By the Fubini theorem, we finally obtain
\begin{align}\label{fL2_control}
\|f\|_{L^2(\R^2)}\leq 
C\big(\|\sqrt{\rho}f\|_{L^2(\R^2)}+\|\nabla f\|_{L^2(\R^2)}\big),
\end{align}
which {along with \eqref{fLp}} implies  \eqref{lem4.weak_estimate_H2}.
\end{proof}

\section{Global existence and regularity of strong solution}\label{sec2}

This section is devoted to the global existence of strong solution and the regularity propagation of density patch and vacuum bubbles.

\subsection{A priori energy estimates}\label{uniform est}

Let $(\rho,u)$ be a smooth solution to the system \eqref{INS} on $[0,T]\times \R^2$ satisfying $0\leq \rho\leq \|\rho_0\|_{L^{\infty}}$.

First of all,  \eqref{energy} and Lemma \ref{lem1}--Lemma \ref{lem4} hold for strong solutions.
\if0 Firstly, standard energy estimate gives 
\begin{align}\label{energy_2}
\sup_{t\in [0,T]} \|\sqrt{\rho}u(t)\|_{L^2}^2
+2\int_0^T\|\nabla u(t)\|^2_{L^2}\,dt
\leq 2\|\sqrt{\rho_0}u_0\|_{L^2}^2.
\end{align}\fi 
The following result has been proved in \cite{LSZ}, Lemma 3.2.

\begin{lem}\label{ nabla u}
There exists a positive constant $C$ depending only on $\|\rho_0\|_{L^{\infty}},~\|\sqrt{\rho_0}u_0\|_{L^2}$ and $\|\nabla u_0\|_{L^2}$ such that for $i=0,1$,
there holds
\begin{equation*}\begin{aligned}
&\sup_{t\in [0,T]} t^i\|\nabla u\|^2_{L^2}+\int_0^Tt^i\int_{\R^2}\rho|\dot{u}|^2\,dxdt
\leq C,\\
&\int_0^Tt^i\big(\|\nabla^2 u\|_{L^2}^2+\|\nabla P\|^2_{L^2}\big)\,dt\leq C.
\end{aligned}
\end{equation*}
\end{lem}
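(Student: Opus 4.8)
The plan is to first prove \eqref{energy} by the usual energy method --- test the momentum equation with $u$ and use the transport equation for $\rho$; this is standard, and it already yields $\int_0^{\infty}\|\nabla u(t)\|_{L^2}^2\,dt\le\frac12\|\sqrt{\rho_0}u_0\|_{L^2}^2$, which is exactly what will keep all constants below independent of $T$. The first-order bounds will then come from testing the momentum equation against the material derivative $\dot u=u_t+u\cdot\nabla u$, together with the stationary Stokes estimate
\[
\|\nabla^2 u(t)\|_{L^2}+\|\nabla P(t)\|_{L^2}\le C\|\rho\dot u(t)\|_{L^2}\le C\|\rho_0\|_{L^\infty}^{1/2}\|\sqrt\rho\dot u(t)\|_{L^2},
\]
which follows from $\Delta u=\nabla P+\rho\dot u$, from $\Delta P=-\dive(\rho\dot u)$ (divergence of the momentum equation, using $\dive u=0$), the $L^2$-boundedness of the Riesz transforms, and $\|\nabla^2 u\|_{L^2}\le C\|\Delta u\|_{L^2}$ in $\R^2$. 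Note that testing with $\dot u$ (rather than $u_t$) and never dividing by $\rho$ means no derivative of $\rho$ is needed anywhere, which is what allows $\rho$ to be merely bounded.

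Testing $\rho\dot u-\Delta u+\nabla P=0$ against $\dot u$ and integrating by parts (using $\dive u=0$, so that $\int\nabla P\cdot u_t=0$), I expect to reach
\[
\|\sqrt\rho\dot u\|_{L^2}^2+\frac12\frac{d}{dt}\|\nabla u\|_{L^2}^2+\int_{\R^2}\partial_k u^i\partial_k u^j\partial_j u^i\,dx+2\int_{\R^2}P\det\nabla u\,dx=0 .
\]
The cubic term can be handled in either of two ways: by the $2\times2$ trace-free cancellation that the authors highlight for Lemma \ref{dot u} (using $\dive u=0$ and the Cayley--Hamilton identity $A^2=-(\det A)I$ for trace-free $2\times2$ matrices $A$, the integrand $\partial_k u^i\partial_k u^j\partial_j u^i$ vanishes pointwise), or more crudely by $\|\nabla u\|_{L^3}^3\le C\|\nabla u\|_{L^2}^2\|\nabla^2 u\|_{L^2}$ from Gagliardo--Nirenberg. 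For the pressure term I would use that $\det\nabla u$ is a Jacobian, hence lies in the Hardy space $\mathcal H^1(\R^2)$ with $\|\det\nabla u\|_{\mathcal H^1}\le C\|\nabla u\|_{L^2}^2$ by the div--curl lemma of Coifman--Lions--Meyer--Semmes, while $P\in BMO(\R^2)$ with $\|P\|_{BMO}\le C\|\nabla P\|_{L^2}$ by the embedding $\dot W^{1,2}(\R^2)\hookrightarrow BMO(\R^2)$; then $\mathcal H^1$--$BMO$ duality together with the Stokes estimate gives
\[
\Big|\int_{\R^2}P\det\nabla u\,dx\Big|\le C\|\nabla P\|_{L^2}\|\nabla u\|_{L^2}^2\le C\|\rho_0\|_{L^\infty}^{1/2}\|\sqrt\rho\dot u\|_{L^2}\|\nabla u\|_{L^2}^2 .
\]
Absorbing $\frac12\|\sqrt\rho\dot u\|_{L^2}^2$ on the left leads to the Riccati-type inequality $\frac{d}{dt}\|\nabla u\|_{L^2}^2+\|\sqrt\rho\dot u\|_{L^2}^2\le C\|\rho_0\|_{L^\infty}\|\nabla u\|_{L^2}^4$.

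To close this, I would regard $C\|\rho_0\|_{L^\infty}\|\nabla u(t)\|_{L^2}^2$ as a \emph{fixed} $L^1(0,\infty)$ coefficient (this is precisely the consequence of \eqref{energy} noted above), so that Gr\"onwall's lemma yields $\sup_{t\ge0}\|\nabla u(t)\|_{L^2}^2\le\|\nabla u_0\|_{L^2}^2\exp\big(C\|\rho_0\|_{L^\infty}\|\sqrt{\rho_0}u_0\|_{L^2}^2\big)$, and integrating the inequality in time then gives $\int_0^{\infty}\|\sqrt\rho\dot u\|_{L^2}^2\,dt\le C$; feeding this back into the Stokes estimate yields $\int_0^{\infty}(\|\nabla^2 u\|_{L^2}^2+\|\nabla P\|_{L^2}^2)\,dt\le C$, which is the case $i=0$. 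For $i=1$ I would multiply the Riccati-type inequality by $t$, getting $\frac{d}{dt}(t\|\nabla u\|_{L^2}^2)+t\|\sqrt\rho\dot u\|_{L^2}^2\le\|\nabla u\|_{L^2}^2+C\|\rho_0\|_{L^\infty}\|\nabla u\|_{L^2}^2\cdot(t\|\nabla u\|_{L^2}^2)$; since $t\|\nabla u\|_{L^2}^2$ vanishes at $t=0$ and both the source $\|\nabla u\|_{L^2}^2$ and the coefficient are in $L^1(0,\infty)$, Gr\"onwall's lemma bounds $t\|\nabla u(t)\|_{L^2}^2$ uniformly in $t$, whence $\int_0^{\infty}t\|\sqrt\rho\dot u\|_{L^2}^2\,dt\le C$, and the Stokes estimate gives $\int_0^{\infty}t(\|\nabla^2 u\|_{L^2}^2+\|\nabla P\|_{L^2}^2)\,dt\le C$, all constants depending only on $\|\rho_0\|_{L^\infty}$, $\|\sqrt{\rho_0}u_0\|_{L^2}$, $\|\nabla u_0\|_{L^2}$. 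The step I expect to be the main obstacle is exactly the convective/pressure term in the $\dot u$-energy identity: because the density may vanish on large sets there is no control of $\|u\|_{L^4(\R^2)}$, so $\int\rho\dot u\cdot(u\cdot\nabla u)$ cannot be estimated head-on --- it is its two-dimensional structure, namely the trace-free cancellation of the cubic part and the $\mathcal H^1$--$BMO$ duality for $\int P\det\nabla u$ (both involving only $\nabla u$ and $\nabla P$), that makes the argument work.
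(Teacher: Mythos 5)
The paper does not actually prove this lemma itself --- it is quoted from \cite{LSZ}, Lemma 3.2 --- so there is no in-paper proof to compare against line by line; your argument is a correct, self-contained proof built from exactly the toolkit the authors deploy for the companion Lemma \ref{dot u}: testing against the material derivative $\dot u$ (so that no bound on $\|\sqrt\rho u\|_{L^4}$ is ever needed, which is indeed the crucial point in the presence of vacuum), the Stokes estimate $\|\nabla^2u\|_{L^2}+\|\nabla P\|_{L^2}\le C\|\rho\dot u\|_{L^2}$, the $\mathcal H^1$--$BMO$ duality for the pressure term, and Gr\"onwall with an $L^1(0,\infty)$-in-time coefficient supplied by the energy inequality. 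Your energy identity is correct, and the one step worth flagging is the cancellation of the cubic term: what appears is $\partial_ku^i\partial_ku^j\partial_ju^i=\mathrm{Tr}\bigl(A(A^{T})^{2}\bigr)$ with $A=\nabla u$, which is not literally the authors' $\mathrm{Tr}(A^{3})$; but since $\mathrm{Tr}\,A=\dive u=0$, Cayley--Hamilton gives $(A^{T})^{2}=-\det(A)\,I$ and hence $\mathrm{Tr}\bigl(A(A^{T})^{2}\bigr)=-\det(A)\,\mathrm{Tr}(A)=0$, so the pointwise vanishing you invoke does hold. Your fallback via $\|\nabla u\|_{L^3}^3\le C\|\nabla u\|_{L^2}^2\|\nabla^2u\|_{L^2}\le \epsilon\|\sqrt\rho\dot u\|_{L^2}^2+C\|\nabla u\|_{L^2}^4$ also closes (unlike the genuinely quartic term that forces the more delicate functional $\Psi$ in Lemma \ref{dot u}), and the Gr\"onwall steps for $i=0$ and $i=1$, including the time-weighted version starting from $t\|\nabla u\|_{L^2}^2|_{t=0}=0$, are all sound.
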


Motivated by Lemma 3.3 in \cite{LSZ},  we derive the estimates for the material derivative of the velocity, which are crucial for the higher order estimates of the velocity. 
Here we have an important observation $\p_iu^k\p_ku^j\p_ju^i=0$ due to an algebraic fact $\text{Tr}(A^3)=0$ if $\text{Tr}(A)=0$ for a $2\times 2$ matrix $A$. 
Then we can remove the assumption $\rho_0\in L^1(\R^2)$ in \cite{LSZ}.

\begin{lem}\label{dot u}
There exists a positive constant $C$ depending only on $\|\rho_0\|_{L^{\infty}}$, $\|\sqrt{\rho_0}u_0\|_{L^2}$ and $\|\nabla u_0\|_{L^2}$ such that for $i=1,2$,
 \begin{equation*}
 \begin{aligned}
&\sup_{t\in [0,T]} t^i\|\sqrt{\rho} \dot{u}\|^2_{L^2}
+\int_0^Tt^i\|\nabla\dot{u}\|_{L^2}^2\,dt\leq C,\\
&\sup_{t\in [0,T]} t^i\bigl(\|\nabla^2u\|^2_{L^2}+\|\nabla P\|^2_{L^2}\bigr)\leq C.
\end{aligned}\end{equation*}
\end{lem}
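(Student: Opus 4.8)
The plan is to derive the $\dot u$ estimates by testing the momentum equation written in the form $\rho\dot u - \Delta u + \nabla P = 0$ against the material derivative $\dot u$, after applying the operator $\partial_t + \operatorname{div}(u\,\cdot)$ to it, following the classical Hoff-type argument but weighted by $t^i$. Concretely, I would start from the identity obtained by applying $D_t = \partial_t + u\cdot\nabla$ to $\rho\dot u^j = \Delta u^j - \partial_j P$, using $D_t\rho = 0$ and $\operatorname{div}u=0$, to get
\begin{equation*}
\rho\, D_t\dot u^j = \partial_t\bigl(\Delta u^j - \partial_j P\bigr) + u\cdot\nabla\bigl(\Delta u^j - \partial_j P\bigr).
\end{equation*}
Multiplying by $\dot u^j$, integrating over $\R^2$, integrating by parts and using $\operatorname{div}u=0$ yields, schematically,
\begin{equation*}
\frac12\frac{d}{dt}\int_{\R^2}\rho|\dot u|^2\,dx + \int_{\R^2}|\nabla\dot u|^2\,dx \leq C\int_{\R^2}|\nabla u|^2|\nabla\dot u|\,dx + C\int_{\R^2}|\nabla u|^3\,dx + (\text{pressure terms}),
\end{equation*}
and the key point the authors flag is that the cubic term $\int \partial_i u^k\partial_k u^j\partial_j u^i\,dx$ vanishes identically because $\operatorname{Tr}(A^3)=0$ for a trace-free $2\times2$ matrix $A$; this is precisely what lets one dispense with the $\rho_0\in L^1$ hypothesis used in \cite{LSZ}, since otherwise one would estimate $\|\nabla u\|_{L^3}^3$ by interpolation in a way that needs control of $\|u\|_{L^2}$ rather than just $\|\sqrt\rho u\|_{L^2}$.

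Next I would handle the remaining quadratic-in-$\nabla u$ terms: $\int|\nabla u|^2|\nabla\dot u|\,dx \leq \|\nabla u\|_{L^4}^2\|\nabla\dot u\|_{L^2}$, absorb the $\|\nabla\dot u\|_{L^2}$ into the left side via Young's inequality, and bound $\|\nabla u\|_{L^4}^4 \leq C\|\nabla u\|_{L^2}^2\|\nabla^2 u\|_{L^2}^2$ by Ladyzhenskaya, then use the elliptic estimate $\|\nabla^2 u\|_{L^2} + \|\nabla P\|_{L^2}\leq C\|\rho\dot u\|_{L^2}\leq C\|\rho_0\|_{L^\infty}^{1/2}\|\sqrt\rho\dot u\|_{L^2}$ (the Stokes regularity for $-\Delta u+\nabla P = -\rho\dot u$, $\operatorname{div}u=0$) to close the loop. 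This produces a differential inequality of the form
\begin{equation*}
\frac{d}{dt}\int\rho|\dot u|^2\,dx + \int|\nabla\dot u|^2\,dx \leq C\|\nabla u\|_{L^2}^2\Bigl(\int\rho|\dot u|^2\,dx\Bigr)\cdot(\text{something integrable}) + \text{l.o.t.},
\end{equation*}
and after multiplying by $t^i$ ($i=1,2$), moving the $\frac{d}{dt}(t^i\int\rho|\dot u|^2)$ and picking up the error $i t^{i-1}\int\rho|\dot u|^2$, I would integrate in time and invoke Lemma \ref{ nabla u} — which already gives $\int_0^T t^i\int\rho|\dot u|^2\,dt\leq C$ for $i=0,1$ and $\sup_t t^i\|\nabla u\|_{L^2}^2\leq C$ — to control all the lower-order contributions. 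A Grönwall argument then yields $\sup_{t}t^i\|\sqrt\rho\dot u\|_{L^2}^2 + \int_0^T t^i\|\nabla\dot u\|_{L^2}^2\,dt\leq C$, and feeding this back into the Stokes estimate gives $\sup_t t^i(\|\nabla^2 u\|_{L^2}^2+\|\nabla P\|_{L^2}^2)\leq C$.

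The main obstacle I anticipate is twofold. First, making the cubic-term cancellation rigorous at the level of the weak/approximate solutions: one must verify $\partial_i u^k\partial_k u^j\partial_j u^i=0$ pointwise (an elementary $2\times2$ linear algebra identity, writing $A=\nabla u$, $\operatorname{Tr}A = \operatorname{div}u = 0$, so by Cayley–Hamilton $A^2 = -\det(A)\,I$ and $\operatorname{Tr}(A^3) = -\det(A)\operatorname{Tr}(A)=0$) and then track where this term actually appears after the integration by parts — it arises from $\int u\cdot\nabla\Delta u^j\,\dot u^j$ type terms after commuting derivatives, and one has to be careful that the boundary/decay terms vanish, which is where working with smooth solutions on $\R^2$ with the a priori bounds (rather than merely finite-energy solutions) matters. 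Second, the bookkeeping of the weights $t^i$ and the bootstrapping between $i$ and $i-1$: the estimate for $i=2$ genuinely uses the estimate for $i=1$ (hence Lemma \ref{ nabla u} with $i=1$), so the two cases must be done in order, and one needs the integrability in time of quantities like $\|\nabla u\|_{L^2}^2$ and $t\|\nabla^2 u\|_{L^2}^2$ coming from the previous lemma to make the Grönwall factor finite. Everything else is routine Ladyzhenskaya/Stokes/Young estimation.
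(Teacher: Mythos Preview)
Your overall architecture is right --- apply $D_t$ to the momentum equation, test against $\dot u$, exploit the $2\times2$ trace identity, close via Stokes and Ladyzhenskaya, weight by $t^i$, and feed in Lemma~\ref{ nabla u}. But you have misplaced the key cancellation and, as a result, underestimated the pressure term, which is where the real work is.

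After commuting $D_t$ with $\Delta$ and testing, the Laplacian commutator produces only $\int(\partial_ku\cdot\nabla u)\cdot\partial_k\dot u$ and $\int\partial_ku\cdot(\partial_ku\cdot\nabla\dot u)$, both bounded by $\|\nabla u\|_{L^4}^2\|\nabla\dot u\|_{L^2}$; no free-standing $\int|\nabla u|^3$ term appears there. The cubic identity $\partial_iu^k\partial_ku^j\partial_ju^i=0$ enters instead inside the pressure contribution $J=-\int(\nabla P_t+u\cdot\nabla\nabla P)\cdot\dot u$. Using $\dive\dot u=\partial_iu^j\partial_ju^i$, the paper rewrites
\[
J=\frac{d}{dt}\int P\,\partial_iu^j\partial_ju^i\,dx-\int P\,D_t(\partial_iu^j\partial_ju^i)\,dx-\int P\,\partial_iu^j\partial_j\dot u^i\,dx,
\]
and it is in expanding $D_t(\partial_iu^j\partial_ju^i)$ that the terms $\int P\,\partial_iu^k\partial_ku^j\partial_ju^i$ appear and vanish by $\mathrm{Tr}(A^3)=0$. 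So the cancellation kills a $P\cdot(\nabla u)^3$ interaction, not an $|\nabla u|^3$ term. Two consequences you did not account for: first, the $\frac{d}{dt}$ piece forces you to work with the modified functional $\Psi(t)=\tfrac12\int\rho|\dot u|^2-\int P\,\partial_iu^j\partial_ju^i$, and you need the Hardy--BMO bound $|\int P\,\partial_iu^j\partial_ju^i|\le C\|\nabla P\|_{L^2}\|\nabla u\|_{L^2}^2$ to show $\Psi\sim\int\rho|\dot u|^2$ up to $C\|\nabla u\|_{L^2}^4$. Second, and this is the step that actually replaces the $\rho_0\in L^1$ hypothesis of \cite{LSZ}, the surviving cross term $\int P\,\partial_iu^j\partial_j\dot u^i$ cannot be handled by a routine H\"older estimate, since that would require $\|P\|_{L^p}$ and $P$ is only defined modulo constants. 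The paper uses the div--curl structure ($\partial_iu$ is divergence-free, $\nabla\dot u^i$ is curl-free) together with $\dot H^1(\R^2)\hookrightarrow BMO$ to get
\[
\Bigl|\int P\,\partial_iu^j\partial_j\dot u^i\,dx\Bigr|\le C\|P\|_{BMO}\|\partial_iu^j\partial_j\dot u^i\|_{\mathcal H^1}\le C\|\nabla P\|_{L^2}\|\nabla u\|_{L^2}\|\nabla\dot u\|_{L^2},
\]
which closes cleanly against the Stokes estimate $\|\nabla P\|_{L^2}\le C\|\sqrt\rho\,\dot u\|_{L^2}$. Without this Hardy--BMO step your ``(pressure terms)'' do not close, and the proof as you sketched it has a genuine gap precisely at the point that distinguishes this lemma from \cite{LSZ}.
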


\begin{proof}
Applying the material derivative $D_t$ to the momentum equation of \eqref{INS}, we get \footnote{Repeated indices represent  summation.}
\begin{equation*}\begin{aligned}
\rho(\p_t\dot{u}+u\cdot\nabla \dot{u})-\Delta\dot{u}
=-\p_k(\p_ku\cdot \nabla u)-\p_k u\cdot \nabla \p_ku
-(\nabla P_t+u\cdot \nabla (\nabla P)).
\end{aligned}\end{equation*}
Taking the $L^2$ inner product with $\dot{u}$ to the above equation,  we get by integration by parts and $\dive u=0$ that 
\begin{equation}\begin{aligned}\label{Dt}
\f12 \frac{d}{dt}\int_{\R^2}\rho|\dot{u}|^2\,dx+\|\nabla \dot{u}\|^2_{L^2}
=\int_{\R^2}(\p_ku\cdot &\nabla u)\cdot\p_k\dot{u}\,dx+\int_{\R^2}\p_ku\cdot(\p_ku\cdot\nabla\dot{u})\,dx+J,
\end{aligned}\end{equation}
where
$$J\eqdef -\int_{\R^2}(\nabla P_t+u\cdot \nabla (\nabla P))\cdot\dot{u}\,dx.$$

The first two terms on the right hand side can be bounded by
\begin{equation}\begin{aligned}\label{e1}
\int_{\R^2}(\p_ku\cdot \nabla u)\cdot\p_k\dot{u}\,dx
+\int_{\R^2}\p_ku\cdot(\p_ku\cdot\nabla\dot{u})\,dx
\leq \f14 \|\nabla \dot{u}\|^2_{L^2}+C \|\nabla u\|^4_{L^4}.
\end{aligned}\end{equation}
As for $J$, using integration by parts, we get (as $\dive{u}=0$, $\dive\dot{u}=\p_iu^j\p_ju^i $)
\begin{align*}
J=&\int_{\R^2}(P_t\dive\dot{u}+(u\cdot \nabla P)\dive\dot{u}+{\p_iu^j\p_j P\dot{u}^{i}})\,dx\\
=&\int_{\R^2}(P_t+u\cdot \nabla P)\p_iu^j\p_ju^i\,dx
-\int_{\R^2} P\p_iu^j\p_j\dot{u}^i\,dx.
\end{align*}
We also have
\begin{equation*}\begin{aligned}
\int_{\R^2}(P_t+u\cdot \nabla P)\p_iu^j\p_ju^i\,dx
=\frac{d}{dt}\int_{\R^2} P\p_iu^j\p_ju^i\,dx
-\int_{\R^2} P D_t(\p_iu^j\p_ju^i)\,dx,
\end{aligned}\end{equation*}
and
\begin{align*}
-\int_{\R^2} P D_t(\p_iu^j\p_ju^i)\,dx
=&-\int_{\R^2} P(\p_i\dot{u}^j\p_ju^i+\p_iu^j\p_j\dot{u}^i)\,dx\\
&+\int_{\R^2} P(\p_iu^k\p_ku^j\p_ju^i+\p_iu^j\p_ju^k\p_ku^i)\,dx.
\end{align*}
Notice that $\p_iu^k\p_ku^j\p_ju^i=\p_iu^j\p_ju^k\p_ku^i=0$. 
Then by symmetry, we obtain
\begin{equation}\begin{aligned}\label{J}
J=\frac{d}{dt}\int_{\R^2} P\p_iu^j\p_ju^i\,dx
-3\int_{\R^2} P \p_iu^j\p_j\dot{u}^i\,dx.
\end{aligned}\end{equation}
Moreover, it follows from $\dive u=\text{\text{curl}}~\nabla\dot{u}=0$ that
\begin{equation}\begin{aligned}\label{P}
\left|\int_{\R^2} P \p_iu^j\p_j\dot{u}^i\,dx\right|
&\leq C\|P\|_{BMO}\|\p_iu^j\p_j\dot{u}^i\|_{\mathcal{H}^1}
\leq C\|\nabla P\|_{L^2}\|\nabla u\|_{L^2}\|\nabla \dot{u}\|_{L^2}.
\end{aligned}\end{equation}
Here $\mathcal{H}^1$ denotes the Hardy space. 

We denote
$$
\Psi(t)\eqdef \f12\int_{\R^2}\rho|\dot{u}|^2\,dx
-\int_{\R^2} P \p_iu^j\p_ju^i\,dx.$$
Plugging \eqref{e1}, \eqref{J} and \eqref{P} into \eqref{Dt} gives 
\begin{align}
    \Psi{'}(t)+\|\nabla \dot{u}\|^2_{L^2}{/2}
\leq C \|\nabla u\|^4_{L^4}+C \|\nabla P\|^2_{L^2}\|\nabla u\|^2_{L^2}.\label{psi}
\end{align}
Using the fact 
\begin{align*}
\left|\int_{\R^2} P\p_iu^j\p_ju^i\,dx\right|
\leq C\|P\|_{BMO}\|\p_iu^j\p_ju^i\|_{\mathcal{H}^1}
\leq C\|\nabla P\|_{L^2}\|\nabla u\|^2_{L^2}, 
\end{align*}
and the Stokes estimate
\begin{align}\label{stokes estimate}
\|\nabla^2u\|^2_{L^2}+\|\nabla P\|^2_{L^2}\leq C\|\rho\dot{u}\|^2_{L^2}
\leq C\|\sqrt{\rho}\dot{u}\|^2_{L^2},
\end{align}
we infer that
\begin{equation}\begin{aligned}\label{psi_2}
\f14\int_{\R^2}\rho|\dot{u}|^2\,dx-C\|\nabla u\|^4_{L^2}\leq
\Psi(t)\leq \int_{\R^2}\rho|\dot{u}|^2\,dx+C\|\nabla u\|^4_{L^2}.
\end{aligned}\end{equation}
By Gagliardo-Nirenberg inequality and \eqref{stokes estimate}, we have
\begin{equation}\begin{aligned}\label{u^4}
\|\nabla u\|^4_{L^4}+\|\nabla P\|^2_{L^2}\|\nabla u\|^2_{L^2}
&\leq C\|\nabla^2 u\|^2_{L^2}\|\nabla u\|^2_{L^2}
+\|\nabla P\|^2_{L^2}\|\nabla u\|^2_{L^2}\\
&\leq C\|\sqrt{\rho}\dot{u}\|^2_{L^2} \|\nabla u\|^2_{L^2}.
\end{aligned}\end{equation}

Multiplying \eqref{psi} by $t^i(i=1,2)$ and using \eqref{psi_2}-\eqref{u^4},  the first inequality  can be deduced from Lemma \ref{ nabla u} and \eqref{energy}. The second inequality is a direct consequence of the first inequality and \eqref{stokes estimate}.
\end{proof}

\subsection{$L^\infty$ bounds of $u$ and $\na u$}

In order to prove the existence of strong solution, we need to obtain a $L^\infty$ bound of $|u(t,x)|$. 

\begin{lem}\label{u_L^infty}
Assume \eqref{c0}. Then there exists a constant $C>0$ depending only on $\|\rho_0\|_{L^{\infty}}$,
$\|\sqrt{\rho_0}u_0\|_{L^2}$, $\|\nabla u_0\|_{L^2}$, $R_0$ and $c_0$ such that 
\begin{align*}
|u(t,x)|\leq C[\ln(2+|x-x_0|+t^{-1})]^{\f12}, \quad \forall\ t\in[0, T],\ \forall\ x\in\R^2.
\end{align*}
\end{lem}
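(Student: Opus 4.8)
\textbf{Proof proposal for Lemma \ref{u_L^infty}.}

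The plan is to combine two ingredients: a uniform-in-$x$ lower bound on the mass of $\rho(t,\cdot)$ in balls of controlled radius (so that $u(t,\cdot)$ cannot be large on the whole of such a ball), and a logarithmic modulus of continuity for $u(t,\cdot)$ coming from the energy estimates. First I would fix $t\in(0,T]$ and set $R(t)=R_0+2t\|\sqrt{\rho_0}u_0\|_{L^2}/\sqrt{c_0}$. By Lemma \ref{lem2}, $\int_{B(x_0,R(t))}\rho(t,x)\,dx\geq c_0/4$. Since by the energy inequality \eqref{energy} and $0\le\rho\le\|\rho_0\|_{L^\infty}$ we have $\int_{B(x_0,R(t))}\rho(t,x)|u(t,x)|^2\,dx\le\|\sqrt{\rho_0}u_0\|_{L^2}^2$, there must exist a point $x_t\in B(x_0,R(t))$ with $|u(t,x_t)|^2\le \frac{4}{c_0}\|\sqrt{\rho_0}u_0\|_{L^2}^2=:C_1^2$; otherwise the mass-weighted $L^2$ norm of $u$ on that ball would exceed the energy bound. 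So $|u(t,x_t)|\le C_1$ for some $x_t$ within distance $R(t)$ of $x_0$.

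Next I would establish the logarithmic modulus of continuity
\[
|u(t,x)-u(t,y)|\leq C(1+t)^{-1/2}\bigl[\ln(2+|x-y|/\sqrt t)\bigr]^{1/2},\qquad\forall\ x,y\in\R^2,
\]
with $C$ depending only on the energy data. This is the standard two-dimensional estimate: split frequencies at a suitable threshold, bound low frequencies of $\nabla u$ via $\|\nabla u(t)\|_{L^2}$ and high frequencies via $\|\nabla^2 u(t)\|_{L^2}$, using that $t^{1/2}\|\nabla^2 u(t)\|_{L^2}$ and $t^{1/2}\|\sqrt\rho\,\dot u(t)\|_{L^2}$, hence $t\|\nabla^2u(t)\|_{L^2}^2$, are bounded by Lemma \ref{dot u} and the Stokes estimate \eqref{stokes estimate} (together with $\|\nabla u(t)\|_{L^2}\le C$ from Lemma \ref{ nabla u}); the $(1+t)^{-1/2}$ gain for large $t$ comes from the decay $\|\nabla u(t)\|_{L^2}^2\lesssim t^{-1}$, again from Lemma \ref{ nabla u} with $i=1$. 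Alternatively one can phrase this as a Brezis--Gallou\"et / Trudinger-type inequality controlling $\|u(t)\|_{L^\infty}$ oscillation by $\|\nabla u(t)\|_{L^2}$ and $\|\nabla u(t)\|_{L^m}$ for some $m>2$; either route is routine given the a priori bounds.

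Finally, combining the two, for any $x\in\R^2$,
\[
|u(t,x)|\le |u(t,x_t)|+|u(t,x)-u(t,x_t)|\le C_1+C(1+t)^{-1/2}\bigl[\ln(2+|x-x_t|/\sqrt t)\bigr]^{1/2}.
\]
Since $|x-x_t|\le|x-x_0|+R(t)\le |x-x_0|+R_0+2T\|\sqrt{\rho_0}u_0\|_{L^2}/\sqrt{c_0}$, we get $|x-x_t|/\sqrt t\le C(1+|x-x_0|)(1+t^{-1/2})$, so $\ln(2+|x-x_t|/\sqrt t)\le C\bigl[1+\ln(2+|x-x_0|)+\ln(2+t^{-1})\bigr]\le C\ln\bigl(2+|x-x_0|+t^{-1}\bigr)$ (absorbing the additive constant into a multiplicative one, using $\ln(2+a)+\ln(2+b)\le C\ln(2+a+b)$ for $a,b\ge0$). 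Dropping the harmless factor $(1+t)^{-1/2}\le1$ yields the claimed bound, with $C$ depending only on $\|\rho_0\|_{L^\infty}$, $\|\sqrt{\rho_0}u_0\|_{L^2}$, $\|\nabla u_0\|_{L^2}$, $R_0$, $c_0$ (and $T$, which is fixed). The main obstacle is the careful proof of the logarithmic modulus of continuity with the correct dependence on $t$ near $t=0$: one must track how the blow-up of $\|\nabla^2u(t)\|_{L^2}$ as $t\to0^+$ feeds into the high-frequency part, which is exactly where the weight $\sqrt t$ in Lemma \ref{dot u} is used; the spatial lower-bound argument via Lemma \ref{lem2} is comparatively soft.
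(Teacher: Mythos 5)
Your proposal is correct and follows essentially the same two-step strategy as the paper: Lemma \ref{lem2} produces a point $x_t\in B(x_0,R(t))$ with $|u(t,x_t)|\leq C_1$, and the logarithmic modulus of continuity $|u(t,x)-u(t,y)|\leq C(1+t)^{-1/2}[\ln(2+|x-y|/\sqrt t)]^{1/2}$ is exactly estimate \eqref{Eq.u(x)-u(y)}, which the paper obtains by the Fourier-side Cauchy--Schwarz with weight $|\xi|^2(1+t|\xi|^2)$ and the bounds $\|\nabla u(t)\|_{L^2}^2\leq C(1+t)^{-1}$, $\|\nabla^2u(t)\|_{L^2}^2\leq Ct^{-1}(1+t)^{-1}$ from Lemmas \ref{ nabla u} and \ref{dot u} --- i.e.\ precisely the frequency splitting you describe. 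The only part you leave as ``routine'' is that computation, but your sketch of it matches the paper's, and the final combination of the two ingredients is identical.
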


\begin{proof}
Let $R(t):= R_0+2t\|\sqrt{\rho_0}u_0\|_{L^2}/\sqrt{c_0}$. Then by Lemma \ref{lem2}, we have 
\beno
\int_{B(x_0,R(t))} \rho(t,x)\,dx\geq c_0/4.
\eeno
So, we obtain
\begin{align*}
\frac{c_0}{4} \inf_{B(x_0,R(t))} |u|^2
\leq \int_{B(x_0,R(t))} \rho|u|^2\,dx\leq \|\sqrt{\rho_0}u_0\|^2_{L^2}\leq C,
\end{align*}
which implies that there exists a $x_t\in B(x_0,R(t))$ such that $|u(t,x_t)|\leq C_1$.

By Lemma \ref{ nabla u} and Lemma \ref{dot u}, we have
\begin{align*}
\|\nabla u(t)\|^2_{L^2}=\int_{\R^2}|\hat{u}(t, \xi)|^2|\xi|^2\,d\xi\leq C(1+t)^{-1},\\
\|\nabla^2 u(t)\|^2_{L^2}=\int_{\R^2}|\hat{u}(t, \xi)|^2|\xi|^4\,d\xi\leq Ct^{-1}(1+t)^{-1},
\end{align*}
which ensure that 
\begin{align}\label{ut}
\int_{\R^2}|\hat{u}(t,\xi)|^2|\xi|^2(1+t|\xi|^2)\,d\xi\leq C(1+t)^{-1}.    
\end{align}
Thus,  for any $x\neq y\in \R^2$, we have (as $|e^{i\xi\cdot x}-e^{i\xi\cdot y}|\leq\min ( |\xi||x-y|,2) $)
\begin{align}
    |u(t, x)-u(t, y)|
&\leq \int_{\R^2}|\hat{u}(t, \xi)||e^{i\xi\cdot x}-e^{i\xi\cdot y}|\,d\xi\nonumber\\
&\leq \biggl(\int_{\R^2}|\hat{u}(t, \xi)|^2|\xi|^2(1+t|\xi|^2)\,d\xi\biggr)^{1/2}
\biggl(\int_{\R^2}\frac{\min ( |\xi|^2|x-y|^2,4)}{|\xi|^2(1+t|\xi|^2)}\,d\xi\biggr)^{1/2},\label{u_x_y}
\end{align}
where 
\begin{align*}
\int_{\R^2}\frac{\min ( |\xi|^2|x-y|^2,4)}{|\xi|^2(1+t|\xi|^2)}\,d\xi&=\biggl(\int_{|\xi|\leq |x-y|^{-1}}+\int_{|\xi|> |x-y|^{-1}}\biggr)
\frac{\min ( |\xi|^2|x-y|^2,4)}{|\xi|^2(1+t|\xi|^2)}\,d\xi\\
&\leq C\ln \left(2+\frac{|x-y|}{\sqrt{t}}\right),
\end{align*}
 which along with \eqref{ut} and \eqref{u_x_y} ensures that
\begin{equation}\label{Eq.u(x)-u(y)}
|u(t, x)-u(t, y)|\leq C(1+t)^{-\f12}\left[\ln \left(2+|x-y|/\sqrt{t}\right)\right]^{\f12}.
\end{equation}

Therefore, by \eqref{Eq.u(x)-u(y)}, $x_t\in B(x_0,R(t))$, $|u(t,x_t)|\leq C_1$ and $R(t)= R_0+2t\|\sqrt{\rho_0}u_0\|_{L^2}/\sqrt{c_0}$, we arrive at 
\begin{equation*}\begin{aligned}
|u(t,x)|&\leq |u(t,x_t)|+|u(t,x)-u(t,x_t)|\\
&\leq C+C(1+t)^{-\f12}[\ln \bigl(2+t^{-1}+|x-x_0|+R(t)\bigr)]^{\f12}\\
&\leq C[\ln (2+|x-x_0|+t^{-1})]^\f12.
\end{aligned}\end{equation*}

This completes the proof of the lemma.
\end{proof}

For the estimate of $\|\nabla u\|_{L^{\infty}}$, we need the following lemma.
\begin{lem}\label{Lf}
Let $f:\R^2\to \R$ and $F:\R^2\to \R^2$ be smooth functions satisfying \footnote{ $\partial_zf=\frac12(\partial_1f-i\partial_2f)$, $\pa_{\bar z}f=\frac12(\partial_1f+i\partial_2f)$.}
$$Lf=\text{curl}~F, \quad\text{~where~}L\in\{\Delta,~\p^2_z,~\p^2_{\bar{z}}\},$$
then there exists a constant $C>0$ independent of $f,F$ such that
\begin{align*}
|f(x)|\leq Cr^{-1}\|f\|_{L^2(B(x,r))}
+C\int_{B(x,r)}\frac{|F(y)|}{|y-x|}\,dy, \quad \forall~x\in\R^2,~\forall~ r>0 .   
\end{align*}
\end{lem}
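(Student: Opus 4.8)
The plan is to prove the pointwise bound on $f$ by a representation-formula argument, treating the three operators $\Delta$, $\p_z^2$, $\p_{\bar z}^2$ in a unified way through their fundamental solutions. First I would reduce to the case $x=0$ and $r=1$ by translation and scaling: if $Lf=\curl F$ on $B(x,r)$, then $g(y)=f(x+ry)$ satisfies $Lg=\curl\widetilde F$ with $\widetilde F(y)=rF(x+ry)$ (for $L=\Delta$ the scaling introduces an $r^2$, compensated by $\|g\|_{L^2(B_1)}=r^{-1}\|f\|_{L^2(B(x,r))}$ and $\int_{B_1}|\widetilde F(y)||y|^{-1}dy=\int_{B(x,r)}|F|\,|y-x|^{-1}dy$; the scalings for $\p_z^2,\p_{\bar z}^2$ work the same way since these are also second-order homogeneous). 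So it suffices to bound $|f(0)|$ by $C\|f\|_{L^2(B_1)}+C\int_{B_1}|F(y)||y|^{-1}dy$.

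Next I would localize: pick a cutoff $\chi\in C_c^\infty(B_1;[0,1])$ with $\chi\equiv 1$ on $B_{1/2}$, and write $L(\chi f)=\chi\,\curl F+[L,\chi]f$. The commutator $[L,\chi]f$ involves only $f$ and $\nabla f$ against derivatives of $\chi$ supported in the annulus $B_1\setminus B_{1/2}$. For the term $\chi\,\curl F=\curl(\chi F)-\nabla^\perp\chi\cdot F$ I would further write the leading piece in divergence-like form so that when I apply the fundamental solution $\Gamma$ of $L$ (which is $\frac{1}{2\pi}\ln|y|$ for $\Delta$, and an explicit combination like $\frac{\bar z}{4\pi z}$, resp. $\frac{z}{4\pi \bar z}$, for $\p_z^2,\p_{\bar z}^2$, all with $|\nabla\Gamma(y)|\lesssim |y|^{-1}$), the worst singularity that hits $F$ is $|\nabla\Gamma|\ast(\chi F)$, giving exactly $\int_{B_1}|F(y)||y|^{-1}dy$ at $x=0$. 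Then
\[
f(0)=(\chi f)(0)=\int_{B_1}\Gamma(-y)\,L(\chi f)(y)\,dy,
\]
and I would integrate by parts to move derivatives off $f$: the $\curl F$ contribution produces $\nabla\Gamma\ast(\chi F)$ plus a term $\Gamma\ast(\nabla^\perp\chi\cdot F)$ which is even better (kernel $\ln|y|$, cutoff away from origin), and the commutator contributions $\int \Gamma\, (\nabla^2\chi) f$ and $\int \nabla\Gamma\cdot(\nabla\chi) f$ are controlled by $\|f\|_{L^1(B_1\setminus B_{1/2})}\le C\|f\|_{L^2(B_1)}$ since there the kernels $\Gamma$, $\nabla\Gamma$ are bounded. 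Collecting terms yields the claim.

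The main obstacle I anticipate is handling the operators $\p_z^2$ and $\p_{\bar z}^2$ with the same clean $|y|^{-1}$ kernel bound: unlike $\Delta$, these are not rotation-invariant, so I must exhibit their fundamental solutions explicitly and verify that the relevant first derivatives of $\Gamma$ decay like $|y|^{-1}$ and are locally integrable. Concretely, a fundamental solution of $\p_z^2$ is $E(z)=\frac{1}{\pi}\frac{\bar z}{z}\cdot\frac{1}{?}$-type expression — I would just record that $\p_z^2\big(\tfrac{1}{2\pi}\bar z\log z\big)=\tfrac{1}{2\pi}\,\partial_z(1/z)\cdot(\cdots)$; in any case such an $E$ exists with $E\in L^1_{loc}$, $|\nabla E(z)|\le C|z|^{-1}$, because $\p_z^2$ factors through $\p_z$ (whose fundamental solution is $\frac{1}{\pi z}$) and an antiderivative, so $E$ behaves like $|z|^0\cdot\log$ and $\nabla E$ like $|z|^{-1}$. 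Once this kernel bound is in hand, the rest of the argument is uniform in $L\in\{\Delta,\p_z^2,\p_{\bar z}^2\}$ and purely a matter of bookkeeping with Young's inequality on $B_1$ (the kernel $|y|^{-1}\in L^1(B_1)$) and Cauchy–Schwarz on the annulus.
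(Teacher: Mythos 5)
Your plan is sound and, for $L=\Delta$, coincides with the paper's Case 1 (localize with a cutoff, represent $f(0)$ through the Newtonian potential $\frac{1}{2\pi}\ln|y|$, and integrate by parts so that only $|\nabla K(y)|\le C|y|^{-1}$ ever hits $F$). For $L=\p_z^2$ and $\p_{\bar z}^2$ you take a genuinely different route: you want an explicit fundamental solution $E$ with $E\in L^\infty_{loc}$ (or at worst logarithmic) and $|\nabla E(z)|\le C|z|^{-1}$, and then run the same convolution scheme. Such an $E$ does exist: $\p_z^2\bigl(\tfrac{z}{\pi\bar z}\bigr)=\delta_0$ and $\p_{\bar z}^2\bigl(\tfrac{\bar z}{\pi z}\bigr)=\delta_0$ in the sense of distributions (this follows from $\p_z\bigl(\tfrac{z}{\bar z}\bigr)=\tfrac1{\bar z}$ away from the origin plus $\p_z\tfrac{1}{\bar z}=\pi\delta_0$), and these kernels are bounded with gradient $O(|z|^{-1})$, so your bookkeeping goes through. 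The one point you must repair is the identification of $E$ itself: your tentative formulas ($\tfrac{\bar z}{4\pi z}$ as stated, and especially $\tfrac{1}{2\pi}\bar z\log z$, whose second $\p_z$-derivative is $-\bar z/z^2$, not a delta) are not correct, and the heuristic "antiderivative of the Cauchy kernel" must be carried out in the $z$-variable applied to $\tfrac{1}{\pi\bar z}$ (the fundamental solution of $\p_z$, not of $\p_{\bar z}$). Amusingly, the correct kernel $z/(\pi\bar z)$ is exactly the multiplier $\tfrac{z}{\bar z}$ that the paper uses: the paper's Case 2 tests $\p_z^2f$ against $\eta(|z|^2)\tfrac{z}{\bar z}$ with a cutoff at an inner scale $\epsilon$ and recovers $-\pi f(0)$ from the limit of $I_3$, which is precisely a hands-on verification of the distributional identity you would need to quote. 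So the two arguments are the same computation organized differently; yours is cleaner once the fundamental solution is correctly written down and verified, while the paper's avoids ever stating the distributional identity by building it into the $\epsilon\to0^+$ limit.
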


\begin{proof}
By the translation and dilation, it suffices to show that 
\begin{align}\label{Eq.f(0)_estimate}
|f(0)|\leq C\|f\|_{L^2(B_1)}+ C\int_{B_1}\frac{|F(y)|}{|y|}\,dy     
\end{align}
for some constant $C>0$ independent of $f$ and $F$. We split the proof into three cases.\smallskip

{\bf Case 1. $L=\Delta$. } Let $\eta_0\in \mathcal{C}_c^{\infty}(\R; [0,1])$ be such that $\eta_0|_{[-1/2, 1/2]}=1$ and  $\operatorname{supp}\eta_0\subset[-1,1]$, and let $\xi_0\in \mathcal{C}_c^{\infty}(\R^2;[0,1])$ be given by $\xi_0(x)=\eta_0(|x|^2)$ for all $x\in\R^2$. \if0 satisfies
\begin{equation*}
0\leq \eta_0\leq 1,\quad
\eta_0(x)=\left\{
     \begin{array}{l}
     1,\qquad |x|\leq 1/2,\\
     0,\qquad |x|>1,\qquad
     \end{array}
     \right.
|\nabla \eta|\leq C .    
\end{equation*}\fi 
Thanks to 
\begin{align*}
\Delta (\xi_0f)=2\dive (\nabla \xi_0f)-\Delta \xi_0f+\text{curl} (\xi_0 F) +F\cdot \nabla^\perp \xi_0,
\end{align*}
we have
\begin{equation*}\begin{aligned}
f(0)=\int_{\R^2}K(-y)\bigl[
2\dive (\nabla \xi_0f)-\Delta \xi_0f+\text{curl} (\xi_0 F) +F\cdot \nabla^\perp \xi_0\bigr](y)\,dy,    
\end{aligned}\end{equation*}
where $K(x)=\frac{1}{2\pi}\ln|x|$.

Using the integration by parts and support properties of $\xi_0$, we get
\begin{align*}
    &\left|\int_{\R^2}K(-y)\dive (\nabla \xi_0 f)(y)\,dy\right|+\left|\int_{\R^2}K(-y)(\Delta \xi_0 f)(y)\,dy\right|\\
    =&\left|\int_{B_1\setminus B_{1/2}}\nabla (K(-y))\cdot(\nabla\xi_0 f)(y)\,dy\right|+\left|\int_{B_1\setminus B_{1/2}} K(-y)(\Delta \xi_0 f)(y)\,dy\right|\\
    \leq & \ C\|f\|_{L^1(B_1)}\leq C\|f\|_{L^2(B_1)},
\end{align*}
and
\begin{align*}
    &\left|\int_{\R^2}K(-y)\text{curl} (\xi_0 F)(y)\,dy\right|+\left|\int_{\R^2}K(-y)(F\cdot \nabla^\perp \xi_0)(y)\,dy\right|\\
    =&\left|\int_{B_1}\nabla^\perp (K(-y))\cdot(\xi_0 F)(y)\,dy\right|+\left|\int_{B_1\setminus B_{1/2}}K(-y)(F\cdot \nabla^\perp\xi_0)(y)\,dy\right|
    \leq C\int_{B_1}\frac{|F(y)|}{|y|}\,dy.
\end{align*}
Combining above estimates, we obtain \eqref{Eq.f(0)_estimate}.\smallskip

{\bf Case 2. $L=\p^2_z$.}
Notice that  $\frac{z}{\bar{z}}\p^2_z f=\p_z(\frac{z}{\bar{z}}\p_z f-\frac{f}{\bar{z}})$. For $\varepsilon\in(0,1)$, we let 
\[\eta_1(s):=\eta_0(s/\varepsilon^2),\quad \eta(s):=\eta_0(s)-\eta_1(s),\quad\forall\ s\in\R.\]
Firstly, we have 
\begin{align}
 \label{I0}   &\int_{|z|\leq 1} \eta(|z|^2)\frac{z}{\bar{z}}\p^2_z f \,dz= -\int_{|z|\leq 1} \p_z(\eta(|z|^2))\left(\frac{z}{\bar{z}}\p_z f-\frac{f}{\bar{z}}\right)\,dz=I_1+I_2+I_3,\quad\text{where}\\
 \notag   &I_1=\int_{|z|\leq 1} f\p_z\left(\frac{z}{\bar{z}}\p_z\eta_0(|z|^2)\right)+\frac{f}{\bar{z}}\p_z\eta_0(|z|^2)\,dz,\\
\notag&I_2=\int_{|z|\leq \epsilon} \p_z\eta_1(|z|^2)\frac{z}{\bar{z}}\p_z f\,dz, \quad I_3=-\int_{|z|\leq \epsilon} \frac{f}{\bar{z}}\p_z\eta_1(|z|^2)\,dz.
\end{align}
Since $\p_z \eta_j(|z|^2)=\eta_j'(|z|^2)\bar{z}$ for $j=0,1$, then 
\begin{align*}
f\p_z\left(\frac{z}{\bar{z}}\p_z\eta_0(|z|^2)\right)
+\frac{f}{\bar{z}}\p_z\eta_0(|z|^2)
= \bigl(2\eta'_0(|z|^2)+|z|^2 \eta_0''(|z|^2)\bigr)f,
\end{align*}
so we get
\begin{align}\label{I1}
|I_1|\leq C\|f\|_{L^1(B_1)}\leq C\|f\|_{L^2(B_1)},    
\end{align}
and similarly
\begin{equation}\label{I2}
|I_2|=\left|\int_{|z|\leq \epsilon} \eta'_1(|z|^2)z\p_z f\,dz\right|
\leq \epsilon \|\nabla f\|_{L^{\infty}(B_\varepsilon)}\int_{|z|\leq \epsilon}|\eta'_1(|z|^2)|\,dz\leq C\epsilon \|\nabla f\|_{L^{\infty}(B_{\epsilon})}.
\end{equation}
Note that  $\int_{|z|\leq \epsilon}\eta'_1(|z|^2)dz=-\pi$, then
\begin{align}
|I_3+\pi f(0)|&=\left|\int_{|z|\leq \epsilon} \eta'_1(|z|^2) (f(z)-f(0)) \,dz\right|\nonumber\\
&\leq \epsilon \|\nabla f\|_{L^{\infty}(B_{\epsilon})}
\int_{|z|\leq \epsilon}|\eta'_1(|z|^2)|\,dz
\leq C{\epsilon} \|\nabla f\|_{L^{\infty}(B_{\epsilon})}.\label{I3}
\end{align}
On the other hand, using integration by parts, we have
\begin{align}
\left|\int_{|z|\leq 1} \eta(|z|^2)\frac{z}{\bar{z}}\p^2_z f \,dz\right|
&= \left|\int_{|z|\leq 1} \eta(|z|^2)\frac{z}{\bar{z}}\text{curl} F \,dz\right|\nonumber \\
&=\left|\int_{|z|\leq 1} F\cdot\nabla^\perp\left(\eta(|z|^2)\frac{z}{\bar{z}}\right)\,dz\right|\leq C\int_{|z|\leq 1} \frac{|F(z)|}{|z|}\,dz,\label{F}
\end{align}
where we have used the fact that 
\begin{align*}
\left|\p_z\left(\eta(|z|^2)\frac{z}{\bar{z}}\right)\right|+\left|\p_{\bar{z}}\left(\eta(|z|^2)\frac{z}{\bar{z}}\right)\right|=
\left|\eta'(|z|^2)z+\eta(|z|^2)\frac{1}{\bar{z}}\right|+\left|\eta'(|z|^2)\frac{z^2}{\bar{z}}-\eta(|z|^2)\frac{z}{\bar{z}^2}\right|\leq \frac{C}{|z|}.
\end{align*}
Combining \eqref{I0}, \eqref{I1}, \eqref{I2}, \eqref{I3}, \eqref{F} and letting $\epsilon\to 0^+$, we obtain \eqref{Eq.f(0)_estimate}.\smallskip

{\bf Case 3. $L=\p^2_{\bar{z}}$.} Notice that $\frac{\bar{z}}{z}\p^2_{\bar{z}} f=\p_{\bar{z}}(\frac{\bar{z}}{z}\p_{\bar{z}} f-\frac{f}{z})$. The proof is similar to Case 2.
\end{proof}

Now we are in a position to give a priori estimate of $\|\nabla u\|_{L^1(0,T;L^{\infty})}$.

\begin{lem}\label{Lip}
Assume that $0\le \rho\le C_0$ and $u$ is a smooth solution of the following equation 
\begin{align}\label{rho_u_curl}
\rho \dot{u}-\Delta u+\nabla P=0,\quad \dive u=0.    
\end{align}
Then there is a constant $C>0$ depending only on $C_0$, $\|\sqrt{\rho_0}u_0\|_{L^2}$ and $\|\nabla u_0\|_{L^2}$ such that
\begin{align}\label{nabla_u}
|\nabla u(t,x)|\leq& Cr^{-1}\|\nabla u(t)\|_{L^2}
+C\int_{B(x,r)}\frac{|\rho \dot{u}(t,y)|}{|y-x|}\,dy,\quad\forall\ t\in[0, T],\ \forall\ x\in\R^2,\ \forall\ r>0,\\
&\text{and}\qquad \int_0^T \|\nabla u(t)\|_{L^{\infty}} \,dt\leq C[\ln(1+T)]^{\f12}.\label{lip}
\end{align}
\end{lem}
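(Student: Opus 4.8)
\textbf{Proof plan for Lemma \ref{Lip}.}

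The plan is to derive the pointwise bound \eqref{nabla_u} by applying Lemma \ref{Lf} to suitable scalar functions built from $u$ and $P$, and then to integrate in time using the energy estimates of Lemma \ref{ nabla u} and Lemma \ref{dot u} together with the $L^\infty$ bound of $u$ already available (or, more precisely, not needed here — only $\nabla u$ and $\rho\dot u$ enter). First I would rewrite \eqref{rho_u_curl} as $\Delta u = \rho\dot u + \nabla P$ componentwise, and observe that since $\dive u = 0$ in $\R^2$ we may write $u = \nabla^\perp\psi$ for a stream function $\psi$ with $\Delta\psi = \omega := \curl u = \p_1 u^2 - \p_2 u^1$. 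Taking $\curl$ of \eqref{rho_u_curl} kills the pressure: $\Delta\omega = \curl(\rho\dot u)$. This is exactly of the form $Lf = \curl F$ with $L = \Delta$, $f = \omega$, $F = \rho\dot u$, so Lemma \ref{Lf} gives
\begin{align*}
|\omega(t,x)|\leq Cr^{-1}\|\omega(t)\|_{L^2(B(x,r))} + C\int_{B(x,r)}\frac{|\rho\dot u(t,y)|}{|y-x|}\,dy.
\end{align*}
Since $|\omega|\leq \sqrt2|\nabla u|$ and $\|\omega(t)\|_{L^2}\leq \|\nabla u(t)\|_{L^2}$, this controls the antisymmetric part of $\nabla u$. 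To recover the full gradient I would argue that in two dimensions, for a divergence-free field, $\nabla u$ is a (matrix of) Riesz-transform combinations of $\omega$; but since the desired estimate is an $L^\infty$ pointwise bound and Riesz transforms are not $L^\infty$-bounded, instead I would work directly with the complex derivatives: writing $w = u^1 + iu^2$ (or the appropriate combination), $\dive u = 0$ and the momentum equation translate into $\p_{\bar z}^2$ or $\p_z^2$ equations for the components, which is precisely why Lemma \ref{Lf} was stated for $L\in\{\Delta,\p_z^2,\p_{\bar z}^2\}$. Concretely, $\p_z w$ and $\p_{\bar z}\bar w$ encode $\dive u$ and $\omega$, and the remaining entries $\p_z\bar w$, $\p_{\bar z} w$ satisfy $\p_{\bar z}(\p_z \bar w) = \frac14(\Delta u^1 - i\Delta u^2 + \dots)$; after using $\dive u = 0$ one gets second-order $\p_z^2$/$\p_{\bar z}^2$ equations whose right-hand sides are $\curl$ of an $L^1_{loc}$ field built from $\rho\dot u$ and $\nabla P$ — and the pressure term, being a gradient, again has vanishing contribution when hit by $\curl$, or can be absorbed. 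Applying Lemma \ref{Lf} to each of these and summing yields \eqref{nabla_u} with the same right-hand side (after noting $\|\nabla^2 u\|$-type terms do not appear because we only ever take one derivative and one inverse-Laplacian).

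Next, to pass from the pointwise bound to the time integral \eqref{lip}, I would choose $r = r(t)$ optimally. From \eqref{nabla_u}, bounding the singular integral crudely via Cauchy–Schwarz,
\begin{align*}
\int_{B(x,r)}\frac{|\rho\dot u(t,y)|}{|y-x|}\,dy \leq \|\rho\dot u(t)\|_{L^2}\Big(\int_{B(x,r)}\frac{dy}{|y-x|^2}\Big)^{1/2},
\end{align*}
but this log-diverges, so instead I would use the sharper splitting indicated in the sketch: on $B(x,\delta)$ use $\|\nabla\dot u\|_{L^2}$ (so that $\int_{B(x,\delta)} |\rho\dot u|/|y-x|\,dy \lesssim \delta\|\nabla(\rho\dot u)\|$-type $\lesssim \delta(\|\sqrt\rho\dot u\|_{\dot H^1}\text{-bound})$), on the annulus use $\|\sqrt\rho\dot u\|_{L^2}$ with the $1/|y-x|\leq 1/\delta$ bound — precisely the two-term estimate $\int_{B(x,r)}\frac{|\rho\dot u|}{|y-x|}dy \leq Cr\|\nabla\dot u\|_{L^2} + C\|\sqrt\rho\dot u\|_{L^2}$ quoted in the introduction, which itself follows from Lemma \ref{lem4}-type weighted inequalities and $0\le\rho\le C_0$. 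Then \eqref{nabla_u} becomes $|\nabla u(t,x)|\leq Cr^{-1}\|\nabla u(t)\|_{L^2} + Cr\|\nabla\dot u(t)\|_{L^2} + C\|\sqrt\rho\dot u(t)\|_{L^2}$; optimizing in $r$ gives $\|\nabla u(t)\|_{L^\infty}\leq C\|\nabla u(t)\|_{L^2}^{1/2}\|\nabla\dot u(t)\|_{L^2}^{1/2} + C\|\sqrt\rho\dot u(t)\|_{L^2}$. Finally I integrate over $[0,T]$: by Lemma \ref{ nabla u}, $\|\nabla u(t)\|_{L^2}^2\leq C(1+t)^{-1}$; by Lemma \ref{dot u}, $\int_0^T t\|\nabla\dot u\|_{L^2}^2\,dt\leq C$ and $t\|\sqrt\rho\dot u\|_{L^2}^2\leq C$. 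Hence
\begin{align*}
\int_0^T\|\nabla u(t)\|_{L^\infty}\,dt \leq C\int_0^T (1+t)^{-1/4}\|\nabla\dot u(t)\|_{L^2}^{1/2}\,dt + C\int_0^T t^{-1/2}\,dt\cdot(\dots),
\end{align*}
and by Cauchy–Schwarz $\int_0^T(1+t)^{-1/4}\|\nabla\dot u\|^{1/2}dt \leq \big(\int_0^T t\|\nabla\dot u\|^2 dt\big)^{1/4}\big(\int_0^T (1+t)^{-1/3}t^{-1/3}dt\big)^{3/4}$ — the time weights are arranged exactly so that the remaining integral converges near $0$ and contributes the $[\ln(1+T)]^{1/2}$ growth at infinity (one has to be a little careful near $t=0$, pairing the $(1+t)^{-1/4}$ with $t^{1/4}$ from $\sqrt t\nabla\dot u\in L^2$ and the $\|\sqrt\rho\dot u\|$ term with $\sqrt t\sqrt\rho\dot u\in L^\infty_tL^2$).

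The main obstacle I anticipate is the first step: cleanly producing, from the divergence-free condition and \eqref{rho_u_curl}, the three scalar equations of the form $Lf = \curl F$ (with $L\in\{\Delta,\p_z^2,\p_{\bar z}^2\}$) that recover \emph{all} components of $\nabla u$ — not just $\curl u$ — with right-hand sides that are genuinely curls of $L^1_{loc}$ vector fields depending only on $\rho\dot u$ (the pressure must drop out or be reorganized into such a curl). This is the reason Lemma \ref{Lf} includes the non-Laplacian operators, and getting the algebra of complex derivatives right, so that the $\|\nabla u\|_{L^2}$ coefficient and not some $\|\nabla^2 u\|$ coefficient appears, is the delicate point. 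The time-integration step is then routine given Lemmas \ref{ nabla u} and \ref{dot u}, modulo the careful bookkeeping of powers of $t$ near the origin.
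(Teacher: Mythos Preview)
Your overall strategy is right, and your time-integration step matches the paper's almost exactly. But there are two places where the paper's argument is sharper than what you sketch, and one of them is a genuine gap.

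\textbf{The pointwise bound \eqref{nabla_u}.} You correctly identify this as the delicate step, but the paper resolves your ``main obstacle'' in one line by working with the \emph{stream function} rather than with $u$ directly. Write $u=\nabla^\perp\psi$; then $|\nabla u|\sim |\nabla^2\psi|\sim |\psi_1|+|\psi_2|+|\psi_3|$ with $\psi_1:=\Delta\psi$, $\psi_2:=\p_z^2\psi$, $\psi_3:=\p_{\bar z}^2\psi$. Taking $\curl$ of \eqref{rho_u_curl} gives the single biharmonic equation $\Delta^2\psi=\curl(\rho\dot u)$, and since $\Delta=4\p_z\p_{\bar z}$ this is simultaneously
\[
\Delta\psi_1=\curl(\rho\dot u),\qquad 16\,\p_{\bar z}^2\psi_2=\curl(\rho\dot u),\qquad 16\,\p_z^2\psi_3=\curl(\rho\dot u).
\]
So all three scalar equations of the form $Lf=\curl F$ come from the \emph{same} right-hand side, with no pressure manipulation and no further algebra; Lemma \ref{Lf} applies to each $\psi_k$ (using $\|\psi_k\|_{L^2}\le C\|\nabla u\|_{L^2}$) and summing gives \eqref{nabla_u}. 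Your route via $w=u^1+iu^2$ can be made to work but is needlessly tangled.

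\textbf{The bound $\int_{B(x,r)}\frac{|\rho\dot u|}{|y-x|}\,dy\le Cr\|\nabla\dot u\|_{L^2}+C\|\sqrt\rho\dot u\|_{L^2}$.} Here there is a real gap. Your inner/outer splitting would need control of $\nabla(\rho\dot u)$ or of $\sqrt\rho\dot u$ in $\dot H^1$, but $\rho$ is merely $L^\infty$ with no regularity, so those quantities are unavailable; and the naive annulus bound produces an unwanted $\sqrt{\ln(r/\delta)}$ factor. The paper's device is to subtract the average of $\dot u$ (not of $\rho\dot u$) over $B(x,r)$: writing $\avg(\dot u)=|B(x,r)|^{-1}\int_{B(x,r)}\dot u$, the fluctuation piece is handled by H\"older in $L^p\times L^{p'}$ with $p>2$ together with Poincar\'e, giving $\int_{B(x,r)}\rho|\dot u-\avg(\dot u)|/|y-x|\,dy\le CC_0 r\|\nabla\dot u\|_{L^2}$. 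For the constant piece, the key is the Lorentz-space estimate
\[
\int_{B(x,r)}\frac{\rho}{|y-x|}\,dy\le \|\rho\|_{L^{2,1}(B(x,r))}\,\big\||\cdot|^{-1}\big\|_{L^{2,\infty}}\le C\|\rho\|_{L^\infty}^{1/2}\|\rho\|_{L^1(B(x,r))}^{1/2},
\]
combined with $\|\rho\|_{L^1(B(x,r))}|\avg(\dot u)|^2=\int_{B(x,r)}\rho|\avg(\dot u)|^2\le 2\int\rho|\dot u-\avg(\dot u)|^2+2\int\rho|\dot u|^2\le CC_0r^2\|\nabla\dot u\|_{L^2}^2+C\|\sqrt\rho\dot u\|_{L^2}^2$. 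This is how the $\|\sqrt\rho\dot u\|_{L^2}$ term enters without any log and without differentiating $\rho$. Once you have this, your optimization in $r$ and the Cauchy--Schwarz-in-time argument go through exactly as you describe.
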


\begin{proof}
Let $u=(-\p_2\psi,\p_1\psi)$, then  $\text{curl}~u=\Delta \psi$. 
By \eqref{rho_u_curl}, we have $\text{curl}~(\Delta u)=\text{curl}~ (\rho\dot{u})$, and thus $\Delta^2 \psi=\text{curl}~(\rho\dot{u})$. Note that 
\beno
|\nabla u|\sim C(|\psi_1|+|\psi_2|+|\psi_3|),
\eeno
 where $\psi_1:=\Delta \psi$, $\psi_2:=\p_z^2\psi$ and $\psi_3:=\p_{\bar{z}}^2\psi$.
Hence, by the fact $\Delta^2 \psi=\text{curl}~(\rho\dot{u})$ and $\Delta =4\p_z\p_{\bar{z}}$, we obtain
\begin{align*}
 \Delta \psi_1=\text{curl}~(\rho\dot{u}),\quad 
 16\p^2_{\bar{z}}\psi_2=\text{curl}~(\rho\dot{u}), \quad 16\p^2_z\psi_3=\text{curl}~(\rho\dot{u}).
\end{align*}
{As $\|\psi_k\|_{L^2} 
 \leq C\|\nabla u\|_{L^2}$ for $k=1,2,3$,}
then by Lemma \ref{Lf}, we get the desired estimate \eqref{nabla_u}. \smallskip

Next we prove \eqref{lip}. We claim that there exists a constant $C>0$ such that
\begin{equation}\label{Eq.nabla_u_claim}
    \|\nabla u\|_{L^{\infty}}
\leq Cr_1^{-1}\|\nabla u\|_{L^2}+Cr_1C_0\|\nabla \dot{u}\|_{L^2}
+CC_0^{\f12}\|\sqrt{\rho}\dot{u}\|_{L^2}
\end{equation}
for any $r_1>0$. Assume \eqref{Eq.nabla_u_claim} for the moment. \if0
If $\|\sqrt{\rho}\dot{u}\|^2_{L^2}\leq \|\nabla u\|_{L^2}\|\nabla \dot{u}\|_{L^2}$, we choose 
\begin{align*}
  r_1=C_0^{-\f12}\|\nabla u\|_{L^2}\|\sqrt{\rho}\dot{u}\|_{L^2}^{-1},\quad  r_2=C_0^{-\f12}\|\nabla \dot{u}\|^{-1}_{L^2}\|\sqrt{\rho}\dot{u}\|_{L^2},
  \end{align*}
otherwise we take\fi Let $r_1=C_0^{-1/2}\|\nabla u\|^{1/2}_{L^2}\|\nabla \dot{u}\|^{-1/2}_{L^2}$.
Then we infer that
\if0\begin{align*}
\|\nabla u\|_{L^{\infty}}
\leq CC_0^{\f12}[\ln (3+\|\nabla u\|_{L^2}
\|\nabla \dot{u}\|_{L^2}/\|\sqrt{\rho}\dot{u}\|^2_{L^2})]^{\f12}
\|\sqrt{\rho}\dot{u}\|_{L^2},
\end{align*}
which along with the fact $\ln(1+s)\leq s$ for all $s>0$ gives \fi
\begin{align*}
    \|\nabla u\|_{L^{\infty}}\leq C\|\nabla u\|_{L^2}^{1/2}\|\nabla \dot{u}\|_{L^2}^{1/2}+C\|\sqrt{\rho}\dot{u}\|_{L^2}.
\end{align*}
Now it follows from \eqref{energy}, Lemma \ref{ nabla u}, Lemma \ref{dot u} and H\"{o}lder's  inequality that
\begin{align*}
    \int_0^T\|\nabla u\|_{L^{\infty}}\,dt&\leq \int_0^T \|\nabla u\|^{\f12}_{L^{2}}\|\nabla \dot{u}\|^{\f12}_{L^2} \,dt+\int_0^T \|\sqrt{\rho} \dot{u}\|_{L^2} \,dt\\
    &\leq \left(\int_0^T \|\nabla u\|^2_{L^{2}} \,dt\right)^{1/4}\left(\int_0^T t(1+t)\|\nabla \dot{u}\|^2_{L^{2}}\, dt\right)^{1/4}\left(\int_0^T t^{-\f12}(1+t)^{-\f12}\,dt\right)^{1/2}\\
    &\qquad\qquad+\left(\int_0^T (1+t)\|\sqrt{\rho} \dot{u}\|^2_{L^2}\,dt\right)^{1/2}\left(\int_0^T (1+t)^{-1}\,dt\right)^{1/2}\\
    &\leq C[\ln(1+T)]^{\f12}.
\end{align*}
This proves \eqref{lip}.

Finally, it remains to prove \eqref{Eq.nabla_u_claim}. Fix $x\in\R^2$ and $r_1>0$. We emphasize that in the proof of \eqref{Eq.nabla_u_claim}, the constant $C$ is independent of $x$ and $r_1$. \if0By H\"{o}lder's inequality, we have
\begin{align}\label{r1-r2}
\int_{B(x,r_1)\setminus B(x,r_2)}\frac{|\rho \dot{u}(t,y)|}{|y-x|}\,dy\leq CC_0^{\f12}[\ln (r_1/r_2)]^{\f12}\|\sqrt{\rho}\dot{u}\|_{L^2}.
\end{align}\fi
Let $\avg(\dot{u})=\frac{1}{|B(x,r_1)|}\int_{B(x,r_1)}\dot{u}(y)\,dy$. By Poincar\'{e} inequality we get that 
\if0
\begin{align*}
|\dot{u}(x)-avg(\dot{u})|
\leq C \int_{B(x,r_1)}\frac{|\nabla\dot{u}|}{|y-x|}\,dy,
\end{align*}
then for $\forall~m\in [1,\infty)$, we can obtain\fi 
\begin{align}\label{Eq.Poincare}
\|\dot{u}-\avg(\dot{u})\|_{L^m(B(x,r_1))}
\leq C_mr_1^{\frac{2}{m}}\|\nabla \dot{u}\|_{L^2(B(x,r_1))},\quad\forall\ m\in[1, +\infty).
\end{align}
As a consequence, for all $p>2$, if we let $p'\in(2, +\infty)$ be such that $\frac{1}{p}+\frac{1}{p'}=1$, then
\begin{equation}\begin{aligned}\label{dot_u_minus}
\int_{B(x,r_1)}\frac{\rho|\dot{u}-\avg(\dot{u})|}{|y-x|}\,dy
&\leq C_0 \|\dot{u}-\avg(\dot{u})\|_{L^p(B(x,r_1))}
\||\cdot|^{-1}\|_{L^{p'}(B(0,r_1))}\\
&\leq Cr_1C_0\|\nabla \dot{u}\|_{L^2}.
\end{aligned}
\end{equation}
On the other hand, as $|\cdot|^{-1}\in L^{2,\infty}(\R^2)$ ,  we get by H\"{o}lder inequality and interpolation inequality in the Lorentz space that 
\begin{align*}
\int_{B(x,r_1)}\frac{\rho (t,y)}{|y-x|}\,dy
&\leq \|\rho\|_{L^{2,1}(B(x,r_1))} \||\cdot|^{-1} \|_{L^{2,\infty}(\R^2)}
\leq C\|\rho\|^{1/2}_{L^{\infty}}\|\rho\|^{1/2}_{L^1(B(x,r_1))},
\end{align*}
and by \eqref{Eq.Poincare}, we have
\begin{equation*}\begin{aligned}
&\|\rho\|_{L^1(B(x,r_1))}|\avg(\dot{u})|^2=\int_{B(x,r_1)}\rho (y)|\avg(\dot{u})|^2\,dy
\\ \leq& 2\int_{B(x,r_1)}\rho|\dot{u}-\avg(\dot{u})|^2\,dy
+2\int_{B(x,r_1)}\rho|\dot{u}|^2\,dy
\leq Cr^2_1C_0\|\nabla \dot{u}\|^2_{L^2}+C\|\sqrt{\rho}\dot{u}\|^2_{L^2},
\end{aligned}\end{equation*}
hence
\begin{equation}\begin{aligned}\label{bar_dot_u}
\int_{B(x,r_1)}\frac{\rho (y)|\avg(\dot{u})|}{|y-x|}\,dy
&\leq C\|\rho\|^{1/2}_{L^{\infty}}\|\rho\|^{1/2}_{L^1(B(x,r_1))}|\avg(\dot{u})|\\
&\leq Cr_1C_0\|\nabla \dot{u}\|_{L^2}
+CC_0^{\f12}\|\sqrt{\rho}\dot{u}\|_{L^2}.
\end{aligned}\end{equation}
Thanks to \eqref{dot_u_minus} and \eqref{bar_dot_u}, we have
\begin{align}\label{r2}
\int_{B(x,r_1)}\frac{|\rho \dot{u}(t,y)|}{|y-x|}\,dy
\leq Cr_1C_0\|\nabla \dot{u}\|_{L^2}+CC_0^{\f12}\|\sqrt{\rho}\dot{u}\|_{L^2}.
\end{align}
By \eqref{r2}, we deduce \eqref{Eq.nabla_u_claim} from \eqref{nabla_u}.
\end{proof}

\subsection{Proof of global existence}

In this subsection, we prove Theorem \ref{thm:strong}. We follow the argument in \cite{DM}. The  idea is to take advantage of classical result to construct smooth solutions corresponding to smoothed-out approximate data with no vacuum, then to pass to the limit. More precisely, we consider
$$ u_0^{\epsilon}\in \mathcal{C}^{\infty}(\R^2) \with  \dive u_0^{\epsilon}=0 \andf \rho_0^{\epsilon}\in \mathcal{C}^{\infty}(\R^2) \with \epsilon\leq \rho_0^{\epsilon}\leq C_0 $$
such that
\begin{equation*}\begin{aligned}
& u_0^{\epsilon}\to u_0 \text{~in~} H^1, \quad \rho_0^{\epsilon}\rightharpoonup \rho_0 \text{~in~} L^{\infty} \text{~weak-*},
\andf~ \rho_0^{\epsilon}\to \rho_0 \text{~in~} L^{p}_{\text{loc}} \text{~if~} p<\infty.
\end{aligned}\end{equation*}As $\rho_0\not\equiv 0$, there exist $R_0\in (0,\infty)$ and $c_0>0$ such that $\int_{B(0,R_0)} \rho_0\,dx> c_0>0$. Then $\int_{B(0,R_0)} \rho_0^{\epsilon}\,dx> c_0>0$ for $\epsilon $ small enough, thus \eqref{c0} holds for  $x_0=0$ and $\rho_0$ replaced by $\rho_0^{\epsilon} $. We also have $\|u_0^{\epsilon}\|_{L^2}\leq \|u_0\|_{L^2}+1$ for $\epsilon $ small enough, and $\|\sqrt{\rho_0^{\epsilon}}u_0^{\epsilon}\|_{L^2}\leq\|\rho_0^{\epsilon}\|_{L^{\infty}}^{1/2}\|u_0^{\epsilon}\|_{L^2}\leq C_0^{1/2}(\|u_0\|_{L^2}+1)$.
In light of the classical strong solution theory for the system \eqref{INS}, there exists a unique global smooth solution $(\rho^{\epsilon}, u^{\epsilon}, P^{\epsilon})$
corresponding to data $(\rho_0^{\epsilon},u_0^{\epsilon})$. Thus, the triple $(\rho^{\epsilon}, u^{\epsilon}, P^{\epsilon})$  satisfies all the a priori estimates of previous subsections uniformly with respect to $\epsilon$. In particular, by Lemma \ref{ nabla u}, Lemma \ref{dot u} and Lemma \ref{u_L^infty}, we have $|u^{\epsilon}(t,x)|\leq C[\ln(2+|x|+t^{-1})]^{\f12}$, $\int_0^T\|u^{\epsilon}\|^2_{L^{\infty}(B_R)}\leq C(R,T) $ and
\begin{equation*}\begin{aligned}
\int_0^T t(\|\nabla \dot{u^{\epsilon}}\|^2_{L^2}
+\|\nabla u^{\epsilon}\|^2_{L^2}\|\nabla^2u^{\epsilon}\|_{L^2}^2+\|u^{\epsilon}\|^2_{L^{\infty}(B_R)}\|\nabla^2 u^{\epsilon}\|^2_{L^2})\,dt\leq C(R,T).
\end{aligned}\end{equation*}
By the definition, we have
\begin{equation*}\begin{aligned}
\|\nabla u^{\epsilon}_t\|_{L^2(B_R)}
&\leq \|\nabla \dot{u^{\epsilon}}\|_{L^2(B_R)}
+\|\nabla u^{\epsilon}\cdot\nabla u^{\epsilon}
+u^{\epsilon}\cdot \nabla^2 u^{\epsilon}\|_{L^2(B_R)}\\ 
&\leq \|\nabla \dot{u^{\epsilon}}\|_{L^2}
+C(\|\nabla u^{\epsilon}\|_{L^2}
+\|u^{\epsilon}\|_{L^{\infty}(B_R)})\|\nabla^2 u^{\epsilon}\|_{L^2}
\end{aligned}\end{equation*}
for any $R>0$. Thus, we obtain
\begin{equation*}\begin{aligned}
\int_0^T t\|\nabla u_t^{\epsilon}\|^2_{L^2(B_R)}\,dt
\leq C(R,T).
\end{aligned}
\end{equation*}
Similarly, we have 
\begin{equation*}\begin{aligned}
\int_0^T \|\sqrt{\rho^{\epsilon}}u_t^{\epsilon}\|^2_{L^2(B_R)}\,dt
\leq C(R,T).
\end{aligned}
\end{equation*}
Let $R(T)=R_0+2TC_0^{1/2}(\|u_0\|_{L^2}+1)/\sqrt{c_0}$ then $R(T)\geq R_0+2T\|\sqrt{\rho_0^{\epsilon}}u_0^{\epsilon}\|_{L^2}/\sqrt{c_0}$ 
for $\epsilon $ small enough. By Lemma \ref{lem2} we have $\int_{B(0,R)} \rho^{\epsilon}(t,x)\,dx\geq c_0/4 $ for $R\geq R(T)$, $t\in[0,T]$. 
Then by Lemma \ref{lem_Lp} we have $\| u^{\epsilon}_t\|_{L^2(B_R)}\leq C_R(\|\nabla u_t^{\epsilon}\|_{L^2(B_R)}
+\|\sqrt{\rho^{\epsilon}}u_t^{\epsilon}\|_{L^{2}(B_R)}) $ for $R\geq R(T)$, $t\in[0,T]$.
Thus $\int_0^T t\| u_t^{\epsilon}\|^2_{L^2(B_R)}\,dt
\leq C(R,T)$ for $R\geq R(T)$, which also holds for $0<R<R(T)$ as the left hand side is increasing in $R$.

By Lemma 3.4 in \cite{DM}, we know that for all $\alpha\in (0,1/2)$
\begin{align}\label{inte}
\| u^{\epsilon}\|_{H^{1/2-\alpha}(0,T;H^1(B_R))}
\leq \| u^{\epsilon}\|_{L^2(0,T;H^1(B_R))}
+C_{\alpha,T}\|t^{1/2} u^{\epsilon}_t\|_{L^2(0,T;H^1(B_R))},    
\end{align}
where $C_{\alpha,T}$ depending only on $\alpha$ and $T$. By \eqref{energy} and Lemma \ref{ nabla u}, we also have
\begin{align*}
\|\nabla u^{\epsilon}\|_{L^2(0,T;{H}^1)}\leq C,   
\end{align*}
By the interpolation with \eqref{inte}, we can deduce that  
\begin{align*}
\|(u^{\epsilon},\nabla u^{\epsilon})\|_{{H}^{1/2-\al}([0,T]\times B_R)} \leq C(R,T)\quad   \text{~ for all~} R,T\in(0,+\infty) .   
\end{align*}
This implies that up to a subsequence, $u^{\epsilon}\to u$ in $L^2_{loc}([0,+\infty);{H}^{1}_{loc})$. By {Theorem} 2.5 in \cite{PL}, one can show that 
\if0\begin{align*}
(\rho^{\epsilon})^2\rightharpoonup (\rho)^2 \text{~in~} 
L^{\infty}(\R^{+};L^{\infty}(\R^2)), 
\end{align*}
which eventually implies that\fi 
\begin{align*}
\rho^{\epsilon}\to \rho\text{~in~} \mathcal{C}([0,T];L^p(B_R)) \text{~for all~} 1\leq p<+\infty,\ T,R\in(0,+\infty). 
\end{align*}
Thus, we can pass to the limit for all the terms in the definition of weak solutions, and the weak energy inequality can be obtained by Fatou property. Moreover Lemma \ref{ nabla u}, Lemma \ref{dot u}, \eqref{lip} and Lemma \ref{u_L^infty} also hold for the limit solution $(\rho,u)$.
Furthermore, the moment equation is fulfilled in the strong sense, i.e.,
\begin{align*}
\rho(\partial_tu+u\cdot\nabla u)-\Delta u+\nabla P=0 \quad\text{~in~} L^2_{loc}(\R^{+};L^2_{loc}(\R^2))   
\end{align*}
for some pressure function $P$ with the estimate in Lemma \ref{ nabla u}.

\subsection{Regularity propagation of density patch and vacuum bubbles}\label{sec3}

\begin{proof}[Proof of Theorem \ref{nabla^2 u_Lm}]
    We assume that $\rho_0$ satisfies (H1) or (H2). By the Stokes estimate and Lemma \ref{lem4}, we have
    \begin{align*}
        \|\sqrt{t}\nabla^2u\|_{L^2([0,T];L^m(\R^2))}&\leq C\|\sqrt{t}\rho\dot{u}\|_{L^2([0,T];L^m(\R^2))}\leq C\|\sqrt{t}\sqrt\rho\dot{u}\|_{L^2([0,T];L^m(\R^2))}\\
        &\leq C\|(\sqrt {t\rho}\dot{u}, \sqrt t\nabla \dot u)\|_{L^2([0,T];L^2(\R^2))}\leq C,
    \end{align*}
    where we have used Lemma \ref{dot u} in the last inequality, and $C>0$ is a constant depending only on $\rho_0, u_0$, $m\geq 2$ and $T>0$. \end{proof}

Now we prove Corollary \ref{cor1}. 

\begin{proof}
Assume that $D$ corresponds to the level set $\{f_0>0\}$ of some function $f_0:\R^2\to \R_{\geq 0}$ with $C^{1,\alpha}$ regularity and $df_0\ne 0$ on $\{f_0=0\}$.
Then we have $D_t=f_t^{-1}(0,\infty)$ with $f_t\eqdef f_0\circ X(t,0,\cdot)$, and $df_t\ne0$ on $\{f_t=0\}$.

Next we check that $\rho_0$ satisfies (H1) or (H2). In fact, if $a=0$, then $\rho_0=b\mathbf{1}_{D}$,  which implies that $\rho_0$ satisfies (H1) as $D$ is bounded. If $a>0$, then $\rho_0\geq a(1-\mathbf{1}_{D})$, and thus there exist $R_0>0, c_0>0$ such that for all $x_0\in\R^2$ we have 
$$\int_{B(x_0,R_0)}\rho_0 \,dx\geq
\int_{B(x_0,R_0)}a \,dx-\int_{\R^2}a\mathbf{1}_{D}\,dx=a\pi R_0^2-a{|D|}=c_0>0,$$
which means that $\rho_0$ satisfies (H2).

By Theorem \ref{thm:strong}  and Theorem \ref{nabla^2 u_Lm}, there exists a global solution $(\rho, u,\nabla P)$ satisfying $\nabla u\in L^{\infty}(\R^+;L^2(\R^2))$ and $\sqrt{t}\nabla^2u\in L^2([0,T];L^m(\R^2))$ for all $T>0$, $m\geq 2$.  For $\alpha\in(0,1)$, we have
\begin{align*}
&[\nabla u]_{\alpha}\leq C\|\nabla u\|^{1-s}_{L^2}\|\nabla^2u\|^s_{L^m}\quad \text{~~for~} m>\frac{2}{1-\alpha},~s=\frac{1+\alpha}{2-2/m}\in(0,1),\\
&\|\nabla u\|_{L^{\infty}}\leq 
C\|\nabla u\|^{1-s'}_{L^2}\|\nabla^2u\|^{s'}_{L^m} 
\quad\text{~~for~} s'=\frac{1}{2-2/m}\in(0,s),
\end{align*}
which imply that $t^{s/2}\nabla u\in L^2([0,T];C^{0,\alpha})$ and $\nabla u\in L^1([0,T];C^{0,\alpha})$.
Consequently, the flow $X(t,0,\cdot)$ is in $C^{1,\alpha}$, which implies that $f_t$ is in $C^{1,\alpha}$.
\end{proof}

\if0
\section{Asymptotics at spatial infinity of strong solution}

This section is mainly devoted to the asymptotics near the spatial infinity of strong solution $(\rho, u, P)$ constructed in Theorem \ref{thm:strong} when $\rho_0$ satisfies (H1).  
These informations are crucial to justify many formal calculations in the proof of weak-strong uniqueness.

\subsection{Asymptotics at infinity of Harmonic function}

\begin{lem}\label{Lem.A_abstract}
    Let $R>0$. Let $u=u(x)\in L^2_{\operatorname{loc}}(B_R^c;\R^2)$ be such that $u\in \dot{H}^1(B_R^c; \R^2)$, $\dive u=0$ and $\Delta \omega=0$ weakly in $\R^2\setminus B_R$, where $\omega=\operatorname{curl} u$. Then there exists a constant vector $u_\infty$ with
    \begin{align}
        |u(x)-u_\infty|\leq &\ CR\|\nabla u\|_{L^2(B_R^c)}|x|^{-1},\quad |\nabla u(x)|\leq CR\|\nabla u\|_{L^2(B_R^c)}|x|^{-2},\quad \forall\ x\in B_{3R}^c,\\
        &\left|\nabla^2 u(x)\right|\leq CR\|\nabla u\|_{L^2(B_R^c)}|x|^{-3},\quad \forall\ x\in B_{3R}^c,
    \end{align}
    where $C>0$ is a constant independent of $R$ and $u$. Moreover, there exists a stream function $\psi\in L^2_{\operatorname{loc}}(\overline{B_{R}}^c;\R^2)$ such that $u=\nabla^\perp\psi=(-\pa_2\psi,\pa_1\psi)$ weakly in $\overline{B_{R}}^c$, with the property
    \begin{equation}
        \left|\psi(x)-u_\infty^{\top}\cdot x\right|\leq CR\|\nabla u\|_{L^2(B_R^c)},\quad \forall\ |x|>2R
    \end{equation}
    for some constant $C>0$ which is independent of $u$ and $R>0$, \footnote{Here we emphasize that $u_\infty$ depends on $u$ and $R>0$.} where $u_\infty^{\top}:=(u_\infty^{(2)}, -u_\infty^{(1)})$ if $u_\infty=(u_\infty^{(1)}, u_\infty^{(2)})$.
\end{lem}

\begin{proof}
    Let $\Omega:=\R^2\setminus \overline{B_R}$ be an open set. As $u\in L^2_{\text{loc}}(\Omega;\R^2)$ is divergence-free, there exists a stream function $\psi\in L^2_{\text{loc}}(\Omega;\R)$ such that $u=\nabla^\perp\psi=(-\p_2\psi,\p_1\psi)$ weakly in $\Omega$. Then $\omega=\operatorname{curl}u=\Delta\psi$ weakly in $\Omega$. Since $\omega\in L^2(\Omega)$ is a weakly harmonic function on $\Omega$, $\omega$ is a strongly harmonic function on $\Omega$, hence $\omega\in C^\infty(\Omega)$ and $\psi\in C^\infty(\Omega)$.  As $\omega\in L^2(\Omega)$ and $\psi\in L_{\text{loc}}^2(\Omega)$, we can write in the polar coordinates, i.e., $x=(r\cos\theta, r\sin\theta)$
    \[\psi(x)=\sum\limits_{k\in\mathbb{Z}}\psi_k(r)e^{ik\theta},\quad \omega(x)=\sum\limits_{k\in\mathbb{Z}}\omega_k(r)e^{ik\theta},\]
    where the first summation is convergent in $L_{\text{loc}}^2(\Omega)$ and the second one is convergent in $L^2(\Omega)$, with $\psi_k\in L_{\text{loc}}^2((R, +\infty);\mathbb C)$ and
    \begin{equation}\label{Eq.omega_k_bound}
        \sum_{k\in\Z}\int_{R}^\infty |\omega_k(r)|^2r\,dr=\frac{\|\omega\|_{L^2(B_R^c)}^2}{2\pi}\leq  \frac{\|\nabla u\|_{L^2(B_R^c)}^2}{2\pi}.
    \end{equation}
    
    By $\Delta \omega=0$ in $B_R^c$, we have 
    \beno
    \omega_k''(r)+\frac1r\omega_k'(r)-\frac{k^2}{r^2}\omega_k(r)=0
    \eeno
    for $r\in(R, +\infty)$ and $k\in\Z$. Thus, there exist two constants $a_k, \widetilde a_k\in\mathbb C$ such that $\omega_k(r)=a_kr^{-k}+\widetilde a_kr^k$ for all $r\in(R, +\infty)$ and $k\in\Z$. Since $\omega$ is real-valued, we have $\omega_k=\overline{\omega_{-k}}$ for $k\in\Z$, thus $\widetilde a_k=\overline{a_{-k}}$ for all $k\in\Z$, where the overline refers to the complex conjugates. Hence $\omega_k(r)=a_kr^{-k}+\overline{a_{-k}}r^k$ for all $r\in(R, +\infty)$ and $k\in\Z$. Then it follows from \eqref{Eq.omega_k_bound} that    
     \[\omega_k(r)=\begin{cases}
        a_kr^{-k}& k\geq 2\\
        \overline{a_{-k}}r^k & k\leq -2
    \end{cases},\qquad \forall\ r\in(R, +\infty),\ \forall\ k\in\Z\setminus\{-1,0, 1\},\]
    and moreover we have
    \begin{equation}\label{Eq.a_k_bound}
        \omega(x)=\sum\limits_{k=2}^{\infty}r^{-k}\bigl(\overline{a_{k}}e^{-i k\theta}+a_k e^{i k\theta}\bigr),\quad \text{with}\quad \sum\limits_{k=2}^{\infty}\frac{|a_{k}|^2}{k-1} R^{-2k+2}=\frac{\|\omega\|_{L^2(B_R^c)}^2}{2\pi}.
    \end{equation}
    This implies that
    \begin{equation}\label{Eq.omega_decay}
        |\omega(x)|\leq CR\|\nabla u\|_{L^2(B_R^c)}|x|^{-2},\qquad\forall\ |x|>2R
    \end{equation}
    for some constant $C>0$ which is independent of $u$ and $R>0$. Indeed, by Cauchy's inequality and \eqref{Eq.a_k_bound} we have
    \begin{align*}
        &\sum_{k=2}^\infty (2R)^{-k+2}|a_k|=\sum_{k=2}^\infty \frac{|a_k|}{\sqrt{k-1}}R^{-k+1}\cdot 2^{-k+2}R\sqrt{k-1}\\
        &\leq  \left(\sum_{k=2}^{\infty}\frac{|a_{k}|^2}{k-1} R^{-2k+2}\right)^{1/2}\left(\sum_{k=2}^\infty2^{-2k+4}(k-1)R^2\right)^{1/2}\leq CR\|\nabla u\|_{L^2(B_R^c)}
    \end{align*}
    for some constant $C>0$ independent of $u$ and $R>0$. Thus, for $|x|>2R$ we have 
    \begin{align*}
        |x|^2|\omega(x)|\leq 2\sum_{k=2}^\infty r^{-k+2}|a_k|\leq 2\sum_{k=2}^\infty (2R)^{-k+2}|a_k|\leq 2CR\|\nabla u\|_{L^2(B_R^c)},
    \end{align*}
    which gives \eqref{Eq.omega_decay}.

    Now we define $\widetilde\psi: B_{R}^c\to \R$ by 
    \begin{equation}\label{Eq.tilde_psi}
        \widetilde\psi(x):=\sum_{k\in\Z}\widetilde \psi_k(r)e^{ik\theta}, \quad \text{with}\quad\widetilde \psi_k(r):=\begin{cases}
            \frac{\overline{a_{-k}}}{4(k+1)}r^{k+2}& k\leq -2\\
            \frac{a_{k}}{4(-k+1)}r^{-k+2}& k\geq 2\\
            0 &k\in\{-1, 0, 1\}
        \end{cases},
    \end{equation}
    where by \eqref{Eq.a_k_bound}, the summation defining $\widetilde\psi$ is convergent in $L_{\text{loc}}^2(B_R^c;\R)$,\footnote{Indeed, the summation defining $\widetilde\psi$ is convergent in $H^k_{\text{loc}}(\Omega)$ for all $k\in\Z_{\geq 0}$, hence $\widetilde\psi\in C^\infty(\Omega)$.} and 
    \begin{equation}\label{Eq.tilde_psi_k}
        \widetilde\psi_k''(r)+\frac1{r}\widetilde\psi_k'(r)-\frac{k^2}{r^2}\widetilde\psi_k(r)=\omega_k(r),\quad\forall\ r\in(R, +\infty),\ \forall\ k\in\Z.
    \end{equation}
    We claim that 
    \begin{equation}\label{Eq.tilde_psi_H^2}
        \nabla^2\widetilde\psi\in L^2(B_{2R}^c)\quad\text{with}\quad\left\|\nabla^2\widetilde\psi\right\|_{L^2(B_{2R}^c)}\leq \widetilde C\|\nabla u\|_{L^2(B_R^c)},
    \end{equation}
    where $\widetilde C>0$ is a constant independent of $u$ and $R>0$.  Using \eqref{Eq.tilde_psi_k} we have $\Delta \widetilde\psi=\omega\in L^2(B_{2R}^c)$.

    Let $\widehat\psi:=\psi-\widetilde\psi\in L^2_{\text{loc}}(B_{2R}^c;\mathbb R)$, then by \eqref{Eq.tilde_psi_H^2} and $u=\nabla^\perp\psi$ we know that
    \begin{equation}\label{Eq.hat_psi_bound}
        \left\|\nabla^2\widehat\psi\right\|_{L^2(B_{2R}^c)}\leq \widehat C\|\nabla u\|_{L^2(B_R^c)},
    \end{equation}
    where $\widehat C>0$ is a constant independent of $u$ and $R>0$. Moreover, it follows from $\omega=\Delta \psi=\Delta \widetilde\psi$ that $\Delta \widehat\psi=0$ in $B_{2R}^c$ weakly (and also strongly). We also write 
    $$\widehat\psi(x)=\sum_{k\in\Z}\widehat\psi_k(r)e^{ik\theta}\quad\text{in}\quad L^2_{\text{loc}}(B_{2R}^c;\mathbb R)$$
    for some $\widehat\psi_k\in L^2_{\text{loc}}((2R, +\infty);\mathbb C)$. Using the same argument as $\omega$, by $\Delta\widehat\psi=0$ and the fact that $\widehat\psi$ is real-valued, there exists a sequence of constants $\{b_k\}_{k\in\Z}\subset\mathbb C$ such that $\widehat\psi_k(r)=b_kr^{-k}+\overline{b_{-k}}r^k$ for all $r\in(2R, +\infty)$ and $k\in\Z$. We claim that
    \begin{equation}\label{Eq.b_k_bound}
        b_k=0\quad\forall\ k\in\Z\cap(-\infty,-2]\quad\text{and}\quad \sum_{k=-1}^{\infty}k^2(k+1)|b_k|^2(2R)^{-2k-2}\leq C'\|\nabla u\|_{L^2(B_R^c)}^2,
    \end{equation}
    where $C'>0$ is a constant independent of $u$ and $R>0$. In particular,
    \begin{equation}\label{Eq.hat_psi_growth}
        \widehat\psi(x)=\sum_{k=-1}^\infty r^{-k}\left(b_ke^{ik\theta}+\overline{b_k}e^{-ik\theta}\right)\quad\text{in}\quad L^2_{\text{loc}}(B_{2R}^c).
    \end{equation}
    Note that $u=\nabla^\perp(\psi+c)$ for all constant $c\in\R$, hence we can assume without loss of generality that $b_0=0$. Now it follows from \eqref{Eq.tilde_psi} and \eqref{Eq.hat_psi_growth} that
    \begin{equation}\label{Eq.psi_sum}
        \psi(x)=\sum_{k=-1}^\infty r^{-k}\left(b_ke^{ik\theta}+\overline{b_k}e^{-ik\theta}\right)+\sum_{k=2}^\infty \frac{r^{-k+2}}{4(-k+1)}\left(a_ke^{ik\theta}+\overline{a_k}e^{-ik\theta}\right)
    \end{equation}
    in $L^2_{\text{loc}}(B_{2R}^c)$. Denote $b_{-1}^{\text{r}}:=\operatorname{Re} b_{-1}$ and $b_{-1}^{\text{i}}:=\operatorname{Im} b_{-1}$, then (recalling that $b_0=0$)
    \begin{equation*}
        \psi(x)-2(b_{-1}^{\text{r}}, b_{-1}^{\text{i}})\cdot x=\sum_{k=1}^\infty r^{-k}\left(b_ke^{ik\theta}+\overline{b_k}e^{-ik\theta}\right)+\sum_{k=2}^\infty \frac{r^{-k+2}}{4(-k+1)}\left(a_ke^{ik\theta}+\overline{a_k}e^{-ik\theta}\right)
    \end{equation*}
    in $L^2_{\text{loc}}(B_{2R}^c)$. By Cauchy's inequality, \eqref{Eq.a_k_bound} and \eqref{Eq.b_k_bound}, we have
    \begin{align*}
        &\sum_{k=1}^\infty(2R)^{-k}|b_k|+\sum_{k=2}^\infty \frac{(2R)^{-k+2}}{k-1}|a_k|\\
        &\leq \left(\sum_{k=1}^\infty k^2(k+1)|b_k|^2(2R)^{-2k}\right)^{1/2}\left(\sum_{k=1}^\infty\frac1{k^2(k+1)}\right)^{1/2}\\
        &\quad\qquad+\left(\sum_{k=2}^\infty \frac{|a_k|^2}{k-1}R^{-2k+2}\right)^{1/2}\left(\sum_{k=2}^\infty \frac{2^{-2k+4}}{k-1}R^2\right)^{1/2}\\
        &\leq  C_0R\|\nabla u\|_{L^2(B_R^c)}
    \end{align*}
    for some constant $C_0>0$ independent of $u$ and $R>0$. Thus, we obtain
    \begin{equation}
        \left|\psi(x)-2(b_{-1}^{\text{r}}, b_{-1}^{\text{i}})\cdot x\right|\leq C_0R\|\nabla u\|_{L^2(B_R^c)},\quad \forall\ |x|>2R.
    \end{equation}
    
 To conclude the proof of Lemma \ref{Lem.A_abstract}, it suffices to prove \eqref{Eq.tilde_psi_H^2}, \eqref{Eq.b_k_bound} and the following facts    
   \begin{align}
        &|u(x)-2(-b_{-1}^{\text i}, b_{-1}^{\text r})|\leq CR\|\nabla u\|_{L^2(B_R^c)}|x|^{-1},\quad \forall\ |x|>3R,\label{Eq.u_infty}\\
        &|\nabla u(x)|\leq CR\|\nabla u\|_{L^2(B_R^c)}|x|^{-2},\quad \forall\ |x|>3R,\label{Eq.nabla_u_decay}\\
        &|\nabla^2 u(x)|\leq CR\|\nabla u\|_{L^2(B_R^c)}|x|^{-3},\quad \forall\ |x|>3R,\label{Eq.nabla^2_u_decay}
    \end{align}
    for some constant $C>0$  independent of $u$ and $R>0$. See below for the proof. 
\end{proof}

Let $R_0>0$, $\Omega=\overline{B_{R_0}}^c$ and let $f:\Omega\to \R$ be such that $f\in L^2_{\text{loc}}(\Omega)$ and the weak derivatives $\nabla f, \nabla^2f\in L^2_{\text{loc}}(\Omega)$ exist. Then we can define the Wirtinger derivatives
\begin{equation}\label{Eq.Wirtinger_derivatives}
    \partial_zf:=\frac12(\partial_1f-i\partial_2f),\quad \pa_{\bar z}f:=\frac12(\partial_1f+i\partial_2f).
\end{equation}
It is easy to see that 
\begin{align}
    \pa_1f&=\pa_zf+\pa_{\bar z}f,\quad \pa_2f=i(\pa_zf-\pa_{\bar z}f),\quad 4\pa_z\pa_{\bar z}f=\Delta f,\label{Eq.pa_1_pa_z}\\
    \pa_1^2f&=\pa_z^2f+2\pa_z\pa_{\bar z}f+\pa_{\bar z}^2f,\quad \pa_2^2f=-\bigl(\pa_z^2f-2\pa_z\pa_{\bar z}f+\pa_{\bar z}^2f\bigr),\quad \pa_1\pa_2f=i(\pa_z^2f-\pa_{\bar z}^2f).\label{Eq.pa_11_pa_zz}
\end{align}
Moreover, if we write $f(x)=\sum_{k\in\Z}f_k(r)e^{ik\theta}$ in $L^2_{\text{loc}}(\Omega)$ for some $\{f_k\}_{k\in\Z}\subset L^2_{\text{loc}}((R_0, +\infty))$, then we have
\begin{align}
    &\p_zf=\f12\sum\limits_{k\in \Z} \left(\p_r+\frac{k}{r}\right)f_k(r)e^{i(k-1)\theta},\quad
    \p_{\bar{z}}f=\f12\sum\limits_{k\in \Z} \left(\p_r-\frac{k}{r}\right)f_k(r)e^{i(k+1)\theta},\label{Eq.p_z}\\
    &\p_{\bar z}^2f=\frac14\sum_{k\in\Z}\left(\p_r-\frac{k+1}{r}\right)\left(\p_r-\frac{k}{r}\right)f_k(r)e^{i(k+2)\theta}\label{Eq.p_bar_z^2}
\end{align}
in $L^2_{\text{loc}}(\Omega)$. Now we are in a position to prove \eqref{Eq.tilde_psi_H^2}, \eqref{Eq.b_k_bound} and \eqref{Eq.u_infty}-\eqref{Eq.nabla^2_u_decay}.

\begin{proof}[Proof of \eqref{Eq.tilde_psi_H^2}]
    In view of \eqref{Eq.pa_1_pa_z}, \eqref{Eq.pa_11_pa_zz}, $\pa_{\bar z}^2\widetilde\psi=\overline{\pa_z^2\widetilde\psi}$ and $\Delta\widetilde \psi =\omega\in L^2(B_{2R}^c)$, it suffices to show that
    \begin{equation}\label{Eq.p_bar_z^2tildepsi}
        \left\|\pa_{\bar z}^2\widetilde\psi\right\|_{L^2(B_{2R}^c)}\leq \widetilde C\|\nabla u\|_{L^2(B_R^c)}
    \end{equation}
    for some constant $\widetilde C>0$ independent of $u$ and $R>0$. Indeed, by \eqref{Eq.tilde_psi} and \eqref{Eq.p_bar_z^2},  we have
    \begin{equation}\label{Eq.p_bar_z_tilde_psi}
        4\p_{\bar z}^2\widetilde\psi(x)=-\sum_{k=2}^\infty ka_kr^{-k}e^{i(k+2)\theta}\quad\text{in}\quad L^2_{\text{loc}}(B_R^c).
    \end{equation}
    It follows from \eqref{Eq.a_k_bound} that
    \begin{align*}
        2\pi\sum_{k=2}^\infty\int_{2R}^\infty k^2|a_k|^2r^{-2k}r\,dr=\sum_{k=2}^\infty \frac{\pi k^2|a_k|^2(2R)^{-2k+2}}{k-1}\leq \widetilde C_0\sum_{k=2}^\infty \frac{|a_k|^2R^{-2k+2}}{k-1}\leq \widetilde C_1^2\|\nabla u\|_{L^2(B_R^c)}^2,
    \end{align*}
    where $\widetilde C_0=\sup\{\pi k^22^{-2k+2}:k\in\Z\cap[2,+\infty)\}\in (0, +\infty)$ and $\widetilde C_1=\sqrt{\widetilde C_0/(2\pi)}$. This proves \eqref{Eq.p_bar_z^2tildepsi} \end{proof}

\begin{proof}[Proof of \eqref{Eq.b_k_bound}]
    Recall that $\widehat \psi_k(r)=b_kr^{-k}+\overline{b_{-k}}r^k$ for all $k\in\Z$ and $r\in(2R, +\infty)$, using \eqref{Eq.p_bar_z^2} we have
    \begin{equation}\label{Eq.p_bar_z_hat_psi}
        \p_{\bar z}^2\widehat\psi(x)=\sum_{k\in\Z}k(k+1)b_kr^{-k-2}e^{i(k+2)\theta}\quad\text{in}\quad L^2_{\text{loc}}(B_{2R}^c).
    \end{equation}
    By \eqref{Eq.hat_psi_bound}, we have
    \begin{align*}
        \sum_{k\in\Z}\int_{2R}^\infty k^2(k+1)^2|b_k|^2r^{-2k-4}r\,dr\leq \widehat C_0\|\nabla u\|_{L^2(B_R^c)}^2
    \end{align*}
    for some constant $\widehat C_0>0$ independent of $u$ and $R>0$. This implies \eqref{Eq.b_k_bound}.
\end{proof}

\begin{proof}[Proof of \eqref{Eq.u_infty}]
    By \eqref{Eq.psi_sum} and \eqref{Eq.p_z}, we have 
    \begin{align}
        \p_z\psi(x)&=\overline{b_{-1}}+\sum_{k=1}^\infty r^{-k}\left(2(1-k)\overline{b_{k-1}}e^{-ik\theta}+\frac{\overline{a_{k+1}}}4e^{-i(k+2)\theta}-\frac{a_{k+1}}{4k}e^{ik\theta}\right),\label{Eq.p_z_psi}\\
        \p_{\bar z}\psi(x)&=b_{-1}+\sum_{k=1}^\infty r^{-k}\left(2(1-k)b_{k-1}e^{ik\theta}+\frac{a_{k+1}}4e^{i(k+2)\theta}-\frac{\overline{a_{k+1}}}{4k}e^{-ik\theta}\right)\label{Eq.p_bar_z_psi}
    \end{align}
    in $L^2_{\text{loc}}(B_{2R}^c)$. By Cauchy's inequality, \eqref{Eq.a_k_bound} and \eqref{Eq.b_k_bound}, we get
    \begin{align*}
        &\sum_{k=1}^\infty (3R)^{-k+1}(k-1)|b_{k-1}|=\sum_{k=0}^\infty (3R)^{-k}k|b_k|\\
        &\leq \left(\sum_{k=0}^\infty k^2(k+1)|b_k|^2(2R)^{-2k-2}\right)^{1/2}\left(\sum_{k=0}^\infty\frac{4(3/2)^{-2k}}{k+1}R^2\right)^{1/2}\leq CR\|\nabla u\|_{L^2(B_R^c)},\\
        &\sum_{k=1}^\infty (3R)^{-k+1}|a_{k+1}|=\sum_{k=0}^\infty (3R)^{-k+2}|a_k|\\
        &\leq \left(\sum_{k=2}^\infty \frac{|a_k|^2}{k-1}R^{-2k+2}\right)^{1/2}\left(\sum_{k=2}^\infty(k-1)3^{-2k+4}R^2\right)^{1/2}\leq CR\|\nabla u\|_{L^2(B_R^c)}
    \end{align*}
    for some constant $C>0$ independent of $u$ and $R>0$. Now \eqref{Eq.u_infty} follows from $u=\nabla^\perp\psi=(-\pa_2\psi, \pa_1\psi)$, \eqref{Eq.pa_1_pa_z}, \eqref{Eq.p_z_psi}, \eqref{Eq.p_bar_z_psi}, $b_{-1}^{\text r}=\operatorname{Re}b_{-1}$ and $b_{-1}^{\text i}=\operatorname{Im}b_{-1}$.
\end{proof}

\begin{proof}[Proof of \eqref{Eq.nabla_u_decay}]
    In view of \eqref{Eq.omega_decay} and \eqref{Eq.pa_11_pa_zz}, it suffices to show that
    \begin{equation}\label{Eq.p_barz_psi^2}
        |\p_{\bar z}^2\psi(x)|\leq CR\|\nabla u\|_{L^2(B_R^c)}|x|^{-2},\qquad\forall\ |x|>3R
    \end{equation}
    for some constant $C>0$ independent of $u$ and $R>0$. By Cauchy's inequality and \eqref{Eq.a_k_bound}, we have
    \begin{align*}
        \sum_{k=2}^\infty k|a_k|(2R)^{-k+2}\leq\left(\sum_{k=2}^\infty \frac{|a_k|^2}{k-1}R^{-2k+2}\right)^{1/2}\left(\sum_{k=2}^\infty k^2(k-1)2^{-2k+4}R^2\right)^{1/2}\leq CR\|\nabla u\|_{L^2(B_R^c)},
    \end{align*}
 hence we get by \eqref{Eq.p_bar_z_tilde_psi}  that 
    \begin{equation}\label{Eq.p_barz_tilde_psi^2}
        \left|\p_{\bar z}^2\widetilde\psi(x)\right|\leq CR\|\nabla u\|_{L^2(B_R^c)}|x|^{-2},\qquad\forall\ |x|>2R.
    \end{equation}
 A similar argument using \eqref{Eq.p_bar_z_hat_psi}, Cauchy's inequality and \eqref{Eq.b_k_bound} gives 
    \begin{equation}\label{Eq.p_barz_hat_psi^2}
        \left|\p_{\bar z}^2\widehat\psi(x)\right|\leq CR\|\nabla u\|_{L^2(B_R^c)}|x|^{-2},\qquad\forall\ |x|>3R.
    \end{equation}
 Now \eqref{Eq.p_barz_psi^2} follows from \eqref{Eq.p_barz_tilde_psi^2}, \eqref{Eq.p_barz_hat_psi^2} and the fact that $\psi=\widetilde\psi+\widehat\psi$.
\end{proof}

\begin{proof}[Proof of \eqref{Eq.nabla^2_u_decay}]
    As $u=\nabla^\perp\psi$, $\p_{z}^3\psi=\overline{\p_{\bar z}^3\psi}$ and $\p_z^2\p_{\bar z}\psi=\overline{\p_{\bar z}^2\p_z\psi}$, it suffices to show that
    \begin{align}
        &\left|\p_{\bar z}^3\psi(x)\right|\leq CR\|\nabla u\|_{L^2(B_R^c)}|x|^{-3},\qquad\forall\ |x|>3R,\label{Eq.p_bar_z^3psi}\\
        &\left|\p_{\bar z}^2\pa_z\psi(x)\right|\leq CR\|\nabla u\|_{L^2(B_R^c)}|x|^{-3},\qquad\forall\ |x|>3R\label{Eq.p_bar_z^2_p_z}
    \end{align}
    for some constant $C>0$ independent of $u$ and $R>0$. We first prove \eqref{Eq.p_bar_z^2_p_z}. Recalling $4\p_{\bar z}\p_z\psi=\Delta\psi=\omega$, it suffices to show that
    \begin{equation}
        \left|\p_{\bar z}\omega(x)\right|\leq CR^2\|\nabla u\|_{L^2(B_R^c)}|x|^{-3},\qquad\forall\ |x|>3R.
    \end{equation}
    Indeed, by \eqref{Eq.a_k_bound} and \eqref{Eq.p_z}, we have
    \[\p_{\bar z}\omega(x)=-\sum_{k=2}^\infty r^{-k-1}ka_ke^{i(k+1)\theta}\quad\text{in}\quad L^2_{\text{loc}}(B_{2R}^c),\]
    with
    \begin{align*}
        \sum_{k=2}^\infty k|a_k|(2R)^{-k+2}\leq \left(\sum_{k=2}^{\infty}\frac{|a_{k}|^2}{k-1} R^{-2k+2}\right)^{1/2}\left(\sum_{k=2}^\infty2^{-2k+4}k^2(k-1)R^2\right)^{1/2}\leq CR\|\nabla u\|_{L^2(B_R^c)},
    \end{align*}
    hence \eqref{Eq.p_bar_z^2_p_z} follows. As for \eqref{Eq.p_bar_z^3psi}, using \eqref{Eq.psi_sum}, \eqref{Eq.b_k_bound}, \eqref{Eq.a_k_bound} and
    \[\p_{\bar z}^3\psi(x)=\frac18\sum_{k\in\Z}\left(\p_r-\frac{k+2}{r}\right)\left(\p_r-\frac{k+1}{r}\right)\left(\p_r-\frac{k}{r}\right)\psi_k(r)e^{i(k+3)\theta}\quad\text{in}\quad L^2_{\text{loc}}(B_{3R}^c),\]
    we find that
    \[\p_{\bar z}^3\psi(x)=\sum_{k=3}^\infty r^{-k}\left(-(k-3)(k-2)(k-1)b_{k-3}e^{ik\theta}+\frac{(k-1)k}{4}a_{k-1}e^{i(k+2)\theta}\right)\quad\text{in}\quad L^2_{\text{loc}}(B_{3R}^c),\]
    with
    \begin{align*}
        &\sum_{k=1}^\infty k(k+1)(k+2)|b_k|(3R)^{-k}+\sum_{k=2}^\infty k(k+1)|a_k|(3R)^{-k+2}\\
        \leq &\ \left(\sum_{k=0}^\infty k^2(k+1)|b_k|^2(2R)^{-2k-2}\right)^{1/2}\left(\sum_{k=0}^\infty(k+1)(k+2)^2\left(\frac32\right)^{-2k-2}R^2\right)^{1/2}\\
        &\qquad+\left(\sum_{k=2}^\infty \frac{|a_k|^2}{k-1}R^{-2k+2}\right)^{1/2}\left(\sum_{k=2}^\infty k^2(k+1)^2(k-1)3^{-2k+4}R^2\right)^{1/2}\leq CR\|\nabla u\|_{L^2(B_R^c)}.
    \end{align*}
   Hence, \eqref{Eq.p_bar_z^3psi} follows. 
   \end{proof}

\subsection{Asymptotics of strong solution}

Let $(\rho,u)$ be the strong solution obtained in Theorem \ref{thm:strong}. First of all, we consider the case when $\rho_0$ satisfies $(H1)$.
In such case, there exists an $R_*=R_*(T)>1$ such that $\operatorname{supp}_x\rho(t,\cdot)\subset B_{R_*}$ for all $t\in[0, T]$. 
This motivated us to consider the following Stokes problem in an exterior domain $\R^2\setminus B_{R_*}$
\begin{align}\label{u_inf_decay}
-\Delta u(t,x)+\nabla P(t,x)=0,\quad  -\Delta \dot{u}(t,x)+\nabla \dot{P}(t,x)=0.  
\end{align}

\begin{prop}\label{u_inf_decay_lem}
There exist $u_\infty=u_\infty(t):[0, T]\to\R^2$, $h=h(t):[0, T]\to\R^2$, $f=f(t):[0, T]\to\R_{\geq 0}$ and a constant $C=C(\rho_0, u_0, T, R_*)>0$ (which is independent of $t$ and $x$), such that the following inequalities holds for $|x|>3R_*$ and $t\in[0, T]$:
\begin{subequations} \label{u_infty}
\begin{gather}
\label{u_infty_decay} |u(t,x)-u_{\infty}(t)|\leq C|x|^{-1}\text{~with~} \sup_{t\in[0,T]}|u_{\infty}(t)|\leq C,\\
\label{nabla_u_infty_decay} |\nabla u(t,x)|\leq C|x|^{-2},\quad |\nabla^2 u(t,x)|\leq C|x|^{-3},\\
\label{dotu_infty_decay} |\dot{u}(t,x)-h(t)|\leq f(t)|x|^{-1}\text{~with~} \int_0^{T}t(|h(t)|^2+|f(t)|^2)\,dt\leq C,\\
\label{nabla_dotu_infty_decay} |\nabla\dot{u}(t,x)|\leq f(t)|x|^{-2}\text{~with~} \int_0^{T}t|f(t)|^2\,dt\leq C;\end{gather}
\end{subequations}
Moreover, there exists a function $\psi=\psi(t,x)$ such that $u=\nabla^\perp\psi$ weakly and $\psi$ satisfies
\begin{align}\label{psi_infty}
|\psi(t,x)|\leq C|x|\quad \andf |\nabla\psi(t,x)|\leq C\quad \text{for}\,\, |x|>3R_*,\ t\in[0, T].
\end{align}
\end{prop}
\begin{proof}
By \eqref{u_inf_decay} and Lemma \ref{Lem.A_abstract}, there exists a constant $C_0=C_0(T, R_*)>0$ such that for any fixed $t\in [0, T]$, there exist $u_\infty(t)\in\R^2$ and $h(t)\in \R^2$ satisfying
\begin{align}
    &|x||u(t,x)-u_\infty(t)|+|x|^2|\nabla u(t,x)|+|x|^3|\nabla^2 u(t,x)|\leq C_0\|\nabla u(t)\|_{L^2(\R^2)},\label{Eq.u_infty_decay}\\
    &|\dot{u}(t,x)-h(t)|\leq C_0\|\nabla \dot{u}(t)\|_{L^2(\R^2)}|x|^{-1},\quad |\nabla\dot{u}(t,x)|\leq C_0\|\nabla\dot{u}(t)\|_{L^2(\R^2)}|x|^{-2},\label{Eq.dot_u_infty_decay}\\
    &\qquad\qquad\qquad \left|\psi(t,x)-u_\infty(t)^\top\cdot x\right|\leq C_0\|\nabla u(t)\|_{L^2(\R^2)},\label{Eq.psi_infty_decay}
\end{align}
for all $|x|>3R_*$. By \eqref{Eq.u_infty_decay} and Lemma \ref{lem_Lp}, there exists a constant $C_1=C_1(T, R_*)>0$ such that for all $t\in[0, T]$ we have
\begin{align}
|u_{\infty}(t)|^2&\leq \widetilde CR_*^{-2}\int_{B_{4R_*}\setminus B_{3R_*}}(|u(t,x)-u_{\infty}(t)|^2+|u(t,x)|^2)\,dx\nonumber\\
& \leq C_1\|\nabla u(t)\|_{L^2(\R^2)}^2+\widetilde CR_*^{-2}\int_{B_{4R_*}}|u(t,x)|^2\,dx\label{Eq.u_infty_estimate}\\
&\leq  C_1\big(\|\sqrt{\rho}u(t)\|^2_{L^2(\R^2)}+\|\nabla u(t)\|^2_{L^2(\R^2)}\big),\nonumber
\end{align}
where $\widetilde C>0$ is an absolute constant. Then we have 
$$\sup_{t\in [0, T]}|u_\infty(t)|\leq C_2\big(\|\sqrt\rho u\|_{L_t^\infty L_x^2}+\|\nabla u\|_{L_t^\infty L_x^2}\big)=\widetilde C_2(\rho_0, u_0, T, R_*)<+\infty$$
for some constant $C_2=C_2(T, R_*)>0$. Thus, \eqref{u_infty_decay}, \eqref{nabla_u_infty_decay} and \eqref{psi_infty} follow by recalling that $|\nabla\psi|=|u|$. 

As for $h$, the same argument as in \eqref{Eq.u_infty_estimate} gives 
\begin{equation}\label{Eq.h_bound}
    |h(t)|\leq C_3 \big(\|\sqrt{\rho}\dot{u}(t)\|_{L^2(\R^2)}+\|\nabla \dot{u}(t)\|_{L^2(\R^2)}\big),\quad\forall\ t\in[0, T],
\end{equation}
where $C_3=C_3(T, R_*)>0$. Now let
\[f(t):=(C_0+C_3)\big(\|\sqrt{\rho}\dot{u}(t)\|_{L^2(\R^2)}+\|\nabla \dot{u}(t)\|_{L^2(\R^2)}\big),\quad\forall\ t\in[0, T].\]
By Lemma \ref{dot u}, we have $\int_0^T tf(t)^2\,dt<\infty$ and by \eqref{Eq.h_bound} we have $|h(t)|\leq f(t)$ for all $t\in[0, T]$. Then we obtain \eqref{dotu_infty_decay} and \eqref{nabla_dotu_infty_decay}. 
\end{proof}
\fi 

\if0
With Proposition \ref{u_inf_decay_lem},  we can prove the following estimates of $u_t$.

\begin{prop}\label{prop_3.2}
Assume that $\rho_0$ satisfies (H1) or (H2). Then it holds that for any $T\in(0,\infty)$, 
\begin{align*}
&\int_0^T\bigl(\|\sqrt{{\rho_0}}{u}_t\|^2_{L^2}+(1+t)\|\sqrt{{\rho}}{u}_t\|^2_{L^2}+t\|\nabla{u}_t\|^2_{L^2}\bigr)\,dt\leq  C,
\end{align*}
where $C>0$ is a constant depending only on $T$, $\rho_0$ and $u_0$. More precisely, if $\rho_0$ satisfies (H1), then $C$ depends only on $T$, $\operatorname{diam}(\operatorname{supp} \rho_0)$,
$\|\rho_0\|_{L^{\infty}}$, $\|\sqrt{\rho_0} \langle x\rangle\|_{L^2}, \|\sqrt{\rho_0}u_0\|_{L^2}, \|\nabla u_0\|_{L^2}$; if $\rho_0$ satisfies (H2), then $C$ depends only on $T$, $R_0$, $c_0$, $\|\rho_0\|_{L^{\infty}}$, $\|\sqrt{\rho_0}u_0\|_{L^2}, \|\nabla u_0\|_{L^2}$.
\end{prop}

\begin{proof}
\noindent{\bf{Case 1.}} $\rho_0$ satisfies (H1). \smallskip

By Lemma \ref{u_L^infty}, there exists an $R_*=R_*(T)>1$ such that $\operatorname{supp}_x\rho(t,\cdot)\subset B_{R_*}$ for all $t\in[0, T]$. Then we have the estimates listed in Proposition \ref{u_inf_decay_lem} in the exterior region $B_{3R_*}^c$. For simplicity, we denote $R:=3R_*$ throughout this proof.

By \eqref{Claim_rhof_Lp}, Lemma \ref{lem_Lp} and \eqref{rho_x2},  we infer that for any $t\in[0,T]$,
\begin{align*}
\|\sqrt{\rho}{u}(t)\|_{L^4}
&\leq \|\rho_0\|^{\f13}_{L^{\infty}}\Bigl(\int_{\R^2}{\rho}\langle x\rangle^{2}\,dx \Bigr)^{\frac{1}{6}}
\Bigl(\|{u}\|_{L^2(B_1)}+\|\nabla{u}\|_{L^2}\Bigr)\\
&\leq C \|\rho_0\|^{\f13}_{L^{\infty}}\Bigl(\int_{\R^2}{\rho}\langle x\rangle^{2}\,dx \Bigr)^{\frac{1}{6}}
\Bigl(\|\sqrt{{\rho}}{u}\|_{L^2(\R^2)}+\|\nabla{u}\|_{L^2}\Bigr)\\
&\leq C\|\rho_0\|^{\frac{1}{3}}_{L^{\infty}}\Bigl( \|\sqrt{\rho_0} \langle x\rangle\|^2_{L^2}+T^2 \|\sqrt{\rho_0} u_0\|^2_{L^2}\Bigr)^{\frac{1}{6}} \Bigl(\|\sqrt{{\rho}}{u}\|_{L^2}+\|\nabla{u}\|_{L^2}\Bigr).
\end{align*}  
Then by H\"{o}lder inequality, Gagliardo-Nirenberg inequality and Lemma \ref{ nabla u}, we obtain
\begin{align*}
&\int_0^T\|\sqrt{{\rho}}{u}_t\|^2_{L^2}\,dt\leq 2\int_0^T \bigl(\|\sqrt{{\rho}}\dot{{u}}\|^2_{L^2}+\|\sqrt{{\rho}}{{u}}\|^2_{L^4}\|\nabla{u}\|^2_{L^{4}}\bigr)\,dt\\
&\leq\int_0^T \Bigl(2\|\sqrt{{\rho}}\dot{{u}}\|^2_{L^2}+C\bigl(\|\sqrt{{\rho}}{u}\|^2_{L^2}+\|\nabla{u}\|^2_{L^2}\bigr)\|\nabla{u}\|_{L^2}\|\nabla^2{u}\|_{L^2}\Bigr)\,dt\\
&\leq 2\|\sqrt{{\rho}}\dot{{u}}\|^2_{L^2([0,T];L^2)}+C\bigl(\|\sqrt{{\rho}}{u}\|^2_{L^{\infty}([0,T];L^2)}+\|\nabla{u}\|^2_{L^{\infty}([0,T];L^2)}\bigr)\|\nabla{u}\|_{L^2([0,T];L^2)}\|\nabla^2{u}\|_{L^2([0,T];L^2)}\\
&\leq C.
\end{align*}
By Gagliardo-Nirenberg inequality, Lemma \ref{ nabla u}, Lemma \ref{dot u}, Lemma \ref{u_L^infty} and Proposition \ref{u_inf_decay_lem}, we have
\begin{align*}
&\int_0^Tt\|\nabla{u}_t\|^2_{L^2}\,dt\leq 2\int_0^T t\bigl(\|\nabla\dot{{u}}\|^2_{L^2}+\|\nabla{u}\|^4_{L^4}+\|{u}\|_{L^{\infty}}^2\|\nabla^2 {u}\|^2_{L^{2}}\bigr)\,ds\\
\leq& \int_0^T C\Bigl(t\|\nabla\dot{{u}}\|^2_{L^2}+t\|\nabla {u}\|^2_{L^{2}}\|\nabla^2 {u}\|^2_{L^{2}}+\bigl(t\|{u}\|_{L^{\infty}(B_{R})}^2+t\|{u}\|_{L^{\infty}(B^c_{R})}^2\bigr)\|\nabla^2 {u}\|^{2}_{L^{2}}\Bigr)\,dt\\
\leq &C\|\sqrt{{t}}\nabla\dot{{u}}\|^2_{L^2([0,T];L^2)}+C\|\nabla{u}\|^2_{L^{\infty}([0,T];L^2)}\|\sqrt{t}\nabla^2{u}\|^2_{L^2([0,T];L^2)}\\
&+C\sup_{t\in[0,T]}\|t{u}(t)\|^2_{L^{\infty}(B_{R})}\|\nabla^2{u}\|^2_{L^2([0,T];L^2)}+C\sup_{t\in[0,T]}\|u(t)\|^2_{L^{\infty}(B^c_{R})}\|\sqrt{t}\nabla^2{u}\|^2_{L^2([0,T];L^2)}
\leq C. 
\end{align*}

Next we focus on $\sqrt{\rho_0}u_t$. For any $2<p<\infty,~1<q<2$ satisfying $\frac{1}{p}+\frac{1}{q}=1$, by duality, 
\begin{align*}
&\int_0^{T}\Bigl|\int_{\R^2}(\rho(t)-\rho_0)|{u}_t|^2\,dx\Bigr|\,dt \leq 2\int_0^{T}\|\rho(t)-\rho_0\|_{\dot{W}^{-1,p}}\|{u}_t\cdot\nabla{u}_t\|_{L^q}\,dt\\ 
&\leq 2\int_0^{T}\|\rho(t)-\rho_0\|_{\dot{W}^{-1,p}}\bigl(\|\nabla{u}_t\|_{L^2}\|{u}_t\|_{L^r(B_{R})}+\|{u}_t\cdot\nabla{u}_t\|_{L^q(B^c_{R})}\bigr)\,dt,
\end{align*}
where $\frac{1}{q}=\frac{1}{2}+\frac{1}{r}$.
If $|x|>R$, we get by Lemma \ref{u_inf_decay_lem} that 
\begin{align*}
|({u}_t\cdot\nabla{u}_t)(t,x)|\leq C\bigl(h(t)+f(t)|x|^{-1}\bigr)f(t)|x|^{-2}\leq C\bigl(h(t)f(t)|x|^{-2}+|f(t)|^2|x|^{-3}\bigr).
\end{align*}
Thus, we have 
\begin{align*}
\|({u}_t\cdot\nabla{u}_t)(t)\|_{L^q(B^c_{R})}\leq C{R}^{\frac{2}{q}-2}\bigl(|h(t)|^2+|f(t)|^2\bigr),
\end{align*}
which along with Lemma \ref{lem_Lp} gives 
\begin{align*}
\|({u}_t\cdot\nabla{u}_t)(t)\|_{L^q}\leq C\bigl(\|\sqrt{{\rho}}{u}_t\|^2_{L^2}+\|\nabla{u}_t\|^2_{L^2}+|h(t)|^2+|f(t)|^2 \bigr).   
\end{align*}

On the other hand, by the density equation,  \eqref{Claim_rhof_Lp}  and Lemma \ref{lem_Lp}, we get
\begin{equation}\begin{aligned}\label{ut3}
\|{\rho}(t)-\rho_0\|_{\dot{W}^{-1,p}}&\leq \int_0^t\|\dive ({\rho} {u})(\tau)\|_{\dot{W}^{-1,p}}\,d\tau
\leq t\|\rho_0\|^{\f12}_{L^{\infty}}\|\sqrt{{\rho}}{u}\|_{L^{\infty}([0,t];L^p)}\\
&\leq Ct\|(\sqrt\rho u, \nabla u)\|_{L^\infty([0, T]; L^2)}.  
\end{aligned}\end{equation}
Then it follows from Lemma \ref{ nabla u} and Lemma \ref{u_inf_decay_lem} that
\begin{align*}
&\int_0^{T}\Bigl|\int_{\R^2}({\rho}(t)-\rho_0)|{u}_t|^2\,dx\Bigl|\,dt
\leq C\|(\sqrt\rho u, \nabla u)\|_{L^\infty([0, T]; L^2)}\int_0^{T}t\|({u}_t\cdot\nabla{u}_t)(t)\|_{L^q}\,dt\\ 
&\leq C\|(\sqrt\rho u, \nabla u)\|_{L^\infty([0, T]; L^2)}\int_0^{T}t\bigl(\|\sqrt{{\rho}}{u}_t\|^2_{L^2}+\|\nabla{u}_t\|^2_{L^2}+|h(t)|^2+|f(t)|^2\bigr)\,dt\leq C.
\end{align*}
This shows that 
\begin{equation}\begin{aligned}\label{rho_0ut_L2}
\int_0^{T}\int_{\R^2}\rho_0|{u}_t|^2\,dx\,dt\leq 
&\int_0^{T}\Bigl|\int_{\R^2}({\rho}(t)-\rho_0)|u_t|^2\,dx\Bigr|\,dt+\int_0^{T}\int_{\R^2}{\rho}|{u}_t|^2\,dx\,dt\leq C.
\end{aligned}\end{equation}

{\bf{Case 2.}} $\rho_0$ satisfies (H2). \smallskip

As $\rho_0$ satisfies (H2), we get by \eqref{fL2_control} that 
\begin{align}\label{fH1}
\|f\|^2_{H^1(\R^2)}\leq C\int_{\R^2} (\rho|f|^2+|\nabla f|^2)\,dx.
\end{align}
By H\"{o}lder inequality, Gagliardo-Nirenberg inequality, \eqref{fH1} and Lemma \ref{ nabla u}, we have
\begin{align}
&\int_0^T\|\sqrt{{\rho}}{u}_t\|^2_{L^2}\,dt\leq 2\int_0^T \bigl(\|\sqrt{{\rho}}\dot{{u}}\|^2_{L^2}+\|\rho_0\|_{L^{\infty}}\|u\|^2_{L^4}\|\nabla{u}\|^2_{L^4}\bigr)\,dt\nonumber\\
&\leq  \int_0^T \Bigl(2\|\sqrt{{\rho}}\dot{{u}}\|^2_{L^2}+C\|\rho_0\|_{L^{\infty}}\bigl(\|\sqrt{\rho}u\|^2_{L^2}+\|\nabla {u}\|^2_{L^{2}}\bigr)\|\nabla u\|_{L^{2}}\|\nabla^2 u\|_{L^{2}}\Bigr)\,dt\label{rho_ut_bubble}\\
&\leq 2\|\sqrt{{\rho}}\dot{{u}}\|^2_{L^2([0,T];L^2)}+C\|\rho_0\|_{L^{\infty}}\bigl(\|\sqrt{{\rho}}{u}\|^2_{L^{\infty}([0,T];L^2)}+\|\nabla{u}\|^2_{L^{\infty}([0,T];L^2)}\bigr)\nonumber\\
&\qquad\times\|\nabla{u}\|_{L^{\infty}([0,T];L^2)}\|\nabla^2{u}\|_{L^{2}([0,T];L^2)}\leq C.\nonumber
\end{align}
It follows from the fact $\|u\|^2_{L^{\infty}(\R^2)}\leq C\|u\|_{L^2(\R^2)}\|\nabla^2u\|_{L^2(\R^2)}$ that
\begin{equation*}\begin{aligned}
&\int_0^Tt\|\nabla{u}_t\|^2_{L^2}\,dt\leq C\int_0^Tt\bigl(\|\nabla\dot{{u}}\|^2_{L^2}+\|\nabla{u}\|^4_{L^4}+\|{u}\|^2_{L^{\infty}}\|\nabla^2 {u}\|^2_{L^{2}}\bigr)\,dt\\
&\quad\leq C\int_0^T t\Bigl(\|\nabla\dot{{u}}\|^2_{L^2}+\|\nabla {u}\|^2_{L^{2}}\|\nabla^2 {u}\|^2_{L^{2}}+\|{u}\|_{L^2}\|\nabla^2 {u}\|^{3}_{L^{2}}\Bigr)\,dt.
\end{aligned}\end{equation*}
\if0By Lemma \ref{ nabla u} and Lemma \ref{dot u}, we know that for any $\alpha\in [0,1]$ there hold
\begin{align*}
\int_0^T t^{\alpha}\|\nabla^2u\|^2_{L^2}\,dt\leq C \quad \andf \sup_{t\in[0,T]}t^{1+\alpha}\|\nabla^2u(t)\|^2_{L^2}\leq C.    
\end{align*}\fi 
Then by \eqref{fH1}, Lemma \ref{ nabla u} and Lemma \ref{dot u}, we have 
\begin{equation}\begin{aligned}\label{nabla_ut_bubble}
&\int_0^Tt\|\nabla{u}_t\|^2_{L^2}\,dt 
\leq C\|\sqrt{t}\nabla \dot{u}\|^2_{L^2([0,T];L^2)}+C\|\nabla {u}\|^2_{L^{\infty}([0,T];L^2)}\|\sqrt{t}\nabla^2 {u}\|^2_{L^2([0,T];L^2)}\\
&\quad+C\sup_{t\in[0,T]}(t\|\nabla^2u\|_{L^2})\bigl(\|\sqrt{{\rho}}{u}\|_{L^{\infty}([0,T];L^2)}+\|\nabla{u}\|_{L^{\infty}([0,T];L^2)}\bigr)\|\nabla^2u\|^2_{L^2([0,T];L^2)}\leq C.
\end{aligned}\end{equation}

Next we focus on $\sqrt{\rho_0}u_t$.
For any $2<p<\infty,~1<q<2$ satisfying $\frac{1}{p}+\frac{1}{q}=1$, we get by duality  that 
\begin{align*}
\int_0^{T}\Bigl|\int_{\R^2}({\rho}(t)-\rho_0)|{u}_t|^2\,dx\Bigr|\,dt 
&\leq 2\int_0^{T}\|{\rho}(t)-\rho_0\|_{\dot{W}^{-1,p}}\|\nabla{u}_t\|_{L^2}\|{u}_t\|_{L^r}\,dt,
\end{align*}
where $\frac{1}{q}=\frac{1}{2}+\frac{1}{r}$.
It follows from \eqref{ut3} and \eqref{fH1}  that
\begin{align*}
&\|{\rho}(t)-\rho_0\|_{\dot{W}^{-1,p}}\leq t\|\rho_0\|_{L^{\infty}}\|u\|_{L^p}\leq Ct\|\rho_0\|_{L^{\infty}}\|(\sqrt\rho u, \nabla u)\|_{L^\infty([0, T]; L^2)},\\
&\|{u}_t\|_{L^r}\leq C
\bigl(\|\sqrt{{\rho}}{u}_t\|_{L^2}+\|\nabla{u}_t\|_{L^2}\bigr).
\end{align*}
Therefore, we have
\begin{align*}
&\int_0^{T}\Bigl|\int_{\R^2}({\rho}(t)-\rho_0)|\bar{u}_t|^2\,dx\Bigl|\,dt\\
&\leq C\|\rho_0\|_{L^{\infty}}\|(\sqrt\rho u, \nabla u)\|_{L^\infty([0, T]; L^2)}\int_0^{T}t\bigl(\|\sqrt{{\rho}}{u}_t\|^2_{L^2}+\|\nabla{u}_t\|^2_{L^2}\bigr)\,dt\leq C.
\end{align*}
Then similar to \eqref{rho_0ut_L2}, we complete the proof of Proposition \ref{prop_3.2}.
\end{proof}
\fi

\section{Weak-strong uniqueness }\label{sec4}

Throughout this section, let $(\bar{\rho}, \bar{u})$ be the strong solution obtained in Theorem \ref{thm:strong} and $(\rho, u)$ be the weak solution provided by Lions in \cite{PL} with the same initial data $(\rho_0, u_0)$. Furthermore, we assume that  $\rho_0$ satisfies (H1) or (H2).

\subsection{Trilinear estimates}    

Let $j\in C_c^{\infty}(\R^2;[0,9])$ be fixed such that $\operatorname{supp}j\subset B_{1/4}$, $\int_{\R^2}j(x)dx=1$ and we denote $j_{\epsilon}(x):=\epsilon^{-2}j(x/{\epsilon})$ for $\varepsilon>0$.

\begin{lem}\label{uni_H^-1_bubble}
There exists $C>0$ such that for any
$f, f_1, f_2\in H^1,~g\in \dot{H}^{-1}\cap L^{\infty}$ with $\|g\|_{\dot{H}^{-1}}\leq 1/2$, and any $\epsilon\in(0, 1/2]$ we have
\begin{align}\label{H-1_uni_bubble}
\biggl|\int_{\R^2} gf^2\,dx\biggr|&\leq C\big(\|g\|_{\dot{H}^{-1}}|\ln \epsilon|^{1/2}+\epsilon\|g\|_{L^{\infty}}\big)\|f\|^2_{H^1},\\
\biggl|\int_{\R^2} gf_1f_2\,dx\biggr|&\leq C\big(\|g\|_{\dot{H}^{-1}}|\ln \epsilon|^{1/2}+\epsilon\|g\|_{L^{\infty}}\big)\|f_1\|_{H^1}\|f_2\|_{H^1}.\label{Eq.bubble2}
\end{align}
\end{lem}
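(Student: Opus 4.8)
\emph{Strategy.} The plan is to split $f$ at frequency $\epsilon^{-1}$ via the mollifier, setting $f_\epsilon:=j_\epsilon*f$ and $f^\epsilon:=f-f_\epsilon$, and to expand $f^2=f_\epsilon^2+2f_\epsilon f^\epsilon+(f^\epsilon)^2$. The low‑frequency square $f_\epsilon^2$ will be paired against $g$ through the $\dot H^{-1}$ bound (this is where the factor $|\ln\epsilon|^{1/2}$ appears), whereas the two remaining pieces, each carrying a factor $f^\epsilon$, will be handled by $\|g\|_{L^\infty}$ together with the smallness $\|f^\epsilon\|_{L^2}=O(\epsilon)$.

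The one genuinely non‑routine ingredient — and the source of the logarithm — is the bound
\[\|j_\epsilon*v\|_{L^\infty(\R^2)}\le C|\ln\epsilon|^{1/2}\|v\|_{H^1(\R^2)},\qquad \forall\ v\in H^1(\R^2),\ \epsilon\in(0,1/2].\]
I would prove it by Fourier inversion and Cauchy--Schwarz, $\|j_\epsilon*v\|_{L^\infty}\le C\int_{\R^2}|\hat j(\epsilon\xi)|\,|\hat v(\xi)|\,d\xi\le C\big(\int_{\R^2}\langle\xi\rangle^{-2}|\hat j(\epsilon\xi)|^2\,d\xi\big)^{1/2}\|v\|_{H^1}$, and then, using $|\hat j|\le\|j\|_{L^1}=1$ on $\{|\xi|\le\epsilon^{-1}\}$ and $|\hat j(\epsilon\xi)|\le C_N(\epsilon|\xi|)^{-N}$ on $\{|\xi|>\epsilon^{-1}\}$ (valid since $j\in C_c^\infty$), observing $\int_{\R^2}\langle\xi\rangle^{-2}|\hat j(\epsilon\xi)|^2\,d\xi\le C\ln(2+\epsilon^{-2})+C\le C|\ln\epsilon|$. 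Alongside this I would record the elementary facts $\|j_\epsilon*v\|_{L^2}\le\|v\|_{L^2}$, $\|\nabla(j_\epsilon*v)\|_{L^2}\le\|\nabla v\|_{L^2}$, and $\|v-j_\epsilon*v\|_{L^2}\le C\epsilon\|\nabla v\|_{L^2}$ (the last from $|1-\hat j(\epsilon\xi)|\le\|\nabla\hat j\|_{L^\infty}\epsilon|\xi|$, since $\hat j(0)=\int j=1$).

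For \eqref{H-1_uni_bubble}, write $g=\dive V$ with $\widehat V(\xi)=-i\xi|\xi|^{-2}\hat g(\xi)$, so $V\in L^2$ with $\|V\|_{L^2}=\|g\|_{\dot H^{-1}}$. Since $f_\epsilon\in H^1\cap L^\infty$, we have $f_\epsilon^2\in H^1(\R^2)$ with $\nabla(f_\epsilon^2)=2f_\epsilon\nabla f_\epsilon$, so, integrating by parts (legitimate as $V\in L^2$, $f_\epsilon^2\in H^1$) and using the $L^\infty$ bound above,
\[\Big|\int_{\R^2}g f_\epsilon^2\,dx\Big|=\Big|\int_{\R^2}V\cdot\nabla(f_\epsilon^2)\,dx\Big|\le 2\|V\|_{L^2}\|f_\epsilon\|_{L^\infty}\|\nabla f_\epsilon\|_{L^2}\le C\|g\|_{\dot H^{-1}}|\ln\epsilon|^{1/2}\|f\|_{H^1}^2.\]
For the two remaining pieces, Cauchy--Schwarz together with $\|f_\epsilon\|_{L^2}\le\|f\|_{L^2}$ and $\|f^\epsilon\|_{L^2}\le C\epsilon\|\nabla f\|_{L^2}$ give
\[\Big|\int_{\R^2}g\big(2f_\epsilon f^\epsilon+(f^\epsilon)^2\big)\,dx\Big|\le\|g\|_{L^\infty}\big(2\|f_\epsilon\|_{L^2}+\|f^\epsilon\|_{L^2}\big)\|f^\epsilon\|_{L^2}\le C\epsilon\|g\|_{L^\infty}\|f\|_{H^1}^2,\]
using $\epsilon\le1/2$; adding the two displays yields \eqref{H-1_uni_bubble}. (The hypothesis $\|g\|_{\dot H^{-1}}\le1/2$ is not actually used.)

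Finally, \eqref{Eq.bubble2} follows from \eqref{H-1_uni_bubble} by polarization and scaling: with $A:=C(\|g\|_{\dot H^{-1}}|\ln\epsilon|^{1/2}+\epsilon\|g\|_{L^\infty})$ the constant just obtained, $4\int g f_1f_2=\int g(f_1+f_2)^2-\int g(f_1-f_2)^2$ gives $4\big|\int g(\lambda f_1)(\lambda^{-1}f_2)\big|\le 2A(\lambda^2\|f_1\|_{H^1}^2+\lambda^{-2}\|f_2\|_{H^1}^2)$ for all $\lambda>0$, and minimizing over $\lambda$ produces $\big|\int g f_1f_2\big|\le A\|f_1\|_{H^1}\|f_2\|_{H^1}$ (alternatively, repeat the previous paragraph verbatim with $\nabla(f_{1,\epsilon}f_{2,\epsilon})=f_{2,\epsilon}\nabla f_{1,\epsilon}+f_{1,\epsilon}\nabla f_{2,\epsilon}$). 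I expect the only real obstacle to be the logarithmic $L^\infty$ bound of the mollified function; once it is in hand, everything else is soft analysis and bookkeeping.
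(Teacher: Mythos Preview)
Your proof is correct and follows essentially the same approach as the paper: both decompose $f^2=(j_\epsilon*f)^2+2(j_\epsilon*f)(f-j_\epsilon*f)+(f-j_\epsilon*f)^2$, pair the first term with $g$ via $\dot H^{-1}$--$\dot H^1$ duality using the key logarithmic bound $\|j_\epsilon*f\|_{L^\infty}\le C|\ln\epsilon|^{1/2}\|f\|_{H^1}$, and control the remaining terms via $\|g\|_{L^\infty}$ and $\|f-j_\epsilon*f\|_{L^2}\le C\epsilon\|\nabla f\|_{L^2}$. The only cosmetic differences are that the paper writes the duality directly as $\|g\|_{\dot H^{-1}}\|(j_\epsilon*f)^2\|_{\dot H^1}$ rather than through $g=\dive V$, and says the bilinear estimate \eqref{Eq.bubble2} is ``similar'' rather than invoking polarization; your observation that the hypothesis $\|g\|_{\dot H^{-1}}\le 1/2$ is not used in the proof itself is also correct (it is there so that one may later choose $\epsilon=\|g\|_{\dot H^{-1}}$).
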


\begin{proof}
For any $\epsilon\in(0,1/2]$, we have
\begin{align*}
\Bigl|\int_{\R^2}gf^2\,dx\Bigr|&=\Bigl|\int_{\R^2}g\bigl((j_{\epsilon}*f)^2+(f-j_{\epsilon}*f)^2
+2(j_{\epsilon}*f)(f-j_{\epsilon}*f)\bigr)\,dx\Bigr|\\
&\leq \|g\|_{\dot{H}^{-1}}\|(j_{\epsilon}*f)^2\|_{\dot{H}^{1}}
+\|g\|_{L^{\infty}}\|f-j_{\epsilon}*f\|^2_{L^2}
+2\|g\|_{L^{\infty}}\|j_{\epsilon}*f\|_{L^2}\|f-j_{\epsilon}*f\|_{L^2}\\
&\leq C\|g\|_{\dot{H}^{-1}}\|j_{\epsilon}*f\|_{L^{\infty}}
\|j_{\epsilon}*f\|_{\dot{H}^{1}}
+C{\epsilon}^2\|g\|_{L^{\infty}}\|\nabla f\|^2_{L^2}
+C\epsilon\|g\|_{L^{\infty}}\|f\|^2_{H^1}\\
&\leq C|\ln \epsilon|^{1/2}\|g\|_{\dot{H}^{-1}}\|f\|^2_{H^1}
+C\epsilon\|g\|_{L^{\infty}}\|f\|^2_{H^1},
\end{align*}
where we have used the estimates $\|j_{\epsilon}*f\|_{L^{\infty}}\leq \|\hat{j_{\epsilon}}\hat{f}\|_{L^1}\leq C|\ln \epsilon|^{1/2}\|f\|_{{H}^1}$ and {(as $\int_{\R^2} j(x)=1$)}
\begin{equation}\label{Eq.f-conv_L^2}
    \|f-j_{\epsilon}*f\|_{L^{2}}\leq C\epsilon\|f\|_{\dot{H}^1}.
\end{equation}
This proves \eqref{H-1_uni_bubble}. The proof of \eqref{Eq.bubble2} is similar.
\end{proof}

\begin{lem}\label{uni_H^-1_patch}
For any $C_0>0$, there exists $C>0$ such that for any
$f, f_1, f_2\in \dot{H}^1,$ and $g\in \dot{H}^{-1}\cap L^{\infty}$ with $\|g\|_{\dot{H}^{-1}}\leq 1/2,~\|g\|_{L^{\infty}}+\int_{\R^2} |g|\langle x \rangle^2\,dx\leq C_0$, for any $\epsilon\in(0, 1/2]$ we have
\begin{align}\label{patch_1}
\biggl|\int_{\R^2} gf^2dx\biggr|
&\leq C\bigl(\|g\|_{\dot{H}^{-1}}
\bigl|\ln\epsilon\bigr|^{1/2}+\epsilon\big)\big(\|f\|_{L^2(B_1)}+\|\nabla f\|_{L^2}\big)^2,\\
\biggl|\int_{\R^2} gf_1f_2dx\biggr|
&\leq C\bigl(\|g\|_{\dot{H}^{-1}}
\bigl|\ln\epsilon\bigr|^{1/2}+\epsilon\big)\big(\|f_1\|_{L^2(B_{1})}+\|\nabla f_1\|_{L^2}\big)\big(\|f_2\|_{L^2(B_1)}+\|\nabla f_2\|_{L^2}\big).\label{Eq.patch2}
\end{align}
\end{lem}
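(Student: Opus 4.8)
The plan is to mirror the proof of Lemma~\ref{uni_H^-1_bubble}, with two modifications forced by the patch setting: the $H^1$-norm of $f$ is replaced throughout by $N_f:=\|f\|_{L^2(B_1)}+\|\nabla f\|_{L^2}$ — the quantity controlled by the weighted embeddings of Lemma~\ref{lem_fm} and Lemma~\ref{lem4}, since now $f$ lies only in $\dot{H}^1$ — and the loss of global $L^2$-integrability of $f$ is absorbed using the new weighted hypothesis $\int_{\R^2}|g|\langle x\rangle^2\,dx\le C_0$. It suffices to prove \eqref{patch_1}: then \eqref{Eq.patch2} follows by polarization, $4f_1f_2=(f_1+f_2)^2-(f_1-f_2)^2$, together with $N_{f_1\pm f_2}\le N_{f_1}+N_{f_2}$.

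I would first split $f=f_{\mathrm{lo}}+f_{\mathrm{hi}}$ into its low- and high-frequency parts, cut at frequency $1$. On the support of $\widehat{f_{\mathrm{hi}}}$ one has $|\xi|\ge1$, so $f_{\mathrm{hi}}\in H^1$ with $\|f_{\mathrm{hi}}\|_{H^1}\le C\|\nabla f\|_{L^2}\le CN_f$, while $f_{\mathrm{lo}}$ is band-limited, whence by Bernstein $\nabla f_{\mathrm{lo}}\in L^\infty$ with $\|\nabla f_{\mathrm{lo}}\|_{L^\infty}\le C\|\nabla f\|_{L^2}\le CN_f$ and, by the weighted embedding of Lemma~\ref{lem_fm}, $|f_{\mathrm{lo}}(0)|\le CN_f$; combining the last two yields the pointwise growth bound
\[|f_{\mathrm{lo}}(x)|\le C\langle x\rangle N_f ,\qquad x\in\R^2 .\]
For $\int_{\R^2}gf_{\mathrm{hi}}^2\,dx$ I would apply Lemma~\ref{uni_H^-1_bubble} directly (using $\|g\|_{L^\infty}\le C_0$), which gives $\big|\int_{\R^2}gf_{\mathrm{hi}}^2\,dx\big|\le C(\|g\|_{\dot{H}^{-1}}|\ln\epsilon|^{1/2}+\epsilon)N_f^2$; this is the only step that sees the parameter $\epsilon$ and produces the logarithm.

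The two terms involving $f_{\mathrm{lo}}$, namely $\int_{\R^2}gf_{\mathrm{lo}}^2\,dx$ and the cross term $2\int_{\R^2}gf_{\mathrm{lo}}f_{\mathrm{hi}}\,dx$, are where the real work lies, and this is the step I expect to be the main obstacle — it has no analogue in Lemma~\ref{uni_H^-1_bubble}, where $f\in H^1$. The issue is that $f\in\dot{H}^1(\R^2)$ need not be in $L^2(\R^2)$, so $f_{\mathrm{lo}}$ need not decay and $f_{\mathrm{lo}}^2$ need not lie in any $L^p$; the weight $\langle x\rangle^2$ on $g$ is precisely what is designed to control this. Using the growth bound above together with $\int_{\R^2}|g|\langle x\rangle^2\,dx\le C_0$ one obtains at once the crude bounds $\int_{\R^2}|g|f_{\mathrm{lo}}^2\,dx\le CC_0N_f^2$ and $\int_{\R^2}|g|\,|f_{\mathrm{lo}}f_{\mathrm{hi}}|\,dx\le CC_0N_f^2$, which already show that all the integrals converge absolutely; the delicate point is to upgrade these to estimates carrying a full factor $\|g\|_{\dot{H}^{-1}}$ (so that the small prefactor $\|g\|_{\dot{H}^{-1}}+\epsilon$ survives). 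Here I would write $g=\dive G$ with $\|G\|_{L^2}=\|g\|_{\dot{H}^{-1}}$ and integrate by parts, $\int_{\R^2}gf_{\mathrm{lo}}^2\,dx=-2\int_{\R^2} G\cdot f_{\mathrm{lo}}\nabla f_{\mathrm{lo}}\,dx$ and $\int_{\R^2}gf_{\mathrm{lo}}f_{\mathrm{hi}}\,dx=-\int_{\R^2} G\cdot\big((\nabla f_{\mathrm{lo}})f_{\mathrm{hi}}+f_{\mathrm{lo}}\nabla f_{\mathrm{hi}}\big)\,dx$, and then exploit $\nabla f_{\mathrm{lo}}\in L^\infty\cap L^2$, $f_{\mathrm{hi}}\in L^2$, the band-limitedness of $f_{\mathrm{lo}}$, the growth bound on $f_{\mathrm{lo}}$, the weighted control $\int_{\R^2}|g|\langle x\rangle^2\,dx\le C_0$ and the weighted estimates of Lemma~\ref{lem_fm}--Lemma~\ref{lem4} to extract the factor $\|G\|_{L^2}=\|g\|_{\dot{H}^{-1}}$ from each piece. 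Once this is carried out, adding the three contributions yields \eqref{patch_1}.
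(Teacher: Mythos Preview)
Your treatment of $f_{\mathrm{hi}}$ via Lemma~\ref{uni_H^-1_bubble} is fine, but the handling of the low–frequency piece has a genuine gap. You assert that $\int_{\R^2} g f_{\mathrm{lo}}^2\,dx$ and the cross term can be bounded by $C\|g\|_{\dot H^{-1}}N_f^2$, i.e.\ with no $\epsilon$–dependence; this is false under the stated hypotheses. Take $g_R=R^{-2}\phi(\cdot-Re_1)-R^{-2}|B_1|^{-1}\chi_{B_1}$ with $\int\phi=1$: then $\int|g_R|\langle x\rangle^2\sim 1$ while $\|g_R\|_{\dot H^{-1}}\lesssim R^{-2}(\log R)^{1/2}$, and for any $f\in\dot H^1$ with $f(x)\sim(\log|x|)^{1/3}$ at infinity (such $f$ exist, with $N_f\sim 1$) one gets $\int g_R f_{\mathrm{lo}}^2\sim R^{-2}(\log R)^{2/3}$, so the ratio $|\int g_R f_{\mathrm{lo}}^2|/\|g_R\|_{\dot H^{-1}}\gtrsim(\log R)^{1/6}\to\infty$. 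In particular the $|\ln\epsilon|^{1/2}$ loss is \emph{not} produced solely by the high–frequency term.

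Concretely, your integration–by–parts step $-2\int G\cdot f_{\mathrm{lo}}\nabla f_{\mathrm{lo}}$ cannot be closed with $\|G\|_{L^2}$ alone, because $f_{\mathrm{lo}}\nabla f_{\mathrm{lo}}\notin L^2$ in general (the logarithmic growth of $f_{\mathrm{lo}}$, which is only $|f_{\mathrm{lo}}(x)|\lesssim N_f(1+|\ln\langle x\rangle|^{1/2})$ rather than the linear bound you wrote, is still too much). And once you have integrated by parts, the weighted control $\int|g|\langle x\rangle^2\le C_0$ is gone — it does not transfer to $G$. What is missing is the cancellation $\int_{\R^2} g\,dx=0$ (a consequence of $g\in\dot H^{-1}\cap L^1$): the paper exploits this by subtracting from $j_\epsilon*f$ its average $a_\epsilon$ on $B_{1/\epsilon}$, so that the ``constant'' part contributes nothing, and by introducing a spatial cutoff at scale $1/\epsilon$ to separate the region where Poincar\'e controls $j_\epsilon*f-a_\epsilon$ from the far field where the weight on $g$ does the work. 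The logarithm then re-enters through $|a_\epsilon|\le C|\ln\epsilon|^{1/2}N_f$.
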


\begin{proof}
    Let $\eta(x)\in C_c^{\infty}(\R^2;[0,1])$ be fixed such that $\operatorname{supp}\eta\subset B_{1/4}$ and $\eta|_{B_{1/8}}=1$, and let $\eta_{\epsilon}(x)=\eta(\epsilon x)$ for $\varepsilon\in(0,1/2]$ and $x\in\R^2$. For $f\in \dot{H}^1$, we denote $$a_\varepsilon:=\left|B_{1/\varepsilon}\right|^{-1}\int_{B_{1/\varepsilon}}f(y)\,dy,\quad\forall\ \varepsilon\in(0,1/2].$$
    Then Poincar\'e's inequality implies that
    \begin{equation}\label{Eq.Poincare1}
        \|\eta_\varepsilon(j_\varepsilon*f-a_\varepsilon)\|_{L^2(\R^2)}\leq C\varepsilon^{-1}\|\nabla f\|_{L^2(\R^2)}
    \end{equation}
    for some absolute constant $C>0$ independent of $f$ and $\varepsilon$. We claim that
    \begin{align}
        |a_\varepsilon|\leq C(\|f\|_{L^2(B_1)}&+|\ln\varepsilon|^{1/2}\|\nabla f\|_{L^2(\R^2)})\leq C|\ln\varepsilon|^{1/2}(\|f\|_{L^2(B_1)}+\|\nabla f\|_{L^2(\R^2)}),\label{Eq.a_epsilon}\\
        &g\in L^1(\R^2)\text{ with }\int_{\R^2} g(x)\,dx=0,\label{Eq.g_integral}\\
        \int_{\R^2} &\left|g(1-\eta_\varepsilon^2)(j_{\epsilon}*f-a_\varepsilon)^2\right|\,dx\leq C\varepsilon\|\nabla f\|_{L^2(\R^2)}^2\label{Eq.g_faraway}
    \end{align}
    for some constant $C>0$ depending only on $C_0$. 

    Now we prove \eqref{patch_1} by admitting \eqref{Eq.a_epsilon}, \eqref{Eq.g_integral} and \eqref{Eq.g_faraway}, whose proof will be given as soon as we finish the proof of \eqref{patch_1}. We write $f^2=-(f-j_\varepsilon*f)^2+(j_\varepsilon*f)^2+2f(f-j_\varepsilon*f)$. First of all, by \eqref{Eq.f-conv_L^2} we have
    \begin{equation}\label{Eq.patch_1_1}
        \int_{\R^2}\left|g(f-j_\varepsilon*f)^2\right|\,dx\leq \|g\|_{L^\infty}{\|}f-j_\varepsilon*f\|_{L^2}^2\leq C\varepsilon^2\|\nabla f\|_{L^2}^2.
    \end{equation}
    
    For the term $(j_\varepsilon*f)^2$, we write
    \begin{align*}
        (j_\varepsilon*f)^2&=(j_\varepsilon*f-a_\varepsilon)^2+a_\varepsilon^2+2a_\varepsilon(j_\varepsilon*f-a_\varepsilon)\\
        &=\eta_\varepsilon^2(j_\varepsilon*f-a_\varepsilon)^2+(1-\eta_\varepsilon^2)(j_\varepsilon*f-a_\varepsilon)^2+a_\varepsilon^2\\
        &\qquad+2a_\varepsilon\eta_\varepsilon^2(j_\varepsilon*f-a_\varepsilon)+2a_\varepsilon(1-\eta_\varepsilon^2)(j_\varepsilon*f-a_\varepsilon).
    \end{align*}
    It follows from \eqref{Eq.mean_L^infty} and \eqref{Eq.Poincare1} that
    \begin{align*}
        &\left|\int_{\R^2}g\eta_\varepsilon^2(j_\varepsilon*f-a_\varepsilon)^2\,dx\right|\leq \|g\|_{\dot H^{-1}}\|\eta_\varepsilon^2(j_\varepsilon*f-a_\varepsilon)^2\|_{\dot H^1}\\
        \leq &C \|g\|_{\dot H^{-1}}\left(\|\nabla\eta_\varepsilon(j_\varepsilon*f-a_\varepsilon)\|_{L^\infty}\|\eta_\varepsilon(j_\varepsilon*f-a_\varepsilon)\|_{L^2}+\|\eta_\varepsilon^2(j_\varepsilon*f-a_\varepsilon)\|_{L^\infty}\|j_\varepsilon*\nabla f\|_{L^2}\right)\\
        \leq & C\|g\|_{\dot H^{-1}}\left(\varepsilon\|\nabla f\|_{L^2}|\ln\varepsilon|^{1/2}\cdot\varepsilon^{-1}\|\nabla f\|_{L^2}+\|\nabla f\|_{L^2}|\ln\varepsilon|^{1/2}\cdot\|\nabla f\|_{L^2}\right)\\
        \leq & C\|g\|_{\dot H^{-1}}|\ln\varepsilon|^{1/2}\|\nabla f\|_{L^2}^2.
    \end{align*}
    Also, by \eqref{Eq.g_integral} we have $\left|\int_{\R^2}ga_\varepsilon^2\,dx\right|=0$. Moreover,  \eqref{Eq.Poincare1} and \eqref{Eq.a_epsilon} imply that 
    \begin{align*}
        &\left|\int_{\R^2}ga_\varepsilon\eta_\varepsilon^2(j_\varepsilon*f-a_\varepsilon)\,dx\right|\leq \|g\|_{\dot H^{-1}}|a_\varepsilon|\|\eta_\varepsilon^2(j_\varepsilon*f-a_\varepsilon)\|_{\dot H^1}\\
        \leq & C\|g\|_{\dot H^{-1}}|a_\varepsilon|\left(\|\nabla\eta_\varepsilon\|_{L^\infty}\|\eta_\varepsilon(j_\varepsilon*f-a_\varepsilon)\|_{L^2}+\|j_\varepsilon*\nabla f\|_{L^2}\right)\\
        \leq & C\|g\|_{\dot H^{-1}}|a_\varepsilon|\left(\varepsilon\cdot \varepsilon^{-1}\|\nabla f\|_{L^2}+\|\nabla f\|_{L^2}\right)\leq C\|g\|_{\dot H^{-1}}|\ln\varepsilon|^{1/2}(\|f\|_{L^2(B_1)}+\|\nabla f\|_{L^2(\R^2)})^2.
    \end{align*}
    And by H\"older's inequality, \eqref{Eq.a_epsilon} and \eqref{Eq.g_faraway}, we have
    \begin{align*}
        &\int_{\R^2}\left|ga_\varepsilon(1-\eta_\varepsilon^2)(j_\varepsilon*f-a_\varepsilon)\right|\,dx\\
        \leq & |a_\varepsilon|\left(\int_{\R^2}|g|(1-\eta_\varepsilon^2)\,dx\right)^{1/2}\left(\int_{\R^2} \left|g(1-\eta_\varepsilon^2)(j_\varepsilon* f-a_\varepsilon)^2\right|\,dx\right)^{1/2}\\
        \leq & |a_\varepsilon|\left(\int_{|x|\geq 1/(8\varepsilon)}|g|\,dx\right)^{1/2}\sqrt\varepsilon\|\nabla f\|_{L^2}\leq C|a_\varepsilon|\left(\varepsilon^2\int_{\R^2}|g||x|^2\,dx\right)^{1/2}\sqrt\varepsilon\|\nabla f\|_{L^2}\\
        \leq & C|\ln\varepsilon|^{1/2}(\|f\|_{L^2(B_1)}+\|\nabla f\|_{L^2(\R^2)})\varepsilon\cdot \sqrt\varepsilon\|\nabla f\|_{L^2}
        \leq  C\varepsilon(\|f\|_{L^2(B_1)}+\|\nabla f\|_{L^2(\R^2)})^2.
    \end{align*}
    Therefore, we conclude that (also using \eqref{Eq.g_faraway})
    \begin{equation}\label{Eq.patch_1_2}
        \left|\int_{\R^2}g(j_\varepsilon*f)^2\,dx\right|\leq C\left(\|g\|_{\dot H^{-1}}|\ln\varepsilon|^{1/2}+\varepsilon\right)\left(\|f\|_{L^2(B_1)}+\|\nabla f\|_{L^2(\R^2)}\right)^2.
    \end{equation}

    As for the term $2f(f-j_\varepsilon*f)$, by H\"older's inequality, Lemma \ref{lem_fm} and \eqref{Eq.f-conv_L^2} we have \if0we write
    \begin{align*}
        &(j_\varepsilon*f)(f-j_\varepsilon*f)=(j_\varepsilon*f-a_\varepsilon)(f-j_\varepsilon*f)+a_\varepsilon(f-j_\varepsilon*f)\\
        =&\eta_\varepsilon^2(j_\varepsilon*f-a_\varepsilon)(f-j_\varepsilon*f)+(1-\eta_\varepsilon^2)(j_\varepsilon*f-a_\varepsilon)(f-j_\varepsilon*f)+a_\varepsilon(f-j_\varepsilon*f).
    \end{align*}
    By \eqref{Eq.mean_L^infty}, \eqref{Eq.f-conv_L^2} and $\|(\nabla j_\varepsilon)*f\|_{L^\infty}\leq C\varepsilon^{-1}\|\nabla f\|_{L^2}$, we have\fi
    \begin{align}\label{Eq.patch_1_3}
        &\left|\int_{\R^2}gf(f-j_\varepsilon*f)\,dx\right|\leq\Bigl(\int_{\R^2}|g|^{3} \langle x \rangle^2\,dx\Bigr)^{\f13}\Bigl(\int_{\R^2} \frac{|f(x)|^6}{\langle x \rangle^4}\,dx\Bigr)^{\f16}\|f-j_\varepsilon*f\|_{L^2}\\
     \notag   \leq & CC_0\left(\|f\|_{L^2(B_1)}+\|\nabla f\|_{L^2(\R^2)}\right)\varepsilon\|\nabla f\|_{L^2(\R^2)}\leq C\varepsilon\left(\|f\|_{L^2(B_1)}+\|\nabla f\|_{L^2(\R^2)}\right)^2,
      \end{align} here we used that $\int_{\R^2}|g|^{3} \langle x \rangle^2\,dx\leq 
      \|g\|_{L^{\infty}}^2\int_{\R^2} |g|\langle x \rangle^2\,dx\leq C_0^3$. \if0\begin{align*}     
        &\Big(\|\nabla\eta_\varepsilon(j_\varepsilon*f-a_\varepsilon)\|_{L^\infty}\|\eta_\varepsilon(f-j_\varepsilon*f)\|_{L^2}+\|\eta_\varepsilon^2(\nabla j_\varepsilon)*f\|_{L^\infty}\|f-j_\varepsilon*f\|_{L^2}\\
        &\qquad\qquad\qquad\qquad+\|\eta_\varepsilon^2(j_\varepsilon*f-a_\varepsilon)\|_{L^\infty}\|\nabla f-j_\varepsilon*\nabla f\|_{L^2}\Big)\\
        \leq & C\|g\|_{\dot H^{-1}}\Big(\varepsilon\|\nabla f\|_{L^2}|\ln\varepsilon|^{1/2}\cdot\varepsilon\|\nabla f\|_{L^2}+\varepsilon^{-1}\|\nabla f\|_{L^2}\cdot \varepsilon\|\nabla f\|_{L^2}+\|\nabla f\|_{L^2}|\ln\varepsilon|^{1/2}\cdot\|\nabla f\|_{L^2}\Big)\\
        \leq & C\|g\|_{\dot H^{-1}}|\ln\varepsilon|^{1/2}\|\nabla f\|_{L^2}^2.
    \end{align*}
    It follows from \eqref{Eq.g_faraway} and \eqref{Eq.f-conv_L^2} that
    \begin{align*}
        &\int_{\R^2}\left|g(1-\eta_\varepsilon^2)(j_\varepsilon*f-a_\varepsilon)(f-j_\varepsilon*f)\right|\,dx\\
        \leq & \left(\int_{\R^2} \left|g(1-\eta_\varepsilon^2)(j_\varepsilon* f-a_\varepsilon)^2\right|\,dx\right)^{1/2}\left(\int_{\R^2} \left|g(1-\eta_\varepsilon^2)(f-j_\varepsilon* f)^2\right|\,dx\right)^{1/2}\\
        \leq & C\sqrt{\varepsilon}\|\nabla f\|_{L^2}\cdot \|g\|_{L^\infty}^{1/2}\|f-j_\varepsilon* f\|_{L^2}\leq C\varepsilon\|\nabla f\|_{L^2}^2.
    \end{align*}
    Also, by \eqref{Eq.a_epsilon} we have
    \begin{align*}
        \left|\int_{\R^2}ga_\varepsilon(f-j_\varepsilon*f)\,dx\right|&\leq |a_\varepsilon|\|g\|_{\dot H^{-1}}\|\nabla f-j_\varepsilon*\nabla f\|_{L^2}\\
        &\leq  C\|g\|_{\dot H^{-1}}|\ln\varepsilon|^{1/2}(\|f\|_{L^2(B_1)}+\|\nabla f\|_{L^2})\|\nabla f\|_{L^2}.
    \end{align*}
    Therefore, we conclude that
    \begin{equation}\label{Eq.patch_1_3}
        \left|\int_{\R^2}g(j_\varepsilon*f)(f-j_\varepsilon*f)\,dx\right|\leq C\left(\|g\|_{\dot H^{-1}}|\ln\varepsilon|^{1/2}+\varepsilon\right)\left(\|f\|_{L^2(B_1)}+\|\nabla f\|_{L^2(\R^2)}\right)^2.
    \end{equation}\fi
    Hence \eqref{patch_1} follows from \eqref{Eq.patch_1_1}, \eqref{Eq.patch_1_2} and \eqref{Eq.patch_1_3}. Similarly, one can prove \eqref{Eq.patch2}.
\end{proof}

To complete the proof of Lemma \ref{uni_H^-1_patch}, it suffices to prove \eqref{Eq.a_epsilon}, \eqref{Eq.g_integral} and \eqref{Eq.g_faraway}.
\begin{proof}[Proof of \eqref{Eq.a_epsilon}]
    It follows by taking $R=1/\varepsilon$ in \eqref{Eq.f_R-f_1} and using $|a_1|=|f_{(1)}|\leq C\|f\|_{L^2(B_1)}$.
\end{proof}

\begin{proof}[Proof of \eqref{Eq.g_integral}]
    By $\int_{\R^2} |g|\langle x \rangle^2\,dx<+\infty$ we have $g\in L^1(\R^2)$. Thus $ \hat{g}\in C(\R^2)$. Since $g\in\dot H^{-1}(\R^2)$, 
    we have $ |\xi|^{-1}\hat{g}(\xi)\in L^2(\R^2)$. If $\int_{\R^2}g\,dx\neq0$ then $ \hat{g}(0)\neq0$ and there exists $r>0$ such that $ |\hat{g}(\xi)|\geq |\hat{g}(0)|/2$ for $|\xi|\leq r$. Thus $\int_{B_r}||\xi|^{-1}\hat{g}(\xi)|^2d\xi\geq(|\hat{g}(0)|/2)^2\int_{B_r}|\xi|^{-2}d\xi=+\infty $, which contradicts $ |\xi|^{-1}\hat{g}(\xi)\in L^2(\R^2)$. Thus $ \hat{g}(0)=0$ and $\int_{\R^2}g\,dx=0$.\if0
    there exists a $G: \R^2\to\R^2$ such that $G\in L^2(\R^2)$ and $g=\dive G$ weakly. Let $\zeta\in C_c^\infty(\R^2;[0,1])$ be such that $\zeta|_{B_1}=1$ and $\operatorname{supp}\zeta\subset B_2$. For any $R>1$ we define $\zeta_R(x):=\zeta(x/R)$ for $x\in\R^2$. Then integration by parts gives that $\int_{\R^2}\dive(G\zeta_R)\,dx=0$, hence
    \begin{align}
        \left|\int_{\R^2}g\zeta_R\,dx\right|\leq \int_{\R^2}|G||\nabla\zeta_R|\,dx\leq 2\sqrt{\pi}\|\nabla \zeta\|_{L^{\infty}}\|G\|_{L^2(B_R^c)},\quad\forall\ R>1.
    \end{align}
    Letting $R\to+\infty$ and using $g\in L^1(\R^2)$ and $G\in L^2(\R^2)$, we obtain $\int_{\R^2}g\,dx=0$.\fi
\end{proof}

\begin{proof}[Proof of \eqref{Eq.g_faraway}]
    We denote $\tilde f=f-a_\varepsilon$, then $j_\varepsilon*f-a_\varepsilon=j_\varepsilon*\tilde f$ {(as $\int_{\R^2}j(x)dx=1$)}. By  H\"older's inequality, we get
\begin{align*}
\int_{\R^2} |g(1-\eta^2_{\epsilon})(j_{\epsilon}*f-a_\varepsilon)^2|\,dx
&=\int_{\R^2} |(1-\eta_{\epsilon}^2)gx^{3/2}|\cdot\frac{|(j_{\epsilon}*\tilde f)(x)|^2}{|x|^{3/2}}\,dx\\
&\leq \Bigl(\int_{|x|\geq\frac{1}{8\epsilon}}|g|^{4/3}|x|^2\,dx\Bigr)^{\f34}\Bigl(\int_{|x|\geq\frac{1}{8\epsilon}} \frac{|(j_{\epsilon}*\tilde f)(x)|^8}{|x|^{6}}\,dx\Bigr)^{\f14}.
\end{align*}\if0
By H\"older's inequality and Fubini's theorem, we obtain
\begin{align*}
&\int_{|x|\geq\frac{1}{8\epsilon}} \frac{|(j_{\epsilon}*\tilde f)(x)|^8}{|x|^{6}}\,dx
\leq \int_{|x|\geq\frac{1}{8\epsilon}}|x|^{-6}\int_{B_{\epsilon}}j_{\epsilon}(y)|\tilde f(x-y)|^8\,dydx\\
&\leq 2^6\int_{|x|\geq\frac{1}{8\epsilon}}\int_{B_{\epsilon}}j_{\epsilon}(y)\frac{|\tilde f(x-y)|^8}{|x-y|^{6}}\,dydx
\leq 2^6\int_{|z|\geq\frac{1}{\epsilon}}\frac{|\tilde f(z)|^8}{|z|^{6}}\,dz.
\end{align*}\fi
It follows from  Lemma \ref{lem_fm} with a scaling argument and Poincar\'e's inequality that
\begin{align*}
&\Bigl(\int_{|x|\geq\frac{1}{\epsilon}} \frac{|(j_{\epsilon}*\tilde f)(x)|^8}{|x|^{6}}\,dx\Bigr)^{\f14}
\leq C\epsilon\bigl(\varepsilon\|j_{\epsilon}*\tilde f\|_{L^2(B_{1/\varepsilon})}+\|\nabla (j_{\epsilon}*\tilde f)\|_{L^2(\R^2)}\bigr)^2\\
\leq& C\varepsilon\bigl(\varepsilon\|\tilde f\|_{L^2(B_{1/\varepsilon})}+\varepsilon\|\tilde f-j_{\epsilon}*\tilde f\|_{L^2(\R^2)}+\|\nabla \tilde f\|_{L^2(\R^2)}\bigr)^2\leq C\varepsilon\|\nabla\tilde f\|_{L^2(\R^2)}^2=C\varepsilon\|\nabla f\|_{L^2(\R^2)}^2.
\end{align*}
Then by $\int_{\R^2}|g|^{4/3}|x|^2\,dx\leq 
      \|g\|_{L^{\infty}}^{1/3}\int_{\R^2} |g||x|^2\,dx\leq C_0^{4/3}$, we obtain \eqref{Eq.g_faraway}.
\if0\begin{equation}\label{exter}
\int_{\R^2} |g(1-\eta_{\epsilon}^2)(j_{\epsilon}*f)^2|\,dx
\leq C\epsilon\|g\|^{\f14}_{L^{\infty}}\Bigl(\int_{\R^2}|g||x|^2\,dx\Bigr)^{\f34}\bigl(\epsilon\|f\|_{L^2(B_{1/{\epsilon}})}+\|\nabla f\|_{L^2}\bigr)^2.
\end{equation}\fi 
\end{proof}

\if0
\begin{proof}
Let $\eta(x)\in C_c^{\infty}(\R^2;[0,1])$ be fixed such that $\operatorname{supp}\eta\subset B_{1/4}$ and $\eta|_{B_{1/8}}=1$, and let $\eta_{\epsilon}(x)=\eta(\epsilon x)$. 
We write {\color{red}(The proof needs to be modified.)}
\begin{align*}
    f^2&=(j_\varepsilon*f)^2+2(j_\varepsilon*f)(f-j_\varepsilon*f)+(f-j_\varepsilon*f)^2\\
    &=(f-j_\varepsilon*f)^2+\eta^2_\varepsilon(j_\varepsilon*f)^2+(1-\eta^2_\varepsilon)(j_\varepsilon*f)^2+2\eta^2_\varepsilon(j_\varepsilon*f)(f-j_\varepsilon*f)\\
    &\qquad+2(1-\eta^2_\varepsilon)(j_\varepsilon*f)(f-j_\varepsilon*f).
\end{align*}
First of  all, we have
\begin{align*}
&\int_{\R^2}|g(f-j_{\epsilon}*f)|^2\,dx\leq \|g\|_{L^{\infty}}\|f-j_{\epsilon}*f\|^2_{L^2}
\leq C {\epsilon}^2\|g\|_{L^{\infty}}\|\nabla f\|^2_{L^2},\\
&\Bigl|\int_{\R^2}g\eta^2_\varepsilon(j_{\epsilon}*f)^2\,dx\Bigr|
\leq \|g\|_{\dot{H}^{-1}}\|\eta^2_\varepsilon(j_{\epsilon}*f)^2\|_{\dot{H}^{1}}\\
&\leq\|g\|_{\dot{H}^{-1}}
\Bigl(2\|\nabla\eta_\varepsilon(j_{\epsilon}*f)\|_{L^{\infty}}\|\eta_{\epsilon}(j_{\epsilon}*f)\|_{L^2}+\|\eta^2_\varepsilon(j_{\epsilon}*f)\|_{L^{\infty}}\|j_{\epsilon}*\nabla f\|_{L^2}
\Bigr)\\
&\leq C\|g\|_{\dot{H}^{-1}}|\ln \epsilon|^{1/2}\bigl(\epsilon\|f\|_{L^2(B_{1/{\epsilon}})}+\|\nabla f\|_{L^2}\bigr)^2,
\end{align*}
where we have used Lemma \ref{lem:app1}. Similarly, we have
\begin{align*}
&\Bigl|\int_{\R^2}g\eta^2_\varepsilon(j_{\epsilon}*f)(f-j_{\epsilon}*f)
\,dx\Bigr|\leq \|g\|_{\dot{H}^{-1}}\Big(\|\eta_{\epsilon}\nabla\eta_{\epsilon}(j_{\epsilon}*f)\|_{L^{\infty}}\|f-j_{\epsilon}*f\|_{L^2}
\\
&\qquad\qquad\qquad\qquad+\|\eta^2_{\epsilon}((\nabla j_{\epsilon})*f)\|_{L^{\infty}}\|f-j_{\epsilon}*f\|_{L^2}
+\|\eta^2_{\epsilon}(j_{\epsilon}*f)\|_{L^{\infty}}\|\nabla(f-j_{\epsilon}*f)\|_{L^2}\Big)\\
&\leq  C\|g\|_{\dot{H}^{-1}}\Bigl(|\ln\epsilon|^{\f12}\|\nabla f\|_{L^2}\bigl
(\epsilon\|f\|_{L^2(B_{1/{\epsilon}})}+\|\nabla f\|_{L^2}\bigr)+\|\nabla f\|^2_{L^2}\Bigr)\\
&\leq C\|g\|_{\dot{H}^{-1}}|\ln\epsilon|^{\f12}\bigl
(\epsilon\|f\|_{L^2(B_{1/{\epsilon}})}+\|\nabla f\|_{L^2}\bigr)\|\nabla f\|_{L^2},
\end{align*}
where we used the fact that $(\nabla j_\varepsilon)*f=\varepsilon^{-1}(\nabla j)_\varepsilon*f$ and $\|(\nabla j)_\varepsilon*f\|_{L^\infty}\leq C\|\nabla f\|_{L^2}$. 

By  H\"older's inequality, we have
\begin{align*}
\int_{\R^2} |g(1-\eta^2_{\epsilon})(j_{\epsilon}*f)^2|\,dx
&=\int_{\R^2} |(1-\eta_{\epsilon})gx^{3/2}|\cdot\frac{|(j_{\epsilon}*f)(x)|^2}{|x|^{3/2}}\,dx\\
&\leq \Bigl(\int_{|x|\geq\frac{1}{8\epsilon}}|g|^{4/3}|x|^2\,dx\Bigr)^{\f34}\Bigl(\int_{|x|\geq\frac{1}{8\epsilon}} \frac{|(j_{\epsilon}*f)(x)|^8}{|x|^{6}}\,dx\Bigr)^{\f14}.
\end{align*}
By H\"older's inequality and Fubini's theorem, we obtain
\begin{align*}
&\int_{|x|\geq\frac{1}{8\epsilon}} \frac{|(j_{\epsilon}*f)(x)|^8}{|x|^{6}}\,dx
\leq \int_{|x|\geq\frac{1}{8\epsilon}}|x|^{-6}\int_{B_{\epsilon}}j_{\epsilon}(y)|f(x-y)|^8\,dydx\\
&\leq \ 2^6\int_{|x|\geq\frac{1}{8\epsilon}}\int_{B_{\epsilon}}j_{\epsilon}(y)\frac{|f(x-y)|^8}{|x-y|^{6}}\,dydx
\leq 2^6\int_{|z|\geq\frac{1}{\epsilon}}\frac{|f(z)|^8}{|z|^{6}}\,dz.
\end{align*}
It follows from  Lemma \ref{lem_fm} with a scaling argument that
\begin{align*}
\Bigl(\int_{|x|\geq\frac{1}{\epsilon}} \frac{|(j_{\epsilon}*f)(x)|^8}{|x|^{6}}\,dx\Bigr)^{\f14}
&\leq C\epsilon\bigl(\varepsilon\|f\|_{L^2(B_{1/\varepsilon})}+\|\nabla f\|_{L^2(\R^2)}\bigr)^2.
\end{align*}
Thus, we obtain
\begin{equation}\label{exter}
\int_{\R^2} |g(1-\eta_{\epsilon}^2)(j_{\epsilon}*f)^2|\,dx
\leq C\epsilon\|g\|^{\f14}_{L^{\infty}}\Bigl(\int_{\R^2}|g||x|^2\,dx\Bigr)^{\f34}\bigl(\epsilon\|f\|_{L^2(B_{1/{\epsilon}})}+\|\nabla f\|_{L^2}\bigr)^2.
\end{equation}

Finally, by H\"{o}lder inequality and \eqref{exter}, we get
\begin{align*}
\int_{\R^2}|g(1-\eta^2_\varepsilon)(j_{\epsilon}*f)(f-j_{\epsilon}*f)|\,dx&\leq \|g\|^{\f12}_{L^{\infty}}\|f-j_{\epsilon}*f\|_{L^2}\Bigl(\int_{\R^2} |g(1-\eta^2_\varepsilon)(j_{\epsilon}*f)^2|\,dx\Bigr)^{\f12}\\
&\leq  C\epsilon^{\f32}\|g\|^{\f58}_{L^{\infty}} \bigl
(\epsilon\|f\|_{L^2(B_{1/{\epsilon}})}+\|\nabla f\|_{L^2}\bigr)\|\nabla f\|_{L^2}.
\end{align*}
This proves \eqref{patch_1}. The proof of \eqref{Eq.patch2} is similar.
\end{proof}
\fi 

\subsection{Proof of weak-strong uniqueness}
\begin{proof}[Proof of Theorem \ref{thm_weak_unique}]
Here we follow the proof from Prange and Tan (\cite{PT}, Proposition 4.1). We focus on the case when $T\in(0,1/2)$ is small. We denote $\delta\!\rho:=\rho-\bar{\rho}$ and $\delta\!u:=u-\bar{u}$. Then the system for ($\delta\!\rho,\delta\!u$) reads 
{(here $\dot{\bar{u}}:=\partial_t\bar{u}+\bar{u}\cdot\nabla \bar{u}$)}
\begin{equation}\label{Eq.delta u}
     \left\{
     \begin{array}{l}
     \partial_t\delta\!\rho+\bar{u}\cdot\nabla \delta\!\rho+\delta\!u\cdot\nabla\rho=0,\qquad (t,x)\in \mathbb{R}^{+}\times\mathbb{R}^{2},\\
     \rho(\partial_t\delta\!u+u\cdot\nabla \delta\!u)-\Delta \delta\!u+\nabla \delta\!P=-\delta\!\rho\dot{\bar{u}}-\rho\delta\!u\cdot\nabla\bar{u},\\
     \dive \delta\!u = 0,\\
     (\delta\!\rho, \delta\!u)|_{t=0} =(0, 0).
     \end{array}
     \right.
\end{equation}

We set $\phi:=(-\Delta)^{-1}\delta\!\rho$ so that $\|\nabla\phi\|_{L^2(\R^2)}=\|\delta\!\rho\|_{\dot{H}^{-1}(\R^2)}$. Testing the first equation of \eqref{Eq.delta u} against $\phi$ and using the fact $\dive \bar{u}=\dive\delta\!u=0$, following the arguments in \cite{PT}, for all $t\in[0,T]$ we have
\begin{align}
&D(t)\leq \|\rho_0\|^{\f12}_{L^{\infty}}\|\sqrt{\rho}\delta\! u\|_{L^2((0,t)\times\R^2)}\exp\Big({\|\nabla \bar{u}\|_{L^1(0,T;L^{\infty}(\R^2))}}\Big)\leq C\|\sqrt{\rho}\delta\! u\|_{L^2((0,t)\times\R^2)},\label{Dt2}
\end{align}
where
\begin{align}
&D(t):=\sup_{0<s\leq t}s^{-\f12}\|\delta\!\rho(s,\cdot)\|_{\dot{H}^{-1}(\R^2)}.\label{Dt1}
\end{align}

Next we use a duality argument to control the difference $\sqrt{\rho}\delta\! u$ in $L^2((0,t)\times\R^2)$. Let $v(t,x)$ be the solution of the following linear backward parabolic system
\begin{equation}\label{Eq.v_backward}
     \left\{
     \begin{array}{l}
     \rho(\partial_tv+u\cdot\nabla v)+\Delta v+\nabla Q=\rho\delta\!u,\\
     \dive u = 0,\\
     v|_{t=T} =0.
     \end{array}
     \right.
\end{equation}
Then there holds 
\begin{equation}\begin{aligned}\label{v_estimate}
&\sup_{t\in[0,T]}\|(\sqrt{\rho}v,\nabla v)(t,\cdot)\|^2_{L^2(\R^2)}+\int_0^T\Bigl(\|\nabla v(t,\cdot)\|^2_{L^2}+\|(\sqrt{\rho}\dot{v},\nabla^2 v,\nabla Q)(t,\cdot)\|^2_{L^2}\Bigr)\,dt\\
&\leq C_0\|\sqrt{\rho}\delta\! u\|^2_{L^2((0,T)\times\R^2)}.   
\end{aligned}\end{equation}
Testing the equation of $\delta\!u$ in the system \eqref{Eq.delta u} by $v$ yields that
\begin{align}\label{estimate_delta_u}
\|\sqrt{\rho}\delta\! u\|^2_{L^2((0,T)\times\R^2)}\leq \int_0^T\bigl|\langle\delta\!\rho\dot{\bar{u}},v\rangle\bigr|\,dt+\int_0^T\bigl|\langle\rho\delta\!u\cdot\nabla\bar{u},v\rangle\bigr|\,dt. 
\end{align}
Firstly, we have (by Lemma \ref{lem4} and \eqref{v_estimate})
\begin{align*}
&\int_0^T\bigl|\langle\rho\delta\!u\cdot\nabla\bar{u},v\rangle\bigr|\,dt\leq\int_0^T\|\sqrt{\rho}\delta\!u\|_{L^2} \|\sqrt{\rho}v\|_{L^4} \|\nabla\bar{u}\|_{L^{4}}\,dt,\\
&\leq \int_0^T\|\sqrt{\rho}\delta\!u\|_{L^2} \|(\sqrt{\rho}v,\nabla v)\|_{L^2} \|\nabla\bar{u}\|^{\f12}_{L^{2}}\|\nabla^2\bar{u}\|^{\f12}_{L^{2}}\,dt,\\
&\leq T^{\f14}\|\sqrt{\rho}\delta\! u\|_{L^2((0,T)\times\R^2)}\|(\sqrt{\rho}v,\nabla v)\|_{L^{\infty}(0,T;L^2)}\|\nabla\bar{u}\|^{\f12}_{L^{\infty}(0,T;L^2)}\|\nabla^2\bar{u}\|^{\f12}_{L^2((0,T)\times\R^2)}\\
&\leq CT^{\f14}\|\sqrt{\rho}\delta\! u\|^2_{L^2((0,T)\times\R^2)},
\end{align*}
where $C>0$ depends only on $\|\rho_0\|_{L^{\infty}},\|\sqrt{\rho_0}u_0\|_{L^2}$ and $\|\nabla u_0\|_{L^2}$. Hence, by adjusting $T\in(0, 1/2)$ to smaller if necessary, it follows {from} \eqref{estimate_delta_u} that
\begin{equation}\label{estimate_delta_u_1}
    \|\sqrt{\rho}\delta\! u\|^2_{L^2((0,T)\times\R^2)}\leq 2\int_0^T\bigl|\langle\delta\!\rho\dot{\bar{u}},v\rangle\bigr|\,dt.
\end{equation}

{Using \eqref{Dt2}}, by adjusting $T\in(0, 1/2)$ to smaller if necessary,
 we have $D(t)\in(0,1/2)$ for $t\in (0,T]$.
Taking $\varepsilon=\| \delta\!\rho(t)\|_{\dot{H}^{-1}}$ in Lemma \ref{H-1_uni_bubble} if $\rho_0$ satisfies (H2) (or in Lemma \ref{uni_H^-1_patch} if $\rho_0$ satisfies (H1)), {and using Lemma \ref{lem4}} {and \eqref{Dt1}}, we obtain
\begin{align}
\int_0^T\bigl|\langle\delta\!\rho\dot{\bar{u}},v\rangle\bigr|\,dt &\leq C\int_0^T  t^{-\f12}\|\delta\!\rho(t)\|_{\dot{H}^{-1}}|\ln \varepsilon|^{\f12} \|\bigl(t^{\f12}\sqrt{\bar{\rho}}\dot{\bar{u}},t^{\f12}\nabla\dot{\bar{u}}\bigr)\|_{L^2} \|(\sqrt{\rho}v,\nabla v)\|_{L^2} \,dt\nonumber\\
&\leq C \|(\sqrt{\rho}v,\nabla v)\|_{L^{\infty}(0,T;L^2)}
\int_0^T D(t)|\ln ({\sqrt{t}}D(t))|^{\f12}\|\bigl(t^{\f12}\sqrt{\bar{\rho}}\dot{\bar{u}},t^{\f12}\nabla\dot{\bar{u}}\bigr)\|_{L^2}\,dt.\label{Eq.unique_est}
\end{align}
Thus, it follows from \eqref{Dt2}, \eqref{v_estimate}, \eqref{estimate_delta_u_1} {and \eqref{Eq.unique_est}} that
\begin{align*}
D(t)\leq C \int_0^t D(s)|\ln ({\sqrt{s}}D(s))|^{\f12}\bigl(\|s^{\f12}\sqrt{\bar{\rho}}\dot{\bar{u}}\|_{L^2}+s^{\f12}\|\nabla\dot{\bar{u}}\|_{L^2}\bigr)\,ds.
\end{align*}
Let $\gamma(t):= \sqrt t\bigl(\|\sqrt{\bar{\rho}}\dot{\bar{u}}\|_{L^2}+\|\nabla\dot{\bar{u}}\|_{L^2}\bigr)$, then by Lemma \ref{dot u} we have
\begin{align*}
\sup_{t\in[0,T]} t\|\sqrt{\bar{\rho}}\dot{\bar{u}}\|^2_{L^2}+\int_0^T t\|\nabla \dot{\bar{u}}\|^2_{L^2} dt \leq C\Longrightarrow   \int_0^{T} t\bigl(\|\sqrt{\bar{\rho}}\dot{\bar{u}}\|^2_{L^2}+\|\nabla\dot{\bar{u}}\|^2_{L^2}\bigr) dt \leq C,
\end{align*}   
hence $\gamma\in L^2([0,T])$, $ |\ln t|^{\f12}\gamma(t)\in L^1([0,T])$, $T,D(t)\in(0, 1/2)$ and we also have
\begin{align*}
D(t)\leq C\int_0^t D(s)|\ln ({\sqrt{s}}D(s))|^{\f12}\gamma(s) ds\leq C\int_0^t D(s)|\ln D(s)|^{\f12}|\ln s|^{\f12}\gamma(s) ds,\quad\forall\ t\in[0, T]. 
\end{align*}
Furthermore, the function $r\mapsto r|\ln r|^{\f12}$ is increasing near $0^+$ and satisfies $\int_0^{1/2} r^{-1}|\ln r|^{-1/2}\,dr=+\infty$. Hence we can apply Osgood's lemma (\cite{BCD}, Lemma 3.4) to conclude that $D(t)=0$ on $[0,T]$. By \eqref{estimate_delta_u_1}, \eqref{Eq.unique_est} {and \eqref{Dt1}}, we have
\begin{align*}
 \sqrt{\rho}\delta\!u\equiv0\quad\andf {\delta\!\rho}\equiv0 \text{~on~} [0,T].   
\end{align*}
Now, a direct $L^2$ estimate of \eqref{Eq.delta u} gives $\int_0^T\|\nabla \delta\!u(s,\cdot)\|^2_{L^2(\R^2)}\,ds=0$, and by  $\|\delta\!u\|_{L^2(B_r)}\leq C(\|\sqrt{\rho}\delta\!u\|_{L^2}+\|\nabla \delta\!u\|_{L^2})=0$ for all $r>0$ (see Lemma \ref{lem4}), we know that $u=\bar{u},~\rho=\bar{\rho}$ a.e. in $[0,T]\times \R^2$. 

The uniqueness on the whole time interval $[0,\infty)$ can be obtained by a bootstrap argument.
\end{proof}

\if0
\subsection{Key integral identities {\color{red}and estimates}}
Let $(\bar{\rho}, \bar{u})$ be the strong solution obtained in Theorem \ref{thm:strong} and $(\rho, u)$ be the weak solution provided by Lions in \cite{PL} with same initial data $(\rho_0, u_0)$. 
We assume that $\rho_0$ satisfies (H1) or (H2).
\begin{lem}
For any $t\in[0, +\infty)$, we have
\begin{align}\label{strong_test}
\int_{\R^2}\rho u\cdot \bar{u}\,dx&+\int_0^t\int_{\R^2}\nabla u: \nabla \bar{u}\,dxds
=\int_{\R^2}\rho_0 |u_0|^2\,dx+\int_0^t\int_{\R^2}\rho (\p_t\bar{u}+ u\cdot \nabla \bar{u})\cdot u\,dxds,\\
&\int_0^t\int_{\R^2}(-\Delta \bar{u})\cdot u\,dxds=\int_0^t\int_{\R^2}\nabla \bar{u}: \nabla u\,dxds.\label{inte_by_parts}
\end{align}
 
 \end{lem}
 
 \begin{proof}
Since the proof of  the integral identity \eqref{strong_test}  is quite delicate, it is left to the appendix.  Here we just prove \eqref{inte_by_parts}.

Let $\eta\in C_c^\infty(\R^2; [0,1])$ be such that $\operatorname{supp}\eta\subset B_2$ and $\eta_{B_1}=1$. We denote $\eta_R(x):=\eta(x/R)$ for $x\in\R^2$. Since $\eta_{R}$ vanishes at infinity, we get by integration by parts that
\begin{align*}
\int_{\R^2}\eta_{R}(-\Delta \bar{u})\cdot u\,dx=\int_{\R^2}(\nabla \eta_{R}\cdot\nabla \bar{u})\cdot u\,dx+\int_{\R^2}\eta_{R} \nabla \bar{u}: \nabla u\,dx.
\end{align*}
It follows from the energy inequality that
\begin{align*}
\int_0^t\int_{\R^2}|\nabla \bar{u}:\nabla u|\,dx\,ds\leq \|\nabla \bar{u}\|_{L^2([0,t];L^2)}\|\nabla {u}\|_{L^2([0,t];L^2)}\leq \|\nabla u_0\|^2_{L^2}.
\end{align*}
Hence by the dominated convergence theorem we have $$\lim_{R\to+\infty}\int_0^t\int_{\R^2}\eta_{R} \nabla \bar{u}: \nabla u\,dx\,ds=\int_0^t\int_{\R^2}\nabla \bar{u}: \nabla u\,dx\,ds.$$
Now it suffices to show that
\begin{align}
    \lim_{R\to+\infty}&\int_0^t\int_{\R^2}\eta_{R} (-\Delta \bar{u})\cdot u\,dx\,ds=\int_0^t\int_{\R^2}(-\Delta \bar{u})\cdot u\,dx\,ds,\label{Eq.convergence_1}\\
    &\lim_{R\to+\infty}\int_0^t\int_{\R^2}(\nabla \eta_{R}\cdot\nabla \bar{u})\cdot u\,dx\,ds=0.\label{Eq.convergence_2}
\end{align}

{\bf Case 1.} $\rho_0$ satisfies (H1). By Lemma \ref{u_L^infty}, there exists an $R_*=R_*(T_1)>1$ such that $\operatorname{supp}_x\bar\rho(t,\cdot)\subset B_{R_*}$ for all $t\in[0, T_1]$. Take $R>R_0:=3R_*>1$.  By Lemma \ref{H1}, Lemma \ref{u_inf_decay_lem} and Lemma \ref{lem_fm}, we have
\begin{align*}
&\int_0^t\int_{\R^2}|(-\Delta \bar{u})\cdot u|\,dxds\\
&\leq \|\nabla^2\bar{u}\|_{L^2([0,t];L^2)}\|u\|_{L^2([0,t];L^2(B_{3R_*}))}+C\int_0^t\int_{B^c_{3R_*}} |x|^{-3}|u|\,dxds\\
&\leq C\|\nabla^2\bar{u}\|_{L^2([0,t];L^2)}\|u\|_{L^2([0,t];L^2(B_R))}
+CR^{-\f12}(\|u\|_{L^1([0,t];L^2(B_1))}+\|\nabla u\|_{L^1([0,t];L^2)})<\infty,
\end{align*}
which gives \eqref{Eq.convergence_1} by the dominated convergence theorem. By Lemma \ref{u_inf_decay_lem}, Lemma \ref{lem_fm} and Lemma \ref{lem_Lp},  we have 
\begin{align*}
&\int_0^t\int_{\R^2}|(\nabla \eta_{R}\cdot\nabla \bar{u})\cdot u|\,dx\,ds
\leq \frac{1}{R}\int_0^t\int_{R<|x|<2R} |\nabla \bar{u}||u|\,dx\,ds\\
&\leq  C R^{-\f12}\int_0^t\int_{R<|x|<2R} |x|^{-1}\cdot |x|^{-\f32}|u|\,dx\,ds\leq C R^{-\f12}\int_0^t\bigg(\int_{R<|x|<2R} \frac{|u|^2}{|x|^2}(\ln |x|)^{-4}\,dx\bigg)^{\f12}\,ds\\
&\leq  C R^{-\f12}\int_0^t(\|u\|_{L^2(B_1)}+\|\nabla u\|_{L^2})\,ds\leq C R^{-\f12}\bigl(t\|\sqrt{\rho}u\|_{L^{\infty}([0,t];L^2)}+t^{\f12}\|\nabla u\|_{L^2([0,t];L^2)}\bigr)\to 0
\end{align*}
as $R\to +\infty$. Thus, we get \eqref{Eq.convergence_2}.

{\bf Case 2.} $\rho_0$ satisfies (H2). In this case, we have
\begin{align*}
\int_0^t\int_{\R^2}&|(-\Delta \bar{u})\cdot u|\,dxds\leq \|\nabla^2\bar{u}\|_{L^2([0,t];L^2)}\|u\|_{L^2([0,t];L^2)}\\
&\leq C\|\nabla^2\bar{u}\|_{L^2([0,t];L^2)}(t\|\sqrt{\rho}u\|_{L^{\infty}([0,t];L^2)}+t^{\f12}\|\nabla u\|_{L^2([0,t];L^2)})<\infty,\\
\int_0^t\int_{\R^2}&|(\nabla \eta_{R}\cdot\nabla \bar{u})\cdot u|\,dx\,ds
\leq \frac{1}{R}\int_0^t\int_{R<|x|<2R} |\nabla \bar{u}||u|\,dx\,ds
\leq  R^{-2}\|u\|_{L^1_t(L^2)}\\
&\leq R^{-2}(t\|\sqrt{\rho}u\|_{L^{\infty}([0,t];L^2)}+t^{\f12}\|\nabla u\|_{L^2([0,t];L^2)})\to 0\quad\text{as}\quad R\to+\infty.
\end{align*}
Hence, \eqref{Eq.convergence_1} and \eqref{Eq.convergence_2} hold. 
\end{proof}

\begin{lem}\label{Lem.unique_1}
    For $t\in[0, +\infty)$, we have
    \[\int_0^t\int_{\R^2} |\rho-\bar{\rho}||\p_t\bar{u}+\bar{u}\cdot\nabla\bar{u}||\bar{u}-u|+\rho|u-\bar{u}|^2|\nabla \bar{u}|dxds<+\infty\]
    and
    \begin{align}
&\f12\int_{\R^2}\rho|u-\bar{u}|^2dx
+\int_0^t\int_{\R^2}|\nabla (u-\bar{u})|^2dxds\nonumber\\
&\leq \int_0^t\int_{\R^2} \bigl[(\rho-\bar{\rho})(\p_t\bar{u}+\bar{u}\cdot\nabla\bar{u})\cdot(\bar{u}-u)-\rho[(u-\bar{u})\cdot\nabla \bar{u}]\cdot(u-\bar{u})\bigr]dxds.\label{Eq.unique_inequality}
\end{align}
\end{lem}
\begin{proof}
    {\color{red}(The proof needs to be modified.)} We can write
\begin{align}\label{bar_u}
\rho(\p_t\bar{u}+ u\cdot \nabla \bar{u})-\Delta \bar{u}+\nabla P=(\rho-\bar{\rho})(\p_t\bar{u}+ \bar{u}\cdot \nabla \bar{u})+\rho(u-\bar{u})\cdot \nabla \bar{u}.    
\end{align}
Multiplying \eqref{bar_u} by $u$ and integrating over $\R^2\times (0,t)$, by \eqref{inte_by_parts} we find
\begin{equation}\begin{aligned}\label{u_test}
&\int_0^t\int_{\R^2}\rho(\p_t\bar{u}+ u\cdot \nabla \bar{u})\cdot u\,dxds+\int_0^t\int_{\R^2}\nabla \bar{u}:\nabla u\,dxds\\
&=\int_0^t\int_{\R^2}(\rho-\bar{\rho})(\p_t\bar{u}+ \bar{u}\cdot \nabla \bar{u})\cdot u\,dxds    
+\int_0^t\int_{\R^2}\rho \left((u-\bar{u})\cdot \nabla \bar{u}\right)\cdot u\,dxds.      
\end{aligned}\end{equation}
Combining \eqref{strong_test} with \eqref{u_test}, we obtain 
\begin{equation}\begin{aligned}\label{u_baru}
&\int_{\R^2}\rho u\cdot \bar{u}\,dx+2\int_0^t\int_{\R^2}\nabla \bar{u}: \nabla u\,dxds=\int_{\R^2}\rho_0 |u_0|^2\,dx\\
&\qquad+\int_0^t\int_{\R^2}(\rho-\bar{\rho})(\p_t\bar{u}+ \bar{u}\cdot \nabla \bar{u})\cdot u\,dxds    
+\int_0^t\int_{\R^2}\rho \left((u-\bar{u})\cdot \nabla \bar{u}\right)\cdot u\,dxds.       
\end{aligned}\end{equation}
Moreover, we multiply \eqref{bar_u} by $\bar{u}$ and integrate over $\R^2\times (0,t)$ to get
\begin{equation}\begin{aligned}\label{baru}
\f12\int_{\R^2}\rho|\bar{u}|^2\,dx+\int_0^t\int_{\R^2}|\nabla \bar{u}|^2\,dxds
&=\f12\int_{\R^2}\rho_0 |u_0|^2\,dx+\int_0^t\int_{\R^2}\rho \left((u-\bar{u})\cdot \nabla \bar{u}\right)\cdot \bar{u}\,dxds\\   
&\qquad\quad+\int_0^t\int_{\R^2}(\rho-\bar{\rho})(\p_t\bar{u}+ \bar{u}\cdot \nabla \bar{u})\cdot\bar{u}\,dxds.       
\end{aligned}\end{equation}
Recall that $(\rho,u)$ satisfies the weak energy inequality, and also the fact that $|u-\bar{u}|^2=|u|^2-2u\cdot\bar{u}+|\bar{u}|^2$. Summing up \eqref{energy} and  \eqref{baru}, and then minus \eqref{u_baru}, we obtain \eqref{Eq.unique_inequality}.
\end{proof}

\subsection{Proof of weak-strong uniqueness} 
We only consider $t\in (0,T_1)$ with $T_1\in(0,1/2)$ small enough, and the global uniqueness can be obtained by a standard bootstrap argument.
\begin{lem}\label{Lem.unique_2}
    Assume that $T_1\in(0, 1/2)$ is small enough. Let $\dot{\bar{u}}:=\p_t\bar{u}+\bar{u}\cdot\nabla\bar{u}$ and
\[A(t):=\sup_{s\in[0,t]} \int_{\R^2}\rho|u-\bar{u}|^2(s,x)\,dx+\int_0^t\int_{\R^2}|\nabla(u-\bar{u})|^2\,dxds,\  B(t):=\sup_{s\in[0, t]}\|\rho(s)-\bar\rho(s)\|_{\dot H^{-1}(\R^2)}\]
for  $t\in[0, T_1]$. Then there is a constant $C>0$ such that 
\begin{align}
    B(t)\leq& CtA(t)^{1/2},\label{Eq.B_leq_A}\\
    A(t)\leq C\int_0^t |B(s)^2\ln B(s)|&\bigl(\|\sqrt{\bar{\rho}}\dot{\bar{u}}\|^2_{L^2}+\|\nabla\dot{\bar{u}}\|^2_{L^2}\bigr)\,ds\label{Eq.A_leq_B}
\end{align}
for all $t\in[0, T_1]$.
\end{lem}
\begin{proof} {\color{red}(The proof needs to be modified.)}
    Thanks to $\nabla\bar{u}\in L^1(0,T;L^{\infty})$ (see \eqref{lip}), we have
\[A(t)\leq \int_0^t\bigl|\int_{\R^2} (\rho-\bar{\rho})\dot{\bar{u}}\cdot(\bar{u}-u)\,dx\bigr|ds+CA(t)\int_0^t\|\nabla u(s)\|_{L^\infty}\,ds,\quad\forall\ t\in[0, T_1].\]
By adjusting $T_1\in(0, 1/2)$ to smaller if necessary, we get
\begin{align*}
A(t)\leq C\int_0^t\bigl|\int_{\R^2} (\rho-\bar{\rho})\dot{\bar{u}}\cdot(\bar{u}-u)\,dx\bigr|ds,\quad\forall\ t\in[0, T_1].
\end{align*}
Next, we wish to estimate $\|\rho-\bar{\rho}\|_{\dot{H}^{-1}(\R^2)}$. We write
\begin{align}\label{rho_minus}
 \p_t(\rho-\bar{\rho})+\dive \{\bar{u}\cdot (\rho-\bar{\rho})\} =-\dive\{(u-\bar{u})\rho\}.
\end{align}

Fix $t>0$, as we have $\int_0^t\|\nabla \bar{u}(s)\|_{L^{\infty}}\,ds\leq C[\ln(1+t)]^{\f12}$, then given $\varphi\in \mathcal{C}^1_c(\R^2)$, there exists a unique $\phi(s,x)$ solving
\begin{align*}
\p_s\phi+\bar{u}\cdot\nabla\phi=0,\quad \forall\ s\in[0, t],\qquad \phi(t,x)=\varphi(x), 
\end{align*}
with the bound
\begin{align}\label{transpot}
\sup_{s\in[0,t]}\|\nabla\phi(s)\|_{L^2}\leq C\|\nabla\varphi\|_{L^2}\int_0^t\|\nabla \bar{u}(s)\|_{L^{\infty}}\,ds\leq C\|\nabla\varphi\|_{L^2} 
\end{align}
Since by \eqref{rho_minus} we have
\begin{align*}
\frac{d}{ds} \langle(\rho-\bar{\rho})(s), \phi(s)\rangle
&=\langle -\dive \{\bar{u} (\rho-\bar{\rho})\},\phi\rangle-\langle \dive \{\rho (u-\bar{u})\},\phi\rangle-\langle (\rho-\bar{\rho}),\bar{u}\cdot\nabla\phi\rangle,
\end{align*}
then using integration by parts and integrating over $(0,t)$, we obtain 
\begin{align*}
|\langle(\rho-\bar{\rho})(t), \phi(t)\rangle|\leq
\int_0^t |\langle \rho (u-\bar{u}),\nabla\phi\rangle|\,ds \leq C\int_0^t \|\sqrt{\rho}(u-\bar{u})(s)\|_{L^2}\|\nabla\phi(s)\|_{L^2}\,ds,
\end{align*}
then \eqref{transpot} implies that
\begin{align*}
|\langle(\rho-\bar{\rho})(t), \varphi\rangle|\leq C\|\nabla\varphi\|_{L^2} \int_0^t\|\sqrt{\rho}(u-\bar{u})(s)\|_{L^2}\,ds.
\end{align*}
Therefore we arrive at
\begin{align}\label{B_A}
B(t):=\sup_{s\in[0,t]}\|\rho(s)-\bar{\rho}(s)\|_{\dot{H}^{-1}(\R^2)}\leq C \int_0^t\|\sqrt{\rho}(u-\bar{u})\|_{L^2(\R^2)}\,ds\leq C tA(t)^{\f12}.
\end{align}
It follows from the energy inequality that $A(t)\leq C$, so 
$0\leq B(t)\leq \f12$ for $t\in[0,T_1]$ by adjusting $T_1\in(0, 1/2)$ to smaller if necessary.
Hence by Lemma \ref{lem_Lp}, Lemma \ref{uni_H^-1_bubble} and Lemma \ref{uni_H^-1_patch} (taking $\varepsilon=\|\rho-\bar\rho\|_{\dot{H}^{-1}}$), we have
\begin{align*}
A(t)&\leq C\int_0^t\biggl|\int_{\R^2} (\rho-\bar{\rho})\dot{\bar{u}}\cdot(\bar{u}-u)dx\biggr|ds\leq C\int_0^t \|\rho-\bar{\rho}\|_{\dot{H}^{-1}}\bigl|\ln \|\rho-\bar{\rho}\|_{\dot{H}^{-1}}\bigr|^{\f12}M(s)\,ds\\
&\leq C\int_0^t B(s)|\ln B(s)|^{\f12}M(s)\,ds
\leq C\biggl[\int_0^t |B(s)^2\ln B(s)|
\bigl(\|\sqrt{\bar{\rho}}\dot{\bar{u}}\|^2_{L^2}+\|\nabla\dot{\bar{u}}\|^2_{L^2}\bigr)\,ds\biggr]^{1/2}A(t)^{\f12},
\end{align*}
where $$M(s):=\left\{
     \begin{array}{l}
\bigl(\|\sqrt{\bar{\rho}}\dot{\bar{u}}\|_{L^2}+\|\nabla\dot{\bar{u}}\|_{L^2}\bigr)(\|\sqrt{\rho}(\bar{u}-u)\|_{L^2}+\|\nabla(\bar{u}-u)\|_{L^2}),\quad \rho_0 \text{~satisfies~} (H1),\\
     \|\dot{\bar{u}}\|_{H^1}\|\bar{u}-u\|_{H^1} , ~\quad\,\quad\quad\quad\quad \quad \qquad\qquad\qquad\qquad\qquad\quad \rho_0 \text{~satisfies~} (H2),\qquad
     \end{array}
     \right.$$
which implies \eqref{Eq.A_leq_B}.
\end{proof}

\begin{proof}[Proof of Theorem \ref{thm_weak_unique}]
    We only consider $t\in (0,T_1)$ with $T_1\in(0,1/2)$ small enough, and the global uniqueness can be obtained by a standard bootstrap argument. Let $A(t), B(t)$ be defined by Lemma \ref{Lem.unique_2}. By Lemma \ref{dot u} we have
\begin{align*}
\sup_{t\in[0,T]} t\|\sqrt{\bar{\rho}}\dot{\bar{u}}\|^2_{L^2}+\int_0^T t\|\nabla \dot{\bar{u}}\|^2_{L^2} dt \leq C\Longrightarrow   \int_0^{T} t\bigl(\|\sqrt{\bar{\rho}}\dot{\bar{u}}\|^2_{L^2}+\|\nabla\dot{\bar{u}}\|^2_{L^2}\bigr) dt \leq C.
\end{align*}   
Let $\gamma(t):= t\bigl(\|\sqrt{\bar{\rho}}\dot{\bar{u}}\|^2_{L^2}+\|\nabla\dot{\bar{u}}\|^2_{L^2}\bigr)$, then $\gamma\in L^1([0,T])$. As $A(t)\leq C$, and by \eqref{Eq.B_leq_A} we get
$B(t)\leq Ct,~B(t)|\ln B(t)|^{\f12}\leq Ct^{5/8}$, then by \eqref{Eq.A_leq_B} we have
\begin{align*}
 A(t)\leq C \int_0^t |B(s)^2\ln B(s)|[\gamma(s)/s]ds
 \leq C\int_0^ts^{5/4}[\gamma(s)/s]ds\leq Ct^{1/4}.  
\end{align*}
Along the way, iterating one more time will yield $A(t)\leq Ct$ for all $t\in[0, T_1]$.

As $t\in [0,T_1]$ is small, and by using the fact that $|\ln \frac{s\sqrt A}{B}|\leq C(\frac{s\sqrt A}{B})^2$, we obtain $|\ln B(s)|\leq C(|\ln s|+|\ln A(s)|+s^2A(s)^2/B(s)^2)\leq C(|\ln A(s)|+s^2A(s)^2/B(s)^2)$ and thus
\begin{align*}
|B(s)^2\ln B(s)|&\leq C(|B(s)^2\ln A(s)|+s^2|A(s)|^2)\leq C\bigl(s^2A(s)|\ln A(s)|+s^2|A(s)|^2\bigr)\\
&\leq C\bigl(sA(s)|\ln A(s)|+|sA(s)|\bigr).
\end{align*}
Thus we get 
\begin{align*}
A(t)\leq C\int_0^t A(s)\bigl(|\ln A(s)|+1\bigr)\gamma(s) ds,\quad\forall\ t\in[0, T_1]. 
\end{align*}
Furthermore, the function $r\to r(1-\ln r)$ is increasing near $0^+$ and satisfies $\int_0^1 \frac{dr}{r(1-\ln r)}=+\infty$. Hence we can apply Osgood lemma (\cite{BCD}, Lemma 3.4) to conclude that $A(t)=0$ on $[0,T_1]$, and by \eqref{Eq.B_leq_A} we have $B(t)=0$ on $[0,T_1]$.
This implies $u=\bar{u},~\rho=\bar{\rho}$ a.e. in $[0,T_1]\times \R^2$. The uniqueness on the whole time $[0,\infty)$ can be obtained by a bootstrap argument.
\end{proof}
\fi 
\if0
\begin{proof}[Proof of Theorem \ref{thm_weak_unique}]
We only consider $t\in (0,T_1)$ with $T_1\in(0,1/2)$ small enough, and the global uniqueness can be obtained by a standard bootstrap argument.

We can write
\begin{align}\label{bar_u}
\rho(\p_t\bar{u}+ u\cdot \nabla \bar{u})-\Delta \bar{u}+\nabla P=(\rho-\bar{\rho})(\p_t\bar{u}+ \bar{u}\cdot \nabla \bar{u})+\rho(u-\bar{u})\cdot \nabla \bar{u}.    
\end{align}
Multiplying \eqref{bar_u} by $u$ and integrating over $\R^2\times (0,t)$, by \eqref{inte_by_parts} we find
\begin{equation}\begin{aligned}\label{u_test}
&\int_0^t\int_{\R^2}\rho(\p_t\bar{u}+ u\cdot \nabla \bar{u})\cdot u\,dxds+\int_0^t\int_{\R^2}\nabla \bar{u}:\nabla u\,dxds\\
&=\int_0^t\int_{\R^2}(\rho-\bar{\rho})(\p_t\bar{u}+ \bar{u}\cdot \nabla \bar{u})\cdot u\,dxds    
+\int_0^t\int_{\R^2}\rho \left((u-\bar{u})\cdot \nabla \bar{u}\right)\cdot u\,dxds.      
\end{aligned}\end{equation}
Combining \eqref{strong_test} with \eqref{u_test}, we obtain 
\begin{equation}\begin{aligned}\label{u_baru}
&\int_{\R^2}\rho u\cdot \bar{u}\,dx+2\int_0^t\int_{\R^2}\nabla \bar{u}: \nabla u\,dxds=\int_{\R^2}\rho_0 |u_0|^2\,dx\\
&\qquad+\int_0^t\int_{\R^2}(\rho-\bar{\rho})(\p_t\bar{u}+ \bar{u}\cdot \nabla \bar{u})\cdot u\,dxds    
+\int_0^t\int_{\R^2}\rho \left((u-\bar{u})\cdot \nabla \bar{u}\right)\cdot u\,dxds.       
\end{aligned}\end{equation}
Moreover, we multiply \eqref{bar_u} by $\bar{u}$ and integrate over $\R^2\times (0,t)$ to get
\begin{equation}\begin{aligned}\label{baru}
\f12\int_{\R^2}\rho|\bar{u}|^2\,dx+\int_0^t\int_{\R^2}|\nabla \bar{u}|^2\,dxds
&=\f12\int_{\R^2}\rho_0 |u_0|^2\,dx+\int_0^t\int_{\R^2}\rho \left((u-\bar{u})\cdot \nabla \bar{u}\right)\cdot \bar{u}\,dxds\\   
&\qquad\quad+\int_0^t\int_{\R^2}(\rho-\bar{\rho})(\p_t\bar{u}+ \bar{u}\cdot \nabla \bar{u})\cdot\bar{u}\,dxds.       
\end{aligned}\end{equation}
Recall that $(\rho,u)$ satisfies the weak energy inequality, and also the fact that $|u-\bar{u}|^2=|u|^2-2u\cdot\bar{u}+|\bar{u}|^2$. Summing up \eqref{energy} and  \eqref{baru}, and then minus \eqref{u_baru}, we obtain
\begin{align*}
&\f12\int_{\R^2}\rho|u-\bar{u}|^2dx
+\int_0^t\int_{\R^2}|\nabla (u-\bar{u})|^2dxds\\
&\leq \int_0^t\int_{\R^2} \bigl[(\rho-\bar{\rho})(\p_t\bar{u}+\bar{u}\cdot\nabla\bar{u})\cdot(\bar{u}-u)-\rho[(u-\bar{u})\cdot\nabla \bar{u}]\cdot(u-\bar{u})\bigr]dxds.
\end{align*}
Let $\dot{\bar{u}}:=\p_t\bar{u}+\bar{u}\cdot\nabla\bar{u}$ and
\[A(t):=\sup_{s\in[0,t]} \int_{\R^2}\rho|u-\bar{u}|^2(s,x)\,dx+\int_0^t\int_{\R^2}|\nabla(u-\bar{u})|^2\,dxds, \quad\forall\ t\in[0, T_1].\]
Thanks to $\nabla\bar{u}\in L^1(0,T;L^{\infty})$ (see \eqref{lip}), we have
\[A(t)\leq \int_0^t\bigl|\int_{\R^2} (\rho-\bar{\rho})\dot{\bar{u}}\cdot(\bar{u}-u)\,dx\bigr|ds+CA(t)\int_0^t\|\nabla u(s)\|_{L^\infty}\,ds,\quad\forall\ t\in[0, T_1].\]
By adjusting $T_1\in(0, 1/2)$ to smaller if necessary, we get
\begin{align*}
A(t)\leq C\int_0^t\bigl|\int_{\R^2} (\rho-\bar{\rho})\dot{\bar{u}}\cdot(\bar{u}-u)\,dx\bigr|ds,\quad\forall\ t\in[0, T_1].
\end{align*}
Next, we wish to estimate $\|\rho-\bar{\rho}\|_{\dot{H}^{-1}(\R^2)}$. We write
\begin{align}\label{rho_minus}
 \p_t(\rho-\bar{\rho})+\dive \{\bar{u}\cdot (\rho-\bar{\rho})\} =-\dive\{(u-\bar{u})\rho\}.
\end{align}

Fix $t>0$, as we have $\int_0^t\|\nabla \bar{u}(s)\|_{L^{\infty}}\,ds\leq C[\ln(1+t)]^{\f12}$, then given $\varphi\in \mathcal{C}^1_c(\R^2)$, there exists a unique $\phi(s,x)$ solving
\begin{align*}
\p_s\phi+\bar{u}\cdot\nabla\phi=0,\quad \forall\ s\in[0, t],\qquad \phi(t,x)=\varphi(x), 
\end{align*}
with the bound
\begin{align}\label{transpot}
\sup_{s\in[0,t]}\|\nabla\phi(s)\|_{L^2}\leq C\|\nabla\varphi\|_{L^2}\int_0^t\|\nabla \bar{u}(s)\|_{L^{\infty}}\,ds\leq C\|\nabla\varphi\|_{L^2} 
\end{align}
Since by \eqref{rho_minus} we have
\begin{align*}
\frac{d}{ds} \langle(\rho-\bar{\rho})(s), \phi(s)\rangle
&=\langle -\dive \{\bar{u} (\rho-\bar{\rho})\},\phi\rangle-\langle \dive \{\rho (u-\bar{u})\},\phi\rangle-\langle (\rho-\bar{\rho}),\bar{u}\cdot\nabla\phi\rangle,
\end{align*}
then using integration by parts and integrating over $(0,t)$, we obtain 
\begin{align*}
|\langle(\rho-\bar{\rho})(t), \phi(t)\rangle|\leq
\int_0^t |\langle \rho (u-\bar{u}),\nabla\phi\rangle|\,ds \leq C\int_0^t \|\sqrt{\rho}(u-\bar{u})(s)\|_{L^2}\|\nabla\phi(s)\|_{L^2}\,ds,
\end{align*}
then \eqref{transpot} implies that
\begin{align*}
|\langle(\rho-\bar{\rho})(t), \varphi\rangle|\leq C\|\nabla\varphi\|_{L^2} \int_0^t\|\sqrt{\rho}(u-\bar{u})(s)\|_{L^2}\,ds.
\end{align*}
Therefore we arrive at
\begin{align}\label{B_A}
B(t):=\sup_{s\in[0,t]}\|\rho(s)-\bar{\rho}(s)\|_{\dot{H}^{-1}(\R^2)}\leq C \int_0^t\|\sqrt{\rho}(u-\bar{u})\|_{L^2(\R^2)}\,ds\leq C tA(t)^{\f12}.
\end{align}
It follows from the energy inequality that $A(t)\leq C$, so 
$0\leq B(t)\leq \f12$ for $t\in[0,T_1]$ by adjusting $T_1\in(0, 1/2)$ to smaller if necessary.
Hence by Lemma \ref{lem_Lp}, Lemma \ref{uni_H^-1_bubble}, Lemma \ref{uni_H^-1_patch} and Remark \ref{rmk} (taking $\varepsilon=\|\rho-\bar\rho\|_{\dot{H}^{-1}}$), we have
\begin{align*}
A(t)&\leq C\int_0^t\biggl|\int_{\R^2} (\rho-\bar{\rho})\dot{\bar{u}}\cdot(\bar{u}-u)dx\biggr|ds\leq C\int_0^t \|\rho-\bar{\rho}\|_{\dot{H}^{-1}}\bigl|\ln \|\rho-\bar{\rho}\|_{\dot{H}^{-1}}\bigr|^{\f12}M(s)\,ds\\
&\leq C\int_0^t B(s)|\ln B(s)|^{\f12}M(s)\,ds
\leq C\biggl[\int_0^t |B(s)^2\ln B(s)|
\bigl(\|\sqrt{\bar{\rho}}\dot{\bar{u}}\|^2_{L^2}+\|\nabla\dot{\bar{u}}\|^2_{L^2}\bigr)\,ds\biggr]^{1/2}A(t)^{\f12},
\end{align*}
where $$M(s):=\left\{
     \begin{array}{l}
\bigl(\|\sqrt{\bar{\rho}}\dot{\bar{u}}\|_{L^2}+\|\nabla\dot{\bar{u}}\|_{L^2}\bigr)(\|\sqrt{\rho}(\bar{u}-u)\|_{L^2}+\|\nabla(\bar{u}-u)\|_{L^2}),\quad \rho_0 \text{~satisfies~} (H1),\\
     \|\dot{\bar{u}}\|_{H^1}\|\bar{u}-u\|_{H^1} , ~\quad\,\quad\quad\quad\quad \quad \qquad\qquad\qquad\qquad\qquad\quad \rho_0 \text{~satisfies~} (H2),\qquad
     \end{array}
     \right.$$
which ensures that
\begin{align}\label{A_B}
A(t)\leq C\int_0^t |B(s)^2\ln B(s)|\bigl(\|\sqrt{\bar{\rho}}\dot{\bar{u}}\|^2_{L^2}+\|\nabla\dot{\bar{u}}\|^2_{L^2}\bigr)\,ds,\quad\forall\ t\in[0, T_1].
\end{align}
By Lemma \ref{dot u} we have
\begin{align*}
\sup_{t\in[0,T]} t\|\sqrt{\bar{\rho}}\dot{\bar{u}}\|^2_{L^2}+\int_0^T t\|\nabla \dot{\bar{u}}\|^2_{L^2} dt \leq C\Longrightarrow   \int_0^{T} t\bigl(\|\sqrt{\bar{\rho}}\dot{\bar{u}}\|^2_{L^2}+\|\nabla\dot{\bar{u}}\|^2_{L^2}\bigr) dt \leq C.
\end{align*}   
Let $\gamma(t):= t\bigl(\|\sqrt{\bar{\rho}}\dot{\bar{u}}\|^2_{L^2}+\|\nabla\dot{\bar{u}}\|^2_{L^2}\bigr)$, then $\gamma\in L^1([0,T])$. As $A(t)\leq C$, and by \eqref{B_A} we get
$B(t)\leq Ct,~B(t)|\ln B(t)|^{\f12}\leq Ct^{5/8}$, then by \eqref{A_B} we have
\begin{align*}
 A(t)\leq C \int_0^t |B(s)^2\ln B(s)|[\gamma(s)/s]ds
 \leq C\int_0^ts^{5/4}[\gamma(s)/s]ds\leq Ct^{1/4}.  
\end{align*}
Along the way, iterating one more time will yield $A(t)\leq Ct$ for all $t\in[0, T_1]$.

As $t\in [0,T_1]$ is small, and by using the fact that $|\ln \frac{s\sqrt A}{B}|\leq C(\frac{s\sqrt A}{B})^2$, we obtain $|\ln B(s)|\leq C(|\ln s|+|\ln A(s)|+s^2A(s)^2/B(s)^2)\leq C(|\ln A(s)|+s^2A(s)^2/B(s)^2)$ and thus
\begin{align*}
|B(s)^2\ln B(s)|&\leq C(|B(s)^2\ln A(s)|+s^2|A(s)|^2)\leq C\bigl(s^2A(s)|\ln A(s)|+s^2|A(s)|^2\bigr)\\
&\leq C\bigl(sA(s)|\ln A(s)|+|sA(s)|\bigr).
\end{align*}
Thus we get 
\begin{align*}
A(t)\leq C\int_0^t A(s)\bigl(|\ln A(s)|+1\bigr)\gamma(s) ds,\quad\forall\ t\in[0, T_1]. 
\end{align*}
Furthermore, the function $r\to r(1-\ln r)$ is increasing near $0^+$ and satisfies $\int_0^1 \frac{dr}{r(1-\ln r)}=+\infty$. Hence we can apply Osgood lemma (\cite{BCD}, Lemma 3.4) to conclude that $A(t)=0$ on $[0,T_1]$, and by \eqref{B_A} we have $B(t)=0$ on $[0,T_1]$.
This implies $u=\bar{u},~\rho=\bar{\rho}$ a.e. in $[0,T_1]\times \R^2$. The uniqueness on the whole time $[0,\infty)$ can be obtained by a bootstrap argument.
\end{proof}
\fi 
 
\appendix 
 
 
 
 

\section{Technical lemmas}\label{Appen_proof}

The following lemma comes from Theorem B.1 in \cite{PL}.

\begin{lem}\label{lem_fm}
For $m\in [2,\infty)$ and $\ell\in (1+\frac{m}{2},\infty)$, there exists a positive constant $C$ depending only on $m$ and $\ell$ such that for all $f\in H^1(\R^2)$,
\begin{align}\label{fm}
\left(\int_{\R^2}\frac{|f|^m}{{\langle x \rangle}^2} 
(\log\langle x \rangle)^{-\ell} dx\right)^{1/m}
\leq C\big(\|f\|_{L^2(B_1)}+\|\nabla f\|_{L^2(\R^2)}\big).    
\end{align}
\end{lem}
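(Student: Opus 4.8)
The plan is to prove Lemma \ref{lem_fm}, which is a weighted Sobolev-type embedding: for $m\in[2,\infty)$ and $\ell>1+\frac m2$, one controls a weighted $L^m$ norm of $f$ by $\|f\|_{L^2(B_1)}+\|\nabla f\|_{L^2(\R^2)}$. I would first reduce to controlling $f$ on the unit ball and on the exterior $B_1^c$ separately. On $B_1$ the weight $\langle x\rangle^{-2}(\log\langle x\rangle)^{-\ell}$ is bounded, so $\int_{B_1}|f|^m\langle x\rangle^{-2}(\log\langle x\rangle)^{-\ell}\,dx\leq C\|f\|_{L^m(B_1)}^m$, and then the standard Sobolev/Gagliardo–Nirenberg inequality on $B_1$ gives $\|f\|_{L^m(B_1)}\leq C(\|f\|_{L^2(B_1)}+\|\nabla f\|_{L^2(B_1)})$ for every $m<\infty$ in two dimensions. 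So the whole difficulty is in the exterior region.

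On $B_1^c$ I would pass to polar coordinates $x=(r\cos\theta,r\sin\theta)$ and work radius by radius. The key one-dimensional fact is that for the restriction of $f$ to a circle of radius $r$, together with the $\log$ weight, one can write $f(r,\theta)-f(1,\theta)=\int_1^r\partial_s f(s,\theta)\,ds$ and estimate $|f(r,\theta)|\leq |f(1,\theta)|+\big(\int_1^r|\partial_s f(s,\theta)|^2 s\,ds\big)^{1/2}\big(\int_1^r s^{-1}\,ds\big)^{1/2}=|f(1,\theta)|+(\log r)^{1/2}\big(\int_1^\infty|\partial_s f|^2 s\,ds\big)^{1/2}$. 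Hence $|f(r,\theta)|^m\leq C|f(1,\theta)|^m + C(\log r)^{m/2}\big(\int_1^\infty|\nabla f(s,\theta)|^2 s\,ds\big)^{m/2}$. Then one multiplies by the weight and integrates: $\int_{B_1^c}|f|^m\langle x\rangle^{-2}(\log\langle x\rangle)^{-\ell}\,dx = \int_0^{2\pi}\int_1^\infty |f(r,\theta)|^m\langle r\rangle^{-2}(\log\langle r\rangle)^{-\ell} r\,dr\,d\theta$. Since $\langle r\rangle\sim r$ and $\log\langle r\rangle\sim\log r$ for large $r$, the radial weight behaves like $r^{-1}(\log r)^{-\ell}$, and $\int_1^\infty r^{-1}(\log r)^{-\ell}\,dr<\infty$ iff $\ell>1$, while $\int_1^\infty (\log r)^{m/2-\ell}r^{-1}\,dr<\infty$ iff $\ell>1+m/2$ — precisely the hypothesis. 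The first term contributes $\big(\int_1^\infty r^{-1}(\log r)^{-\ell}\,dr\big)\int_0^{2\pi}|f(1,\theta)|^m\,d\theta\leq C\|f(1,\cdot)\|_{L^m(\partial B_1)}^m$, which is controlled by the trace theorem / one-dimensional Sobolev embedding on the circle by $C(\|f\|_{L^2(B_1)}+\|\nabla f\|_{L^2(B_1)})^m$ (here I use that in the annulus $B_1\setminus B_{1/2}$, say, the trace on $\partial B_1$ in $L^m$ is controlled by the $H^1$ norm, and then extend back to $B_1$). The second term, after integrating the $(\log r)^{m/2-\ell}r^{-1}$ factor in $r$, leaves $\int_0^{2\pi}\big(\int_1^\infty|\nabla f(s,\theta)|^2 s\,ds\big)^{m/2}\,d\theta$; since $m/2\geq 1$, one cannot directly pull the power inside, so I would instead use Minkowski's inequality in integral form or simply Hölder in $\theta$ together with the fact $\int_0^{2\pi}\int_1^\infty|\nabla f|^2 s\,ds\,d\theta=\|\nabla f\|_{L^2(B_1^c)}^2$, treating $g(\theta):=\int_1^\infty|\nabla f(s,\theta)|^2 s\,ds$ — but $g$ is only in $L^1(d\theta)$, not $L^{m/2}$, which is a genuine subtlety.

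To handle that last point cleanly I would not split $f(r,\theta)=f(1,\theta)+\int_1^r\partial_s f\,ds$ but rather work with the full two–dimensional gradient: first prove the inequality for $m=2$ by the above radial argument (where $m/2=1$ and $g\in L^1(d\theta)$ suffices), obtaining $\int_{B_1^c}|f|^2\langle x\rangle^{-2}(\log\langle x\rangle)^{-\ell}\,dx\leq C(\|f\|_{L^2(B_1)}^2+\|\nabla f\|_{L^2}^2)$ for $\ell>2$; and then for general $m$ interpolate against the unweighted $L^q$ bounds $\|f\|_{L^q(\R^2)}\leq C\|f\|_{H^1}$ (valid for all $q<\infty$ in $\R^2$, using the $L^2(B_1)+\dot H^1$ control of the $L^2$ norm via a Poincaré-type argument, as already done in the paper). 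Concretely, write the weight as a product $\langle x\rangle^{-2}(\log\langle x\rangle)^{-\ell}=\big(\langle x\rangle^{-2}(\log\langle x\rangle)^{-\ell_0}\big)^{\theta}\cdot 1^{1-\theta}$ with $\ell_0>2$ and choose $\theta\in(0,1)$, $q$ so that Hölder gives $\|f\|_{L^m(w\,dx)}\leq \|f\|_{L^2(w_0\,dx)}^{\theta}\|f\|_{L^q}^{1-\theta}$ with $\frac1m=\frac\theta2+\frac{1-\theta}{q}$ and the weight bookkeeping $m\cdot\frac{\theta}{2}\cdot\ell_0 = \ell$ forced, which can be arranged precisely when $\ell>1+\frac m2$ by taking $q$ large; this converts the $m\geq2$ case into the already-established $m=2$ case plus a harmless unweighted Sobolev factor. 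The main obstacle, as indicated, is exactly this non-convexity issue ($m/2\ge1$) in the exterior estimate: the naive radial pointwise bound loses too much because $\theta\mapsto\int_1^\infty|\nabla f(s,\theta)|^2s\,ds$ is merely integrable, and the clean fix is the $m=2$-plus-interpolation route rather than trying to run the radial argument directly at exponent $m$.
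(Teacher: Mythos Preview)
The paper does not give its own proof of this lemma; it simply cites Theorem B.1 in Lions \cite{PL}. So there is no argument in the paper to compare your approach against.

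Your $m=2$ argument via radial integration is essentially fine (modulo the origin issue noted below). The real problem is the interpolation step. You want to write
\[
\|f\|_{L^m(w\,dx)}\leq \|f\|_{L^2(w_0\,dx)}^{\theta}\|f\|_{L^q(\R^2)}^{1-\theta},
\qquad w_0=\langle x\rangle^{-2}(\log\langle x\rangle)^{-\ell_0},
\]
but matching the $\langle x\rangle^{-2}$ factor in the weight forces $m\theta/2=1$, i.e.\ $\theta=2/m$, and then the exponent relation $\tfrac1m=\tfrac\theta2+\tfrac{1-\theta}{q}$ gives $q=\infty$. So you would need an $L^\infty$ bound on $f$, which is unavailable in $H^1(\R^2)$. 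More fundamentally, even for finite $q$ the unweighted bound $\|f\|_{L^q(\R^2)}\leq C(\|f\|_{L^2(B_1)}+\|\nabla f\|_{L^2(\R^2)})$ that you invoke is \emph{false}: take $f\equiv 1$, for which the right-hand side is finite but the left-hand side is infinite. The Poincar\'e-type estimates in the paper (Lemma~\ref{lem_Lp}, Lemma~\ref{lem4}) always carry a density weight $\sqrt{\rho}$ or are local; they never deliver a global unweighted $L^q$ bound from $\|f\|_{L^2(B_1)}+\|\nabla f\|_{L^2}$ alone. So the interpolation route, as written, cannot close. A correct argument has to keep a weight on both interpolants, or proceed differently (e.g.\ a dyadic decomposition in which on each annulus one uses the full 2D Sobolev embedding together with the radial control of averages, rather than freezing~$\theta$ and integrating only in~$r$).

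A smaller point: the weight $\langle x\rangle^{-2}(\log\langle x\rangle)^{-\ell}$ is \emph{not} bounded on $B_1$, since $\log\langle x\rangle\sim |x|^2/2$ near the origin and $(\log\langle x\rangle)^{-\ell}$ blows up like $|x|^{-2\ell}$. Taken literally, the left-hand side of \eqref{fm} is infinite for any $f$ that does not vanish near $0$. This is almost certainly a transcription issue from Lions' statement (where the weight is regularized, e.g.\ $\log(e+|x|)$ in place of $\log\langle x\rangle$), and in every application in the paper the actual weight used ($\langle x\rangle^{-3}$ or $\langle x\rangle^{-4}$) is bounded near the origin; but your sentence ``on $B_1$ the weight is bounded'' is false for the weight as written.
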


The following technical lemma is {similar to Proposition A.4 in \cite{PT}}, which will be used constantly.  

\begin{lem}
\label{lem_Lp}
Assume that $\varrho\in L^{\infty}(\R^2)$ and $\int_{B(x_0,r_0)}\varrho(x)\,dx\geq c_0>0$. Then for all $p\in [1,\infty)$, there exists a constant $C>0$ depending only on $p$, $\|\varrho\|_{L^{\infty}}$ and $c_0$ such that for $f\in H^1_{\operatorname{loc}}$, 
\begin{align*}\label{f2}
\|f\|_{L^p(B(x_0,r_0))}\leq 
Cr_0^{2/p}\bigl((1+r_0)\|\nabla f\|_{L^2(B(x_0,r_0))}+\|\sqrt{\varrho}f\|_{L^2(B(x_0,r_0))}\bigr).    
\end{align*}
\end{lem}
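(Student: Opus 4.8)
The plan is to combine the scale-covariant Poincar\'{e}--Sobolev inequality on the ball $B(x_0,r_0)$ with a bound for the mean of $f$ that exploits the lower bound $\int_{B(x_0,r_0)}\varrho\,dx\geq c_0$. After a translation we may assume $x_0=0$; write $B_r=B(0,r)$ and let $\bar f:=|B_{r_0}|^{-1}\int_{B_{r_0}}f\,dx$ denote the mean of $f$ over $B_{r_0}$. First I would record that for every $q\in[1,\infty)$ there is a constant $C_q>0$, independent of $r_0$ and $f$, with
\[
\|f-\bar f\|_{L^q(B_{r_0})}\leq C_q\,r_0^{2/q}\,\|\nabla f\|_{L^2(B_{r_0})};
\]
this is the Poincar\'{e}--Wirtinger inequality together with the two-dimensional embedding $H^1(B_1)\hookrightarrow L^q(B_1)$, transported to radius $r_0$ by the dilation $x\mapsto r_0x$ (in $\R^2$ the quantity $\|\nabla f\|_{L^2}$ is scale invariant, while $\|\,\cdot\,\|_{L^q(B_r)}$ carries the factor $r^{2/q}$).

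Next I would estimate $\bar f$. Using $\int_{B_{r_0}}\varrho\,dx\geq c_0$, one has
\[
c_0\,|\bar f|^2\leq\int_{B_{r_0}}\varrho\,|\bar f|^2\,dx\leq 2\int_{B_{r_0}}\varrho\,|f-\bar f|^2\,dx+2\int_{B_{r_0}}\varrho\,|f|^2\,dx\leq 2\|\varrho\|_{L^\infty}\|f-\bar f\|_{L^2(B_{r_0})}^2+2\|\sqrt\varrho\,f\|_{L^2(B_{r_0})}^2,
\]
so the case $q=2$ of the first display gives
\[
|\bar f|\leq \frac{C}{\sqrt{c_0}}\Bigl(\|\varrho\|_{L^\infty}^{1/2}\,r_0\,\|\nabla f\|_{L^2(B_{r_0})}+\|\sqrt\varrho\,f\|_{L^2(B_{r_0})}\Bigr)
\]
for an absolute constant $C$. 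Since $\|\bar f\|_{L^p(B_{r_0})}=|B_{r_0}|^{1/p}|\bar f|=\pi^{1/p}r_0^{2/p}|\bar f|$, I would then conclude by the triangle inequality $\|f\|_{L^p(B_{r_0})}\leq\|f-\bar f\|_{L^p(B_{r_0})}+\|\bar f\|_{L^p(B_{r_0})}$ and the first display with $q=p$:
\[
\|f\|_{L^p(B_{r_0})}\leq C_p\,r_0^{2/p}\|\nabla f\|_{L^2(B_{r_0})}+C\,r_0^{2/p}\Bigl(r_0\|\nabla f\|_{L^2(B_{r_0})}+\|\sqrt\varrho\,f\|_{L^2(B_{r_0})}\Bigr),
\]
and collecting the two gradient contributions into a single coefficient of size at most $C(1+r_0)$ produces exactly the claimed inequality, with a final constant depending only on $p$, $\|\varrho\|_{L^\infty}$ and $c_0$.

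I do not expect a serious obstacle; the only points that need care are (i) keeping track of the powers of $r_0$ when the Poincar\'{e}--Sobolev inequality is transported from the unit ball, and (ii) verifying that the constant obtained this way depends only on $p$, $\|\varrho\|_{L^\infty}$, $c_0$ and not on $r_0$ — which holds because the $r_0$-scaling has been made explicit and the hypothesis on $\varrho$ enters only through $\|\varrho\|_{L^\infty}$ and the single number $c_0$. Since every estimate above is localized to $B(x_0,r_0)$, the assumption $f\in H^1_{\mathrm{loc}}$ suffices, as the restriction of $f$ to the bounded ball $B(x_0,r_0)$ lies in $H^1(B(x_0,r_0))$. (A compactness-and-contradiction argument on the fixed unit ball followed by scaling would also work, but the direct route above seems cleaner.)
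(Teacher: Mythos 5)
Your argument is correct and follows essentially the same route as the paper's proof: a scaled Poincar\'e--Sobolev inequality for $f-\bar f$ on $B(x_0,r_0)$, a bound on the mean $\bar f$ obtained from the lower bound $\int_{B(x_0,r_0)}\varrho\,dx\geq c_0$ combined with the $q=2$ Poincar\'e inequality, and the triangle inequality to conclude. The only cosmetic difference is that you bound $c_0|\bar f|^2$ by a quadratic splitting whereas the paper estimates $\sqrt{c_0}\,|\bar f|\leq\|\sqrt{\varrho}\,\bar f\|_{L^2}$ directly by the triangle inequality in $L^2$; the two are interchangeable.
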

\begin{proof}
Denote $\overline{f}:=\frac{1}{|B(x_0,r_0)|}\int_{B(x_0,r_0)}f(y)\,dy$. Then Poincar\'{e}'s inequality implies 
\begin{equation}\label{f1}
\|f-\overline{f}\|_{L^p(B(x_0,r_0))}\leq Cr_0^{2/p}\|\nabla f\|_{L^2(B(x_0,r_0))}.
\end{equation}
By \eqref{f1} for $p=2$, we get
\begin{align*}
\sqrt{c_0}|\overline{f}|&\leq \|\sqrt{\varrho}\overline{f}\|_{L^2(B(x_0,r_0))}
\leq \|\sqrt{\varrho}(f-\overline{f})\|_{L^2(B(x_0,r_0))}+\|\sqrt{\varrho}f\|_{L^2(B(x_0,r_0))}\\
&\leq C\|\varrho\|_{L^{\infty}}^{1/2}\|f-\overline{f}\|_{L^2(B(x_0,r_0))}+\|\sqrt{\varrho}f\|_{L^2(B(x_0,r_0))}\\
&\leq Cr_0\|\varrho\|_{L^{\infty}}^{1/2}\|\nabla f\|_{L^2(B(x_0,r_0))}+\|\sqrt{\varrho}f\|_{L^2(B(x_0,r_0))}.
\end{align*}
Therefore, we have
\begin{align*}
&\|f\|_{L^p(B(x_0,r_0))}\leq\|f-\overline{f}\|_{L^p(B(x_0,r_0))}+Cr_0^{2/p}|\overline{f}|\\ &\leq 
Cr_0^{2/p}\|\nabla f\|_{L^2(B(x_0,r_0))}+Cr_0^{2/p+1}\|\nabla f\|_{L^2(B(x_0,r_0))}+Cr_0^{2/p}\|\sqrt{\varrho}f\|_{L^2(B(x_0,r_0))}.
\end{align*}

This completes the proof.
\end{proof}

\begin{lem}\label{lem:app1}
Let $j_\epsilon$ be as in Subsection 4.1. 
There exists a constant $C>0$ such that for all $R>0$ and all $f\in \dot{H}^1(\R^2)$, there holds
\begin{align}\label{Eq.Appen_B1}
|(j_{\epsilon}*f)(x)-f_{(R)}|\leq C\|\nabla f\|_{L^2(\R^2)}\left|\ln\f{R}{\epsilon}\right|^{\f12},\quad \forall~ |x|\leq\f{R}{4},~ \forall~\epsilon\in\left(0,\f{R}{2}\right]
\end{align}
where $f_{(R)}:=\frac{1}{|B_R|}\int_{B_R} f(y)\,dy$. In particular, for all $\varepsilon\in(0,1/2]$, all $\varphi\in C_c^\infty(\R^2)$ satisfying $\operatorname{supp}\varphi\subset B_{1/(4\varepsilon)}$, we have
\begin{align}\label{Eq.mean_L^infty}
\|\varphi (j_{\epsilon}*f-f_{(1/\varepsilon)})\|_{L^{\infty}}\leq C|\ln{\epsilon}|^{\f12}\|\varphi\|_{L^{\infty}}\|\nabla f\|_{L^2(\R^2)}.  
\end{align}    
\end{lem}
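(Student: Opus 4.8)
The statement to prove is Lemma~\ref{lem:app1}, concerning the mollification estimate \eqref{Eq.Appen_B1} and its consequence \eqref{Eq.mean_L^infty}.

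\textbf{Overall approach.} The plan is to first establish \eqref{Eq.Appen_B1} and then deduce \eqref{Eq.mean_L^infty} as an immediate corollary. For \eqref{Eq.Appen_B1}, I would fix $|x|\le R/4$ and $\epsilon\in(0,R/2]$ and split the difference $(j_\epsilon*f)(x)-f_{(R)}$ into two pieces: the difference between $(j_\epsilon*f)(x)$ and a local average of $f$ on a ball of radius comparable to $\epsilon$ centered at $x$, and the difference between that local average and the average $f_{(R)}$ over $B_R$. Since $\operatorname{supp}j\subset B_{1/4}$, the mollified value $(j_\epsilon*f)(x)$ only sees $f$ on $B(x,\epsilon/4)\subset B_{R/4+R/8}\subset B_R$, so both averages are "honest" averages of $f$ over concentric balls sitting inside $B_R$.

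\textbf{Key steps.} (i) Control $|(j_\epsilon*f)(x)-f_{B(x,\epsilon/4)}|$: write $(j_\epsilon*f)(x)-f_{B(x,\epsilon/4)}=\int j_\epsilon(y)\,(f(x-y)-f_{B(x,\epsilon/4)})\,dy$, bound by $\|j_\epsilon\|_{L^\infty}$ times $\int_{B(x,\epsilon/4)}|f-f_{B(x,\epsilon/4)}|$, and apply Poincar\'e's inequality on $B(x,\epsilon/4)$ together with Cauchy--Schwarz, giving a bound $C\|\nabla f\|_{L^2(B(x,\epsilon/4))}\le C\|\nabla f\|_{L^2(\R^2)}$. (ii) Control $|f_{B(x,\epsilon/4)}-f_{(R)}|$ by a dyadic telescoping argument: interpolate between radii $\epsilon/4$ and (say) $2R$ through $O(|\ln(R/\epsilon)|)$ dyadic scales $r_k=2^k(\epsilon/4)$, and between consecutive radii use the standard estimate $|f_{B(x,r_k)}-f_{B(x,r_{k+1})}|\le C\|\nabla f\|_{L^2(B(x,r_{k+1}))}$ (Poincar\'e on $B(x,r_{k+1})$, with the key point in two dimensions that the constant is \emph{scale-invariant}, so no power of $r_k$ appears). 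Since each annular $L^2$ norm of $\nabla f$ is at most $\|\nabla f\|_{L^2(\R^2)}$ and there are $O(|\ln(R/\epsilon)|)$ terms, Cauchy--Schwarz over the $k$-sum (using disjointness of dyadic annuli only heuristically — actually just summing) yields $|f_{B(x,\epsilon/4)}-f_{B(x,2R)}|\le C|\ln(R/\epsilon)|^{1/2}\|\nabla f\|_{L^2(\R^2)}$. Finally compare $f_{B(x,2R)}$ with $f_{(R)}=f_{B(0,R)}$: since $B_R\subset B(x,2R)$ and $|B_R|/|B(x,2R)|$ is a fixed constant, $|f_{B(x,2R)}-f_{(R)}|\le C\|\nabla f\|_{L^2(B(x,2R))}\le C\|\nabla f\|_{L^2(\R^2)}$, which is absorbed. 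Combining (i) and (ii) gives \eqref{Eq.Appen_B1}. (iii) For \eqref{Eq.mean_L^infty}: if $\operatorname{supp}\varphi\subset B_{1/(4\varepsilon)}$, take $R=1/\varepsilon$ in \eqref{Eq.Appen_B1}; then for every $x\in\operatorname{supp}\varphi$ we have $|x|\le R/4$, so $|(j_\varepsilon*f)(x)-f_{(1/\varepsilon)}|\le C|\ln(R/\varepsilon)|^{1/2}\|\nabla f\|_{L^2}=C|\ln(1/\varepsilon^2)|^{1/2}\|\nabla f\|_{L^2}\le C|\ln\varepsilon|^{1/2}\|\nabla f\|_{L^2}$, and multiplying by $\varphi$ and taking the sup gives the claim.

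\textbf{Main obstacle.} The only genuinely delicate point is the dyadic telescoping in step (ii): one must be careful that the Poincar\'e constant relating $|f_{B(x,r)}-f_{B(x,2r)}|$ to $\|\nabla f\|_{L^2(B(x,2r))}$ is independent of $r$ (this is exactly the dimension-$2$ scaling of $\|\nabla f\|_{L^2}$, which is scale-invariant in $\R^2$ — this is why the logarithm, rather than a power, appears), and that the number of dyadic steps from $\epsilon/4$ up to $2R$ is $\lesssim \log_2(R/\epsilon)+O(1)\lesssim|\ln(R/\epsilon)|+1$; when $R/\epsilon$ is bounded (i.e.\ $\epsilon$ close to $R/2$) the logarithm could be small or vanish, but then only $O(1)$ dyadic steps are needed and the bound $C\|\nabla f\|_{L^2}\le C(|\ln(R/\epsilon)|+1)^{1/2}\|\nabla f\|_{L^2}$ still holds — here one should simply note that $|\ln(R/\epsilon)|$ in the statement may be replaced by $|\ln(R/\epsilon)|+C$ without loss since the left side is also trivially bounded via Poincar\'e when $R/\epsilon\lesssim 1$. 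Everything else is routine.
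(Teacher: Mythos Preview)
Your step (ii) contains a genuine gap. The Poincar\'e bound you quote is
\[
|f_{B(x,r_k)}-f_{B(x,r_{k+1})}|\le C\|\nabla f\|_{L^2(B(x,r_{k+1}))},
\]
with the \emph{full ball} on the right, not the annulus $A_k=B(x,r_{k+1})\setminus B(x,r_k)$. The annular version is simply false: if $\nabla f$ is supported in the innermost ball $B(x,r_0)$ (so $f$ is constant outside), then $\|\nabla f\|_{L^2(A_k)}=0$ for $k\ge 1$ while $|f_{B(x,r_k)}-f_{B(x,r_{k+1})}|\ne 0$. With only the ball bound, Cauchy--Schwarz over $k$ gives
\[
\sum_{k=0}^{N-1}\|\nabla f\|_{L^2(B(x,r_{k+1}))}\le \sqrt{N}\Big(\sum_{k}\|\nabla f\|_{L^2(B(x,r_{k+1}))}^2\Big)^{1/2}\le \sqrt{N}\cdot\sqrt{N}\,\|\nabla f\|_{L^2}=N\|\nabla f\|_{L^2},
\]
since each point is counted up to $N$ times. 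So your telescoping yields $|\ln(R/\epsilon)|$, not $|\ln(R/\epsilon)|^{1/2}$, and the parenthetical ``actually just summing'' does not rescue it.

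Your strategy can be repaired by replacing ball averages with \emph{spherical} averages $\bar f(x,r)=\frac{1}{2\pi r}\int_{\partial B(x,r)}f\,d\sigma$: from $\bar f'(x,r)=\frac{1}{2\pi r}\int_{\partial B(x,r)}\partial_\nu f\,d\sigma$ and Cauchy--Schwarz on the sphere one gets $|\bar f'(x,r)|\le(2\pi r)^{-1/2}\big(\int_{\partial B(x,r)}|\nabla f|^2\big)^{1/2}$, whence by Cauchy--Schwarz in $r$ and the co-area formula
\[
|\bar f(x,R)-\bar f(x,r)|\le \Big(\tfrac{1}{2\pi}\ln\tfrac{R}{r}\Big)^{1/2}\|\nabla f\|_{L^2(B(x,R)\setminus B(x,r))}.
\]
Then $|f_{B(x,r)}-\bar f(x,r)|\le C\|\nabla f\|_{L^2(B(x,r))}$ (a single Poincar\'e-type step) handles the endpoints. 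The paper instead avoids all of this: it scales to $R=1$, introduces a cutoff $\zeta$ with $\zeta|_{B_{3/4}}=1$, writes $(j_\epsilon*f)(x)-f_{(1)}=j_\epsilon*(\zeta(f-f_{(1)}))(x)$ for $|x|\le 1/4$, applies the Fourier-side bound $\|j_\epsilon*h\|_{L^\infty}\le C|\ln\epsilon|^{1/2}\|h\|_{H^1}$, and then uses Poincar\'e on $B_1$ to get $\|\zeta(f-f_{(1)})\|_{H^1}\le C\|\nabla f\|_{L^2}$.
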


\begin{proof}
\if0
It is enough to show that 
\begin{align}\label{claim_appendixB}
|(j_{\epsilon}*f)(x)-f_{(R)}|\leq C\|\nabla f\|_{L^2(\R^2)}\left|\ln\f{R}{\epsilon}\right|^{\f12},\quad \forall~ |x|\leq\f{R}{4},~ \forall~\epsilon\in\left(0,\f{R}{2}\right],  
\end{align}
where $f_{(R)}:=\frac{1}{|B_R|}\int_{B_R} f(y)\,dy$. Indeed, \eqref{Eq.Appen_B1} follows from \eqref{claim_appendixB} and $|f_{(R)}|\leq C\|f\|_{L^2(B_R)}/R$ for all $R>0$.
\fi 

Let $\zeta\in C_c^{\infty}(\R^2;[0,1])$ be fixed such that $\operatorname{supp}\zeta\subset B_1$ and $\zeta|_{B_{3/4}}=1$. Since $\int_{\R^2}j(x)dx=1$, we have $(j_{\epsilon}*f)(x)-f_{(1)}=\int_{\R^2}j_{\epsilon}(x-y)(f(y)-f_{(1)})\,dy=j_{\epsilon}*(f-f_{(1)})(x)$ for $x\in\R^2$. And by the definition of $\zeta$, for $|x|\leq1/4$ we have
\begin{align*}
(j_{\epsilon}*f)(x)-f_{(1)}=\zeta(x)\Bigl((j_{\epsilon}*f)(x)-f_{(1)}\Bigr)= j_{\epsilon}*\Bigl(\zeta\bigl(f-f_{(1)}\bigr)\Bigr)(x):=j_{\epsilon}*(\zeta g)(x),  
\end{align*}
where $g=f-f_{(1)}\in \dot{H}^1$ satisfies $\int_{B_1}g(y)\,dy=0$. By $\|j_\varepsilon*h\|_{L^\infty}\leq C|\ln \varepsilon|^{1/2}\|h\|_{H^1}$ and Poincar\'e inequality, we know that
\begin{align*}
\|j_{\epsilon}*(\zeta g)\|_{L^{\infty}}\leq C|\ln{\epsilon}|^{\f12} \|\zeta g\|_{H^1}\leq C|\ln{\epsilon}|^{\f12}(\|g\|_{L^2(B_1)}+\|\nabla g\|_{L^2(\R^2)})\leq C|\ln{\epsilon}|^{\f12}\|\nabla g\|_{L^2(\R^2)}.   
\end{align*}
This proves \eqref{Eq.Appen_B1} for $R=1$, and then a scaling argument gives \eqref{Eq.Appen_B1} for all $R>0$.
\end{proof}

\begin{lem}
    There exists a contant $C>0$ such that for all $f\in \dot H^1(\R^2)$, there holds
    \begin{equation}\label{Eq.f_R-f_1}
        \left|f_{(R)}-f_{(1)}\right|\leq C\|\nabla f\|_{L^2(\R^2)}|\ln R|^{1/2},\quad\forall\ R\geq2.
    \end{equation}
\end{lem}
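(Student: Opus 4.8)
The plan is to derive \eqref{Eq.f_R-f_1} from Lemma~\ref{lem:app1} alone, by comparing both $f_{(R)}$ and $f_{(1)}$ to the single fixed number $(j_{1/2}*f)(0)$. The heart of the matter is that Lemma~\ref{lem:app1} already delivers the ``square root of a logarithm'' growth: a naive dyadic telescoping $f_{(R)}-f_{(1)}=\sum_{k}(f_{(2^{k+1})}-f_{(2^{k})})$ together with Poincar\'e's inequality would only yield the weaker bound $C\|\nabla f\|_{L^2(\R^2)}\ln R$, whereas \eqref{Eq.Appen_B1} compares $(j_\epsilon*f)(x)$ with $f_{(R)}$ \emph{directly} with the gain $|\ln(R/\epsilon)|^{1/2}$. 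I first record that for $f\in\dot H^1(\R^2)\subset L^1_{\operatorname{loc}}(\R^2)$ the quantity $(j_{1/2}*f)(0)$ is well defined (the integrand is supported in $B_{1/8}$), and that $f_{(R)}-f_{(1)}$ and $(j_{1/2}*f)(0)-f_{(R)}$ are unchanged if $f$ is modified by an additive constant, since $\int_{\R^2}j=1$; thus no choice of representative or density argument is needed, and Lemma~\ref{lem:app1}, stated for $\dot H^1$, applies verbatim.

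The steps, in order: (i) apply \eqref{Eq.Appen_B1} with $x=0$, $\epsilon=\tfrac12$ and radius $R\ge2$ (legitimate since $\tfrac12\le R/2$ and $0\le R/4$) to get $|(j_{1/2}*f)(0)-f_{(R)}|\le C\|\nabla f\|_{L^2(\R^2)}|\ln(2R)|^{1/2}$; (ii) apply \eqref{Eq.Appen_B1} with $x=0$, $\epsilon=\tfrac12$ and radius $1$ (legitimate since $\tfrac12\le\tfrac12$ and $0\le\tfrac14$) to get $|(j_{1/2}*f)(0)-f_{(1)}|\le C(\ln2)^{1/2}\|\nabla f\|_{L^2(\R^2)}$; (iii) add the two via the triangle inequality, obtaining
\[
\bigl|f_{(R)}-f_{(1)}\bigr|\le C\|\nabla f\|_{L^2(\R^2)}\bigl(|\ln(2R)|^{1/2}+1\bigr),\qquad R\ge2;
\]
(iv) for $R\ge2$ use $\ln R\ge\ln2>0$ and subadditivity of $t\mapsto t^{1/2}$ to bound $|\ln(2R)|^{1/2}+1=(\ln2+\ln R)^{1/2}+1\le(\ln R)^{1/2}+(\ln2)^{1/2}+(\ln2)^{-1/2}(\ln R)^{1/2}\le C(\ln R)^{1/2}$, which is exactly \eqref{Eq.f_R-f_1}.

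I expect no genuine obstacle: all the work lives inside Lemma~\ref{lem:app1}, whose proof in the excerpt is self-contained and does \emph{not} invoke \eqref{Eq.f_R-f_1}, so there is no circularity. The only mildly delicate point is the elementary bookkeeping in step~(iv), where the hypothesis $R\ge2$ is used to absorb both $|\ln(2R)|^{1/2}$ and the additive $O(1)$ term into $C|\ln R|^{1/2}$. For completeness I would note an alternative, self-contained route bypassing Lemma~\ref{lem:app1}: write $(j_{R/2}*f)(0)-(j_{1/2}*f)(0)=\int_{\R^2}\bigl(\widehat{j}(\tfrac{R}{2}\xi)-\widehat{j}(\tfrac12\xi)\bigr)\widehat{f}(\xi)\,d\xi$ (up to normalization), apply Cauchy--Schwarz against the weight $|\xi|^{-2}$, and estimate $\int_{\R^2}|\widehat{j}(\tfrac{R}{2}\xi)-\widehat{j}(\tfrac12\xi)|^2|\xi|^{-2}\,d\xi\le C\ln R$ by splitting into $|\xi|\le2/R$, $2/R\le|\xi|\le2$, $|\xi|\ge2$ and using $\widehat{j}(0)\ne0$, the Lipschitz bound of $\widehat{j}$ near $0$, and the Schwartz decay of $\widehat{j}$; the logarithm then comes exclusively from the intermediate annulus.
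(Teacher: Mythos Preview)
Your proposal is correct and takes essentially the same approach as the paper: both apply Lemma~\ref{lem:app1} twice at $x=0$ with a fixed $\epsilon$ (you use $\epsilon=\tfrac12$, the paper uses $\epsilon_0=\tfrac14$), once with radius $R$ and once with radius $1$, and then use the triangle inequality. The paper's write-up is slightly more compressed---it absorbs the constants directly via $|\ln(R/\epsilon_0)|^{1/2}\le C|\ln R|^{1/2}$ and $|\ln\epsilon_0|^{1/2}\le C|\ln R|^{1/2}$ for $R\ge2$ rather than spelling out your step~(iv)---but the argument is the same.
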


\begin{proof}
    Fix 
    $\varepsilon_0=1/4$. Then Lemma \ref{lem:app1} implies that
    \begin{align*}
        \left|(j_{\varepsilon_0}*f)({0})-f_{(R)}\right|\leq C\|\nabla f\|_{L^2}\left|\ln \frac{R}{\varepsilon_0}\right|^{1/2}\leq C\|\nabla f\|_{L^2}|\ln R|^{1/2},\\
        \left|(j_{\varepsilon_0}*f)({0})-f_{(1)}\right|\leq C\|\nabla f\|_{L^2}\left|\ln {\varepsilon_0}\right|^{1/2}\leq C\|\nabla f\|_{L^2}|\ln R|^{1/2}
    \end{align*}
    for all $R\geq 2$. Then \eqref{Eq.f_R-f_1} follows from the triangle inequality.
\end{proof}

\if0
\begin{lem}\label{AppendixB1}
Let $T>0$ and $f, g: [0, T+1]\times\R^2\to \R^2$ satisfy $\int_0^{T+1} \|f(t)\|^2_{L^2_x}+t\|g(t)\|^2_{L^2_x}\,dt<\infty$. Let $\xi\in C_c^{\infty}(\R;[0,1])$ be fixed such that $\operatorname{supp}\xi\subset[-2,-\f12]$, $\int_{\R}\xi(t)\,dt=1$ and let $\xi_{\epsilon}(t):=\epsilon^{-1}\xi(\frac{t}{\epsilon})$ for $\varepsilon>0$. Then we have 
\begin{align}
&\lim_{\epsilon\to 0+}\int_0^T\|(f*_t\xi_{\epsilon}-f)(t)\|^2_{L^2_x}\,dt=0,\label{Eq.f_convo_conver}\\
&\lim_{\epsilon\to 0+}\int_0^T t\|(g*_t\xi_{\epsilon}-g)(t)\|_{L^2_x}\,dt=0.\label{Eq.g_convo_conver}
\end{align}
\end{lem}

\begin{proof}
By Fubini's theorem, we can write
\begin{align*}
\int_0^T\|(f*_t\xi_{\epsilon}-f)(t)\|^2_{L^2_x}\,dt
&=\int_{\R^2}\|(f*_t\xi_{\epsilon}-f)(x)\|_{L_t^2(0,T)}^2\,dx.  
\end{align*}
For a.e. $x\in\R^2$, we have $\|f(\cdot,x)\|_{L_t^2(0,T+1)}<\infty$, thus $\|(f*_t\xi_{\epsilon}-f)(x)\|_{L_t^2(0,T)}\to 0$ as $\epsilon\to 0+$, and we also have
\begin{align*}
\|(f*_t\xi_{\epsilon}-f)(x)\|^2_{L_t^2(0,T)}\leq 2\|(f*_t\xi_{\epsilon})(x)\|^2_{L_t^2(0,T)}+2\|f(x)\|^2_{L_t^2(0,T)} \leq 4\|f(x)\|^2_{L_t^2(0,T+1)}\in L_x^1(\R^2),   
\end{align*}
then \eqref{Eq.f_convo_conver} follows by the dominated convergence theorem.

By Fubini's theorem again, we can also write
\begin{align*}
\int_0^Tt\|(g*_t\xi_{\epsilon}-g)(t)\|^2_{L^2_x}\,dt
=\int_{\R^2}\int_0^Tt|(g*_t\xi_{\epsilon}-g)(t,x)|^2\,dtdx.  
\end{align*}
Note that $\operatorname{supp}\xi_\varepsilon\subset(-\infty, 0)$, thus
\begin{align*}
\sqrt{t}|(g*_t\xi_{\epsilon})(t,x)|&\leq\sqrt{t}(|g|*_t\xi_{\epsilon})(t,x)
= \sqrt{t}\int_{\R}|g(t-s,x)|\xi_{\epsilon}(s)\,ds\\
&\leq\int_{\R}\sqrt{t-s}|g(t-s,x)|\xi_{\epsilon}(s)\,ds=\bigl((\sqrt{t}|g|)*_t\xi_{\epsilon}\bigr)(t,x),
\end{align*}
then for small enough $\varepsilon>0$ we get
\begin{align*}
&\int_0^Tt|(g*_t\xi_{\epsilon}-g)(t,x)|^2\,dt\leq 2\int_0^Tt|(g*_t\xi_{\epsilon})(t,x)|^2\,dt+2\int_0^Tt|g(t,x)|^2\,dt\\
&\leq \ 2\int_0^T|((\sqrt{t}|g|)*_t\xi_{\epsilon})(t,x)|^2\,dt+2\int_0^Tt|g(t,x)|^2\,dt\leq 4\int_0^Tt|g(t,x)|^2\,dt\in L^1(\R^2).
\end{align*}
For a.e. $x\in\R^2$, we have $\|\sqrt{t}g(t,x)\|_{L_t^2(0,T)}<\infty$, thus $\|(\sqrt{t}g)*_t\xi_{\epsilon}(\cdot, x)-\sqrt{t}g(\cdot, x)\|_{L^2_t(0,T)}\to 0$ as $\epsilon\to 0+$. So, in view of the dominate convergence theorem, it suffices to show that
\begin{align}\label{Eq.Appen_C_1}
\lim_{\epsilon\to 0+}\int_0^T |\sqrt{t}(g*_t\xi_{\epsilon})-(\sqrt{t}g)*_t\xi_{\epsilon}|^2\,dt=0.
\end{align}

By Minkowski inequality and $\operatorname{supp}\xi_\varepsilon\subset[-2\varepsilon, -\varepsilon/2]$, we have
\begin{align*}
\biggl(\int_0^T \bigl|\sqrt{t}(g*_t\xi_{\epsilon})-(\sqrt{t}g)*_t\xi_{\epsilon}\bigr|^2\,dt\biggr)^{\f12}
&=\biggl(\int_0^T\Bigl|\int_{\R} \bigl(\sqrt{t}g(t-s)-\sqrt{t-s}g(t-s)\bigr)\xi_{\epsilon}(s)\,ds\Bigr|^2\,dt\biggr)^{\f12}\\
&\leq\int_{\R}\biggl(\int_0^T\bigl|\bigl(\sqrt{t}-\sqrt{t-s}\bigr)g(t-s)\bigr|^2\,dt\biggr)^{\f12}\xi_{\epsilon}(s)\,ds\\
&=\int_{-2\varepsilon}^{-\frac\varepsilon2}\biggl(\int_0^T\biggl|\sqrt{\frac{t}{t-s}}-1\biggr|^2(t-s)\bigl|g(t-s)\bigr|^2\,dt\biggr)^{\f12}\xi_{\epsilon}(s)\,ds.
\end{align*}
For small enough $\varepsilon>0$, $s\in[-2\varepsilon, -\varepsilon/2]$ and $t\in[\sqrt\varepsilon, T]$, we have $|-s|\leq 2\varepsilon$, $|t-s|\geq \sqrt\varepsilon$, and $$\biggl|\sqrt{\frac{t}{t-s}}-1\biggr|^2=\left|1-\sqrt{1-\frac{-s}{t-s}}\right|^2\leq \biggl|\frac{-s}{t-s}\biggr|^2\leq 4\varepsilon,$$
hence,
\begin{align*}
\mathrm{I}_{\epsilon}:=\int_{\sqrt{\epsilon}}^T \biggl|\sqrt{\frac{t}{t-s}}-1\biggr|^2\cdot(t-s)\bigl|g(t-s)\bigr|^2\,dt
\leq 4\epsilon \int_{\sqrt{\epsilon}}^T(t-s)\bigl|g(t-s)\bigr|^2\,dt\leq 4\epsilon \int_{0}^{T+1}t\bigl|g(t)\bigr|^2\,dt.
\end{align*}
For $s\in[-2\varepsilon, -\varepsilon/2]$ and $t>0$, we have $0<|\sqrt{t/(t-s)}-1|<1$, thus,
\begin{align*}
\mathrm{II}_{\epsilon}:&=\int_0^{\sqrt{\epsilon}}\biggl|\sqrt{\frac{t}{t-s}}-1\biggr|^2\cdot(t-s)\bigl|g(t-s)\bigr|^2\,dt
\leq \int_0^{\sqrt{\epsilon}+2\epsilon} t\bigl|g(t)\bigr|^2\,dt.
\end{align*}
Then we finally get
\begin{align*}
\biggl(\int_0^T \bigl|\sqrt{t}(g*_t\xi_{\epsilon})-(\sqrt{t}g)*_t\xi_{\epsilon}\bigr|^2\,dt\biggr)^{\f12}&\leq \int_{-2\varepsilon}^{-\varepsilon/2} (\mathrm{I}_{\epsilon}+\mathrm{II}_{\epsilon})^{\f12}(s)\xi_{\epsilon}(s)\,ds\\
&\leq 4\epsilon \int_{0}^{T+1}t\bigl|g(t)\bigr|^2\,dt+\int_0^{\sqrt{\epsilon}+2\epsilon} t\bigl|g(t)\bigr|^2\,dt\to 0 
\end{align*}
as $\varepsilon\to 0+$. This proves \eqref{Eq.Appen_C_1} and thus completes the proof of Lemma \ref{AppendixB1}.
\end{proof}
\fi

\if0
\section{Proof of  integral identity  \eqref{strong_test}}

\subsection{The case when $\rho_0$ satisfies (H1)}

By \eqref{rho_x2}, for $t\in[0,T_1]$ we have
\begin{equation}\begin{aligned}\label{rho_x2_smallt}
\sup_{t\in[0,T_1+1]}\int_{\R^2}\rho|x|^2\,dx
\leq 2\int_{\R^2}\rho_0 |x|^2\,dx+2(T_1+1)^2 \|\sqrt{\rho_0} u\|^2_{L^2}\leq C,\quad \forall~ 0<T_1\leq1/2.   
\end{aligned}\end{equation}

Let $\bar{u}=\nabla^{\perp} \psi$ and $\eta(x)\in C_c^{\infty}(\R^2;[0,1])$ be fixed such that $\eta=1$ in $B(0,1)$ and $\eta=0$ in $\R^2\setminus B(0,2)$, let  $\xi\in C_c^{\infty}(\R;[0,1])$ be fixed such that $\operatorname{supp}\xi(t)\subset[-2,-\f12],~\int_{\R}\xi(t)dt=1$. Denote\footnote{Note that we only focus on large $R$, so we can assume without loss of generality that $R>R_*$, where $R_*$ is given in AppendixA.} 
$\eta_{R}(x):=\eta(\frac{x}{R}),~\xi_{\epsilon}(t):=\epsilon^{-1}\xi(\frac{t}{\epsilon}),~\psi_{\epsilon}(t,x):=(\psi*_{t}\xi_{\epsilon})(t,x),~\bar{u}_{\epsilon,R}:=\nabla^{\perp}(\psi_{\epsilon}\eta_R),~\bar{u}_{\epsilon}:=\nabla^{\perp}\psi_{\epsilon}$.

In view the regularity of $\bar{u}_{\epsilon,R}$, by the weak formulation of \eqref{INS}, for a.e. $t\in(0,T_1)$ we have
\begin{align*}
\int_{\R^2}\rho u\cdot \bar{u}_{\epsilon,R}\,dx+\int_0^t\int_{\R^2}\nabla u: \nabla \bar{u}_{\epsilon,R}\,dxds
&=\int_{\R^2}\rho_0 u_0\cdot \bar{u}_{\epsilon,R}|_{t=0}\,dx\\
&\qquad\qquad+\int_0^t\int_{\R^2}\rho u\cdot (\p_t\bar{u}_{\epsilon,R}+ u\cdot \nabla \bar{u}_{\epsilon,R})\,dxds.
\end{align*}  

\begin{lem}\label{Lem.H_1_convergence1}
    We have
    \begin{enumerate}[(i)]
        \item $\ds\lim\limits_{R\to+\infty}\int_{\R^2}(\rho u\cdot \bar{u}_{\epsilon,R})(t,x)\,dx=\int_{\R^2}(\rho u\cdot \bar{u}_{\epsilon})(t,x)\,dx$ and $\ds\lim\limits_{R\to+\infty}\int_{\R^2}\rho_0 u_0\cdot \bar{u}_{\epsilon,R}|_{t=0}\,dx=\int_{\R^2}\rho_0 u_0\cdot \bar u_{\varepsilon}|_{t=0}\,dx$ for all $\varepsilon\in(0,1/2)$;
        \item $\ds\lim_{\varepsilon\to0+}\int_{\R^2}\rho u\cdot \bar{u}_{\epsilon}\,dx=\int_{\R^2}\rho u\cdot \bar{u}\,dx$ and $\ds \lim_{\varepsilon\to0+}\int_{\R^2}\rho_0 u_0\cdot \bar u_{\varepsilon}|_{t=0}\,dx=\int_{\R^2}\rho_0 |u_0|^2\,dx$.
    \end{enumerate}
\end{lem}

\begin{lem}\label{Lem.H_1_convergence2}
    We have
    \begin{enumerate}[(i)]
        \item $\ds\lim_{R\to+\infty}\int_0^t\int_{\R^2}\nabla u: \nabla \bar{u}_{\epsilon,R}\,dx\,ds=\int_0^t\int_{\R^2}\nabla u: \nabla \bar{u}_{\epsilon}\,dx\,ds$ for all $\varepsilon\in(0,1/2)$;
        \item $\ds\lim_{\varepsilon\to0+}\int_0^t\int_{\R^2}\nabla u: \nabla \bar{u}_{\epsilon}\,dx\,ds=\int_0^t\int_{\R^2}\nabla u: \nabla \bar{u}\,dx\,ds$.
    \end{enumerate}
\end{lem}

\begin{lem}\label{Lem.H_1_convergence3}
    We have
    \begin{enumerate}[(i)]
        \item $\ds\lim_{R\to+\infty}\int_0^t\int_{\R^2}\rho u\cdot\p_t\bar{u}_{\epsilon,R}\,dx\,ds=\int_0^t\int_{\R^2}\rho u\cdot\p_t\bar{u}_{\epsilon}\,dx\,ds$ for all $\varepsilon\in(0,1/2)$;
        \item $\ds\lim_{\varepsilon\to0+}\int_0^t\int_{\R^2}\rho u\cdot\p_t\bar{u}_{\epsilon}\,dx\,ds=\int_0^t\int_{\R^2}\rho u\cdot\p_t\bar{u}\,dx\,ds$.
    \end{enumerate}
\end{lem}

\begin{lem}\label{Lem.H_1_convergence4}
    We have
    \begin{enumerate}[(i)]
        \item $\ds\lim_{R\to+\infty}\int_0^t\int_{\R^2}\rho u \cdot(u\cdot \nabla \bar{u}_{\epsilon,R})\,dx\,ds=\int_0^t\int_{\R^2}\rho u \cdot(u\cdot \nabla \bar{u}_{\epsilon})\,dx\,ds$ for all $\varepsilon\in(0,1/2)$;
        \item $\ds\lim_{\varepsilon\to0+}\int_0^t\int_{\R^2}\rho u\cdot (u\cdot \nabla \bar{u}_{\epsilon})\,dx\,ds =\int_0^t\int_{\R^2}\rho u \cdot(u\cdot \nabla \bar{u})\,dx\,ds$.
    \end{enumerate}
\end{lem}

Now, clearly \eqref{strong_test} follows from Lemmas \ref{Lem.H_1_convergence1}$\sim$\ref{Lem.H_1_convergence4}. It remains to prove these lemmas. By Lemma \ref{u_L^infty}, there exists an $R_*=R_*(T_1+1)>0$ such that $\operatorname{supp}_x\bar\rho(t,\cdot)\subset B_{R_*}$ for all $t\in[0, T_1+1]$, Let $R^*:=3R_*$, then we have the estimates listed in Lemma \ref{u_inf_decay_lem} in the exterior region $B_{R^*}^c$ for $(\bar u, \psi)$ and $t\in[T_1+1]$. Hence we have the same estimates for $(\bar u_\varepsilon, \psi_\varepsilon)$ on $(t,x)\in [0, T_1]\times B_{R^*}^c$, and uniformly in $\varepsilon\in(0,1/2)$ (see footnote \ref{footnote_epsilon}).

\begin{proof}[Proof of Lemma \ref{Lem.H_1_convergence1}]
    \begin{enumerate}[(i)]
        \item
We fix $\varepsilon\in(0,1/2)$. By definition, we have
\begin{align*}
\int_{\R^2}\rho u\cdot \bar{u}_{\epsilon,R}\,dx=\int_{\R^2}\rho u\cdot \bar{u}_{\epsilon}\eta_{R}\,dx+\int_{\R^2}\rho u\psi_{\epsilon}\cdot \nabla^{\perp}\eta_R\,dx,\quad\forall\ R>0.
\end{align*}  
As $\|\bar{u}\|_{L^{2}(B_{R^*})}\leq C(\|\sqrt{\bar{\rho}}\bar{u}\|_{L^2}+\|\nabla \bar{u}\|_{L^2})$ (and the same holds for\footnote{\label{footnote_epsilon}Indeed, by Lemma \ref{lem_Lp} we have $\|\bar{u}_{\epsilon}\|_{L^{2}(B_{R^*})}\leq C(\|(\sqrt{\bar{\rho}}\bar{u})_{\epsilon}\|_{L^2(\R^2)}+\|\nabla \bar{u}_{\epsilon}\|_{L^2(\R^2)})$. And for any $p,q\in[1,\infty]$ we have $\|f_{\epsilon}\|_{L^p([0,t];L^q)}\leq \|f\|_{L^p([0,t+1];L^q)}$ if $\varepsilon\in(0,1/2)$.} 
$\bar u_\varepsilon$, since $\bar u_\varepsilon=\bar u*_t\xi_\varepsilon$, where the convolution acts on $t$, not $x$), hence by Lemma \ref{ nabla u} and Lemma \ref{u_inf_decay_lem}
\begin{align*}
\int_{\R^2}|\rho u\cdot \bar{u}_{\epsilon}|\,dx&\leq \|\sqrt{\rho}u\|_{L^2}(\|\rho_0\|^{\f12}_{L^{\infty}}\|\bar{u}_{\epsilon}\|_{L^{2}(B_{R^*})}+\|\sqrt{\rho}|x|\|_{L^2}\|\bar{u}_{\epsilon}\|_{L^{\infty}(B_{R^*}^c)})<\infty,   
\end{align*}
then it follows from the dominated convergence theorem that
\[\lim_{R\to+\infty}\int_{\R^2}\rho u\cdot \bar{u}_{\epsilon}\eta_{R}\,dx=\int_{\R^2}\rho u\cdot \bar{u}_{\epsilon}\,dx.\]
On the other hand, by \eqref{psi_infty} and $\sqrt{\rho}|x|\in L^2$ we have
\begin{align*}
\int_{\R^2}|\rho u\psi_{\epsilon}\cdot \nabla^{\perp}\eta_R|\,dx&\leq\frac{1}{R}\|\sqrt{\rho}u\|_{L^2}\|\psi_{\epsilon}\|_{L^{\infty}(B_{2R})}\Bigl(\int_{|x|>R}\rho|x|^2 \,dx\Bigr)^{\f12}\to0
\end{align*}
as $R\to +\infty$, hence
\begin{align*}
\ds\lim_{R\to+\infty}\int_{\R^2}\rho u\cdot \bar{u}_{\epsilon,R}\,dx=\int_{\R^2}\rho u\cdot \bar{u}_{\epsilon}\,dx.
\end{align*}  
Similarly, we can also get
\begin{align*}
\ds\lim_{R\to+\infty}\int_{\R^2}\rho_0 u_0\cdot \bar{u}_{\epsilon,R}|_{t=0}\,dx=\int_{\R^2}\rho u\cdot \bar{u}_{\epsilon}|_{t=0}\,dx.
\end{align*}  
        \item 
We fix $t\in(0, T_1]$. For each $\varepsilon\in(0,1/2)$, we have
\begin{align*}
\int_{\R^2} |\rho u\cdot(\bar{u}*_t\xi_{\epsilon})(t)-\bar{u}(t))|\,dx&=\int_{\R^2}\left|\rho u\cdot\int_{\R}(\bar{u}(t-s)-\bar{u}(t))\xi_{\epsilon}(s)\,ds\right|\,dx\\
&\leq\int_{\R}\int_{\R^2}|(\rho u)(t,x)||\bar{u}(t-s,x)-\bar{u}(t,x)|\,dx \,\xi_{\epsilon}(s)\,ds.
\end{align*}
We claim\footnote{Proof of \eqref{Eq.outer_small}: Fix $T_1\in (0,1/2)$, and also fix $t\in[0, T_1]$. For all $R>R^*$ we have 
\begin{align*}
\int_{|x|>R}|(\rho u)(t,x)||\bar{u}(t-s,x)|\,dx\leq R^{-1} \|\sqrt{\rho}u\|_{L^2}\|\sqrt{\rho}|x|\|_{L^2}\sup_{t\in[0,T_1+2]}\|\bar{u}(t,x)\|_{L_x^{\infty}(B_{R_*}^c)}=CR^{-1},\quad \forall\ s\in[-2,0],  
\end{align*}
where $C>0$ is independent of $R>R^*$ and $s\in[-2, 0]$. For $\delta\in(0,1)$, taking $R_\delta>R^*+2C/\delta$ gives \eqref{Eq.outer_small}.
} that for any $\delta\in(0,1)$, there exists a large $R_{\delta}>0$ such that
\begin{align}\label{Eq.outer_small}
\int_{|x|>R_{\delta}}|(\rho u)(t,x)|\bigl(|\bar{u}(t-s,x)|+|\bar{u}(t,x)|\bigr)\,dx<\frac{\delta}{2},\quad \forall\ s\in[-2,0],
\end{align}
which implies that for all $\varepsilon\in(0,1/2)$, there holds
\begin{align}\label{outer}
    \int_{\R}\int_{|x|>R_{\delta}}|(\rho u)(t,x)||(\bar{u}(t-s,x)-\bar{u}(t,x))|\,dx \,\xi_{\epsilon}(s)\,ds<\frac\delta2,
\end{align}
where we have used the fact that $\operatorname{supp}\xi_{\epsilon}\subset[-2\epsilon,-\epsilon/2]$; on the other hand, for each $\varepsilon\in(0,1/2)$, $s\in[-2\varepsilon,-\varepsilon/2]$ and $\delta\in(0,1)$, we have
\begin{align}\label{inner}
\int_{|x|<R_{\delta}}|(\rho u)(t,x)||\bar{u}(t-s,x)-\bar{u}(t,x)|\,dx\leq \|\rho_0\|^{\f12}_{L^{\infty}}\|\sqrt{\rho}u\|_{L^2}\|\bar{u}(t-s)-\bar{u}(t)\|_{L^2_x(B_{R_\delta})}.  
\end{align}
As $\operatorname{supp}\xi_{\epsilon}\subset[-2\epsilon,-\f12\epsilon]$ and $\bar{u}\in \mathcal{C}((0,\infty);L^2(B_{R_\delta}))$, then by \eqref{outer} and \eqref{inner} we have
\begin{align*}
&\limsup_{\varepsilon\to0+}\int_{\R^2} |\rho u(\bar{u}*_t\xi_{\epsilon})(t)-\bar{u}(t))|\,dx\\
&\qquad\leq \frac{\delta}{2}+\|\rho_0\|^{\f12}_{L^{\infty}}\|\sqrt{\rho}u\|_{L^{\infty}([0,t];L^2)}\limsup_{\varepsilon\to0+}\sup_{s\in[-2\epsilon,-\f12\epsilon]}\|\bar{u}(t-s)-\bar{u}(t)\|_{L^2_x(B_{\delta})}<\delta,
\end{align*}
for all $\delta\in(0,1)$. Therefore, by the arbitrariness of $\delta\in(0,1)$, we deduce
\[\lim_{\varepsilon\to0+}\int_{\R^2}\rho u\cdot \bar{u}_{\epsilon}\,dx=\int_{\R^2}\rho u\cdot \bar{u}\,dx.\]

Finally, note that
\begin{align*}
\int_{\R^2} \rho_0 u_0\cdot\bigl(\bar{u}_{\epsilon}|_{t=0}-u_0\bigr)\,dx&=\int_{\R^2} \rho_0 u_0\cdot\int_{\R}\bigl(\bar{u}(-s)-u_0\bigr)\xi_{\epsilon}(s)\,dsdx\\
&=\int_{\R^2} \rho_0 u_0\cdot\int_{\R}\int_0^{-s}\bar{u}_t(\tau)\,d\tau\,\xi_{\epsilon}(s)\,dsdx,     
\end{align*}
then by Fubini's theorem, H\"older's inequality and \eqref{rho_0ut_L2} we get
\begin{align*}
\int_{\R^2} |\rho_0 u_0\cdot&\bigl(\bar{u}_{\epsilon}|_{t=0}-u_0\bigr)|\,dx\leq 
\|\sqrt{\rho_0} u_0\|_{L^2}\int_{\R}\int_0^{-s}\|\sqrt{\rho_0}\bar{u}_t(\tau)\|_{L^2_{x}}\,d\tau\,\xi_{\epsilon}(s)\,ds\\
&\leq \|\sqrt{\rho_0} u_0\|_{L^2}\|\sqrt{\rho_0}\bar{u}_t\|_{L^2([0,2\epsilon];L^2)}\int_{-2\epsilon}^{-\f12\epsilon}(-s)^{\f12}\xi_{\epsilon}(s)\,ds\\
&\leq \sqrt{2\epsilon}\|\sqrt{\rho_0} u_0\|_{L^2}\|\sqrt{\rho_0}\bar{u}_t\|_{L^2([0,2\epsilon];L^2)}\to 0 \text{~as~} \epsilon\to 0+.  
\end{align*}
    \end{enumerate}
    This completes the proof of Lemma \ref{Lem.H_1_convergence1}.
\end{proof}

\begin{proof}[Proof of Lemma \ref{Lem.H_1_convergence2}]
    \begin{enumerate}[(i)]
        \item 
We have
\begin{align*}
\int_0^t\int_{\R^2}\nabla u:\nabla \bar{u}_{\epsilon,R}\,dxds=
&\int_0^t\int_{\R^2}(\nabla u:\nabla \bar{u}_{\epsilon}) \eta_R\,dxds+\mathrm{II}_1,
\end{align*}
where 
\begin{align*}
    \mathrm{II}_1:=\int_0^t\int_{\R^2}\Big(((\nabla\eta_R\cdot\nabla) u )\cdot\bar u_{\epsilon}+((\nabla\psi_\varepsilon\cdot\nabla)u)\cdot\nabla^\perp\eta_R+\psi_\varepsilon\nabla u  :\nabla\nabla^{\perp}\eta_R\Big)\,dxds.
\end{align*}
It's easy to get that
\begin{align*}
\int_0^t\int_{\R^2}|\nabla u:\nabla \bar{u}_{\epsilon}| \,dx\,ds&\leq \|\nabla u\|_{L^2([0,t];L^2)}\|\nabla \bar{u}_{\epsilon}\|_{L^2([0,t];L^2)}
\leq \|\nabla u\|_{L^2([0,t];L^2)}\|\nabla \bar{u}\|_{L^2([0,t+1];L^2)},
\end{align*}
hence by Lemma \ref{energy}, the dominated convergence theorem and 
\begin{align*}
|\mathrm{II}_1|\leq C t^{\f12}\|\nabla u\|_{L^2([0,t];L^2(B^c_R))}\bigl(\|\nabla \psi_{\epsilon}\|_{L^{\infty}([0,t];L^{\infty}(B_{R}^c)}+R^{-1}\|\psi_{\epsilon}\|_{L^{\infty}([0,t];L^{\infty}(B_{2R}))}\bigr)\to0
\end{align*}
as $R\to+\infty$, according to Lemma \ref{energy} and Lemma \ref{u_inf_decay_lem}, \footnote{It's easy to check that $\|f_{\epsilon}\|_{L^{\infty}([0,t];L^{\infty}(B_{2R}\setminus B_R))}\leq \|f\|_{L^{\infty}([0,t+1];L^{\infty}(B_{2R}\setminus B_R))}$.} we get the desired convergence.

\if0
\begin{align*}
\int_0^t\int_{\R^2}|\nabla u:\nabla \bar{u}_{\epsilon}|\,dxds
&\leq \|\nabla u\|_{L^2([0,t];L^2)}\|\nabla \bar{u}_{\epsilon}\|_{L^2([0,t];L^2)}<\infty,\\
\int_0^t\int_{\R^2}|\nabla^{\perp}\eta_R\cdot\nabla) u)\cdot \nabla \psi_{\epsilon}|\,dxds
&\leq CR^{-1}\int_0^t\int_{B_{2R}\setminus B_R}|\nabla u \nabla \psi_{\epsilon}|\,dxds\\
&\leq t^{\f12}\|\nabla u\|_{L^2([0,t];L^2(B^c_R))}\|\nabla \psi_{\epsilon}\|_{L^{\infty}([0,t];L^{\infty}(B^c_R))},\\
\int_0^t\int_{\R^2}|\psi_{\epsilon}\nabla u:\nabla\nabla^{\perp}\eta_R|\,dxds
&\leq CR^{-2}\int_0^t\int_{B_{2R}\setminus B_R}|\nabla u \psi_{\epsilon}|\,dxds\\
&\leq t^{\f12}R^{-1}\|\nabla u\|_{L^2([0,t];L^2(B^c_R))}\|\psi_{\epsilon}\|_{L^{\infty}([0,t];L^{\infty}(B^c_R))},
\end{align*}
then by Lemma \ref{u_inf_decay_lem}, we can get
\begin{align*}
\lim_{R\to\infty}\int_0^t\int_{\R^2}\nabla u\cdot \nabla \bar{u}_{\epsilon,R}\,dxds=\int_0^t\int_{\R^2}\nabla u\cdot \nabla \bar{u}_{\epsilon}\,dxds.
\end{align*}
\fi
        \item 
By \eqref{energy} and Lemma \ref{AppendixB1}, we have
\begin{align*}
\int_0^t\int_{\R^2}|\nabla u:(\nabla\bar{u}_{\epsilon}-\nabla\bar{u})|\,dxds\leq \|\nabla u\|_{L^2([0,t];L^2)}\|\nabla\bar{u}_{\epsilon}-\nabla\bar{u}\|_{L^2([0,t];L^2)}\to 0 \text{~as~} \epsilon\to 0+.  
\end{align*}
    \end{enumerate}This completes the proof of Lemma \ref{Lem.H_1_convergence2}.
\end{proof}

\begin{proof}[Proof of Lemma \ref{Lem.H_1_convergence3}]
    \begin{enumerate}[(i)]
        \item 
Fix $\varepsilon\in(0,1/2)$. By definition,
\begin{align*}
&\int_0^t\int_{\R^2}\rho u\cdot  \p_t\bar{u}_{\epsilon,R}\,dxds
=\int_0^t\int_{\R^2}\rho u\cdot(\p_t\bar{u}_{\epsilon})\eta_R\,dxds+\int_0^t\int_{\R^2}\rho u \p_t\psi_{\epsilon}\cdot\nabla^{\perp}\eta_R\,dxds.
\end{align*}
Denote $\xi'_{\epsilon}=\frac{d}{dt}\xi_{\epsilon}$, then by Fubini's theorem, H\"{o}lder's inequality, the energy inequality, \eqref{rho_x2_smallt}, Lemma \ref{lem_Lp} and Lemma \ref{u_inf_decay_lem}, we have
\begin{align*}
&\int_0^t\int_{\R^2}|\rho u\cdot\p_t\bar{u}_{\epsilon}|\,dxds=\int_0^t\int_{\R^2}|\rho u \cdot(\xi'_{\epsilon}*_t\bar{u})|\,dxds\leq\ \|\sqrt{\rho}u\|_{L^{\infty}([0,t];L^2)}\\
&\qquad\times\int_0^t\int_{\R}|\xi'_{\epsilon}(\tau)|\bigl(\|\rho_0\|_{L^\infty}^{1/2}\|\bar{u}(s-\tau)\|_{L^2(B_{R^*})}+\|\sqrt{\rho}|x|\|_{L^2}\|\bar{u}(s-\tau)\|_{L^{\infty}(B^c_{R^*})}\bigr)\,d\tau ds\\
&\quad\leq C\varepsilon^{-1}t\bigl(\|\sqrt{\bar{\rho}}\bar{u}\|_{L^{\infty}([0,t+1];L^2)}+\|\nabla\bar{u}\|_{L^{\infty}([0,t+1];L^2)}+\|\bar{u}\|_{L^{\infty}([0,t+1];L^{\infty}(B^c_{R^*}))}\bigr)<\infty.
\end{align*}
Moreover, it follows from the energy inequality, \eqref{rho_x2_smallt} and Lemma \ref{u_inf_decay_lem} that
\begin{align*}
&\int_0^t\int_{\R^2}|\rho u \p_t\psi_{\epsilon}\cdot\nabla^{\perp}\eta_R|\,dxds
\leq CR^{-1} \int_0^t\int_{B_{2R}\setminus B_R}|\rho u (\xi'_{\epsilon}*_t\psi)|\,dxds\\
&\leq CR^{-2}\int_0^t\|\sqrt{\rho}u\|_{L^2}\|\sqrt{\rho}|x|\|_{L^2(B_R^c)}
\|\xi'_{\epsilon}*_t\psi\|_{L^{\infty}(B_{2R})}\,ds\\
&\leq  CtR^{-2}\|\sqrt{\rho}u\|_{L^{\infty}([0,t];L^2)}\|\sqrt{\rho}|x|\|_{{L^{\infty}([0,t];L^2)}}\|\psi\|_{L^{\infty}([0,t+1];L^{\infty}(B_{2R}))}\|\xi'_{\epsilon}\|_{L^1(\R)}\\
&\leq C\varepsilon^{-1}tR^{-1}\|\sqrt{\rho}u\|_{L^{\infty}([0,t];L^2)}\|\sqrt{\rho}|x|\|_{{L^{\infty}([0,t];L^2)}} \to 0 \text{~as~} R\to +\infty.
\end{align*}
        \item
We first have
\begin{align*}
\int_0^t\int_{\R^2} |\rho u(\p_t\bar{u}_{\epsilon}-\p_t\bar{u})|\,dxds \leq \|\sqrt{\rho}u\|_{L^{\infty}([0,t];L^2)}\int_0^t\Bigl(\int_{\R^2}\rho|\bar{u}_t*_t\xi_{\epsilon}-\bar{u}_t|^2\,dx\Bigr)^{\f12}\,ds,  
\end{align*}
and 
\begin{align*}
&\int_0^t\Bigl(\int_{\R^2}\rho(s)|(\bar{u}_t*_t\xi_{\epsilon}-\bar{u}_t)(s)|^2\,dx\Bigr)^{\f12}\,ds\\
\leq\ & \int_0^t\Bigl|\int_{\R^2}(\rho(s)-\rho_0)|\bar{u}_t*_t\xi_{\epsilon}-\bar{u}_t|^2\,dx\Bigr|^{\f12}\,ds+\int_0^t\Bigl(\int_{\R^2}\rho_0|\bar{u}_t*_t\xi_{\epsilon}-\bar{u}_t|^2\,dx\Bigr)^{\f12}\,ds:=\mathrm{III_1}+\mathrm{III_2}. 
\end{align*}
To deal with $\mathrm{III_2}$, by H\"older's inequality and Fubini's theorem we can get 
\begin{align}\label{pt_convergence}
 \int_0^t\Bigl(\int_{\R^2}\rho_0|\bar{u}_t*_t\xi_{\epsilon}-\bar{u}_t|^2\,dx\Bigr)^{\f12}\,ds&\leq t^{\f12}\Bigl(\int_0^t\int_{\R^2}\rho_0|\bar{u}_t*_t\xi_{\epsilon}-\bar{u}_t|^2\,dxds\Bigr)^{\f12},
\end{align}
then letting $f=\sqrt{\rho_0}\p_t\bar{u}$ in Lemma \ref{AppendixB1} and using Proposition \ref{prop_3.2} gives $\mathrm{III_2}\to 0$ as $\epsilon\to 0+$.

While for $\mathrm{III_1}$, letting $\epsilon=s$ in Lemma \ref{uni_H^-1_patch}, and using Lemma \ref{lem_Lp} we obtain
\begin{equation}\begin{aligned}\label{lem4.1-1}
&\int_0^t\Bigl|\int_{\R^2}(\rho(s)-\rho_0)|\bar{u}_t*_t\xi_{\epsilon}-\bar{u}_t|^2\,dx\Bigr|^{\f12}\,ds
\leq C\int_0^t\Bigl(\|\rho(s)-\rho_0\|^{\f12}_{\dot{H}^{-1}} |\ln s|^{\f14}+s^{\f12}\Bigr)\\
&\qquad\qquad\qquad\qquad\qquad\quad \times\Bigl(\|\sqrt{\rho_0}(\bar{u}_t*_t\xi_{\epsilon}-\bar{u}_t)(s)\|_{L^2_x}+\|\nabla(\bar{u}_t*_t\xi_{\epsilon}-\bar{u}_t)(s)\|_{L^2_x}\Bigr)\,ds,
\end{aligned}\end{equation}
then by the energy inequality and H\"older's inequality, we get\footnote{Here we also use \eqref{ut3} for $p=2$, which holds for Lions' weak solution. Indeed, we have $\|\rho(s)-\rho_0\|_{\dot H^{-1}}\leq Cs\|\sqrt\rho u\|_{L^\infty([0, T]; L^2)}\leq Cs$.}
\begin{align*}
&\int_0^t\Bigl|\int_{\R^2}(\rho(s)-\rho_0)|\bar{u}_t*_t\xi_{\epsilon}-\bar{u}_t|^2\,dx\Bigr|^{\f12}\,ds\\
&\leq C\int_0^t s^{\f12}|\ln s|^{\f14} \Bigl(\|\sqrt{\rho_0}(\bar{u}_t*_t\xi_{\epsilon}-\bar{u}_t)(s)\|_{L^2_x}+\|\nabla(\bar{u}_t*_t\xi_{\epsilon}-\bar{u}_t)(s)\|_{L^2_x}\Bigr)\,ds\\
&\leq  C\biggl(\int_0^t s\Bigl(\|\sqrt{\rho_0}(\bar{u}_t*_t\xi_{\epsilon}-\bar{u}_t)(s)\|^2_{L^2_x}+\|\nabla(\bar{u}_t*_t\xi_{\epsilon}-\bar{u}_t)(s)\|^2_{L^2_x}\Bigr)\,ds\biggr)^{\f12}.
\end{align*}  
So, letting $f=\sqrt{\rho_0}\p_t\bar{u},~g=\nabla \bar{u}_t$ in Lemma \ref{AppendixB1}, and using Proposition \ref{prop_3.2}, we get $\mathrm{III_1}\to 0$ as $\epsilon\to 0+$.
\end{enumerate}
This completes the proof of Lemma \ref{Lem.H_1_convergence3}.
\end{proof}

\begin{proof}[Proof of Lemma \ref{Lem.H_1_convergence4}]
    \begin{enumerate}[(i)]
        \item  Fix $\varepsilon\in(0,1/2)$. By definition,
\begin{align*}
\int_0^t\int_{\R^2}\rho u\cdot (u\cdot \nabla \bar{u}_{\epsilon,R})\,dxds&=\int_0^t\int_{\R^2}\rho u \cdot(u\cdot \nabla \bar{u}_{\epsilon})\eta_R\,dxds+\mathrm{IV}_1,    
\end{align*}
where 
\begin{align*}
\mathrm{IV}_1=\int_0^t\int_{\R^2}\rho (u\cdot \bar u_{\epsilon})(u\cdot \nabla \eta_R)+\rho (u\cdot\nabla^\perp\eta_R)(u\cdot\nabla\psi_\varepsilon)+\rho u\cdot(\psi_\varepsilon(u\cdot\nabla)\nabla^\perp\eta_R)\,dxds.    
\end{align*}
By Lemma \ref{lip} and the energy inequality, we have
\[\int_0^t\int_{\R^2}\rho|u|^2|\nabla\bar u_\varepsilon|\,dx\,ds\leq \|\sqrt{\rho}u\|^2_{L^{\infty}([0,t];L^2)}\int_0^t\|\nabla\bar u_\varepsilon\|_{L^\infty}\,ds\leq C\int_0^{t+1}\|\nabla\bar u\|_{L^\infty}\,ds<+\infty.\]
Moreover, it follows from the energy inequality and Lemma \ref{u_inf_decay_lem} that
\begin{align*}
    |\mathrm {IV}_1|\leq CR^{-1}&\|\sqrt{\rho}u\|^2_{L^{\infty}([0,t];L^2)}\Big(\|\bar{u}\|_{L^{\infty}([0,t+1];L^{\infty}(B^c_R))}+\|\nabla\psi\|_{L^{\infty}([0,t+1];L^{\infty}(B^c_R))}\\
    &+R^{-1}\|\psi\|_{L^{\infty}([0,t+1];L^{\infty}(B_{2R}\setminus B_R))}\Big)\to 0\quad\text{as}\quad R\to+\infty.
\end{align*}
\if0
\begin{align*}
\int_0^t\int_{\R^2}|\rho u (u\cdot \nabla \bar{u}_{\epsilon})|\,dxds&\leq \|\sqrt{\rho}u\|^2_{L^{\infty}([0,t];L^2)}\|\nabla\bar{u}_{\epsilon}\|_{L^{1}([0,t];L^{\infty})}\\
&\leq\|\sqrt{\rho}u\|^2_{L^{\infty}([0,t];L^2)}\|\nabla\bar{u}\|_{L^{1}([0,t+1];L^{\infty})}<\infty,\\ 
\int_0^t\int_{\R^2}|\rho u \bar{u}_{\epsilon}(u\cdot \nabla \eta_R)|\,dxds
&\leq CtR^{-1}\|\sqrt{\rho}u\|^2_{L^{\infty}([0,t];L^2)}\|\bar{u}_{\epsilon}\|_{L^{\infty}([0,t];L^{\infty}(B^c_R))}\\
&\leq CtR^{-1}\|\sqrt{\rho}u\|^2_{L^{\infty}([0,t];L^2)}\|\bar{u}\|_{L^{\infty}([0,t+1];L^{\infty}(B^c_R))},\\
\int_0^t\int_{\R^2}|\rho u \nabla^{\perp}\eta_{R}(u\cdot \nabla \psi_{\epsilon})|\,dxds
&\leq CtR^{-1}\|\sqrt{\rho}u\|^2_{L^{\infty}([0,t];L^2)}\|\nabla\psi\|_{L^{\infty}([0,t+1];L^{\infty}(B^c_R))},\\
\int_0^t\int_{\R^2}|\rho u \psi_{\epsilon}(u\cdot\nabla\nabla^{\perp}\eta_{R})|\,dxds&\leq CtR^{-2}\|\sqrt{\rho}u\|^2_{L^{\infty}([0,t];L^2)}\|\psi_{\epsilon}\|_{L^{\infty}([0,t];L^{\infty}(B_{2R}\setminus B_R))}\\
& \leq CtR^{-2}\|\sqrt{\rho}u\|^2_{L^{\infty}([0,t];L^2)}\|\psi\|_{L^{\infty}([0,t+1];L^{\infty}(B_{2R}\setminus B_R))}\\
&\leq CtR^{-1}\|\sqrt{\rho}u\|^2_{L^{\infty}([0,t];L^2)},
\end{align*}
\fi
Therefore, we have
\begin{align*}
\lim_{R\to\infty}\int_0^t\int_{\R^2}\rho u\cdot (u\cdot \nabla \bar{u}_{\epsilon,R})\,dxds=\int_0^t\int_{\R^2}\rho u\cdot(u\cdot \nabla \bar{u}_{\epsilon})\,dxds.
\end{align*}

        \item 
\underline{Claim}:
\begin{equation}\label{Claim_convolution}
    \|\sqrt{t}(f*_t\xi_{\epsilon})(t,x)\|_{L^2(\R^+;L^4)}\leq \|\sqrt{t}f\|_{L^2(\R^+;L^4)},\quad \forall\ \varepsilon>0.
\end{equation}
Indeed, we can write
\begin{align*}
\|\sqrt{t}(f*_t\xi_{\epsilon})\|^2_{L^2(\R^+;L^4)}&=\int_0^{\infty} t\left(\int_{\R^2} |(f*_t\xi_{\epsilon})(t,x)|^4\,dx\right)^{\f12}dt\\
&\leq \int_0^{\infty}t\left(\int_{\R^2} \Bigl(\int_{\R} |f(t-s,x)|\xi_{\epsilon}(s)\,ds\Bigr)^4\,dx\right)^{\f12}dt\\
&\leq \int_0^{\infty} t\left(\int_{\R^2} \Bigl(\int_{\R} |f(t-s,x)|^2\xi_{\epsilon}(s)\,ds\Bigr)^2\,dx\right)^{\f12}dt,
\end{align*}
where in the last inequality we have used the facts 
\begin{align*}
\int_{\R} \xi_{\epsilon}(s)\,ds=1 \quad\text{~and~}\quad\Bigl(\int_{\R} |f(t-s,x)|\xi_{\epsilon}(s)\,ds\Bigr)^2\leq \int_{\R} |f(t-s,x)|^2\xi_{\epsilon}(s)\,ds.
\end{align*}
Then by Minkowski's inequality and $\operatorname{supp}\xi_{\epsilon}\subset[-2\epsilon,-\f12\epsilon]$, we obtain
\begin{align*}
&\|\sqrt{t}(f*_t\xi_{\epsilon})\|^2_{L^2(\R^+;L^4)}\leq \int_0^{\infty} t \int_{\R} \|f(t-s)\|^2_{L^4_x}\xi_{\epsilon}(s)\,dsdt\\
\leq&\int_0^{\infty} \int_{\R}(t-s) \|f(t-s)\|^2_{L^4_x}\xi_{\epsilon}(s)\,dsdt= \int_{\R}\int_0^{\infty}(t-s) \|f(t-s)\|^2_{L^4_x}\,dt\xi_{\epsilon}(s)\,ds\\
\leq& \int_0^{\infty}t \|f(t)\|^2_{L^4_x}\,dt=\|\sqrt{t}f\|_{L^2(\R^+;L^4)}^2.
\end{align*}
This checks \eqref{Claim_convolution}. Since we have
\begin{align*}
\int_0^t\int_{\R^2} |\rho u\cdot(u\cdot\nabla(\bar{u}_{\epsilon}-\bar{u}))|\,dxds\leq \|\sqrt{\rho}u\|^2_{L^{\infty}([0,t];L^2)}\|\nabla\bar{u}_{\epsilon}-\nabla\bar{u}\|_{L^1([0,t];L^{\infty})},     
\end{align*}
and by interpolation inequality, we also get
\begin{align*}
\int_0^t\|\nabla\bar{u}_{\epsilon}-\nabla\bar{u}\|_{L^{\infty}}\,ds&\leq C \int_0^t\|\nabla\bar{u}_{\epsilon}-\nabla\bar{u}\|^{\f13}_{L^{2}}\|\nabla^2\bar{u}_{\epsilon}-\nabla^2\bar{u}\|^{\f23}_{L^{4}}\,ds\\
&=C\int_0^t s^{-\f13}\|\nabla\bar{u}_{\epsilon}-\nabla\bar{u}\|^{\f13}_{L^{2}}\|\sqrt{s}(\nabla^2\bar{u}_{\epsilon}-\nabla^2\bar{u})(s)\|^{\f23}_{L^{4}}\,ds\\
&\leq Ct^{\f16}\|\nabla\bar{u}_{\epsilon}-\nabla\bar{u}\|^{\f13}_{L^2([0,t];L^2)}\|\sqrt{s}(\nabla^2\bar{u}_{\epsilon}-\nabla^2\bar{u})\|^{\f23}_{L^2([0,t];L^4)}.
\end{align*}
Now, Theorem \ref{nabla^2 u_Lm} implies that $\|\sqrt{s}(\nabla^2\bar{u}_{\epsilon}-\nabla^2\bar u)\|_{L^2([0,t];L^4)}<\infty$. Letting $f=\nabla\bar{u}$ in Lemma \ref{AppendixB1} gives that
$\|\nabla\bar{u}_{\epsilon}-\nabla\bar{u}\|_{L^2([0,t];L^2)} \to 0$ as $\epsilon \to 0+$.
\end{enumerate}
This completes the proof of Lemma \ref{Lem.H_1_convergence4}.\end{proof}
\if0
\begin{rmk}\label{Remark_Appen}
    Notice that in the proof of Lemma \ref{Lem.H_1_convergence2} (ii), Lemma \ref{Lem.H_1_convergence3} (ii) and Lemma \ref{Lem.H_1_convergence4} (ii), all properties we used are not specific to this case. In particular, all properties we used hold also for those $\rho_0$ satisfying (H2).
\end{rmk}\fi 

\subsection{The case when $\rho_0$ satisfies (H2)} 

As $\rho_0$ satisfies (H2), then by \eqref{fL2_control} we have
\begin{align}\label{H1_bubble}
\|f\|^2_{H^1(\R^2)}\leq C\int_{\R^2} (\rho|f|^2+|\nabla f|^2)\,dx,\quad \|f\|^2_{H^1(\R^2)}\leq C\int_{\R^2} (\bar{\rho}|f|^2+|\nabla f|^2)\,dx.
\end{align}
In this case, we set $\bar{v}_{\epsilon,R}:=\mathbb{P}\bigl((\bar{u}*_t{\xi_{\epsilon}})\eta_R\bigr)$ and $\bar{u}_{\epsilon}:=\bar{u}*_t{\xi_{\epsilon}}$, where $\PP\eqdef {\rm Id}-\nabla \Delta^{-1}\dive$ is the Leray projector into divergence-free vector fields. 
Here we recall that $\PP$ is a bounded linear operator on $\dot{H}^s$ for all $s\geq 0$, see \cite{Tao}. 
In view the regularity of $\bar{v}_{\epsilon,R}$ , we deduce from the weak formulation of \eqref{INS} that for a.e. $t\in(0,T_1)$
\begin{align*}
\int_{\R^2}\rho u\cdot \bar{v}_{\epsilon,R}\,dx+\int_0^t\int_{\R^2}\nabla u: \nabla \bar{v}_{\epsilon,R}\,dxds
&=\int_{\R^2}\rho_0 u_0\cdot \bar{v}_{\epsilon,R}|_{t=0}\,dx\\
&\qquad\qquad+\int_0^t\int_{\R^2}\rho u\cdot(\p_t\bar{v}_{\epsilon,R}+ u\cdot \nabla \bar{v}_{\epsilon,R})\,dxds.
\end{align*}  

\begin{lem}\label{Lem.H_1_convergence1_2}
    We have
    \begin{enumerate}[(i)]
        \item $\ds\lim\limits_{R\to+\infty}\int_{\R^2}(\rho u\cdot \bar{v}_{\epsilon,R})(t,x)\,dx=\int_{\R^2}(\rho u\cdot \bar{u}_{\epsilon})(t,x)\,dx$ and $\ds\lim\limits_{R\to+\infty}\int_{\R^2}\rho_0 u_0\cdot \bar{v}_{\epsilon,R}|_{t=0}\,dx=\int_{\R^2}\rho_0 u_0\cdot \bar u_{\varepsilon}|_{t=0}\,dx$ for all $\varepsilon\in(0,1/2)$;
        \item $\ds\lim_{\varepsilon\to0+}\int_{\R^2}\rho u\cdot \bar{u}_{\epsilon}\,dx=\int_{\R^2}\rho u\cdot \bar{u}\,dx$ and $\ds \lim_{\varepsilon\to0+}\int_{\R^2}\rho_0 u_0\cdot \bar u_{\varepsilon}|_{t=0}\,dx=\int_{\R^2}\rho_0 |u_0|^2\,dx$.
    \end{enumerate}
\end{lem}

\begin{lem}\label{Lem.H_1_convergence2_2}
    We have
    \begin{enumerate}[(i)]
        \item $\ds\lim_{R\to+\infty}\int_0^t\int_{\R^2}\nabla u: \nabla \bar{v}_{\epsilon,R}\,dx\,ds=\int_0^t\int_{\R^2}\nabla u: \nabla \bar{u}_{\epsilon}\,dx\,ds$ for all $\varepsilon\in(0,1/2)$;
        \item $\ds\lim_{\varepsilon\to0+}\int_0^t\int_{\R^2}\nabla u: \nabla \bar{u}_{\epsilon}\,dx\,ds=\int_0^t\int_{\R^2}\nabla u: \nabla \bar{u}\,dx\,ds$.
    \end{enumerate}
\end{lem}

\begin{lem}\label{Lem.H_1_convergence3_2}
    We have
    \begin{enumerate}[(i)]
        \item $\ds\lim_{R\to+\infty}\int_0^t\int_{\R^2}\rho u\cdot\p_t\bar{v}_{\epsilon,R}\,dx\,ds=\int_0^t\int_{\R^2}\rho u\cdot\p_t\bar{u}_{\epsilon}\,dx\,ds$ for all $\varepsilon\in(0,1/2)$;
        \item $\ds\lim_{\varepsilon\to0+}\int_0^t\int_{\R^2}\rho u\cdot\p_t\bar{u}_{\epsilon}\,dx\,ds=\int_0^t\int_{\R^2}\rho u\cdot\p_t\bar{u}\,dx\,ds$.
    \end{enumerate}
\end{lem}

\begin{lem}\label{Lem.H_1_convergence4_2}
    We have
    \begin{enumerate}[(i)]
        \item $\ds\lim_{R\to+\infty}\int_0^t\int_{\R^2}\rho u \cdot(u\cdot \nabla \bar{v}_{\epsilon,R})\,dx\,ds=\int_0^t\int_{\R^2}\rho u \cdot(u\cdot \nabla \bar{u}_{\epsilon})\,dx\,ds$ for all $\varepsilon\in(0,1/2)$;
        \item $\ds\lim_{\varepsilon\to0+}\int_0^t\int_{\R^2}\rho u\cdot (u\cdot \nabla \bar{u}_{\epsilon})\,dx\,ds =\int_0^t\int_{\R^2}\rho u \cdot(u\cdot \nabla \bar{u})\,dx\,ds$.
    \end{enumerate}
\end{lem}

Now, clearly \eqref{strong_test} follows from Lemmas \ref{Lem.H_1_convergence1_2}$\sim$\ref{Lem.H_1_convergence4_2}. It remains to prove these lemmas.

\begin{proof}[Proof of Lemma \ref{Lem.H_1_convergence1_2}]
    \begin{enumerate}[(i)]
        \item 
Fix $\varepsilon\in(0, 1/2)$. By the energy inequality, \eqref{H1_bubble}, $\bar u_\varepsilon(t)=\PP(\bar u_\varepsilon(t))\in L^2$ and the boundedness of $\PP$ in $L^2$, we get
\begin{align*}
\int_{\R^2}|\rho u\cdot (\bar{v}_{\epsilon,R}-\bar{u}_{\epsilon})|\,dx&\leq C \|\rho_0\|^{\f12}_{L^{\infty}}\|\sqrt{\rho}u\|_{L^{\infty}([0,t];L^2)}\|(\eta_R-1)\bar{u}_{\epsilon}(t)\|_{L^{2}}\to 0\quad\text{as}\quad R\to+\infty.
\end{align*}
Similarly, we also have
\begin{align*}
\lim_{R\to\infty}\int_{\R^2}\rho_0 u_0\cdot \bar{v}_{\epsilon,R}|_{t=0}\,dx=\int_{\R^2}\rho_0 u_0\cdot \bar{u}_{\epsilon}|_{t=0}\,dx.
\end{align*}  
\item 
\underline{Claim}\label{Claim_u_t_continuous}: For any $p\in[2,\infty)$ and $T\in(0,\infty)$, we have $\bar{u}\in \mathcal{C}((0,T];L^p(\R^2))$, and there exists a positive constant $C>0$ depending only on $T,
\|\rho_0\|_{L^{\infty}}, \|\sqrt{\rho_0}u_0\|_{L^2}, \|\nabla u_0\|_{L^2}$, such that
\begin{equation*}
\int_0^T t\|\p_t\bar{u}\|_{L^p(\R^2)}\leq C.
\end{equation*}
Indeed, by \eqref{H1_bubble}, Sobolev's embedding theorem and Proposition \ref{prop_3.2} we get
\begin{align*}
\int_0^T t\|\p_t\bar{u}\|^2_{L^p(\R^2)}\leq C \int_0^T t(\|\sqrt{\bar{\rho}}\bar{u}_t\|^2_{L^2(\R^2)}+\|\nabla \bar{u}_t\|^2_{L^2(\R^2)})\,dt\leq C.
\end{align*}
This checks our claim.  We fix $t\in(0, T_1]$. For each $\varepsilon\in(0,1/2)$, we have
\begin{align*}
\int_{\R^2} |\rho u(\bar{u}*_t\xi_{\epsilon})(t)-\bar{u}(t))|\,dx&=\int_{\R^2}\left|\rho u\int_{\R}(\bar{u}(t-s)-\bar{u}(t))\xi_{\epsilon}(s)\,ds\right|\,dx\\
&\leq \|\rho_0\|^{\f12}_{L^{\infty}}\|\sqrt{\rho}u\|_{L^2}\sup_{s\in[-2\epsilon,-\epsilon/2]}\|\bar{u}(t-s)-\bar{u}(t)\|_{L^2},  
\end{align*}
as $\operatorname{supp}\xi_{\epsilon}\subset[-2\epsilon,-\epsilon/2]$. Then $\bar{u}\in \mathcal{C}((0,\infty);L^2(\R^2))$ and the energy inequality imply that
\begin{align*}
\lim_{\varepsilon\to0+}\int_{\R^2} |\rho u(\bar{u}*_t\xi_{\epsilon})(t)-\bar{u}(t))|\,dx
\leq C\lim_{\varepsilon\to0+}\sup_{s\in[-2\epsilon,-\epsilon/2]}\|\bar{u}(t-s)-\bar{u}(t)\|_{L^2_x}=0,
\end{align*}
Finally, the proof of $\ds \lim_{\varepsilon\to0+}\int_{\R^2}\rho_0 u_0\cdot \bar u_{\varepsilon}|_{t=0}\,dx=\int_{\R^2}\rho_0 |u_0|^2\,dx$ is exactly the same as the corresponding proof in Lemma \ref{Lem.H_1_convergence1}. 
\if0note that
\begin{align*}
\int_{\R^2} \rho_0 u_0\bigl(\bar{u}_{\epsilon}|_{t=0}-u_0\bigr)\,dx&=\int_{\R^2} \rho_0 u_0\int_{\R}\bigl(\bar{u}(-s)-u_0\bigr)\xi_{\epsilon}(s)\,dsdx\\
&=\int_{\R^2} \rho_0 u_0\int_{\R}\int_0^{-s}\bar{u}_t(\tau)\,d\tau\,\xi_{\epsilon}(s)\,dsdx,     
\end{align*}
then by Fubini's theorem, H\"older's inequality and \eqref{rho_0ut_L2} we get
\begin{align*}
\int_{\R^2} |\rho_0 u_0\bigl(\bar{u}_{\epsilon}|_{t=0}-u_0\bigr)|\,dx&\leq 
\|\sqrt{\rho_0} u_0\|_{L^2}\int_{\R}\int_0^{-s}\|\sqrt{\rho_0}\bar{u}_t(\tau)\|_{L^2_{x}}\,d\tau\,\xi_{\epsilon}(s)\,ds\\
&\leq \|\sqrt{\rho_0} u_0\|_{L^2}\|\sqrt{\rho_0}\bar{u}_t\|_{L^2([0,2\epsilon];L^2)}\int_{-2\epsilon}^{-\f12\epsilon}(-s)^{\f12}\xi_{\epsilon}(s)\,ds\\
&\leq \sqrt{2\epsilon}\|\sqrt{\rho_0} u_0\|_{L^2}\|\sqrt{\rho_0}\bar{u}_t\|_{L^2([0,2\epsilon];L^2)}\to 0 \text{~as~} \epsilon\to 0+.  
\end{align*}\fi 
\end{enumerate}
This completes the proof of Lemma \ref{Lem.H_1_convergence1_2}.
\end{proof}

\begin{proof}[Proof of Lemma \ref{Lem.H_1_convergence2_2}]
\begin{enumerate}[(i)]
    \item 
By the boundedness of $\PP$ in $\dot{H}^1$, we have
\begin{equation}\begin{aligned}\label{s1}
&\int_0^t\int_{\R^2}|\nabla u: \nabla (\bar{v}_{\epsilon,R}-\bar{u}_{\epsilon})|\,dxds
\leq \int_0^t\|\nabla u\|_{L^2}\|\nabla \bar{v}_{\epsilon,R}-\nabla\bar{u}_{\epsilon}\|_{L^2}\,ds\\
&\leq \int_0^t\|\nabla u\|_{L^2}\bigl(\|\nabla\bar{u}_{\epsilon}(\eta_R-1)\|_{L^2}+\|\bar{u}_{\epsilon}\nabla\eta_R\|_{L^2}\bigr)\,ds\\
&\leq \|\nabla u\|_{L^2([0,t];L^2)}\bigl(\|\nabla\bar{u}_{\epsilon}\|_{L^2([0,t];L^2(B^c_R))}+CR^{-1}\|\bar{u}_{\epsilon}\|_{L^2([0,t];L^2)}\bigr)\\
&\leq \|\nabla u_0\|_{L^2}\bigl(\|\nabla\bar{u}\|_{L^2([0,t+1];L^2(B^c_R))}+CR^{-1}\|\bar{u}\|_{L^2([0,t+1];L^2)}\bigr)\to 0 \text{~as~} R \to \infty.  
\end{aligned}\end{equation}
Hence
\begin{align*}
\lim_{R\to\infty}\int_0^t\int_{\R^2}\nabla u: \nabla \bar{v}_{\epsilon,R}\,dxds=\int_0^t\int_{\R^2}\nabla u: \nabla \bar{u}_{\epsilon}\,dxds.
\end{align*}
 \item Exactly the same as the proof of Lemma \ref{Lem.H_1_convergence2} (ii). 
        \end{enumerate}
This completes the proof of Lemma \ref{Lem.H_1_convergence2_2}.    
\end{proof}

\begin{proof}[Proof of Lemma \ref{Lem.H_1_convergence3_2}]
    \begin{enumerate}[(i)]
        \item Fix $\varepsilon\in(0,1/2)$. Denote $\xi'_{\epsilon}=\frac{d}{dt}\xi_{\epsilon}$, then
\begin{align*}
\int_0^t\int_{\R^2}\rho u \cdot(\p_t\bar{v}_{\epsilon,R}-\p_t\bar{u}_{\epsilon})\,dxds
=\int_0^t\int_{\R^2}\rho u\cdot \PP\left((\bar{u}*_t\xi'_{\epsilon})(\eta_R-1)\right)\,dxds,
\end{align*}
and it follows from the boundedness of $\PP$ in $L^2$ that
\begin{align*}
\int_0^t\int_{\R^2}|\rho u\cdot (\p_t\bar{v}_{\epsilon,R}-\p_t\bar{u}_{\epsilon})|\,dxds
&\leq C\|\sqrt{\rho}u\|_{L^{\infty}([0,t];L^2)}\int_0^t\|(\eta_R-1)(\bar{u}*_t\xi'_{\epsilon})(s)\|_{L^2}\,ds\\
&\leq C\|\sqrt{\rho_0}u_0\|_{L^2}\int_0^t\|(\bar{u}*_t\xi'_{\epsilon})(s)\|_{L^2(B^c_R)}\,ds.
\end{align*}
By Minkowski's inequality, \eqref{H1_bubble}, \eqref{energy} and Lemma \ref{ nabla u}, we obtain
\begin{align*}
\|(\bar{u}*_t\xi'_{\epsilon})(s)\|_{L^2_x}&=\Bigl(\int_{\R^2} \Bigl|\int_{\R}\bar{u}(s-\tau,x)\xi'_{\epsilon}(\tau)\,d\tau\Bigr|^2 dx\Bigr)^{\f12}  \leq \int_{\R}\Bigl(\int_{\R^2}|\bar{u}(s-\tau,x)|^2\,dx\Bigr)^{\f12}|\xi'_{\epsilon}(\tau)|\,d\tau\\
&\leq \sup_{s\in[0,t+1]}\|\bar{u}(s)\|_{L^2_x}\int_{\R}|\xi'_{\epsilon}(\tau)|\,d\tau\leq C\varepsilon^{-1}\|(\sqrt{\bar \rho}\bar u, \nabla\bar u)\|_{L^\infty([0, t+1]; L^2)}\leq C\varepsilon^{-1},
\end{align*}
then $\int_0^t\|(\bar{u}*_t\xi'_{\epsilon})(s)\|_{L^2_x}\,ds<+\infty$, hence by the dominated convergence theorem we have
\[\lim_{R\to+\infty}\int_0^t\int_{\R^2}|\rho u\cdot (\p_t\bar{v}_{\epsilon,R}-\p_t\bar{u}_{\epsilon})|\,dxds\leq C\lim_{R\to+\infty}\int_0^t\|(\bar{u}*_t\xi'_{\epsilon})(s)\|_{L^2(B^c_R)}\,ds=0.\]
\if0 then for fixed $\epsilon>0$ we get
\begin{align*}
\lim_{R\to\infty}\int_0^t\int_{\R^2}\rho u \p_t\bar{v}_{\epsilon,R}\,dxds=\int_0^t\int_{\R^2}\rho u \p_t\bar{u}_{\epsilon}\,dxds.
\end{align*}\fi 
        \item The proof is very similar to the proof of Lemma \ref{Lem.H_1_convergence3} (ii), and the only difference is that in \eqref{lem4.1-1}, we use Lemma \ref{H-1_uni_bubble} and \eqref{H1_bubble} instead of Lemma \ref{uni_H^-1_patch}. 
        \end{enumerate}
This completes the proof of Lemma \ref{Lem.H_1_convergence3_2}.    
\end{proof}

\begin{proof}[Proof of Lemma \ref{Lem.H_1_convergence4_2}]
    \begin{enumerate}[(i)]
        \item By the Gagliardo-Nirenberg inequality, we have
\begin{align*}
&\int_0^t\int_{\R^2} |\rho u\cdot(u\cdot\nabla(\bar{v}_{\epsilon,R}-\bar{u}_{\epsilon}))|\,dxds\leq \|\sqrt{\rho}u\|^2_{L^{\infty}([0,t];L^2)}\|\nabla\bar{v}_{\epsilon,R}-\nabla\bar{u}_{\epsilon}\|_{L^1([0,t];L^{\infty})}\\
&\leq C\|\sqrt{\rho_0}u_0\|^2_{L^2} \int_0^t\|\nabla\bar{v}_{\epsilon,R}-\nabla\bar{u}\|^{\f13}_{L^{2}}\|\nabla^2\bar{v}_{\epsilon,R}-\nabla^2\bar{u}_{\epsilon}\|^{\f23}_{L^{4}}\,ds\\
&=C\|\sqrt{\rho_0}u_0\|^2_{L^2}\int_0^t s^{-\f13}\|\nabla\bar{v}_{\epsilon,R}-\nabla\bar{u}_{\epsilon}\|^{\f13}_{L^{2}}\|\sqrt{s}(\nabla^2\bar{v}_{\epsilon,R}-\nabla^2\bar{u}_{\epsilon})(s)\|^{\f23}_{L^{4}}\,ds\\
&\leq Ct^{\f16}\|\sqrt{\rho_0}u_0\|^2_{L^2}\|\nabla\bar{v}_{\epsilon,R}-\nabla\bar{u}_{\epsilon}\|^{\f13}_{L^2([0,t];L^2)}\|\sqrt{s}(\nabla^2\bar{v}_{\epsilon,R}-\nabla^2\bar{u}_{\epsilon})\|^{\f23}_{L^2([0,t];L^4)}.
\end{align*}
By Theorem \ref{nabla^2 u_Lm} and \eqref{Claim_convolution}, we have $\|\sqrt{s}(\nabla^2\bar{v}_{\epsilon,R}-\nabla^2\bar{u}_{\epsilon})\|_{L^2([0,t];L^4)}<+\infty$ (where we use the fact that $\PP$ is bounded on $L^4$ and commutes with $\nabla^2$.), and similar to \eqref{s1} we have $\|\nabla\bar{v}_{\epsilon,R}-\nabla\bar{u}_{\epsilon}\|_{L^2([0,t];L^2)}\to 0$ as $R\to \infty$,
so we get
\begin{align*}
\lim_{R\to+\infty}\int_0^t\int_{\R^2}\rho u \cdot(u\cdot \nabla \bar{v}_{\epsilon,R})\,dx\,ds=\int_0^t\int_{\R^2}\rho u \cdot(u\cdot \nabla \bar{u}_{\epsilon})\,dx\,ds
\end{align*}
         \item Exactly the same as the proof of Lemma \ref{Lem.H_1_convergence4} (ii).
        \end{enumerate}
This completes the proof of Lemma \ref{Lem.H_1_convergence4_2}.    
\end{proof}

\fi

\if0
\begin{align*}
&\int_0^t\Bigl|\int_{\R^2}(\rho-\bar{\rho})(s)|\p_t\bar{u}|^2\,dx\Bigr|^{\f12}ds\\
&\leq C\biggl(\int_0^t s^{-\f12}\,ds\biggr)^{\f12}\Bigl(\int_0^t \bigl(s^{\f12}|\ln s|^{\f12}\|(\rho-\bar{\rho})(s)\|_{\dot{H}^{-1}}+s^{\f32}\bigr)\|\bar{u}_t\|^2_{H^1}\,ds\Bigr)^{\f12}\\
&\leq Ct^{\f14}\|\rho_0\|^{\f12}_{L^{\infty}}\|\sqrt{\rho_0}u_0\|_{L^2}
\int_0^t s(\|\sqrt{\bar{\rho}}\bar{u}_t\|^2_{L^2}+\|\nabla\bar{u}_t\|^2_{L^2})\,ds\\
&<\infty. 
\end{align*}
By the mass equation, we can also easily get
\begin{equation}\begin{aligned}\label{ut3}
\|(\rho-\bar{\rho})(t)\|_{\dot{H}^{-1}}&\leq \int_0^t\|\dive (\rho u-\bar{\rho}\bar{u})\|_{\dot{H}^{-1}}\,ds\\
&\leq t\|\rho_0\|^{\f12}_{L^{\infty}}(\|\sqrt{\rho}u\|_{L^{\infty}([0,t];L^2)}+\|\sqrt{\bar{\rho}}\bar{u}\|_{L^{\infty}([0,t];L^2)})\\
&\leq t\|\rho_0\|^{\f12}_{L^{\infty}}\|\sqrt{\rho_0}u_0\|_{L^2}.   
\end{aligned}\end{equation}
\fi

\if0
Then we finally get
\begin{align*}
\int_0^t\int_{\R^2}|\rho u \p_t\bar{u}|\,dxds&\leq  \|\sqrt{\rho}u\|_{L^{\infty}([0,t];L^2)}\|\sqrt{\rho}\bar{u}_t\|_{L^1([0,t];L^{2})}\\
&\leq t^{\f12}\|\sqrt{\rho}u\|_{L^{\infty}([0,t];L^2)}(\|\sqrt{\bar{\rho}}\bar{u}_t\|_{L^2([0,t];L^{2})}+\|\sqrt{\rho-\bar{\rho}}\bar{u}_t\|_{L^2([0,t];L^{2})}
<\infty,    
\end{align*}
\fi




\medskip

\section* {Acknowledgments.}

D. Wei is partially supported by the National Key R\&D Program of China under the grant 2021YFA1001500. Z. Zhang is partially supported by NSF of China under Grant 12288101.

\end{document}